\documentclass[12pt]{article}
\usepackage{geometry,amsmath,amssymb,amsthm,bbm, enumerate}
\geometry{letterpaper}

\newtheorem{corollary}{Corollary}
\newtheorem{lemma}{Lemma}
\newtheorem{proposition}{Proposition}
\newtheorem{theorem}{Theorem}
\newtheorem{example}{Example}


\newcommand{\tmtextit}[1]{{\itshape{#1}}}
\setcounter{topnumber}{2}
\setcounter{bottomnumber}{2}
\setcounter{totalnumber}{4}     
\setcounter{dbltopnumber}{2}    

\newcommand{\C}{{\mathbb{C}}}

\newcommand{\A}{{\mathbb{A}}}
\newcommand{\OO}{{\mathfrak{o}}}

\newcommand{\ord}{{\rm{ord}}}

\usepackage{mathdots,mathrsfs}
\DeclareMathAlphabet      {\mathbfit}{OML}{cmm}{b}{it}

\newtheorem{definition}{Definition}
\newtheorem{remark}{Remark}

\def\lam{{\lambda}}
\def\Lam{{\Lambda}}

\newcommand{\diag}{{\rm diag}}

\newcommand{\Sp}{{\rm Sp}}
\newcommand{\SO}{{\rm SO}}

\newcommand{\bis}{{\underline{\text{s}}}}
\newcommand{\CE}{{\mathcal {E}}}

\newcommand{\FJ}{{\mathfrak J}}

\newcommand{\FS}{{\mathfrak {S}}}
\newcommand{\Fc}{{\mathfrak c}}
\newcommand{\Fd}{{\mathfrak d}}
\newcommand{\Fo}{{\mathfrak o}}
\newcommand{\Ft}{{\mathfrak t}}
\newcommand{\FD}{{\mathfrak D}}
\newcommand{\FU}{{\mathfrak U}}

\newcommand{\Spin}{{\rm Spin}}
\newcommand{\pr}{{\rm pr}}

\newcommand{\Z}{{\mathbb Z}}

\newcommand{\ud}{\,\mathrm{d}}

\newcommand{\apair}[1]{\left\langle{#1}\right\rangle}

\newcommand{\cpair}[1]{\left\{{#1}\right\}}
\newcommand{\ppair}[1]{\left( {#1} \right)}
\usepackage{tikz}
\newcommand*\circled[1]{\tikz[baseline=(char.base)]{
           \node[shape=circle,draw,inner sep=2pt] (char) {#1};}}
           
\begin{document}

\title{Eisenstein Series on Covers of Odd Orthogonal Groups}
\author{Solomon Friedberg and Lei Zhang}\maketitle
\begin{abstract}We study the Whittaker coefficients of the minimal parabolic
Eisenstein series on the $n$-fold cover of the split odd orthogonal group $SO_{2r+1}$.
If the degree of the cover is odd, then Beineke, Brubaker and Frechette have conjectured
that the $p$-power contributions to the Whittaker coefficients may be computed using the theory of crystal graphs
of type C, by attaching to each path component a Gauss sum or a degenerate Gauss sum
depending on the fine structure of the path.
We establish their conjecture using a combination of automorphic and combinatorial-representation-theoretic
methods.   Surprisingly, we must make use of the type A theory, and the two different crystal graph
descriptions of Brubaker, Bump and Friedberg available for type A based on different factorizations of the long word into
simple reflections.  We also establish a formula for the
Whittaker coefficients in the even degree cover case, again based on crystal graphs of type C.  As a further consequence,
we establish a Lie-theoretic description of the coefficients for $n$ sufficiently large, thereby
confirming a conjecture of Brubaker, Bump and Friedberg. 
\end{abstract}

\noindent
{\bf Keywords:} Eisenstein series, metaplectic group, Whittaker coefficient, twisted multiplicativity,
crystal graph, BZL-pattern, short pattern.

\medskip

\noindent
{\bf AMS subject classification:}  Primary 11F68, Secondary 11F55, 16T30, 17B37, 20G42.

\section{Introduction}\label{introduction}
Let $G$ be a reductive group defined over a number field
and let $P$ be a maximal parabolic subgroup of $G$ with Levi decomposition $P=MN$.
Given an automorphic representation $\pi$ of $M(\A)$, the adelic points of $M$, and a vector $f_\pi$ in the space of $\pi$,
one may construct an Eisenstein series $E(g;s,f_\pi)$ on $G(\A)$.
By the work of Langlands, this series, originally defined for $\Re(s)\gg 0$, has analytic continuation and functional equation.
If $\pi$ is generic then the Whittaker coefficients of $E$ may be expressed in terms of Langlands $L$-functions
for $\pi$, and the continuation and functional equation of these $L$-functions may be obtained from the study
of Eisenstein series---the Langlands-Shahidi method.  If the parabolic subgroup $P$ is not maximal then a similar statement
is true; the Eisenstein series in that case is a function of more than one complex variable.

Now suppose instead that $\widetilde{G}$ is a metaplectic cover of degree $n$ of $G(\A)$.  Such a cover exists
only when the base field contains enough roots of unity.  By the work of M\oe glin and Waldspurger~\cite{MW}, the Eisenstein
series on $\tilde{G}$ also have analytic continuation and functional equation.  Hence so do their Whittaker
coefficients.  However,  the Whittaker functional is not unique and the coefficients may not typically be expressed
in terms of $L$-functions, even in the simplest case of the Eisenstein series induced from the Borel subgroup
(``Borel Eisenstein series").  For example, for the $n$-fold cover of $GL(2)$, each Whittaker coefficient is an infinite
sum of $n$-th order Gauss sums.  These series, first studied by Kubota \cite{Ku},
thus have meromorphic continuation and
functional equation even though they are not (for $n>2$) Eulerian.  This continuation implies that the
arguments of $n$-th order Gauss sums at prime arguments are uniformly distributed on the unit circle
\cite{Pat}.

When the $n$-fold cover of $GL(2)$ is replaced by the $n$-fold cover of $GL(r)$ the Whittaker coefficients
of the Borel Eisenstein series again involve infinite sums of number theoretic
interest, but the structure of the coefficients is considerably more subtle \cite{BBF5}.  Each Whittaker
coefficient is a multiple Dirichlet series whose general coefficient
may be determined from the prime power coefficients by a `twisted' multiplicativity, similarly to the way that a
Gauss sum modulo a composite modulus may be expressed in terms of Gauss sums modulo prime powers.
Moreover, each coefficient at powers of $p$, any good prime, is a sum of products
of $n$-th order Gauss sums modulo powers of $p$ (sometimes degenerate),
which may be described by using {\sl crystal graphs}.  These graphs are attached to representations
of the quantum group $U_q(gl_{r}(\C))$.  The description using crystal
graphs is uniform in both $p$ and $n$.  We shall describe this in more detail below.
When $n$ is sufficiently large many of the Gauss sums become zero and an easier Lie theoretic
description that does not require crystal graphs
may also be given.  In that case the number of nonzero  terms at
powers of a fixed prime $p$ is exactly the order of the Weyl group, and there
is a natural correspondence between these terms and the elements of the Weyl group.
This description was first conjectured by Brubaker, Bump and Friedberg \cite{BBF1}; its
proof in this case follows from \cite{BBF5} using \cite{BBF2}.

It is natural to seek the Whittaker coefficients of Borel
Eisenstein series on covers of groups of other Cartan types, and to establish a relation between
these coefficients and crystal graphs.   This work gives the
first general result.   We treat the case of covers of split orthogonal groups
of Cartan type B.

For $n$, the degree of the cover, sufficiently large, a conjectural
Lie theoretic description of the Whittaker coefficients of Borel
Eisenstein series was presented for all Cartan types by Brubaker, Bump and
Friedberg \cite{BBF1, BBF2}.\footnote{The functional equations for the series were proved, but the
connection to Eisenstein series was only conjectured.}
A new feature arises when there are two root lengths:  there are
two different Gauss sums that appear.  When $n$ is odd these are Gauss sums
attached to two Galois-conjugate $n$-th order residue symbols, but when
$n$ is even these are Gauss sums of orders $n$ and $n/2$.  Once again for $n$ sufficiently
large the nonzero terms at powers of a fixed prime $p$
should correspond naturally to the elements of the Weyl group, and there is a conjectural Lie theoretic
description.  However, for lower degree covers additional terms should arise, which
are in the convex hull of the lattice points giving the powers of $p$ corresponding to the Weyl group elements.

For Eisenstein series on odd degree covers of a split orthogonal group of Cartan type B,
a conjectural description of these terms using crystal graphs was given by
Beineke, Brubaker and Frechette \cite{BeBrF1, BeBrF2}.  The quantum group that
appears should be regarded as attached to the quantized Lie algebra
of the $L$-group.  By a well-known principle going back to Savin, the analogue of the $L$-group for
odd degree covers matches that of the degree one cover, although for even degree covers 
the situation is more ambiguous (see Section 4.4 of \cite{BBCF}).
Hence the conjectural description of Beineke, Brubaker and Frechette involved crystal
graphs of type C.
Beineke, Brubaker and Frechette also proved that their
description was compatible with the Lie theoretic description in \cite{BBF1, BBF2}.
So for odd degree covers there is a conjectured description of the Whittaker coefficients that is uniform in $n$.
However, for $n$ even, no parallel description had been given, even conjecturally.

In this paper we establish the conjecture of Beineke, Brubaker and Frechette
for Borel Eisenstein series on odd degree covers of split orthogonal groups of type B.
That is, we show that their Whittaker coefficients may be described in terms of
multiple Dirichlet series whose $p$-parts are calculated (in a specific way)
using crystal graphs of type C.   In doing so,
we thus also confirm that the conjectured Lie theoretic description of Brubaker, Bump and Friedberg  \cite{BBF1, BBF2}
holds for $n$ sufficiently large, $n$ odd.  In addition,
we give a different formula for the case of even degree covers.  The coefficients are again expressed
in terms of the combinatorial objects associated to crystal graphs of type C, but with a
different assignment of number-theoretic data to these objects.  In fact, this second, new, description is
uniform in $n$.
We use this formula to confirm the conjectured Lie theoretic description for $n$ sufficiently large, $n$ even,
as well.

There is something surprising in our proof of the conjecture of Beineke, Brubaker and
Frechette, and to explain it we must go into more detail about how
crystal graphs are used in describing the Whittaker coefficients.
For type A, there are in fact two different descriptions of the $p$-part in terms of crystal graphs.
These descriptions are based on two different factorizations of the long element of the Weyl group
into simple reflections.  The factorization is used to determine a path from each vertex of the crystal
graph to the lowest weight vector (the edges of the graph indicate the action of the Kashiwara operators
attached to each simple root, and the factorization determines  which edges to use in forming the path).
Then the lengths of the path segments are used to determine a collection
of Gauss sums, which also depend on how the path fits in to the rest of the crystal graph.  See \cite{BBF4},
Chs.\ 2 and 3,
for more details.  For two factorizations which are as far apart as possible, the crystal graph description
is valid.  Moreover, the equality of the terms from the two different factorizations is not a formal equality of summands, but
rather much more intricate.  In fact, this equality is enough to imply the functional equation
for the multiple Dirichlet series \cite{BBF3}.

A direct proof of the equality of these two different type A crystal graph descriptions is given in \cite{BBF4}.
It turns out that many terms in the two descriptions
may be identified after applying the Schutzenberger involution.  However,
the difficult ones are not identified; in fact the number of summands on the two sides
may be different, and the equality requires identities among Gauss sums.    These terms are in a suitable
sense boundary terms for the polytope which is the convex hull of the lattice points describing
all possible paths.  Moreover, the equality
is true for all $n$, but with different reasons for different $n$.  To establish it,
one reduces inductively to
objects called {\it short patterns}.  These patterns are then classified. 
The most difficult terms are exactly the terms that come from the {\it totally resonant case}---see \cite{BBF4}, (6.14)
and following.  These terms must be handled by a subtle combinatorial argument. 

Returning to our situation, the Whittaker coefficients are computed inductively, viewing the Eisenstein
series on $SO_{2r+1}$ as induced from an Eisenstein series on $SO_{2r-1}$.  This yields a complicated inductive
formula for each Whittaker coefficient as an exponential sum involving lower rank coefficients
(Theorem~\ref{inductivesum}).  
There is also an inductive formula obtained from Beineke, Brubaker and Frechette's crystal graph description
(Lemma~\ref{ind-lemma}).
However, these formulae involve {\it different} number-theoretic weight factors, a situation
reminiscent of the two type A descriptions.  Remarkably, this is not
merely an analogy.  The equality of the two type C (on the $L$-group side) expressions is 
then established
{\it using} both the equality of the two different type A crystal graph descriptions and
a classification of type C combinatorial structures (which we again call short patterns) that
is analogous to the type A classification.  (In particular, there is a totally resonant
case that requires special effort.)  This is a new phenomenon.
In type A, the induction in stages methods matched the inductive crystal description exactly 
(\cite{BBF5}, Sect.~8), and there was no need to
establish an equality to bridge the Eisenstein series
and the crystal descriptions.  The need to use the two different type A
descriptions in studying other Cartan types
was also not expected.  

Our inductive computation of the Whittaker coefficients works for covers of all degrees.
However, the identities involving Gauss sums are different in the cases of even
and of odd degree covers, due to the differing
orders of the multiplicative character in
the Gauss sums that arise.  For $n$ odd, the conjecture of Beineke, Brubaker and Frechette prescribes
specific rules for attaching number-theoretic quantities to path lengths on the crystal graph.
These rules are in fact closely related to the type~A rules, and have a description in terms
of the maximality of the path segments with respect to the Kashiwara operators.
As they observe, these rules would not apply to the case that $n$ was even, and indeed our analysis establishing
their conjecture makes use of the assumption that $n$ is odd.  To address the
case $n$ even, we also present a {\it different}
set of rules for attaching number-theoretic quantities to path lengths that is uniform in the degree
of the cover, and we show that this different rule also matches the inductive formula from the Eisenstein series.
The rule we present is more combinatorially complicated, but does simplify for $n$ sufficiently large,
where it yields the conjectured Lie theoretic description of Brubaker, Bump and Friedberg.

Though our approach is global, working over a number field,
our results at primes $p$ may also be interpreted locally.  Indeed, working adelically,
the Whittaker integral of a Borel Eisenstein series may be unfolded to an integral over the adelic
points of the unipotent radical $U$ of the Borel subgroup; however the section
involved is not factorizable, but an infinite sum of factorizable sections.  
Following Brubaker, Friedberg and McNamara
(in preparation), one may use this to prove twisted multiplicativity, and to relate the global Whittaker
coefficients to local Whittaker integrals.  As McNamara has proved \cite{McN}, over a local field $F_v$ the unipotent
subgroup $U(F_v)$ may be decomposed
into cells on which the integrand in the Whittaker integral is constant, and these cells may be given
the combinatorial structure of the crystal $B(-\infty)$.  Thus the Whittaker integral may be evaluated
in terms of a sum over an (infinite) crystal.  However, our result does more: we reduce the support
from $B(-\infty)$ to the finite crystal attached to a representation of a specific highest weight, and we show that
values are number theoretic and may be computed for $n$ odd using the crystal graph recipe of
\cite{BeBrF1}.

In Section~\ref{sect2} we fix the notation, and review some facts about metaplectic groups.  The metaplectic
Eisenstein series are defined in Section~\ref{sect3}.  In Section~\ref{sect4} we 
compute the Whittaker coefficients of these Eisenstein series, using an induction-in-stages argument.
Each Whittaker coefficient is written as a Dirichlet series in several complex variables whose coefficients
satisfy an inductive relation giving the rank $r$ coefficients as sums of rank $r-1$ coefficients (Theorem~\ref{inductivesum}).  
The twisted multiplicativity of these coefficients is established in Section~\ref{sect5}; this allows one to
reduce to the study of the coefficients indexed by powers of a single prime.  
Section~\ref{sect6} has two parts. In Section~\ref{sect61} we
present a combinatorial version of the inductive formula of Section~\ref{sect4} for the $p$-power coefficients.
Then in Section~\ref{sect62} we review the conjectured description of Beineke, Brubaker and Frechette~\cite{BeBrF1,
BeBrF2} which uses crystal graphs, and we recast it in a similar inductive format.  In Section~\ref{sec:p} we establish
their conjecture (Theorem~\ref{thm:main}) by the argument described above.  And in Section~\ref{sec:even},
we establish a second inductive formula that applies in the even cover case as well.  
This formula is then used in Section~\ref{sec9} to prove the 
conjectured Lie theoretic description for $n$ sufficiently large.

The authors express their appreciation to Benjamin Brubaker and Daniel Bump for helpful discussions.
The first named author was supported by grants from the NSF (DMS-1001326) and NSA (H98230-10-1-0183).

\section{Metaplectic Groups}\label{sect2}

We begin by fixing the notation and constructing the $n$-fold cover
of $SO_{2r+1}$.  Fix an integer $n\geq 1$. Let $F$ be a number field such that the group $\mu_{4n}$ of $4n$-th roots
of unity in $F$ has order $4n$. 
(The requirement that $F$ contain $2n$ $2n$-th roots of unity is necessary; containing the 
full group of $4n$-th roots of unity is an assumption made for convenience. See McNamara~\cite{Mc}, Section~13.1,
for more information.)  
Fix a finite set of places $S$ of $F$ containing all
archimedean places, all places ramified over $\mathbbm{Q}$ (in particular
those dividing $n$), and which is sufficiently large that the ring $\mathfrak{o}_S$
of $S$-integers of $F$ is a principal ideal domain and the residue field has at least
4 elements for all $v\not\in S$. For each $v\in S$ let $F_v$ denote the completion of 
$F$ at $v$, and let $F_S = \prod_{v \in S} F_v$. 
 Let $(~,~)_v\in\mu_{2n}$ denote the $2n$-fold Hilbert symbol in $F_v$
and let $(~,~)_S\in\mu_{2n}$ denote the 
product over $S$ of these local Hilbert symbols: $(~,~)_S=\prod_{v\in S}(~,~)_v$.  

The metaplectic groups are constructed using a two-cocycle, as in Matsumoto \cite{Mat}.
In \cite{BBF5}, Brubaker, Bump and Friedberg, following earlier authors, described a specific cocycle 
$\sigma_0$ in $H^{2}({\rm GL}_{2r+1}(F_{S}),\mu_{2n})$. Its 
calculation involves the arithmetic of $F$ and is expressed in terms of products of Hilbert symbols
$(~,~)_S$.  We will work with this cocycle composed with an inner automorphism of ${\rm GL}_{2r+1}$:
$$
g\to wgw^{-1},\qquad
w=\begin{pmatrix}
I_{r+1}&&&\\&&&1\\&&\iddots&\\&1&&
\end{pmatrix}.
$$
Let $\sigma\colon {\rm GL}_{2r+1}(F_{S})\times {\rm GL}_{2r+1}(F_{S})\to \mu_{2n}$ be the two-cocycle given by
$$\sigma(g_1,g_2)=\sigma_0(wg_1w^{-1},wg_2w^{-1}).$$  For example, on the torus one has
\begin{equation}\label{torus-cocycle}
\sigma\ppair{w^{-1}\begin{pmatrix}
x_{1}&&\\&\ddots&\\&&x_{2r+1}
\end{pmatrix}w, w^{-1}\begin{pmatrix}
y_{1}&&\\&\ddots&\\&&y_{2r+1}
\end{pmatrix}w}
=\prod_{i>j}(x_{i},y_{j})_{S}.
\end{equation}

Let $\widetilde{GL}_{2r+1}(F_S)$ denote the $2n$-fold cover of the group $GL_{2r+1}(F_S)$ 
corresponding to $\sigma$.  
Thus the elements of $\widetilde{GL}_{2r+1}(F_S)$ are ordered pairs $(g,\zeta)$ with
$g\in GL_{2r+1}(F_S)$  and $\zeta\in \mu_{2n}$, and the multiplication in $\widetilde{GL}_{2r+1}(F_S)$ is given by
$$(g_1,\zeta_1)(g_2,\zeta_2)=(g_1g_2,\sigma(g_1,g_2)\zeta_1\zeta_2).$$
Let $p:\widetilde{GL}_{2r+1}(F_S)\to GL_{2r+1}(F_S)$ 
denote the projection $p((g,\zeta))=g$.

To work with the $n$-fold metaplectic cover of the split special orthogonal group we restrict the $2n$-fold cover of the general linear group.
This shift in the degree of the cover is due to the cover-doubling phenomenon described in Bump, Friedberg and Ginzburg \cite{BFG}, Section 2.
We shall see it reflected in Lemma~\ref{cover} below, in which all $2n$-fold residue symbols appear as squares.
Let $Q:F_S^{2r+1}\times F_S^{2r+1}\to F_S$ be the quadratic form
$$Q(x,y)=\sum_{i=1}^{2r+1}x_i\, y_{2r+2-i} -\frac{3}{2} x_{r+1}y_{r+1}.$$
Then $Q(x,y)$ is represented by the matrix
$$
J=\begin{pmatrix}&&&&&&1\\&&&&&\iddots&\\&&&&1&&\\&&&-1/2&&&\\&&1&&&&\\&\iddots&&&&&\\1&&&&&& \end{pmatrix}.
$$
Let $G$ or $SO_{2r+1}$ denote the split special orthogonal group which is 
 the stabilizer in $SL_{2r+1}$ of the quadratic form 
$Q(x,y)$.  Thus
$$G_{F_S}=\{g\in SL_{2r+1}(F_S)\mid~ ^t \!g J g=J\}.$$
 Let $\widetilde{G}_{F_S}=p^{-1}(G_{F_S})$.  
Similarly define $G_{F_v}$ and $\widetilde{G}_{F_v}$ for the completions
$F_v$ of $F$ at places $v$ of $F$.  Then $\widetilde{G}_{F_S}$ is isomorphic to the quotient of the direct product 
$\prod_{v\in S} \widetilde{G}_{F_v}$ by the equivalence relation 
identifying the copies of $\mu_{2n}$ at each $v\in S$.

\begin{remark}{\rm  The algebraic
group $SO_{2r+1}$ is not simply connected but has a two-fold covering, namely
the spin group $\rm{Spin}_{2r+1}$.  So 
there  are two two-fold covers naturally attached to $SO_{2r+1}(F_v)$, namely
$\rm{Spin}_{2r+1}(F_v)$ and $\widetilde{G}_{F_v}$.  These covers are not isomorphic.
Indeed,  the spin group is an algebraic group
while the cocycle used to define the two-fold covering $\widetilde{G}_{F_v}$ depends on the arithmetic of the
field $F_v$.   Moreover, though  $\rm{Spin}_{2r+1}$ is a cover of the algebraic group $SO_{2r+1}$,
this cover does not descend to the $F_v$ points, so that in fact $\rm{Spin}_{2r+1}(F_v)$
is only a cover of the subgroup of $SO_{2r+1}(F_v)$ consisting of elements whose spinor norm is a square.

Let $\underline{G}$ be a semisimple connected
simply-connected split algebraic group defined over $F_v$.  Then
Matsumoto \cite{Mat} defined an $n$-fold covering of $\underline{G}(F_v)$.  To explain the relation between Matsumoto's covers and the ones used here, fix $n$ as above.
On the one hand, Matsumoto's result gives
a two-cocycle $\sigma_{M}$ on ${\rm Spin}_{2r+1}(F_{v})$ with values in $\mu_n$.  On the other, 
let $\delta$ be the spinor norm from $G_{F_{v}}$ to $F_{v}^{\times}/(F_{v}^{\times})^{2}$ and denote by $G'_{F_{v}}$ the kernel of $\delta$. There is an exact sequence
$$
1\longrightarrow \{\pm 1\}\longrightarrow \Spin_{2r+1}(F_{v})\stackrel{\pr}{ \longrightarrow} G'_{F_{v}}\longrightarrow 1.
$$
Then restricting the cocycle $\sigma$ defined above to $G'_{F_v}$
and then pulling it back, one obtains a cocycle 
$\sigma_{res}$ on ${\rm Spin}_{2r+1}(F_{v})$: $\sigma_{res}(g,h):=\sigma({\rm pr}(g),{\rm pr}(h))$.  
Referring to Matsumoto's results and using the notation in \cite{Mat}, in order to specify $\sigma_{M}$, we 
choose $c_{\alpha}(s,t)=(s,t)^{-\|\alpha^{\vee}\|}_{n}$ where $\alpha$ is in the set $\Phi(\Spin_{2r+1})$ of roots of $\Spin_{2r+1}$,
the short coroots have length 1, and $c_{\alpha}$ is defined on \cite{Mat}, pg.\ 25.
Using the assumption that $-1\in (F^{\times}_{v})^{2n}$ and Matsumoto's relations on $\sigma_{M}$ in \cite{Mat}, it is 
not difficult to check that the two cocycles $\sigma_{M}$ and $\sigma_{res}$ are in fact equal. 
So the covers $\widetilde{G}_{F_v}$ obtained by embedding into a cover of a general linear group essentially
coincide with those constructed by Matsumoto.}
\end{remark}
\smallskip

If $c,d\in\mathfrak{o}_S$ are relatively prime, for $m$ a divisor of $2n$ let 
$\left(\frac{c}{d}\right)_{m}$ denote the $m$-th power residue symbol on $\mathfrak{o}_S$.  
Here the Hilbert and power residue symbols are normalized as in \cite{BBF5}, Sections 3 and 4.
(The $m$-th power residue symbols depend on the choice of $S$ but we suppress this from the notation.)
Then
we have the reciprocity law
\begin{equation}\label{reciprocity-law}
\left(\frac{c}{d}\right)_{2n}=(d,c)_S\left(\frac{d}{c}\right)_{2n}.
\end{equation}
Also, 
\begin{equation}\label{2n-squared}
\left(\frac{c}{d}\right)_{2n}^2=\left(\frac{c}{d}\right)_{n}.
\end{equation}

We embed $F$ (and in particular $\mathfrak{o}_S$)
into $F_S = \prod_{v \in S} F_v$ diagonally.  We similarly embed $SL_{2r+1}(F)$ into $SL_{2r+1}(F_S)$ diagonally.
Then there is an embedding  $g\mapsto (g,\kappa(g))$ of 
$SL_{2r+1}(\mathfrak{o}_S)$ into $\widetilde{SL}_{2r+1}(F_S):=p^{-1}(SL_{2r+1}(F_S))$
(see \cite{BBF5}, Section 4).  This embedding has the property that if $\beta$ is any positive
root of $SL_{2r+1}$ with respect to the standard Borel subgroup then 
\begin{equation}\label{kubota-hom}
\kappa\left(w^{-1}i_\beta(\begin{smallmatrix}a&b\\c&d\end{smallmatrix})w\right)=
\begin{cases}\left(\frac{d}{c}\right)_{2n}&\text{if $c\neq0$}\\1&\text{if $c=0$.}\end{cases}
\end{equation}
The embedding restricts to give an embedding of $G(\mathfrak{o}_S)$ into $\widetilde{G}_{F_S}$.
Let $\Phi$ denote the set of positive roots of $SO_{2r+1}$ with respect
to the standard Borel subgroup $B$ of upper triangular matrices of $SO_{2r+1}$.  In 
standard notation, $\Phi=\cpair{e_{i}\pm e_{j}, e_{i}\mid  i<j},$ where $e_i$ denotes
the $i$-th standard unit vector in ${\mathbb R}^r$.  For $\alpha\in \Phi$,
let $i_\alpha$ denote the canonical embedding of $SL_2$ into $SO_{2r+1}$
corresponding to $\alpha$.  Then the cover doubling phenomenon noted above is
reflected in the following Lemma.

\begin{lemma}\label{cover}
Let $\alpha\in\Phi$.  Then
\begin{equation}\label{cover-doubling}
\kappa\left(i_\alpha(\begin{smallmatrix}a&b\\c&d\end{smallmatrix})\right)=\begin{cases}
{\left(\frac{d}{c}\right)_{2n}\left(\frac{a}{b}\right)_{2n}}&\text{if $\alpha=e_{i}-e_{j}$ for $i<j$},   \medskip \\
\left(\frac{d}{c}\right)_{n}& \text{if $\alpha=e_{i}+e_{j}$ for $i<j$},  \medskip \\
\left(\frac{d}{c}\right)_n^2&\text{if $\alpha$ is short.}
\end{cases}
\end{equation}
\end{lemma}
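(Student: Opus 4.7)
The plan is to reduce the computation to the Kubota formula (\ref{kubota-hom}) via the identity $\kappa(g)=\kappa_0(wgw^{-1})$, which follows from $\sigma(g_1,g_2)=\sigma_0(wg_1w^{-1},wg_2w^{-1})$; here $\kappa_0$ is the splitting of $\sigma_0$ for which (\ref{kubota-hom}) holds. Hence the task is to identify $wi_\alpha(h)w^{-1}$ inside $SL_{2r+1}$ and compute $\kappa_0$ of it.

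First I would write out the Chevalley generators $E_\alpha,F_\alpha\in\mathfrak{so}_{2r+1}$ for each positive root $\alpha$, using the form $J$. For a long root $\alpha=e_i\pm e_j$ with $i<j\leq r$, $E_\alpha$ is a difference of two matrix units whose supports form two disjoint $2\times 2$ blocks of $SL_{2r+1}$; checking on $u_\pm(x)$ and $\mathrm{diag}(t,t^{-1})$ yields the factorization
\begin{equation*}
i_\alpha(h)=i_{\beta_1}(h)\cdot i_{\beta_2}(\sigma(h)),\qquad \sigma\begin{pmatrix}a&b\\c&d\end{pmatrix}=\begin{pmatrix}a&-b\\-c&d\end{pmatrix},
\end{equation*}
the $\beta_k$ being positive roots of $SL_{2r+1}$ and $\sigma$ being conjugation by $\mathrm{diag}(1,-1)$. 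For a short root $\alpha=e_i$, the $-\tfrac{1}{2}$ in the $(r+1,r+1)$-entry of $J$ forces $E_{e_i}=e_{i,r+1}+2e_{r+1,2r+2-i}$ with $E_{e_i}^2\neq 0$, so $i_{e_i}$ is the symmetric-square embedding $\mathrm{Sym}^2:SL_2\hookrightarrow SL_3$ placed in the $3\times 3$ block at rows $i,r+1,2r+2-i$.

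Conjugating by $w$ relocates and sometimes reverses these blocks: one obtains
\begin{equation*}
wi_{e_i-e_j}(h)w^{-1}=i_{\beta_1}(h)\cdot i_{\beta_2'}((h^{-1})^T),\quad wi_{e_i+e_j}(h)w^{-1}=i_{\gamma_1'}(h)\cdot i_{\gamma_2'}(\sigma(h)),
\end{equation*}
where $\beta_2'=e_{r+1+i}-e_{r+1+j}$, $\gamma_1'=e_i-e_{r+1+j}$, $\gamma_2'=e_j-e_{r+1+i}$; the $(h^{-1})^T$ in the first case comes from $w$ reversing the order of rows $r+2,\dots,2r+1$ (combining with $\sigma$ as $\tau\sigma(h)=(h^{-1})^T$, where $\tau$ reverses rows and columns). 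For the short root, $wi_{e_i}(h)w^{-1}=\mathrm{Sym}^2(h)$ at rows $(i,r+1,r+1+i)$. Applying (\ref{kubota-hom}) to each factor, using $(-c)=(c),\ (-b)=(b)$ as ideals in $\mathfrak{o}_S$, bilinearity of the residue symbol, and (\ref{2n-squared}), one reads off exactly: $(\tfrac{d}{c})_{2n}(\tfrac{a}{b})_{2n}$ for $e_i-e_j$; $(\tfrac{d}{c})_{2n}^2=(\tfrac{d}{c})_n$ for $e_i+e_j$; and for the short root, Kubota along the long $SL_3$-root applied to $\mathrm{Sym}^2(h)$ (bottom row $(c^2,cd,d^2)$) gives $(\tfrac{d^2}{c^2})_{2n}=(\tfrac{d}{c})_n^2$.

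The main obstacle is showing that the residual two-cocycle contributions $\sigma_0(g_1,g_2)$ between the two Kubota factors (and the analogous $SL_3$-cocycle terms in the short-root case) are trivial. Via the Bruhat decomposition of each factor and (\ref{torus-cocycle}), these reduce to Hilbert-symbol products $\prod_{k>\ell}(x_k,y_\ell)_S$ on the torus parts. For $e_i-e_j$ the index sets $\{i,j\}$ and $\{r+1+i,r+1+j\}$ are separated by $r+1$, so no inversion $k>\ell$ arises and the product is empty; for $e_i+e_j$ the two inversion contributions collapse by bilinearity into $(c,(-c)^{-1}\cdot(-c))_S=(c,1)_S=1$; the short-root case is handled similarly along the $SL_3$-Bruhat decomposition of $\mathrm{Sym}^2(h)$. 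Finally, the edge cases $b=0$ or $c=0$ follow from the standard convention that (\ref{kubota-hom}) returns $1$ when the lower-left entry of the $SL_2$-matrix vanishes, since then the appropriate factor becomes upper triangular.
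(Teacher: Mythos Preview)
Your approach---factor $i_\alpha(h)$ inside $SL_{2r+1}$ as a product of two commuting $SL_2$-embeddings for long roots, or as the symmetric square into an embedded $SL_3$ for short roots, and then evaluate $\kappa$ on each piece via (\ref{kubota-hom})---is exactly the paper's. The difference is that you treat the ``residual two-cocycle contributions'' as the main obstacle and try to compute them by hand, whereas the paper simply invokes the fact that $\kappa$ is a genuine group homomorphism $SL_{2r+1}(\mathfrak{o}_S)\to\mu_{2n}$ (this is the Bass--Milnor--Serre theorem \cite{BMS}, not merely the statement that $g\mapsto(g,\kappa(g))$ splits the cover). Hence $\kappa(g_1g_2)=\kappa(g_1)\kappa(g_2)$ for all $g_1,g_2\in SL_{2r+1}(\mathfrak{o}_S)$, and your Hilbert-symbol bookkeeping, while not wrong in spirit, is unnecessary.

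For the short root, your identification with the symmetric-square map matches the paper, but the phrase ``Kubota along the long $SL_3$-root applied to $\mathrm{Sym}^2(h)$'' is not quite right: $\mathrm{Sym}^2(h)$ does not lie in any root-$SL_2$ of $SL_3$, so (\ref{kubota-hom}) does not apply directly. One must either carry out a full $SL_3$ Bruhat decomposition of $M(h)$ and track $\kappa$ through it, or do as the paper does: use block-compatibility of the metaplectic cocycle \cite{BLS} (or the BMS construction itself) to reduce $\kappa(i_\alpha(\gamma))$ to $\kappa(M(\gamma))$ on the $3\times3$ block, and then cite the explicit evaluation of $\kappa$ on such matrices from \cite{BBFH1,BBFH2}.
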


Indeed, if $\alpha\in \Phi$ is a long root, then we may write $i_\alpha=i_\beta\,i_{\beta'}$ where $\beta,\beta'$ 
are the positive roots of $SL_{2r+1}$ which restrict to $\alpha$ on the diagonal torus of $SO_{2r+1}\subset SL_{2r+1}$.
Since $\kappa$ is a homomorphism, combining
(\ref{kubota-hom}) and (\ref{2n-squared}) we find that (\ref{cover-doubling}) holds.  If instead $\alpha$ is a short root, 
then $i_\alpha$ is obtained from the symmetric square map.  Indeed, for the short simple root $\alpha$ we have
$$i_\alpha(\gamma)=\begin{pmatrix}
I_{r-1}&&\\&M(\gamma)&\\&&I_{r-1}\end{pmatrix}\qquad \text{with}\qquad
M((\smallmatrix a&b\\c&d\endsmallmatrix))=
\begin{pmatrix} a^2&ab&b^{2}\\2ac&bc+ad&2bd\\c^2&cd&d^{2}\end{pmatrix};$$
for the other short roots the rows and columns are moved accordingly.
From the block-compatibility of the metaplectic cocycle established by Banks, Levy and Sepanski \cite{BLS} (or from the original construction of $\kappa$ 
by Bass, Milnor and Serre \cite{BMS}) it follows that $\kappa(i_\alpha(\gamma))=\kappa(M(\gamma))$.
It may be checked that
$$\kappa\left(M((\smallmatrix a&b\\c&d\endsmallmatrix))\right)=\left(\frac{d^2}{c^2}\right)_{2n}=\left(\frac{d}{c}\right)_n^2.$$
Indeed, this follows from \cite{BBFH1}, Eqn.\ (17), when $n=3$ by an easy computation (after taking into
account that the normalization of $\kappa$ there uses $\left(\tfrac{c}{d}\right)_n$ in place of
$\left(\tfrac{d}{c}\right)_n$), and a similar formula pertains in general
(see \cite{BBFH2}, Section 1).
Thus the Lemma holds in all cases.

\section{Eisenstein Series}\label{sect3}

In this Section we define the Eisenstein series on $\widetilde{G}_{F_S}$. 
Let $B=TU$ be the standard Borel subgroup of $G=SO_{2r+1}$; the torus $T$ is given by
$$T=\{\diag(t_{1},t_{2},\dots,t_{r},1,t^{-1}_{r},\dots,t_{2}^{-1},t_{1}^{-1})\}$$
and the subgroup $U$ consists of the upper triangular unipotent matrices in $G$.
The cocyle restricted to $T_{F_S}$ is given by (\ref{torus-cocycle}).  It is easy to check that  $\widetilde{T}_{F_S}$,
the pullback of $T_{F_S}$ to $\widetilde{G}_{F_S}$, is not abelian.  However, the subgroup
$\Omega= \Fo^{\times}_{S}(F^{\times}_{S})^{n}$ is a maximal isotropic subgroup of $F_S$ with respect to $(~,~)_S$,
and accordingly
the subgroup
$\widetilde{T}_{\Omega}$ consisting of $t$ as above such that $t_{i}\in \Omega$
for all $i$, $1\leq i\leq r$, is an abelian subgroup of $\widetilde{G}_{F_S}$ (and is in fact a maximal abelian
subgroup of $\widetilde{T}_{F_S}$).

Let $\mathbf s=(s_1,\dots,s_r)\in \C^r$.  For $x\in F_S$, let $|x|=\prod_{v\in S}|x_v|_v$.
For \break  $t=\diag(t_{1},t_{2},\dots,t_{r},1,t^{-1}_{r},\dots,t^{-1}_{2},t^{-1}_{1})\in T_{F_S}$,
let 
$$
\FJ(t)=(\prod^{r-1}_{i=1}\prod^{i}_{j=1} |t_{j}|^{2s_{i}})\prod^{r}_{i=1}|t_{i}|^{s_{r}}.
$$
Let $\underline{\text{s}}:G_{F_S}\to\widetilde{G}_{F_S}$ be the trivial section $\underline{\text{s}}(g)=(g,1)$. 
A function $f\colon \widetilde{G}_{F_S}\rightarrow \C$ is {\it genuine} if 
$f((g,\zeta))=\zeta f(\bis(g))$ for all $\zeta\in \mu_{2n}$.  
Let $I({\bf s})$ be the space consisting of all smooth genuine complex-valued functions $f$ on $\tilde{G}_{F_{S}}$ such that
$$
f(\bis(tu)g)=\FJ(t)f(g),
$$ 
for all $t\in \widetilde{T}_{\Omega}$, $u$ in $U(F_S)$ and $g\in \widetilde{G}_{F_S}$. 
Let  $\tilde{G}_{F_{S}}$ act on the space $I({\bf s})$ by right translation. Then $I({\bf s})$ is a representation of $\tilde{G}_{F_{S}}$.
By Lemma 2 in McNamara \cite{Mc}, $I({\bf s})$ has a
$G(\mathfrak{o}_{S})$-invariant function and this is unique up to a constant.  However, we shall consider the full 
induced space below and keep track of the dependence
on the inducing data.

Suppose $f\in I({\bf s})$ with $\Re(s_i)$ sufficiently large.  Since $f\in I({\bf s})$, $f(bg)=f(g)$ for all $b\in B(\mathfrak{o}_S)$, $g\in \widetilde{G}_{F_S}$.  Define the Eisenstein series 
$$E_f(g,\mathbf s)=\sum_{\gamma\in B(\mathfrak{o}_S)\backslash G(\mathfrak{o}_S)} f(\gamma g), \qquad g\in \widetilde{G}_{F_S}.$$
In this formula,  $\gamma\in G(\mathfrak{o}_S)$ is embedded in $\widetilde{G}_{F_S}$ via the map
$\gamma\mapsto (\gamma,\kappa(\gamma))$ as described
above.  Then this series converges absolutely and (for a given flat section)
has analytic continuation to all ${\bf s}\in\C^r$ (\cite{MW}).

\section{An Inductive Formula for the Whittaker Coefficients}\label{sect4}

The goal of this Section is to establish an inductive formula for the Whittaker coefficients of the Borel Eisenstein
series on the $n$-fold cover of $SO_{2r+1}$.  This formula expresses the coefficients in terms of similar coefficients
but with $r$ replaced by $r-1$.

Let $\omega_r$ be the $(2r+1)\times(2r+1)$ matrix with alternating $1$ and $-1$ on the anti-diagonal and zero elsewhere:
$$
\omega_r=\begin{pmatrix}
&&&&1\\&&&-1&\\&&1&&\\&\iddots&&\\
1&&&&&
\end{pmatrix}.
$$
Then $\omega_r$ is a representative for the longest Weyl element in $G_F$. 

The Whittaker coefficients of concern are described as follows. Let $\psi$ be a complex valued additive character 
on $F_S$ whose conductor is exactly $\mathfrak{o}_S$.
(For the existence of $\psi$, see Brubaker and Bump \cite{BB}, Lemma 1.)  Recall that $U$ denotes
the unipotent radical of the Borel subgroup $B$ of $G$; the group $U(F_S)$ embeds in $\widetilde{G}_{F_S}$ 
under the trivial section $\underline{\text{s}}(u)=(u,1)$.  Let $\mathbf m=(m_1,\dots,m_r)$ be an $r$-tuple of nonzero elements
of $\mathfrak{o}_S$, and let 
$$\psi_{\mathbf m}(u)=\psi\left(m_1u_{1,2}+m_2 u_{2,3}+\dots +m_{r-1} u_{r-1,r}+m_r u_{r,r+1}\right).$$
Then our goal is to compute the integral
\begin{equation}\label{Whit-coeff}
W_{\mathbf m}(f,\mathbf s)=\int_{U(\mathfrak{o}_S)\backslash U(F_S)} E_f(\underline{\text{s}}(u),\mathbf s)\,\psi_{\mathbf m}(u)\ud u.
\end{equation}
For later use, for $f\in I({\bf s})$ as above, we also introduce the Whittaker functionals
for sufficiently large $\Re(s_{i})$, 
\begin{align*}
\Lam_{{\bf m},t}(f)&=\FJ(t)^{-1}\int_{U(F_S)}f(\bis(t)\bis(\omega_{r})\bis(u))\,\psi_{\mathbf m}(u)\ud u.
\end{align*}

For $0\neq C\in \mathfrak{o}_S$, let $|C|$ denote the cardinality of $\mathfrak{o}_S/C\mathfrak{o}_S$. 
Also for $1\leq i\leq r$ let $\alpha_i$ denote the $i$-th simple root of (the connected component of) the Langlands
dual group $^LG$, with the long root being $\alpha_r$,
and let $\|\alpha_i\|$ denote the length of $\alpha_i$, normalized so that the short roots have length $1$.
Let $\varepsilon_i=\|\alpha_i\|^2$, so $\varepsilon_{i}=1$ for $1\leq i\leq r-1$ and $\varepsilon_{r}=2$.
We will prove the following result. 

\begin{theorem}\label{inductivesum}
$$W_{\mathbf m}(f,\mathbf s)=\sum_{0\neq C_i\in \mathfrak{o}_S/\mathfrak{o}_S^\times} 
\begin{frac}{H(C_1,\dots,C_r;\mathbf m)}
{|C_1|^{2s_1}\dots |C_r|^{2s_r}}\end{frac}
\Lambda_{\mathbf m, \mathbf C}(f),$$
where $\Lambda_{\mathbf m, \mathbf C}(f)$ is defined as above and 
$$C=\diag(C^{-1}_{1},C^{-1}_{2}C_{1},\dots,C^{-1}_{r-1}C_{r-2},C^{-2}_{r}C_{r-1},1,C^{2}_{r}C^{-1}_{r-1},\dots,C_{1}).$$ 
 The coefficients $H$ satisfy the inductive relation
\begin{align}\label{thesum}
&H(C_{1},\dots,C_{r};m_{1},\dots,m_{r})=\\
&\sum_{\substack{0\neq D_{i}\in \OO_{S}/\OO^{\times}_{S}\\0\neq d_{i}\in \OO_{S}/\OO^{\times}_{S}}}
\sum_{\substack{c_{i}\bmod{d_{i}}\\(c_i,d_i)=1}}\,\,\prod_{k=1}^r \left(\frac{c_k}{d_k}\right)_n
\left(\frac{c_{2r-k}}{d_{2r-k}}\right)_n\cdot
\prod^{r-1}_{i=1}\prod^{2r-1}_{j=2r-i}(d_{2r-i}/d_{i},d_{j})_{S}\cdot \prod^{r}_{i=2}(\Fd_{i-1}/\Fd_{i},D_{i})^{-\varepsilon_{i}}_{S}
\nonumber \\
&\times\psi\left(m_{1}\frac{c_{1}}{d_{1}}+
\sum_{j=1}^{r-1} m_{j+1}\left(\frac{u_{j}c_{j+1}}{d_{j+1}}+(-1)^{\varepsilon_{j+1}}\frac{d_{j}c_{2r-j}u_{2r-1-j}}{d_{j+1}d_{2r-j}}\right)\right) 
|d_{r}|^{2r-2}\prod_{j=1}^{r-1} \left|d^{j-1}_{j}d^{r+j-2}_{r+j}\right| 
\nonumber\\
&\times H\left(D_{2},\dots,D_{r};\frac{m_{2}d_{1}d_{2r-2}}{d_{2}d_{2r-1}},\frac{m_{3}d_{2}d_{2r-3}}{d_{3}d_{2r-2}},\dots,\frac{m_{r-1}d_{r-2}d_{r+1}}{d_{r-1}d_{r+2}},\frac{m_{r}d_{r-1}}{d_{r+1}}\right)\nonumber
\end{align}
where the outer sum is over $D_i$, $2\leq i\leq r$, and $d_i$, $1\leq i\leq 2r-1$, such that (modulo units)
$C_{i}=\Fd_{i}D_{i}$, where $D_{1}=1$ for convenience and 
\begin{equation}\label{eq:C}
\begin{array}{rl}
\Fd_{1}&=d_{1}d_{2}\cdots d_{r-1} d_{r}^{2}d_{r+1} \cdots d_{2r-1},\\
\Fd_j&=d_jd_{j+1}\cdots d_{r-1}d_r^2d_{r+1}\cdots d_{2r-j}d_{2r-j+1}^2\dots d_{2r-1}^2\qquad 2\leq j\leq r-1,\\
\Fd_{r}&=d_{r}d_{r+1}\cdots d_{2r-1},\\
\end{array}
\end{equation}
and such that the following divisibility conditions hold in $\mathfrak{o}_S$:
\begin{align}
d_{j+1}&\vert m_{j+1} d_{j}~\text{for}~1\leq j\leq r-1  \label{eq:div-d} \\
d_{j+1}d_{2r-j}&\vert m_{j+1} d_{j}d_{2r-j-1}~\text{for}~1\leq j\leq r-2,\,\,\ d_{r+1}\vert m_{r}d_{r-1}. \label{eq:div-d-2}
\end{align}
The integers $u_i$, $1\leq i\leq 2r-2$, are chosen so that $c_iu_i\equiv 1 \bmod d_i$ (and the
sum is independent of these choices due to the divisibility conditions).
\end{theorem}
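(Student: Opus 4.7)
The plan is to prove Theorem~\ref{inductivesum} by induction in stages, realizing the Borel Eisenstein series on $\widetilde{G}_{F_S}$ as an Eisenstein series induced from the standard maximal parabolic $P_1 = M_1 U_1$ of $SO_{2r+1}$ whose Levi is $M_1 \cong GL_1 \times SO_{2r-1}$. Decomposing $B(\OO_S)\backslash G(\OO_S) = B(\OO_S)\backslash P_1(\OO_S) \cdot P_1(\OO_S)\backslash G(\OO_S)$ and using the $B(\OO_S)$-invariance of $f$, one writes
\[
E_f(g, {\bf s}) = \sum_{\gamma\in P_1(\OO_S)\backslash G(\OO_S)} E^{(r-1)}_{f,\gamma}(\gamma g),
\]
where $E^{(r-1)}_{f,\gamma}$ is a Borel Eisenstein series on the embedded $\widetilde{SO}_{2r-1}$ factor, with inducing datum obtained by restricting $f$ along the $GL_1$ direction.

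Next, I unfold (\ref{Whit-coeff}). Factor $U = U_1\cdot U'$, where $U'$ is the unipotent radical of the Borel of the embedded $SO_{2r-1}$, and apply the Bruhat decomposition with respect to $P_1$. Because $\psi_{\bf m}$ is non-degenerate on the last coordinate of $U_1$, only the open cell contributes. This reduces $W_{\bf m}(f,{\bf s})$ to an integral over $U_1(F_S)$ of a genuine function, multiplied by the rank $(r-1)$ Whittaker coefficient $W^{(r-1)}_{{\bf m}'}$ of the inner Eisenstein series, with ${\bf m}'$ to be determined.

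Then I parametrize the open $P_1(\OO_S)$-cosets. Each coset representative $\gamma$ is described by coordinates $(c_i,d_i)$, $1\le i\le 2r-1$, with $(c_i,d_i)=1$, arising from a factorization of $\gamma$ as a product of $SL_2$-type elements attached to the positive roots. Performing Iwasawa decomposition on $\bis(\gamma)\bis(\omega_r)\bis(u)$ inside $\widetilde{G}_{F_S}$ produces the torus element with diagonal entries $C_i = \Fd_i D_i$ as in (\ref{eq:C}), where the $\Fd_i$ collect the $d_j$ contributions on the outer rank and the $D_i$ arise from the inner $(r-1)$-rank Iwasawa. Matching the twisted inducing character on $M_1$ produces the shifted parameter ${\bf m}' = (m_2 d_1 d_{2r-2}/d_2 d_{2r-1},\ldots, m_r d_{r-1}/d_{r+1})$, and the conditions (\ref{eq:div-d})--(\ref{eq:div-d-2}) are exactly those needed for ${\bf m}'\in\OO_S^{r-1}$, so that the inner coefficient is non-zero. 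The integral over $U_1(F_S)$ against $\psi_{\bf m}$, after the linear change of variable carrying the unipotent part of the Iwasawa decomposition, yields the exponential factor $\psi\bigl(m_1 c_1/d_1 + \sum_j m_{j+1}(u_j c_{j+1}/d_{j+1} + (-1)^{\varepsilon_{j+1}} d_j c_{2r-j} u_{2r-1-j}/(d_{j+1} d_{2r-j}))\bigr)$, and the Jacobian contributes the volume normalization $|d_r|^{2r-2}\prod_j |d_j^{j-1} d_{r+j}^{r+j-2}|$.

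Finally, I account for the metaplectic cocycle. Each $SL_2$-factor of $\gamma$ contributes a Kubota-type factor via Lemma~\ref{cover}: long roots give $\bigl(\tfrac{c}{d}\bigr)_{2n}\bigl(\tfrac{a}{b}\bigr)_{2n}$, which combine through reciprocity (\ref{reciprocity-law}) and the square identity (\ref{2n-squared}) into the residue symbols $\prod_k \bigl(\tfrac{c_k}{d_k}\bigr)_n \bigl(\tfrac{c_{2r-k}}{d_{2r-k}}\bigr)_n$; the short root contributes with the squaring exponent $\varepsilon_r = 2$. Applying the torus cocycle (\ref{torus-cocycle}) when reassembling the Iwasawa pieces yields the outer Hilbert-symbol product $\prod_{i,j} (d_{2r-i}/d_i, d_j)_S$, and commuting these factors past the torus element to isolate the inner Whittaker coefficient produces the second cocycle factor $\prod_{i=2}^r (\Fd_{i-1}/\Fd_i, D_i)^{-\varepsilon_i}_S$ using the block-compatibility of the Banks--Levy--Sepanski cocycle and (\ref{torus-cocycle}). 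The main obstacle is this bookkeeping of the 2-cocycle across the Bruhat and Iwasawa decompositions: reconciling the $2n$-fold cover structure inherited from $GL_{2r+1}$ with the effectively $n$-fold cover on $SO_{2r+1}$, and ensuring that short versus long roots receive the correct exponents $\varepsilon_i$, especially in interaction with the sign $(-1)^{\varepsilon_{j+1}}$ in the exponential. Once each root subgroup contribution is matched carefully using Lemma~\ref{cover}, (\ref{reciprocity-law}), (\ref{2n-squared}), and the torus cocycle, the claimed formula (\ref{thesum}) results.
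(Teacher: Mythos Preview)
Your outline is essentially the same strategy as the paper's: induction in stages through the maximal parabolic with Levi $GL_1\times SO_{2r-1}$, coset representatives indexed by $(c_i,d_i)_{1\le i\le 2r-1}$, Bruhat-type factorization of $\gamma\omega_r^{-1}$, and bookkeeping of the cocycle via Lemma~\ref{cover}, (\ref{reciprocity-law}), (\ref{2n-squared}), and (\ref{torus-cocycle}).

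The one place where the paper is sharper than your sketch is the construction of the coset representatives. You say the $(c_i,d_i)$ arise ``from a factorization of $\gamma$ as a product of $SL_2$-type elements attached to the positive roots,'' but $SO_{2r+1}$ has $r^2$ positive roots, not $2r-1$, and a naive root-by-root factorization would not give a clean parametrization of $P(\OO_S)\backslash G(\OO_S)$ on which $\kappa$ is computable. The paper's device (its Lemmas~2 and~3) is to write each representative as $i_1(g_1)\,i_2(g_2)\,i_3(g_3)$ with $g_1,g_3\in P_r(\OO_S)\backslash SL_r(\OO_S)$ and $g_2\in P_2(\OO_S)\backslash SL_2(\OO_S)$, where $i_1,i_3$ are two different $SL_r$-embeddings and $i_2$ is the symmetric-square $SL_2$ corresponding to the short root. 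This is what produces exactly $2r-1$ parameters $(d_1,\dots,d_{2r-1})$ (namely $r-1$ from $g_3$, one from $g_2$, and $r-1$ from $g_1$) and, crucially, lets one import the already-computed type~A Bruhat factorization and $\kappa$-values from \cite{BBF5} for the two $SL_r$ pieces, with the short-root piece handled separately. Once you make this construction explicit, the rest of your cocycle accounting goes through as you describe; also note that the relevant decomposition is Bruhat ($\gamma'=\FU^+\FD\,\FU^-$), not Iwasawa.
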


We turn to the proof of the Theorem.
To begin, let us rewrite the Eisenstein series as a maximal parabolic Eisenstein series.
(Representation-theoretically, this step is transitivity of induction.)  Let $P$ be the maximal 
parabolic subgroup of $SO_{2r+1}$ consisting of matrices of the form
$$\begin{pmatrix} *&*&*\\&*&*\\&&*\end{pmatrix}$$
where the middle block is $2r-1$ by $2r-1$.
Then
$$E_f(g,s)=\sum_{\gamma\in P(\mathfrak{o}_S)\backslash G(\mathfrak{o}_S)} 
\Theta_f(\gamma g)\qquad g\in \widetilde{G}_{F_S}$$
where
$$\Theta_f(g)=\sum_{\gamma\in B(\mathfrak{o}_S)\backslash P(\mathfrak{o}_S)} f(\gamma g).$$
Notice that we may identify $B(\mathfrak{o}_S)\backslash P(\mathfrak{o}_S)$ with
$B'(\mathfrak{o}_S)\backslash G'(\mathfrak{o}_S)$, where the primes denote the corresponding groups of 
rank one lower.  Moreover, we may index the cosets in
$P(\mathfrak{o}_S)\backslash G(\mathfrak{o}_S)$ by the bottom rows of coset representatives, modulo units.

The Eisenstein series is given as a sum over $\gamma\in P(\mathfrak{o}_S) \backslash G(\mathfrak{o}_S)$.
We must find coset representatives and compute their contributions.
While it is not difficult to find coset representatives, 
the difficulty in the computation is that $G(\mathfrak{o}_S)$ embeds in $\widetilde{G}_{F_S}$ 
via the map $\kappa$, whose computation is not easy. 
To exhibit coset representatives for this quotient and compute $\kappa$ on them, 
we shall make use of the calculation for type A
by Brubaker, Bump and Friedberg \cite{BBF5}, Section 5.  We show that it is possible to construct
coset representatives from products of three matrices:
two embedded $SL_r$'s and one an embedded $SL_2$ corresponding
to a short root.  We then combine the calculations of \cite{BBF5} with various Bruhat decompositions
to obtain an integral expression
for the Whittaker coefficient.  This may be evaluated, and leads to the inductive formula of the Theorem.

We begin by exhibiting the coset representatives.  (We work in $G(\mathfrak{o}_S)$ and
then move to the cover later.) To parametrize the cosets, we use the map
$G(\mathfrak{o}_S)\to \mathfrak{o}_S^\times\backslash \mathfrak{o}_S^{2r+1}$ which takes
a matrix $\gamma\in G(\mathfrak{o}_S)$ to its last row modulo multiplication of the
entire row by units.  This map factors through $P(\mathfrak{o}_S)\backslash G(\mathfrak{o}_S)$,
a coset is uniquely determined by its image, and its image is contained in the subset of 
isotropic vectors with respect to the quadratic form
corresponding to $J^{-1}$ whose entries have gcd 1, again modulo units.  

In fact, the last row map establishes a one-to-one correspondence to this subset.  Indeed,
consider the three embeddings $i_1:SL_r\to G$, $i_2:SL_2\to G$, $i_3:SL_r\to G$
given as follows.  The embedding $i_1$ is described via blocks:
$$i_1:\begin{pmatrix}g&v\\w&a \end{pmatrix} \hookrightarrow
\begin{pmatrix}a'&&&w'&\\&g&&&v\\&&1&&\\v'&&&g'&\\&w&&&a \end{pmatrix},
$$
where $g, g'\in M_{(r-1)\times(r-1)}$, $v, v', w^{t}, {w'}^{t}\in M_{(r-1)\times 1}$, and $a, a'\in M_{1\times 1}$;
the primed entries are uniquely determined so that the matrix is in $G$.   The embedding $i_2$ is the
composition of the symmetric square map $M$ above with an embedding of $SO_3$ into $SO_{2r+1}$:
$$i_2:\begin{pmatrix}a&b\\c&d\end{pmatrix}\hookrightarrow
\begin{pmatrix} a^2&&ab&&b^2\\&I_{r-1}&&&\\2ac&&ad+bc&&2bd\\&&&I_{r-1}&\\c^2&&cd&&d^2\end{pmatrix}.
$$
The embedding $i_3$ maps to the Levi subgroup of the Siegel parabolic of $G$:
$$i_3: h\hookrightarrow \begin{pmatrix}h'&&\\&1&\\&&h\end{pmatrix},$$
where once again the primed matrix $h'$ is uniquely determined by the requirement that
the image of $i_3$ be in $G$.

\begin{lemma}Let $\mathbf L = (L_1,\dots,L_{2r+1})\in\mathfrak{o}_S^{2r+1}$ be an isotropic
vector with respect to the quadratic form corresponding to $J^{-1}$,
and suppose that $\gcd(L_1,\dots,L_{2r+1})=1$.
Then there exist $g_1,g_3\in SL_r(\mathfrak{o}_S)$
and $g_2\in SL_2(\mathfrak{o}_S)$ such that $i_1(g_1)\,i_2(g_2)\,i_3(g_3)$ has bottom row $\mathbf L$.
\end{lemma}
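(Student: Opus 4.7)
The plan is to construct $g_1$, $g_2$, $g_3$ explicitly from the entries of $\mathbf{L}$, using the assumption that $\mathfrak{o}_S$ is a PID. First I would compute the bottom row of $i_1(g_1)\,i_2(g_2)\,i_3(g_3)$ as a function of the three factors. The bottom row of $i_1(g_1)$ is $(0, w_1, 0, \ldots, 0, a_1)$, where $(w_1, a_1)$ is the last row of $g_1$ (so $a_1$ lies at position $2r+1$ and $w_1$ at positions $2,\dots,r$). Right multiplication by $i_2(g_2)$, with entries $a,b,c,d$, affects only positions $1$, $r+1$, $2r+1$ via the symmetric square $M(g_2)$, giving the row $(c^2 a_1,\, w_1,\, cd\, a_1,\, 0, \ldots, 0,\, d^2 a_1)$. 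Right multiplication by $i_3(g_3) = \diag(h', 1, h)$, where $h = g_3$ and $h' = J_0 (h^t)^{-1} J_0$ with $J_0$ the $r \times r$ reverse identity (as forced by the $\SO_{2r+1}$ condition), then yields the triple of blocks $\bigl((c^2 a_1, w_1) h',\ cd\, a_1,\ (0,\ldots,0, d^2 a_1) h\bigr)$. Equating to $\mathbf{L}$ splits into three block equations.

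Set $D = \gcd(L_{r+2}, \ldots, L_{2r+1})$. The third block equation forces $d^2 a_1 = D$ up to a unit and the last row of $h$ to be $(L_{r+2}/D, \ldots, L_{2r+1}/D)$, a primitive vector completable in $\SL_r(\mathfrak{o}_S)$. The middle block equation forces $cd a_1 = L_{r+1}$. The isotropic hypothesis reads $\sum_{i=1}^r L_i L_{2r+2-i} = L_{r+1}^2$, which gives $D \mid L_{r+1}^2$. A prime-by-prime analysis over $\mathfrak{o}_S$, splitting on whether $v_p(L_{r+1}) \geq v_p(D)$ or $v_p(L_{r+1}) < v_p(D)$, produces $d, a_1, c \in \mathfrak{o}_S$ realizing these two relations with $\gcd(c, d) = 1$; Bezout then completes $(c, d)$ to an element $g_2 \in \SL_2(\mathfrak{o}_S)$.

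It remains to solve the first block equation $(c^2 a_1, w_1) = (L_1, \ldots, L_r)(h')^{-1}$. Using $(h')^{-1} = J_0 h^t J_0$, a direct computation identifies the first entry of $(L_1, \ldots, L_r)(h')^{-1}$ with $D^{-1} \sum_{k=1}^r L_{r+1-k} L_{r+1+k}$, which the isotropic relation collapses to $L_{r+1}^2/D = c^2 a_1$. Hence the first entry matches automatically, and $w_1$ is read off as the remaining entries. For $g_1 \in \SL_r(\mathfrak{o}_S)$ with last row $(w_1, a_1)$ to exist we verify primitivity: if some prime $p$ divided $a_1$ and every entry of $w_1$, then $p$ would divide every entry of $(c^2 a_1, w_1)$, hence (since $h' \in \SL_r(\mathfrak{o}_S)$ is unimodular) every entry of $(L_1, \ldots, L_r)$; but $p \mid a_1$ forces $p \mid D$, so $p$ also divides $L_{r+2}, \ldots, L_{2r+1}$, and then the isotropic relation gives $p \mid L_{r+1}$, contradicting $\gcd(\mathbf{L}) = 1$. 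The main obstacle is the arithmetic identification of $c^2 a_1$ with $L_{r+1}^2/D$: this is precisely where the isotropic hypothesis is essential, reducing the first block equation to a consistency check rather than an independent constraint. The degenerate case $D = 0$ (which by isotropy also forces $L_{r+1} = 0$) is handled separately by direct constructions that bypass the divisibility analysis.
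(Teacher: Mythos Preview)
Your argument is correct, but it takes a different route from the paper's.  The paper first uses $i_3(g_3)$ to reduce to the case $L_{r+2}=\cdots=L_{2r}=0$ (putting the last $r$ coordinates into the form $(0,\dots,0,L_{2r+1})$ by a unimodular change of variable), and then, with only three nonzero entries $L_1,L_{r+1},L_{2r+1}$ left in positions $1$, $r+1$, $2r+1$, writes down closed formulas
\[
c_2=\frac{L_1}{\gcd(L_1,L_{r+1})},\qquad d_2=\frac{L_{r+1}}{\gcd(L_1,L_{r+1})},\qquad a=\gcd(L_1,L_{2r+1}),\qquad w=(L_2,\dots,L_r).
\]
The isotropic relation, which in this reduced setting simplifies to $L_1L_{2r+1}=L_{r+1}^2$, is then invoked only to check that these choices are consistent.

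You instead keep the general $\mathbf L$, set $D=\gcd(L_{r+2},\dots,L_{2r+1})$, and solve $ad^2=D$, $acd=L_{r+1}$ by a valuation analysis (your two cases correspond exactly to the two gcd formulas above after the reduction).  The payoff of your approach is that it makes the role of isotropy more visible: it is what guarantees $D\mid L_{r+1}^2$ (so the valuation system is solvable) and, via the identity
\[
\text{first entry of }(L_1,\dots,L_r)(h')^{-1}=\frac{1}{D}\sum_{k=1}^r L_{r+1-k}L_{r+1+k}=\frac{L_{r+1}^2}{D}=ac^2,
\]
what collapses the first block to a consistency check rather than a new constraint.  The paper's approach is shorter and gives explicit gcd formulas immediately; yours is more self-contained and conceptually highlights why the isotropy hypothesis is exactly the obstruction that must vanish.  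Both are valid, and in fact after the paper's reduction step the two sets of formulas for $a,c,d$ agree (as one checks prime by prime using $L_1L_{2r+1}=L_{r+1}^2$).
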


\begin{proof}
Given a matrix $\gamma\in G(\mathfrak{o}_S)$, there is a $g_3\in SL_r(\mathfrak{o}_S)$ such
that $\gamma\, i_3(g_3)$ has bottom row $\mathbf L$ with $L_{r+2}=\dots=L_{2r}=0$.  So it is sufficient to show that
every $\mathbf L$ such that $L_{r+1}=\dots=L_{2r}=0$ is the bottom row of a matrix of the form $i_1(g_1)\,i_2(g_2)$.
But the bottom row of $i_1\left(\left(\begin{smallmatrix}g&v\\w&a\end{smallmatrix}\right)\right)
i_2\left(\left(\begin{smallmatrix}a_2&b_2\\c_2&d_2\end{smallmatrix}\right)\right)$ (with the blocks for $i_1$ as above) is
$$\begin{pmatrix}ac_2^2&w&ac_2d_2&0_{1\times(r-1)}&ad_2^2\end{pmatrix}.$$
Let us choose
$$\begin{array}{cccc}
w=(L_2,\dots,L_{r}),&c_2=\frac{L_1}{\gcd(L_1,L_{r+1})},&d_2=\frac{L_{r+1}}{\gcd(L_1,L_{r+1})},&
a=\gcd(L_1,L_{2r+1}).
\end{array}$$
Then it is easy to check that the product has the desired last row. (Here the greatest common divisors
are determined up to units which are adjusted to obtain the exact equality of bottom rows.) Let us
remark that since the vector $\mathbf L$ is isotropic, it satisfies the equation 
$L_1L_{2r+1}=L_{r+1}^2$, and this implies, for example, that $c_2$ is also given by
$c_2=\frac{L_{r+1}}{\gcd(L_{r+1},L_{2r+1})}.$
\end{proof}

Let $P_r$ denote the standard parabolic subgroup of $SL_r$ of type $(r-1,1)$.  Then we have

\begin{lemma}  \label{lm:g1g3}
A complete set of coset representatives for $P(\mathfrak{o}_S)\backslash G(\mathfrak{o}_S)$
is given by $i_1(g_1)\,i_2(g_2)\,i_3(g_3)$ where $g_1,g_3\in P_r(\mathfrak{o}_S)\backslash SL_r(\mathfrak{o}_S)$
and $g_2\in P_2(\mathfrak{o}_S)\backslash SL_2(\mathfrak{o}_S)$.
\end{lemma}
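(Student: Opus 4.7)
The plan is to leverage the bijection between $P(\mathfrak{o}_S)\backslash G(\mathfrak{o}_S)$ and primitive isotropic rows in $\mathfrak{o}_S^{2r+1}$ modulo units via the bottom-row map. The preceding lemma already shows that every such row arises as the bottom row of some product $i_1(g_1)i_2(g_2)i_3(g_3)$, so surjectivity of the map $(g_1,g_2,g_3)\mapsto P\cdot i_1(g_1)i_2(g_2)i_3(g_3)$ is immediate. The task is then to verify that this map factors through the product of parabolic quotients and is injective on the quotient.

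For well-definedness, I would first check by direct matrix calculation that $i_1(P_r), i_2(P_2), i_3(P_r)\subset P$. Indeed, an element of the relevant parabolic has last row of the form $(0,\ldots,0,d)$ with $d\in\mathfrak{o}_S^\times$; under $i_k$ this image has its bottom row concentrated in position $2r+1$, so it lies in $P$. Combining this with the explicit bottom-row formula
$$\text{bottom row of } i_1(g_1)i_2(g_2)i_3(g_3)=\bigl((ac_2^2,\,w)\,h',\; ac_2d_2,\; ad_2^2\cdot h_r\bigr),$$
where $(w,a)$ is the last row of $g_1$, $(c_2,d_2)$ the last row of $g_2$, $h_r$ the last row of $h=g_3$, and $h'$ is the block determined by $h$ via the isotropy condition, one sees that replacing $(g_1,g_2,g_3)$ by $(p_1g_1,p_2g_2,p_3g_3)$ with $p_k$ in the respective parabolic rescales this bottom row by an overall unit, as required for the map to factor through the product of parabolic quotients.

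For injectivity, I would recover the triple from the bottom row $\mathbf{L}$ in three steps. First, $g_3$ modulo $P_r$ is determined by $(L_{r+2},\ldots,L_{2r+1})$ modulo units, since this equals $ad_2^2\cdot h_r$. Second, the ratio $(c_2:d_2)$ is read off from $L_{r+1}=ac_2d_2$ together with $L_{2r+1}=ad_2^2(h_r)_r$, thereby recovering $g_2$ modulo $P_2$. Finally $(w,a)$, and hence $g_1$ modulo $P_r$, is determined by inverting the action of $h'$ on the first $r$ entries and by reading off the scalar from $L_{2r+1}$.

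The main obstacle will be handling the degenerate configurations, notably when $c_2=0$ or $d_2=0$, or when some of the first $r$ entries vanish, since in those cases the ratios above are not directly well-defined. In each such case, one observes that the parabolic quotient collapses the ambiguity: for example if $d_2=0$, then $L_{r+1}=L_{r+2}=\cdots=L_{2r+1}=0$, forcing $\mathbf{L}$ into a constrained form that uniquely determines a single coset $g_2\bmod P_2$. A direct matrix-level case analysis, mirroring the explicit construction in the preceding lemma, then completes the proof.
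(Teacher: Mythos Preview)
Your overall strategy matches the paper's: both arguments use the bijection between $P\backslash G$ and primitive isotropic bottom rows modulo units, invoke the preceding lemma for surjectivity, and then read the cosets $[g_k]$ back from $\mathbf L$.

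There is, however, a genuine gap in your well-definedness paragraph. The claim that replacing $(g_1,g_2,g_3)$ by $(p_1g_1,p_2g_2,p_3g_3)$ rescales the bottom row by a single unit is false. For example, left-multiplying $g_2$ by $p_2\in P_2$ sends $(c_2,d_2)\mapsto u(c_2,d_2)$; in your displayed bottom-row formula this multiplies the entries coming from $ac_2^2,\,ac_2d_2,\,ad_2^2$ by $u^2$ while leaving the block $w$ untouched, so the resulting row is not a unit multiple of the original unless $u^2=1$ or $w=0$. Thus the $P$-coset of $i_1(g_1)i_2(g_2)i_3(g_3)$ really depends on the chosen representatives, and the assignment does \emph{not} factor through the product of parabolic quotients. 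Relatedly, in your injectivity step the inversion of $h'$ presupposes knowing $g_3$ as an actual matrix, not merely its $P_r$-coset, so $[g_1]$ is not determined by $\mathbf L$ alone.

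None of this damages the lemma, which only asserts that for a \emph{fixed} choice of representatives $R_1,R_2,R_3$ the products hit every $P$-coset exactly once. Your argument already proves this once reframed. Injectivity: from the last $r$ entries of $\mathbf L$ one recovers $[g_3]$ and hence the unique $g_3\in R_3$; with $g_3$ now a definite matrix, $h'$ is known and your recovery of $[g_2]$ and then of $(ac_2^2,w)$ goes through, pinning down $g_2\in R_2$ and $g_1\in R_1$. Surjectivity: apply the construction of the previous lemma using this specific $g_3\in R_3$, then adjust $g_1$ as above to land in $R_1$ after replacing $g_2$ by its representative in $R_2$. So drop the well-definedness step and rephrase the recovery with $g_3$ fixed; the paper's two-sentence proof skates over the same subtlety.
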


Indeed, in the proof above, modulo units we see that the bottom rows of $\gamma_1$ and $\gamma_2$
are determined by $\mathbf L$.  Also right-multiplying $\gamma\in G(\mathfrak{o}_S)$
by $i_3(P_r(\mathfrak{o}_S))$ does not change its bottom row (modulo units).  Since the bottom row modulo units
parametrizes the cosets, we have constructed a full set of coset representatives.

Let $G_{\omega_{r}}$ denote the big Bruhat cell 
$G_{\omega_{r}} =B\omega_{r} B$ of $G$.  We will focus on $\gamma$ such that $P(\mathfrak{o}_S)\gamma \in 
P(\mathfrak{o}_S) \backslash (G(\mathfrak{o}_S)\cap G_{\omega_{r}}(F_S))$
since, in the standard way, the other cells do not contribute to the coefficients
attached to $\psi_{\mathbf m}$ as it is a generic additive character of the unipotent subgroup $U$ of $B$.
For each such coset, we will work with $\gamma'=\gamma\omega_r^{-1}$ and factor this.

We now proceed to use the type A computation of \cite{BBF5}, Section 5 (especially (25) there), and the Bruhat decomposition. 
The type A computation gives, by writing representatives as products of embedded $SL_2$'s
and using the Bruhat decomposition on each factor, factorizations for 
$i_1(g_1)$ and $i_3(g_3)$.  We may similarly carry out a Bruhat decomposition of $i_2(g_2)$.  
Each of these factorizations is indexed by parameters $d_i$, one for each simple root used
(these are the $d_i$ which appear in Theorem~\ref{inductivesum}).  We will denote the parameters
for $g_1$ by $d_{r+1},\dots,d_{2r-1}$, the parameter for $g_2$ by $d_r$
(this is simply the lower right entry of $g_2$), and the parameters for
$g_3$ by $d_1,\dots,d_{r-1}$.
Suppose $\gamma'=i_1(g_1)\,i_2(g_2)\,i_3(g_3)$. Then the above ingredients give a factorization
$$
\gamma' =\FU^{+}_{1}\FD_{1}\FU^{-}_{1}\FU^{+}_{2}\FD_{2}\FU^{-}_{2}\FU^{+}_{3}\FD_{3}\FU^{-}_{3}.
$$
Here the $\FU^{+}$ are upper triangular unipotent, the $\FU^{-}$ are lower triangular unipotent, and the
$\FD$ are diagonal.  We record
the matrices.  

First, we have the diagonal matrices $\FD_i$, $i=1,2,3$, as well as their product $\FD$ which we will require later.

\begin{align*}
\FD_{1}&=\diag\left((\prod^{r-1}_{i=1}d_{r+i})^{-1},d^{-1}_{2r-1},d^{-1}_{2r-2},\dots,d^{-1}_{r+1},1,d_{r+1},\dots, d_{2r-1},\prod^{r-1}_{i=1}d_{r+i}\right);\\
\FD_{2}&=\diag\left(d^{-2}_{r},I_{2r-1},d^{2}_{r}\right);\\
\FD_{3}&=\diag\left((\prod^{r-1}_{i=1}d_{i})^{-1},d_{1},d_{2},\dots,d_{r-1},1,d^{-1}_{r-1},\dots,d^{-1}_{1},\prod^{r-1}_{i=1}d_{i}\right);\\
\FD&=\FD_{1}\FD_{2}\FD_{3}\\&=
\diag\left((d_{r}\prod^{2r-1}_{i=1}d_{i})^{-1}, \frac{d_{1}}{d_{2r-1}},\frac{d_{2}}{d_{2r-2}},\dots,\frac{d_{r-1}}{d_{r+1}},1,\frac{d_{r+1}}{d_{r-1}},\dots,\frac{d_{2r-1}}{d_{1}},d_{r}\prod^{2r-1}_{i=1}d_{i}\right).
\end{align*}

Second, we record the lower triangular unipotents:

$$\begin{array}{ccc}
&\FU^{-}_{1}=\begin{pmatrix} 1&&&&\\&V_{1}&&&\\&&1&&\\w'&&&V'_{1}&\\&w&&&1 \end{pmatrix},
&
\FU^{-}_{2}=\begin{pmatrix}1&&&&\\ &I_{r-1}&&&\\\frac{2c_{r}}{d_{r}}&&1&&\\&&&I_{r-1}&\\ \frac{c^{2}_{r}}{d^{2}_{r}}&&\frac{c_{r}}{d_{r}}&&1 \end{pmatrix}, \cr
\end{array}$$
and
$$
\FU^{-}_{3}=\begin{pmatrix}V'_{3}&&\\&1&\\&&V_{3} \end{pmatrix}
~\text{with}~
V_{3}=\begin{pmatrix}1&&&&&&\\
\frac{u_{r-2}c_{r-1}}{d_{r-1}}&1&&&&&\\
*&\frac{u_{r-3}c_{r-2}}{d_{r-2}}&1&&&&\\
\vdots&\vdots&\vdots&\ddots&&&\\
*&*&*&\dots&1&&\\
*&*&*&\dots&\frac{u_{1}c_{2}}{d_{2}}&1&\\
*&*&*&\dots&*&\frac{c_{1}}{d_{1}}&1 \end{pmatrix}.
$$
In these matrices, the $c_i$ and $d_i$ are coprime and $c_iu_i\equiv 1\bmod d_i$.  The formula for $V_1$ is similar
to that for $V_3$ but with all indices shifted up by $r$.  The matrices $V'_i$ and $w'$ are determined so that the
unipotents are in $G$.

Third, we have the following upper triangular unipotents:

$$
\FU^{+}_{2}=\begin{pmatrix}1&&-\frac{u_{r}}{d_{r}}&&\frac{u^{2}_{r}}{d^{2}_{r}}\\&I_{r-1}&&&\\&&1&&-\frac{2u_{r}}{d_{r}}\\&&&I_{r-1}&\\&&&&1 \end{pmatrix}
$$
$$
\FU^{+}_{3}=\begin{pmatrix}U'_{3}&&\\&1&\\&&U_{3} \end{pmatrix}
~~\text{where}~~
U_{3}=\begin{pmatrix} 1&&&&-\frac{u_{r-1}}{d_{r-1}}\\&1&&&-\frac{u_{r-2}}{d_{r-1}d_{r-2}}\\&&\ddots&&\vdots\\&&&1&-\frac{u_{1}}{\prod^{r-1}_{i=1}d_{i}}\\&&&&1\end{pmatrix}.
$$
We will not need $\FU^+_1$ (it is in the unipotent radical of $P$).

After multiplying and rearranging, one finds that
$
\gamma'=\FU^{+}\FD\FU^{-},
$
where $\FU^{+}$ is in the unipotent radical of $P$ and
$$\psi_{\mathbf m}(\omega_{r}^{-1} \FU^- \omega_{r})=
\psi\left(m_{1}\frac{c_{1}}{d_{1}}+
\sum_{j=1}^{r-1} m_{j+1}\left(\frac{u_{j}c_{j+1}}{d_{j+1}}+(-1)^{\varepsilon_{j+1}}\frac{d_{j}c_{2r-j}u_{2r-1-j}}{d_{j+1}d_{2r-j}}\right)\right).
$$

We now proceed to compute the Whittaker coefficients (compare with \cite{BBF5}, Section 5).
The above factorization lifts from $G$ to $\widetilde{G}_{F_S}$ but at the expense of some power residue
and Hilbert symbols.
Indeed, computations similar to \cite{BBF5}, pg.\ 1098 and using Lemma~\ref{cover} show that in $\widetilde{G}_{F_S}$
\begin{align*}
&(\gamma_{1},\kappa(\gamma_1)))=\prod^{2r-1}_{k=r+1} \ppair{\frac{c_{k}}{d_{k}}}_{n}\,\,
\prod^{2r-1}_{i=r+2}\prod^{i}_{j=r+1}(d_{i},d_{j})^{-1}_{S}\bis(\FU^{+}_{1})\bis(\FD_{1})\bis(\FU^{-}_{1});\\
&(\gamma_2,\kappa(\gamma_2))=\left(\frac{d_r}{c_r}\right)^2_n(d_{r},c_{r})_S^{2}\,\bis(\FU^{+}_{2})\bis(\FD_{2})\bis(\FU^{-}_{2});\\
&(\gamma_{3},\kappa(\gamma_3))=\prod^{r-1}_{k=1}\left(\frac{d_k}{c_k}\right)_n(d_{k},c_{k})_{S}\,\bis(\FU^{+}_{3})\bis(\FD_{3})\bis(\FU^{-}_{3}).
\end{align*}
Indeed, the factor $\left(\tfrac{a}{b}\right)_{2n}$ of Lemma~\ref{cover} is multiplied by the Hilbert symbol $(b,d)_S$ in carrying
out this decomposition.  But we have 
$$\left(\frac{a}{b}\right)_{2n}(b,d)_S=\left(\frac{d}{b}\right)_{2n}^{-1}(b,d)_S=\left(\frac{b}{d}\right)_{2n}^{-1}
=\left(\frac{c}{d}\right)_{2n},$$
where these equalities are obtained by applying basic properties of the power-residue symbol along with the reciprocity law
(\ref{reciprocity-law}).  Thus we obtain two factors of $\left(\frac{c_k}{d_k}\right)_{2n}$ for each $k$ which give
the factors of $\left(\frac{c_k}{d_k}\right)_{n}$.
Also a computation using (\ref{torus-cocycle}) yields
$$
\prod^{2r-1}_{i=r+2}\prod^{i}_{j=r+1}(d_{i},d_{j})^{-1}_{S}\prod^{3}_{i=1}\bis(\FU^{+}_{i})\bis(\FD_{i})\bis(\FU^{-}_{i})=\prod^{r-1}_{i=1}\prod^{2r-1}_{j=2r-i}(d_{2r-i}/d_{i},d_{j})_{S}\, \bis(\FU^{+})\bis(\FD)\bis(\FU^{-}).
$$

Let $U_P$ denote the unipotent radical of the parabolic subgroup $P$.  This is the group of unipotent matrices with all non-diagonal entries outside of the first row and column equal to zero.  Then we have a decomposition $U=U_PU_P'$
where $U_P'$ is the complementary subgroup. Now it is easy to check that $U_P(\mathfrak{o}_S)$
acts properly on the right on the set of cosets 
in $P(\mathfrak{o}_S)\backslash (G(\mathfrak{o}_S)\cap G_{\omega_{r}}(F_S))$.
We use this to
replace the integration in (\ref{Whit-coeff}) with a double integration over $U_P(F_S)$ and $U_P'(\mathfrak{o}_S)
\backslash U_P'(F_S)$:
\begin{multline*}
\sum_{\gamma \in 
P(\mathfrak{o}_S) \backslash (G(\mathfrak{o}_S)\cap G_{\omega_{r}}(F_S))}\int_{U(\mathfrak{o}_S)\backslash U(F_S)}
=\\
\sum_{\gamma \in 
P(\mathfrak{o}_S) \backslash (G(\mathfrak{o}_S)\cap G_{\omega_{r}}(F_S))/ U_P(\mathfrak{o}_S)}
\int_{U_P(F_S)}\int_{U_P'(\mathfrak{o}_S)\backslash U_P'(F_S)}.
\end{multline*}

To carry this out with the cosets parametrized above, note that if $\gamma=\gamma'\omega_r$ where $\gamma'$
has bottom row $\mathbf{L}$, then modding out by $U_P(\mathfrak{o}_S)$ on the right corresponds to
taking all $L_k$ modulo $L_{2r+1}$ (which is nonzero since $\gamma$ is in the big cell).  With the
parametrization above, $L_{2r+1}=d_r\prod_{j=1}^{2r-1}d_j$, and this corresponds to taking
$c_k$ modulo $\tilde{d}_k$ and prime to $d_k$, where 
$\tilde{d}_k=\prod^{k}_{j=1}d_{j}$ for $k\leq r$ and $\tilde{d}_k=d_{r}d^{-1}_{2r-k}\prod^{k}_{j=1}d_{j}$
for $r<k\leq 2r-1$.  

Putting all this together, we have
\begin{align}
W_{\mathbf m}&(f,\mathbf s) =\sum_{d_{k}} 
\prod^{r-1}_{i=1}\prod^{2r-1}_{j=2r-i}(d_{2r-i}d_{i}^{-1},d_{j})_{S}\nonumber \\
&\sum_{\substack{c_{k}\bmod \tilde{d}_{k}\\ \gcd(c_k,d_k)=1}}\ppair{\frac{c_{k}}{d_{k}}}_n^{\varepsilon_k} 
\psi\left(m_{1}\frac{c_{1}}{d_{1}}+
\sum_{j=1}^{r-1} m_{j+1}\left(\frac{u_{j}c_{j+1}}{d_{j+1}}+(-1)^{\varepsilon_{j+1}}\frac{d_{j}c_{2r-j}u_{2r-1-j}}{d_{j+1}d_{2r-j}}\right)\right)
\nonumber \\
&\int_{U_P(F_S)}\int_{U_P'(\mathfrak{o}_S)\backslash U_P'(F_S)}
\Theta\left(\bis(\FD)\,\bis(\omega_{r})\,\bis(u_P')\,\bis(u_P)\right)\,\psi\left(\sum^{r}_{i=1} m_{i}u_{i,i+1}\right)\ud u_P'\ud u_P. \label{eq:W1}
\end{align}
Here the $u_k$ satisfy $c_ku_k\equiv 1 \bmod {d}_k$.  

Now the function $\Theta$ is invariant under the group of lower-triangular unipotent 
matrices in $P(\mathfrak{o}_S)$.  Putting such matrices
into the integral in \eqref{eq:W1}, moving them rightwards and changing variables, one sees that the integral vanishes unless the divisibility
condition \eqref{eq:div-d-2} holds.  Moreover, replacing  $u_j$ by $u_{j}+t_jd_{j}$, $1\leq j\leq r-1$, 
and summing over $t_j$ modulo $d_j$, it follows that the expression is zero unless
the divisibility condition \eqref{eq:div-d} holds as well.

Since $\kappa(\gamma)$ depends only on $c_{k}$ modulo $d_{k}$ for $1\leq k\leq 2r-1$, if we sum over $c_{k}\bmod d_{k}$ and  then multiply the result by $|\prod^{k-1}_{j=1}d_{j}|$ when $k\leq r$ and by $|d_{r}d^{-1}_{2r-k}\prod^{k-1}_{j=1}d_{j}|$ when $r<k$, this has the same result as summing over $L_{i}\mod L_{2r-1}$. In doing so
we obtain the factor $|d_{r}|^{2r-2}\prod^{r-1}_{k=1}|d_{k}|^{2r-2-k}\, |d_{r+k}|^{r-k-1}.$

Next we make use of the factorization 
$$\omega_r=\begin{pmatrix}1&&\\&\omega_{r-1}&\\&&1
\end{pmatrix}\begin{pmatrix} &&1\\&-I_{2r-1}&\\1&& \end{pmatrix}.$$
To do so, for $g'\in \widetilde{\SO}_{2r-1}(F_{S})$, denote
$$
f'_{r-1}(g')=\int_{U_P(F_S)}f\left(i(g'))\FD' \bis\begin{pmatrix} &&1\\&-I_{2r-1}&\\1&& \end{pmatrix} \bis(u_P)\right)\psi(m_{1}{(u_P)}_{1,2})\ud u_P,
$$
where $i\colon \widetilde{\SO}_{2r-1}\to \widetilde{\SO}_{2r+1}$ is the embedding in the Levi subgroup of $P$ and
\begin{align*}
\FD'&=\begin{pmatrix}1&&\\&\omega_{r-1}&\\&&1\end{pmatrix}^{-1}\FD\begin{pmatrix}1&&\\&\omega_{r-1}&\\&&1\end{pmatrix}
\\
&=\diag\left((d_{r}\!\prod^{2r-1}_{i=1} d_{i})^{-1}, \frac{d_{2r-1}}{d_{1}},\frac{d_{2r-2}}{d_{2}},\dots, \frac{d_{r+1}}{d_{r-1}},1,\frac{d_{r-1}}{d_{r+1}},\dots, \frac{d_{2}}{d_{2r-2}},\frac{d_{1}}{d_{2r-1}}, d_{r}\!\prod^{2r-1}_{i=1} d_{i}\right).
\end{align*}
(The function $f'_{r-1}$ of course depends on $\FD'$ but we suppress this dependence for notational convenience.)
The function $f'_{r-1}$ is in $I(\mathbf{s}')$ where $\mathbf{s}'=(s_2,\dots,s_r)$.

Then a straightforward calculation shows that 
\begin{multline}\label{mess1}
\int_{U_P(F_S)}\!\!\Theta\left(\bis(\FD) \bis(\omega_{r})\bis(u'_P)\bis(u_P)\right)\,
\psi(m_{1}{(u_P)}_{1,2})\,du_P \\
=\sigma(i(\omega_{r-1}),\FD')^{-1} \!E_{f'_{r-1}}\left({\bis(\omega_{r-1})\bis(u')},\mathbf{s}'\right)
\end{multline}
where $u'$ is the unipotent element in ${SO}_{2r-1}$ such that $i(u')=\FD'u\FD'^{-1}$.
We may drop the $\bis(\omega_{r-1})$ on the right hand side of (\ref{mess1}) since $E_{f'_{r-1}}$ is automorphic.

The double integral in (\ref{eq:W1}) may then be simplified by changing $u_P'$ to $\FD'u_P'\FD'^{-1}$.
This variable change has no effect on the measure of compact quotient
$U_P'(\mathfrak{o}_S)\backslash U_P'(F_S)$. However, it changes the locations that support the character (corresponding to the simple roots), sending
\begin{multline*}
\cpair{u_{2,3},u_{3,4},\dots,u_{r-1,r},u_{r,r+1}}
\mapsto\\ \cpair{\frac{d_{2r-1}d_{2}}{d_{1}d_{2r-2}}u_{2,3},\frac{d_{2r-2}d_{3}}{d_{2}d_{2r-3}}u_{3,4},\dots,\frac{d_{r+2}d_{r-1}}{d_{r-2}d_{r+1}}u_{r-1,r},\frac{d_{r+1}}{d_{r-1}}u_{r,r+1}}.
\end{multline*}

These steps give the following evaluation of the double integral in \eqref{eq:W1}:
\begin{align}
&\sigma(i(\omega_{r-1}),\FD')^{-1} \int_{U'(\mathfrak{o}_S)\backslash U'(F_S)}E_{f'_{r-1}}(\bis(u),\mathbf{s}')
\nonumber\\
&\psi\ppair{\frac{m_{2}d_{1}d_{2r-2}}{d_{2r-1}d_{2}}x_{2,3}+\frac{m_{3}d_{2}d_{2r-3}}{d_{2r-2}d_{3}}x_{3,4}+\cdots+\frac{m_{r-1}d_{r-2}d_{r+1}}{d_{r+2}d_{r-1}}x_{r-1,r}+\frac{m_{r}d_{r-1}}{d_{r+1}}x_{r,r+1}}\,du.\label{eq:cocycle-d}
\end{align}
Here $U'$ is the unipotent radical of $SO_{2r-1}$; for convenience we write $u\in U'$ as $u=(x_{i,j})$ with $2\leq i,j\leq 2r$.

Now we may use the induction hypothesis and write the integral in (\ref{eq:cocycle-d}) as
\begin{align*}
&\sum_{0\neq D_{i}\in \Fo_{S}/\Fo^{\times}_{S}}H(D_{2},\dots,D_{r};\frac{m_{2}d_{1}d_{2r-2}}{d_{2r-1}d_{2}},\dots,\frac{m_{r}d_{r-1}}{d_{r+1}})\prod^{r}_{i=2}|D_{i}|^{-2s_{i}} \Lam_{{\bf m}',D}(f'_{r-1}),
\end{align*}
where 
$$D=\diag(D^{-1}_{2},D^{-1}_{3}D_{2},\dots,D^{-1}_{r-1}D_{r-2},D^{-2}_{r}D_{r-1},1,D^{2}_{r}D^{-1}_{r-1},\dots,D_{2})
$$ 
and ${\bf m}'=(\frac{m_{2}d_{1}d_{2r-2}}{d_{2r-1}d_{2}},\dots,\frac{m_{r}d_{r-1}}{d_{r+1}})$. 

Let $C_{i}=\Fd_{i}D_{i}$, where the $\Fd_{i}$ are defined in \eqref{eq:C} and $D_{1}=1$.
Then substituting in the definitions and comparing, one sees that 
\begin{align}
\FJ(D)\Lam_{{\bf m'},D}(f'_{r-1})&
=\sigma(i(\omega_{r-1}),\FD')\prod^{r}_{i=2}(\Fd_{i-1}/\Fd_{i},D_{i})^{-\varepsilon_i}_{S}
\nonumber\\
&\left(\left\vert \frac{d_{1}}{d_{2r-1}}\right\vert^{2r-3}\left\vert\frac{d_{2}}{d_{2r-2}}\right\vert^{2r-5}\cdots
\left\vert\frac{d_{r-2}}{d_{r+2}}\right\vert^{3}\left\vert\frac{d_{r-1}}{d_{r+1}}\right\vert\right)^{-1}
\FJ(C)\Lam_{{\bf m},C}(f).\label{eq:cocycle-D}
\end{align}
Combining Equations (\ref{eq:W1}), (\ref{mess1}), (\ref{eq:cocycle-d}) and (\ref{eq:cocycle-D}), Theorem~\ref{inductivesum} follows.
\qed

\smallskip

The functionals $\Lambda_{\mathbf m, \mathbf C}(f)$ may be analyzed as in \cite{BBF5},
Section 6.  Since the arguments there apply with only minimal changes, we do not carry this out here.

\section{Twisted Multiplicativity}\label{sect5}

In this Section we establish the twisted multiplicativity properties of the coefficients $H$.  This allows us to recover
all coefficients from those indexed by parameters which are all powers of a single prime.
See also \cite{BBF1}.  Recall $\varepsilon_i=\|\alpha_i\|^2$.

\begin{theorem}\label{thm:twist-m}
If vectors ${\bf m}$, ${\bf m'}$ and ${\bf C}\in (\Fo_{S}-\{0\})^{r}$ satisfy $\gcd(m'_{1}\cdots m'_{r}, C_{1}\cdots C_{r})=1$, then
\begin{equation}\label{eq:twist-m}
H({\bf C}; m_{1}m'_{1},\dots,m_{r}m'_{r})=\prod^{r}_{i=1}\ppair{\frac{m'_{i}}{C_{i}}}_n^{-\varepsilon_i}H({\bf C}; {\bf m}).
\end{equation}
\end{theorem}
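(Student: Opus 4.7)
The plan is induction on the rank $r$, starting from the explicit inductive formula \eqref{thesum} of Theorem~\ref{inductivesum}. The base case $r=1$ follows from classical twisted multiplicativity of Gauss sums: $H(C_1;m_1)$ reduces (up to normalization) to $\sum_{(c,C_1)=1}\left(\frac{c}{C_1}\right)_n^{2}\psi(m_1c/C_1)$, and the change of variable $c\mapsto c\overline{m_1'}$, with $\overline{m_1'}$ an inverse of $m_1'$ modulo $C_1$, yields the identity, using $\varepsilon_1=2$.

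For the inductive step, substitute $m_i\mapsto m_im_i'$ throughout the right-hand side of \eqref{thesum}. The coprimality hypothesis $\gcd(\prod_i m_i',\prod_i C_i)=1$ forces $\gcd(m_i',d_j)=1$ for every pair $(i,j)$ occurring in the sum, since each $d_j$ divides some $\Fd_k$ and hence some $C_k$. Consequently, the divisibility conditions \eqref{eq:div-d}, \eqref{eq:div-d-2} and the Hilbert-symbol factors $(d_{2r-i}/d_i,d_j)_S$ and $(\Fd_{i-1}/\Fd_i,D_i)_S^{-\varepsilon_i}$ (which do not involve the $m_i$'s) are preserved. Applying the inductive hypothesis to the inner rank-$(r-1)$ coefficient, whose slot-$i$ argument acquires an extra factor $m_i'$, extracts the factor $\prod_{i=2}^r\left(\frac{m_i'}{D_i}\right)_n^{-\varepsilon_i}$; the $\varepsilon_i$ values match between the two ranks under the natural index shift (with $\varepsilon_r=2$ at either rank, corresponding to the long root of the dual Sp-group).

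It remains to analyze the Kloosterman-like sum $\sum_{(c_k,d_k)=1}\prod_{k=1}^r\left(\frac{c_k}{d_k}\right)_n\left(\frac{c_{2r-k}}{d_{2r-k}}\right)_n\psi(\cdots)$ that survives after substitution. The character couples the $c_k$'s into a path $c_1-c_2-\cdots-c_{2r-1}$ with labels placed symmetrically about $c_r$: the vertex $c_1$ carries $m_1$, the edge $(c_k,c_{k+1})$ with $k<r$ carries $m_{k+1}$, and the edge $(c_k,c_{k+1})$ with $k\geq r$ carries $m_{2r-k}$. Introduce the change of variables $c_k\mapsto c_ka_k$, $u_k\mapsto u_k\overline{a_k}$, where $a_k$ is determined recursively by $a_1\equiv\overline{m_1'}\pmod{d_1}$ and $a_{k+1}\equiv a_k\overline{m_\ell'}\pmod{d_{k+1}}$, with $\ell$ the label of $(c_k,c_{k+1})$. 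Using \eqref{eq:div-d}-\eqref{eq:div-d-2}, this restores the character modulo $\mathfrak{o}_S$ and introduces the factor $\prod_{k\neq r}\left(\frac{a_k}{d_k}\right)_n\cdot\left(\frac{a_r}{d_r}\right)_n^{2}$. Unwinding the recursion gives $a_k\equiv\overline{m_1'}\cdots\overline{m_k'}\pmod{d_k}$ for $k\leq r$ and $a_{r+j}\equiv\overline{m_1'}\cdots\overline{m_{r-j}'}\,\overline{m_{r-j+1}'}^{\,2}\cdots\overline{m_r'}^{\,2}\pmod{d_{r+j}}$ for $j\geq 1$; tracking the exponent of each $\overline{m_i'}$, with the doubled multiplicity at $d_r$ and the squared $\overline{m_r'}^{\,2}$ factors propagating through the right half of the path, assembles into $\prod_{i=1}^r\left(\frac{m_i'}{\Fd_i}\right)_n^{-\varepsilon_i}$. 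Combined with the inductive contribution via $\left(\frac{m_i'}{C_i}\right)_n=\left(\frac{m_i'}{\Fd_i}\right)_n\left(\frac{m_i'}{D_i}\right)_n$ (using $D_1=1$), this yields \eqref{eq:twist-m}.

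The principal obstacle is the change-of-variables analysis. Verifying the character preservation modulo $\mathfrak{o}_S$ for the second family of character terms $m_{j+1}(-1)^{\varepsilon_{j+1}}d_jc_{2r-j}u_{2r-1-j}/(d_{j+1}d_{2r-j})$ requires the full force of \eqref{eq:div-d-2}, and selecting consistent lifts of the $a_k$'s so that the recursion is simultaneously compatible with all character constraints along the path is subtle where the two halves meet at $c_r$. Matching the squared exponents in the $a_k$'s for $k>r$ to the squared multiplicities of $d_k$ in the $\Fd_i$'s of \eqref{eq:C} is the decisive bookkeeping and directly reflects the cover-doubling recorded in Lemma~\ref{cover}.
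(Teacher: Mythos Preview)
Your argument is correct and follows essentially the same route as the paper: induction on $r$ via \eqref{thesum}, a multiplicative change of variables in the $c_k$-sum to strip off the $m_i'$, and then the inductive hypothesis on the rank-$(r-1)$ factor, combined through $C_i=\Fd_iD_i$. Your recursive description of the $a_k$ unwinds to precisely the explicit substitution $c_i\mapsto\bigl(\prod_{j\le i}m_j'\bigr)^{-1}c_i$ for $i\le r$ and $c_i\mapsto\bigl(\prod_{j\le r}m_j'\prod_{j\ge 2r+1-i}m_j'\bigr)^{-1}c_i$ for $i>r$ that the paper uses; once you take these products as actual elements of $\Fo_S$ (so that $A_{k+1}=A_k\,m_\ell'$ holds on the nose) the ``consistent lift'' obstacle you flag disappears, and in fact only \eqref{eq:div-d}, not \eqref{eq:div-d-2}, is needed to see that $\psi$ is restored after the substitution.
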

\begin{proof}
The proof is by induction on $r$. For $r=1$, $H(C_{1};m_{1}m'_{1})=g_{2}(m_{1}m'_{1},C_{1})$, where $g_2$ is the Gauss
sum defined in Sect.~\ref{sect6} below.  Equation~\eqref{eq:twist-m} follows by the usual properties of Gauss sums.

For general $r$, since all $d_i$ divide $C_1$, we have $\gcd(d_i,m'_j)=1$ for all $i,j$. Thus, the divisibility conditions~\eqref{eq:div-d}
for $(m_1m_1',\dots,m_r m'_r)$, 
namely $d_{j+1}\vert m_{j+1}m'_{j+1} d_{j}$ for $1\leq j\leq r-1$, hold if and only if
$d_{j+1}\vert m_{j+1} d_{j}~\text{for}~1\leq j\leq r-1$.  Similarly, the divisibility conditions~\eqref{eq:div-d-2} for $(m_1m_1',\dots,m_r m'_r)$
hold if and only if $d_{j+1}d_{2r-j}\vert m_{j+1} d_{j}d_{2r-j-1}$ for $1\leq j\leq r-2$ and $d_{r+1}\vert m_{r} d_{r-1}$.
In the inner sum in (\ref{thesum}), make the variable change $c_{i}\rightarrow  (\prod_{j=1}^{i}m'_{i})^{-1}c_{i}$ if $1\leq i\leq r$, and $c_{i}\rightarrow  (m'_{r}\prod_{j=1}^{i-1}m'_{i})^{-1}c_{i}$ for $r< i\leq 2r-1$.
This variable change removes all $m'_{i}$'s from the sum and contributes the factor 
\begin{equation}\label{eq:factor1}
\prod^{r-1}_{i=1}\ppair{\frac{\prod^{i}_{j=1}m'_{j}}{d_{i}}}_n^{-1}
\ppair{\frac{\prod^{r}_{i=1}m'_{i}}{d_{r}}}_n^{-2}
\prod^{r}_{i=2}\ppair{\frac{\prod^{r}_{\ell=1}m'_{\ell}\prod^{r}_{j=i}m'_{j}}{d_{2r+1-i}}}_n^{-1}.
\end{equation}
In addition, by induction (note $\gcd(m'_{2}\cdots m'_{r},D_{2}\cdots D_{r})=1$), the $H$ on the right of (\ref{thesum}) contributes a factor of 
\begin{equation}\label{eq:factor2}
\prod^{r}_{i=2}\ppair{\frac{m'_{i}}{D_{i}}}_n^{-\varepsilon_i}.
\end{equation}
Multiplying \eqref{eq:factor1} and \eqref{eq:factor2} and making use of \eqref{eq:C}, one obtains Theorem~\ref{thm:twist-m}.
\end{proof}

Let $\apair{\cdot,\cdot}$ be the standard Euclidean inner product, normalized so that the short roots have length $1$.

\begin{theorem}\label{twmu2}
If vectors ${\bf C}$ and ${\bf C'}$ in $(\Fo_{S}-\{0\})^{r}$ satisfy $\gcd(C_{1}\cdots C_{r}, C'_{1}\dots C'_{r})=1$, then
\begin{equation}\label{eq:twist-c}
H(C_{1}C'_{1},\dots,C_{r}C'_{r};{\bf m})=\mu({\bf C},{\bf C'})H({\bf C}; {\bf m}) H({\bf C'}; {\bf m}),
\end{equation}
where $\mu({\bf C},{\bf C'})$ is an $n$-th root of unity depending ${\bf C}$, ${\bf C'}$, given by
$$
\mu({\bf C},{\bf C'})=\prod^{r}_{i=1}\ppair{\frac{C_{i}}{C'_{i}}}_n^{\varepsilon_i}\ppair{\frac{C'_{i}}{C_{i}}}_n^{\varepsilon_i}\,\prod_{j<i}\ppair{\frac{C_{i}}{C'_{j}}}_n^{2\apair{\alpha_{i},\alpha_{j}}}
\ppair{\frac{C'_{i}}{C_{j}}}_n^{2\apair{\alpha_{i},\alpha_{j}}}.
$$
\end{theorem}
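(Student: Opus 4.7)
The plan is to argue by induction on $r$, paralleling the proof of Theorem~\ref{thm:twist-m} but now splitting the $\mathbf C$-indices via the Chinese Remainder Theorem rather than making a simple variable change in the $c_i$. The base case $r=1$ is the twisted multiplicativity of the Gauss sum $g_2(m_1, C_1 C_1') = \left(\frac{C_1}{C_1'}\right)_n \left(\frac{C_1'}{C_1}\right)_n g_2(m_1, C_1) g_2(m_1, C_1')$, a standard consequence of the $2n$-th power reciprocity law (\ref{reciprocity-law}) combined with (\ref{2n-squared}); this accounts for the $i=1$ factor in $\mu(\mathbf C, \mathbf C')$.

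For the inductive step, apply the formula (\ref{thesum}) to the left hand side with $C_i$ replaced by $C_i C_i'$. Since $\gcd(\prod C_i, \prod C_i')=1$, in each summand we may uniquely write $d_j = \delta_j \delta_j'$ (modulo units) with $\delta_j \mid \prod C_i$ and $\delta_j' \mid \prod C_i'$, and correspondingly $\mathfrak{d}_i = \mathfrak{e}_i \mathfrak{e}_i'$, $D_i = E_i E_i'$ with $E_i \mid C_i$, $E_i' \mid C_i'$, $C_i = \mathfrak{e}_i E_i$ and $C_i' = \mathfrak{e}_i' E_i'$. The divisibility conditions (\ref{eq:div-d}) and (\ref{eq:div-d-2}) separate into divisibilities for the $\delta_j$ and for the $\delta_j'$. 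By CRT, $c_k \bmod d_k$ with $(c_k, d_k)=1$ corresponds to a pair $(\gamma_k, \gamma_k') \bmod (\delta_k, \delta_k')$, coprime to $\delta_k$ and $\delta_k'$ respectively, via $c_k \equiv \gamma_k \delta_k' \overline{\delta_k'} + \gamma_k' \delta_k \overline{\delta_k} \pmod{d_k}$, where the overlines denote inverses in the opposite modulus.

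With this decomposition the sum factors as a double sum, and each ingredient splits. The character $\psi(m_1 c_1/d_1 + \cdots)$ becomes a product of the corresponding character for $(\delta, \gamma)$ and the one for $(\delta', \gamma')$. The norms $|d_r|^{2r-2} \prod |d_j^{j-1} d_{r+j}^{r+j-2}|$ split multiplicatively. The Hilbert-symbol factors $\prod (d_{2r-i}/d_i, d_j)_S$ and $\prod (\mathfrak{d}_{i-1}/\mathfrak{d}_i, D_i)^{-\varepsilon_i}_S$ split into the analogous factors for $\delta, E$ and $\delta', E'$ plus cross terms of the form $(\delta, \delta')_S$ and $(\delta, E')_S$. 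The Kubota symbols $\left(\frac{c_k}{d_k}\right)_n$ split via CRT and the reciprocity law:
\begin{equation*}
\left(\frac{c_k}{d_k}\right)_n = \left(\frac{\gamma_k}{\delta_k}\right)_n \left(\frac{\gamma_k'}{\delta_k'}\right)_n \left(\frac{\delta_k'}{\delta_k}\right)_n \left(\frac{\delta_k}{\delta_k'}\right)_n \cdot (\text{Hilbert correction}),
\end{equation*}
where the Hilbert correction merges with the cross terms above. Finally, the inner $H$ on the right hand side of (\ref{thesum}) is $H(E_2 E_2', \ldots; \mathbf m'')$ with the shifted mass vector $\mathbf m''$; the coprimality of $\prod E_i$ and $\prod E_i'$ follows from that of $\prod C_i$ and $\prod C_i'$, so by induction it equals $\mu(\mathbf E, \mathbf E')\, H(\mathbf E; \mathbf m'') H(\mathbf E'; \mathbf m'')$.

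Assembling everything, the decomposed sum becomes the product of the two sums that compute $H(\mathbf C; \mathbf m)$ and $H(\mathbf C'; \mathbf m)$, multiplied by a scalar that is the product of the cross terms generated above together with the inner $\mu(\mathbf E, \mathbf E')$. The expected main obstacle is a clean bookkeeping argument showing that these cross terms collapse to the $\mu(\mathbf C, \mathbf C')$ of the statement. The Hilbert symbols of the form $(\delta_i, \delta_j')_S$, $(\delta_i, E_j')_S$ and $(E_i, E_j')_S$ reassemble, using bimultiplicativity and the identities $\mathfrak{e}_i E_i = C_i$, $\mathfrak{e}_i' E_i' = C_i'$, into $(C_i, C_j')_S$-type factors; combined with the $\left(\frac{\delta}{\delta'}\right)_n \left(\frac{\delta'}{\delta}\right)_n$ and $\left(\frac{E_i}{E_i'}\right)_n\left(\frac{E_i'}{E_i}\right)_n$ factors, and using the reciprocity law once more to convert Hilbert symbols into symmetric pairs of residue symbols, one obtains exactly the product of $\left(\frac{C_i}{C_i'}\right)_n^{\varepsilon_i}\left(\frac{C_i'}{C_i}\right)_n^{\varepsilon_i}$ and the cross terms $\left(\frac{C_i}{C_j'}\right)_n^{2\langle \alpha_i, \alpha_j\rangle}\left(\frac{C_i'}{C_j}\right)_n^{2\langle \alpha_i, \alpha_j\rangle}$ for $j<i$, where the exponents $\varepsilon_i$ and $2\langle\alpha_i,\alpha_j\rangle$ track the combinatorial multiplicity with which each $d_j$ appears in $\mathfrak{d}_i$ through (\ref{eq:C}). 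This pattern-matching between the root datum of type B (encoded in $\|\alpha_i\|$ and the inner products $\langle\alpha_i,\alpha_j\rangle$) and the exponents of $d_j$ in $\mathfrak{d}_i$ is the only subtle part; the rest is symbol bookkeeping parallel to \cite{BBF5}, Section~4.
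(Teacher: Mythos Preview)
Your overall strategy matches the paper's proof: induction on $r$, CRT-split $d_i=\delta_i\delta_i'$, factor the exponential sum and the Hilbert/residue symbols, apply induction to the inner $H$, then reconcile the cross terms with $\mu(\mathbf C,\mathbf C')$. One substantive step is missing, however, and without it the argument does not close.

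After you apply the inductive hypothesis to $H(E_2E_2',\dots,E_rE_r';\mathbf m'')$, the shifted mass vector $\mathbf m''$ still has components of the form $m_{j+1}\,d_j d_{2r-j-1}/(d_{j+1}d_{2r-j})$ with $d_i=\delta_i\delta_i'$, so each of the two factors $H(\mathbf E;\mathbf m'')$ and $H(\mathbf E';\mathbf m'')$ continues to depend on \emph{both} the primed and unprimed $\delta$-variables. Consequently the decomposed double sum does \emph{not} yet factor as (sum over unprimed data) $\times$ (sum over primed data). What the paper does at this point---and what you need to insert---is to invoke Theorem~\ref{thm:twist-m} (twisted multiplicativity in $\mathbf m$): since $\gcd(\delta_i',E_j)=1$, one can strip the $\delta'$-contributions out of $H(\mathbf E;\mathbf m'')$ at the cost of a product of residue symbols $\bigl(\tfrac{\delta'_{\cdot}}{E_i}\bigr)_n^{-\varepsilon_i}$, and symmetrically for $H(\mathbf E';\mathbf m'')$. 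Only after this extraction do the two sums genuinely separate into the inductive expressions for $H(\mathbf C;\mathbf m)$ and $H(\mathbf C';\mathbf m)$; the extra residue symbols produced here are precisely the pieces needed (together with your Hilbert cross terms and the $\bigl(\tfrac{\delta_k}{\delta_k'}\bigr)_n\bigl(\tfrac{\delta_k'}{\delta_k}\bigr)_n$ factors) to assemble $\mu(\mathbf C,\mathbf C')$. The paper organizes this bookkeeping via the shorthand $h(A,B)=\bigl(\tfrac{A}{B}\bigr)_n\bigl(\tfrac{A'}{B'}\bigr)_n$, which makes the final matching (your last paragraph) tractable.
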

\begin{proof}
Since $\gcd(C_{1}\cdots C_{r}, C'_{1}\dots C'_{r})=1$, there is a one-to-one correspondence between the $d_{i}$ satisfying 
\begin{equation*}
\begin{array}{ll}
d_{r}\prod^{2r-1}_{i=1}d_{i}=C'_{1}C_{1};&\\
(d_{r}\prod^{2r-1}_{j=i}d_{j}\prod^{2r-1}_{j=2r+1-i}d_{j})\vert C_{i}C_i'&\text{for $1< i<r$};\\
(\prod^{2r-1}_{j=r}d_{j})\vert C_{r}C_r';&
\end{array}
\end{equation*}
and the divisibility conditions~\eqref{eq:div-d},  \eqref{eq:div-d-2}, and the integer pairs $e_{i}$, $e'_{i}$ such that
$d_{i}=e_{i}e'_{i}$ for all $i$, satisfying
\begin{equation}
\begin{array}{l}
e_{r}\prod^{2r-1}_{i=1}e_{i}=C_{1}\text{ and }
e'_{r}\prod^{2r-1}_{i=1}e'_{i}=C'_{1};\\
(e_{r}\prod^{2r-1}_{j=i}e_{j}\prod^{2r-1}_{j=2r+1-i}e_{j})\vert C_{i} \text{ for } 1<i<r \text{ and }
(\prod^{2r-1}_{j=r}e_{j})\vert C_{r};\\
(e'_{r}\prod^{2r-1}_{j=i}e'_{j}\prod^{2r-1}_{j=2r+1-i}e'_{j})\vert C'_{i} \text{ for } 1<i<r\text{ and }
(\prod^{2r-1}_{j=r}e'_{j})\vert C'_{r};\\
e_{j+1}\vert m_{j+1} e_{j}~\text{for}~1\leq j\leq r-1;\\
e'_{j+1}\vert m_{j+1} e'_{j}~\text{for}~1\leq j\leq r-1;\\
e_{j+1}e_{2r-j}\vert m_{j+1} e_{j}e_{2r-j-1} \text{ for } 1\leq j\leq r-2 \text{ and } e_{r+1}\vert m_{r}e_{r-1};\\
e'_{j+1}e'_{2r-j}\vert m_{j+1} e'_{j}e'_{2r-j-1} \text{ for } 1\leq j\leq r-2 \text{ and }e'_{r+1}\vert m_{r}e'_{r-1}.\\
\end{array}
\end{equation} 
Thus, we can split the sum over the $d_{i}$ into sums over $e_{i}$ and $e'_{i}$.

Let
\begin{equation}
c_i=\begin{cases}
x'_{i}\prod^{i}_{j=1}e_{j}+x_{i}\prod^{i}_{j=1} e'_{j}  &\text{ for } 1\leq i\leq r,\\
x'_{i}e_{r}e^{-1}_{2r-i}\prod^{i}_{j=1}e_{j}+x_{i}e'_{r}e'^{-1}_{2r-i}\prod^{i}_{j=1}e'_{j} &\text{ for } r+1\leq i\leq 2r-1.
\end{cases}
 \end{equation}
Since $\prod^{i-1}_{j=1}e_{j}$ is a unit modulo $e'_{i}$ and $\prod^{i-1}_{j=1}e'_{j}$ is a unit modulo $e_{i}$, as $x'_{i}$ varies modulo $e'_{i}$ and $x_{i}$ varies modulo $e_{i}$, $c_{i}$ for $1\leq i\leq r$ varies modulo $d_{i}$. This is also true for $r<i\leq 2r-1$. With this parametrization, the $c_{i}$ that are invertible modulo $e_{i}$ are those with $\gcd(x_{i},e_{i})=\gcd(x'_{i},e'_{i})=1$, and for such $c_{i}$, $u_{i}$ is determined by the equations 
$$
\begin{array}{l}
x_{i}u_{i}\prod^{i}_{j=1}e'_{j}\equiv 1\bmod e_{i}
\text{ and } x'_{i}u_{i}\prod^{i}_{j=1}e_{j}\equiv 1\bmod e'_{i} \text{ for } 1\leq i\leq r;\\
x_{i}u_{i}e'_{r}e'^{-1}_{2r-i}\prod^{i}_{j=1}e'_{j}\equiv 1\bmod e_{i}\text{ and }
x'_{i}u_{i}e_{r}e^{-1}_{2r-i}\prod^{i}_{j=1}e_{j}\equiv 1\bmod e'_{i}\text{ for } r< i< 2r.
\end{array}
$$
Let $v_{i}$ modulo $e_{i}$ (resp. $v'_{i}$ modulo $e'_{i}$) satisfy $v_{i}x_{i}\equiv 1\bmod e_{i}$ (resp. $v'_{i}x'_{i}\equiv 1\bmod e'_{i}$).
Let $u_0=v_0=v_0'=1$.  Substituting in and simplifying, one sees that
\begin{equation*}
\begin{array}{lll}
\psi\left(\frac{m_{i}u_{i-1}c_{i}}{d_{i}}\right)&=\psi\left(\frac{m_{i}v'_{i-1}x'_{i}}{e'_{i}}\right)
\psi\left(\frac{m_{i}v_{i-1}x_{i}}{e_{i}}\right)&\text{for $1\leq i\leq r$}\\
\psi\ppair{\frac{m_{i}d_{i-1}c_{2r+1-i}u_{2r-i}}{d_{i}d_{2r+1-i}}}&=
\psi\ppair{\frac{m_{i}e'_{i-1}x'_{2r+1-i}v'_{2r-i}}{e'_{i}e'_{2r+1-i}}}\psi\ppair{\frac{m_{i}e_{i-1}x_{2r+1-i}v_{2r-i}}{e_{i}e_{2r+1-i}}}
&\text{for $2\leq i\leq r$.}
\end{array}
\end{equation*}
Therefore, the exponential sum in Theorem~\ref{inductivesum} factors into two sums with similar divisibility conditions and similar exponentials. 

To carry out the proof by induction, we must analyze the power residues symbols.
Below, we will work with pairs of numbers of the form $A$, $A'$ and $B$, $B'$ such that $\gcd(A,A')=\gcd(B,B')=1$.
For convenience, let $h(A,B)=\ppair{\frac{A}{B}}_n\ppair{\frac{A'}{B'}}_n$. Then
$h(A_{1}A_{2},B)=h(A_{1},B)h(A_{2},B),$ $h(A^{-1},B)=h(A,B)^{-1}$ and, by reciprocity, 
$(A,B')_{S}(A',B)_{S}h(A,B)=h(B,A)$.  
With this notation, we have 
\begin{equation}
\ppair{\frac{c_{i}}{d_{i}}}_n=\ppair{\frac{x'_{i}}{e'_{i}}}_n\ppair{\frac{x_{i}}{e_{i}}}_n\cdot
\begin{cases}
h(e_{1}\cdots e_{i},e_{i})& \text{ if }1\leq i\leq r,\\
h(e_{r}e^{-1}_{2r-i}\prod^{i}_{j=1}e_{j},e_{i})& \text{ if } r<i\leq 2r-1.
\end{cases}\label{eq:twist-c/d}
\end{equation}

The factorization $d_i=e_ie_i'$ leads to factorizations of the $D_i$ in Equation~\eqref{eq:C} for $2\leq i\leq r$.
Rather than introducing a new letter, we denote the factors $D_i$, $D_i'$; we have
$\gcd(D_{2}\cdots D_{r},D'_{2}\cdots D'_{r})=1$. 
Then by induction we have
\begin{align}
&H(D_{2}D'_{2},\dots,D_{r}D'_{r};\frac{m_{2}d_{1}d_{2r-2}}{d_{2}d_{2r-1}},\dots,\frac{m_{r-1}d_{r-2}d_{r+1}}{d_{r-1}d_{r+2}},\frac{m_{r}d_{r-1}}{d_{r+1}})
\nonumber\\
=&\prod^{r}_{i=2}h(D_{i},D_{i})^{\varepsilon_i}\prod_{i<j}h(D_{j},D_{i})^{2\apair{\alpha_{i},\alpha_{j}}}H(D_{2},\dots,D_{r},\frac{m_{2}d_{1}d_{2r-2}}{d_{2}d_{2r-1}},\dots,\frac{m_{r}d_{r-1}}{d_{r+1}})
\nonumber\\
&\times H(D'_{2},\dots,D'_{r},\frac{m_{2}d_{1}d_{2r-2}}{d_{2}d_{2r-1}},\dots,\frac{m_{r}d_{r-1}}{d_{r+1}}). \label{eq:twist-D}
\end{align}

Let $m_{i}=n_{i}n'_{i}$ such that $n_{i}e_{i-1}e_{2r-i}/e_{i}e_{2r-i+1}$ and 
$n'_{i}e'_{i-1}e'_{2r-i}/e'_{i}e'_{2r-i+1}$ are integral and $\gcd(n_{i},C'_{1})=\gcd(n'_{i},C_{1})=1$ for all $i$. 
Then $\gcd(n_{i},D'_{j})=\gcd(n'_{i},d_{j})=1$ for all $i$, $j$. By Theorem~\ref{thm:twist-m},
\begin{align*}
&H(D_{2},\dots,D_{r},\frac{m_{2}d_{1}d_{2r-2}}{d_{2}d_{2r-1}},\dots,\frac{m_{r}d_{r-1}}{d_{r+1}})\\
=&\prod^{r}_{i=2}\ppair{\frac{n'_{i}e'_{i-1}e'_{2r-i}/(e'_{i}e'_{2r-i+1})}{D_{i}}}_n^{-\varepsilon_i}
H(D_{2},\dots,D_{r},\frac{n_{2}e_{1}e_{2r-2}}{e_{2}e_{2r-1}},\dots,\frac{n_{r}e_{r-1}}{e_{r+1}}).
\end{align*}
Note that $\frac{n'_{i}e'_{i-1}e'_{2r-i}}{e'_{i}e_{2r-i+1}}=n'_{r}e'_{r-1}/e_{r+1}$ when $i=r$.
Moreover, for $2\leq i\leq r$,
$$
\ppair{\frac{n'_{i}e'_{i-1}e'_{2r-i}/e'_{i}e'_{2r-i+1}}{D_{i}}}_n^{-1}=\ppair{\frac{n'_{i}}{D_{i}}}_n^{-1}
\ppair{\frac{e'_{i-1}/e'_{2r-i+1}}{D_{i}}}_n^{-1}
\ppair{\frac{e'_{i}/e'_{2r-i}}{D_{i}}}_n.
$$
Since $\gcd(n'_{i},D_{i})=1$ for all $i$, we may then put the $n_{i}$ back into the coefficients $H$
by Theorem~\ref{thm:twist-m}.  Doing so, and using a similar argument with the last factor in~\eqref{eq:twist-D}, we obtain
\begin{multline}
H(D_{2}D'_{2},\dots,D_{r}D'_{r};\frac{m_{2}d_{1}d_{2r-2}}{d_{2}d_{2r-1}},\dots,\frac{m_{r-1}d_{r-2}d_{r+1}}{d_{r-1}d_{r+2}},\frac{m_{r}d_{r-1}}{d_{r+1}})\\
=\ppair{\prod^{r}_{i=2}h(D_{i},D_{i})
h(e_{i-1}e^{-1}_{2r-i+1},D_{i})^{-1}h(e_{i}e^{-1}_{2r-i},D_{i})}^{\varepsilon_i}
\prod^{r-1}_{i=1}h(D_{i+1},D_{i})^{-\varepsilon_{i+1}}\\
H(D_{2},\dots,D_{r},\frac{m_{2}e_{1}e_{2r-2}}{e_{2}e_{2r-1}},\dots,\frac{m_{r}e_{r-1}}{e_{r+1}})
 H(D'_{2},\dots,D'_{r},\frac{m_{2}e'_{1}e'_{2r-2}}{e'_{2}e'_{2r-1}},\dots,\frac{m_{r}e'_{r-1}}{e'_{r+1}}). \label{eq:twist-DD'}
\end{multline}

By contrast,
$$
\mu({\bf C},{\bf C'})=\prod^{r}_{i=1}h(C_{i},C_{i})^{\varepsilon_i}\prod^{r-1}_{i=1}h(C_{i+1},C_{i})^{-\varepsilon_{i+1}}.
$$
Substituting in the expressions from (\ref{eq:C}) and using the properties of the function $h(A,B)$ noted above, one finds that
\begin{align}
\mu({\bf C},{\bf C'})&=
\prod^{r}_{i=2}h(D_{i},D_{i})^{\varepsilon_i}h(e_{i}e^{-1}_{2r-i},D_{i})\prod^{r-1}_{i=1}h(D_{i+1},D_{i})^{-\varepsilon_{i+1}}
\nonumber\\
&\prod^{r}_{i=2}(\Fd_{i-1}\Fd^{-1}_{i},D'_{i})_S^{-\varepsilon_i}(\Fd'_{i-1}{\Fd'_{i}}^{-1},D_{i})_S^{-\varepsilon_i}
h(\Fd_{i-1}\Fd^{-\varepsilon_i}_{i},D_i^{\varepsilon_i})^{-1}
\nonumber\\
&\prod^{r}_{i=1}\prod^{i}_{j=1}h(e_{j},e_{i})^{\varepsilon_i}\prod^{2r-1}_{i=r+1}\prod^{i}_{j=1}h(e_{r}e^{-1}_{2r-i}e_{j},e_{i})
\nonumber\\
&\prod^{r-1}_{i=1}\prod^{2r-1}_{j=2r-i}(e_{2r-i}e^{-1}_{i},e'_{j})_{S}\prod^{r-1}_{i=1}\prod^{2r-1}_{j=2r-i}(e'_{2r-i}{e'_{i}}^{-1},e_{j})_{S}. \label{eq:twist-d}
\end{align}

Finally, we collect the residue symbols that arise in~\eqref{eq:twist-c/d} and~\eqref{eq:twist-DD'}, and Hilbert symbols that arise in~\eqref{eq:cocycle-D} and~\eqref{eq:cocycle-d}. These match~\eqref{eq:twist-d}, and the theorem is proved.
\end{proof}

The proof above is similar to the proof for type A in \cite{BBF5}, Theorem 3.  However, the formulae for 
twisted multiplicativity in this Section involve the root system of the Langlands dual group 
$^LG$, a phenomenon predicted in \cite{BBF1}.  
Theorem~\ref{twmu2} agrees with  \cite{BeBrF1} after noting that in 
formula (20) of that work the order of
the simple roots $\alpha_i$ is the reverse of ours.

\section{Combinatorial Descriptions of the Coefficients at Powers of $p$}\label{sect6}

By twisted multiplicativity, the coefficients $H({\bf C};{\bf m})$ for all $\bf C$ and $\bf m$ (with $m_i\neq0$ for all $i$)
are determined from the coefficients in which $C_i$ and $m_i$ are powers of a prime.  Let $p$ be a fixed prime.  
In the rest of this paper, we will study
these $p$-power coefficients, and we shall show that they may be evaluated using crystal graphs
of type C.  Such a graph is attached
to a given dominant weight $\eta$ for the Langlands dual group $^LG$
and describes a representation of the quantized universal enveloping algebra with
$\eta$ as highest weight.  
See for example \cite{BBF4}, Chapter 2, for a brief summary and references. 
For the $\bf m$-th Whittaker coefficient, $\eta$ will be related to the $\ord_p(m_i)$.

Littelmann~\cite{Litt}
described the set of path lengths attached to such a crystal graph as one traverses
from each vertex to the lowest weight vector,
following a prescribed order (coming from a certain factorization of the long
element of the Weyl group into simple reflections).   Assembling these path lengths into a vector,
one obtains the lattice points of a certain polytope.  
Each lattice point is called a Berenstein-Zelevinsky-Littelmann pattern (or BZL-pattern for short).

A type C BZL-pattern may be rewritten as a triangular array of non-negative integers $c_{i,j}$, with $1\leq i\leq r$
and $i\leq j\leq 2r-i$, satisfying certain inequalities.  See \cite{Litt}, Theorem 6.1 and Corollary 6.1 or
 \cite{BeBrF1}, Section 2.2.
Following Littelmann, write $\overline{c}_{i,j}=c_{i,2r-j}$ for $i\leq j\leq r$.
In particular, the top row of this array is given by
$$\left(\begin{matrix}
c_{1,1}&c_{1,2}&\dots&c_{1,r}&\overline{c}_{1,r-1}&\dots&\overline{c}_{1,1}
\end{matrix}\right).$$

In this Section we present two combinatorial descriptions related to crystal graphs.  First, in Subsection~\ref{sect61}
 we shall rewrite the inductive formula of Theorem~\ref{inductivesum} in the $p$-power case as a sum over
such top rows.  Second, in Subsection~\ref{sect62} we shall recall the conjectural formula of Beineke,
Brubaker and Frechette \cite{BeBrF1}, which constructs certain coefficients $H_{BZL}$ directly from
the crystal graph.  In Section~\ref{sec:p} we shall show that these coefficients in fact match the coefficients
$H$ of the Eisenstein series, as conjectured.

\subsection{A Combinatorial Version of the Inductive Formula}\label{sect61} 

Let ${\bf m}$ be an $r$-tuple of non-negative integers (its components will be the logarithm base $p$ of the components
of $\bf m$
in the prior sections, but for convenience we keep the same letter), 
and $\mu$ be the $r$-tuple of positive integers given by $\mu_{j}=m_{r+1-j}+1$.
Define $CQ_{1}(\mu)$ to be the set of $(2r-1)$-tuples $(d_{1},\dots,d_{2r-1})$ of non-negative integers satisfying the inequalities
\begin{equation}\label{ineq:CQ-d}
\begin{cases}
d_{j}\leq \mu_{r+1-j} & 1\leq j\leq r,\\
d_{j+1}+d_{2r-j}\leq \mu_{r-j}+d_{j}& 1\leq j\leq r-1.
\end{cases}
\end{equation}
Let $\Ft=(d_{1},d_{2},\dots,d_{2r-1})$ be a $(2r-1)$-tuple of non-negative integers. A {\it weight vector} $k(\Ft)$ of $\Ft$ is
a vector of the form
$$
k(\Ft)=(k_{1}(\Ft),k_{2}(\Ft),\dots,k_{r}(\Ft))
$$
with
\begin{equation}\label{eq:k}
k_{r}(\Ft)=\sum^{r}_{j=1} d_{2r-j} \text{ and }
k_{i}(\Ft)=\sum^{2r-1}_{j=i}d_{j}+d_{r}+\sum^{i-1}_{j=1}d_{2r-j}, \text{ for } 1\leq i<r.
\end{equation}
(In Sect.~\ref{sec:p} and \ref{sec:even} we will also apply similar formulae to other vectors of odd length $2r'-1$ for various $r'$, replacing
$r$ by $r'$.)
When the choice of $\Ft$ is clear, we write $k_{i}$ in place of $k_{i}(\Ft)$.
If $\Ft\in CQ_{1}(\mu)$, the weight $k(\Ft)$ is called {\it strict} if 
\begin{equation}\label{strict}
k_{i+1}-k_{i+2}< \mu_{r-i}+k_{i}-k_{i+1}\text{ for $1\leq i\leq r-2$, and } 0<\mu_{1}+k_{r-1}-2k_{r}.
\end{equation}
These inequalities are equivalent to 
\begin{equation}\label{strictness-ineqs}
d_{i+1}<\mu_{r-i}+d_{i}-d_{2r-i}+d_{2r-i-1} \text{ for $1\leq i\leq r-2$, and } 0<\mu_{1}+d_{r-1}-d_{r+1}.
\end{equation}

Analogous to the top row of the $BZL$-patterns,  define
\begin{equation}\label{eq:Fc}
\bar{\Fc}_{1,j}=\sum^{j}_{i=1}d_{2r-i},~\Fc_{1,r}=\sum^{r}_{i=1}d_{2r-i}+d_{r}, \text{ and } \Fc_{1,j}=\Fc_{1,r}+\sum^{r-1}_{i=j}d_{i}, \text{ for } 1\leq j<r.
\end{equation}
Then we have
\begin{equation}\label{eq:delta-circle}
\Fc_{1,j}-\Fc_{1,j+1}=d_{j}~\text{for $1\leq j\leq 2r-1$ and $j\neq r$,
~and~}\Fc_{1,r}-\bar{\Fc}_{1,r-1}=2d_{r}.
\end{equation}
For convenience in the discussion below, we will write $\Fc_{1,j}=\bar{\Fc}_{1,2r-j}$ for $1\leq j\leq 2r-1$.

We need some facts about Gauss sums.  Let $m,c\in \mathfrak{o}_S$, $c\neq0$, and $t\in\Z$.  Let
$$g_{t}(m,c)=\sum_{\substack{d\bmod c\\ \gcd(c,d)=1}}\ppair{\frac{d}{c}}^{t}_{n}\psi\ppair{\frac{md}{c}}.$$
If $t=1$ we write simply $g(m,c)$.  We recall the following properties of these Gauss sums at prime powers.   Let $q=|p|$.
\begin{itemize}
\item Given a fixed prime $p$, for any integer $t\not\equiv 0\pmod n$ one has
$$
g_{t}(p^{k},p^{l})=\begin{cases}
q^{k}g_{tl}(1,p) &\text{ if $l=k+1$,}\\
\phi(p^{l})&\text{ if $l\leq k$ and $n\mid lt$}\\
0&\text{ otherwise,}
\end{cases}
$$
where $\phi(p^{l})$ denotes Euler's totient function for $\Fo_{S}/p^{l}\Fo_{S}$.
\item For any integer $t\not\equiv0\pmod n$, $g_{t}(1,p)g_{n-t}(1,p)=q$.
\end{itemize}

The Whittaker coefficients will be described below using {\it decorated} BZL-patterns.  For type A,
see \cite{BBF5}, pg.\ 1087; for type C see \cite{BeBrF1}, pg.\ 50. The decorations correspond
to certain conditions on the root strings with regard to the Kashiwara (or root) operators;
see \cite{BBF5}, Sect.~2 and \cite{BBF4}, Chs.~2 and 3.  
We rewrite the inductive formula using a similar idea.
 Given $\Ft\in CQ_{1}(\mu)$, define an array $\Delta(\Ft)=(\Fc_{1,1},\Fc_{1,2},\dots,\bar{\Fc}_{1,1})$.
We decorate $\Delta(\Ft)$ as follows:
\begin{enumerate}
\item  The entry $\Fc_{1,j+1}$ is circled if $\Fc_{1,j}=\Fc_{1,j+1}$ or $\Fc_{1,j}=0$. 
\item The entry $\Fc_{1,j}$ is boxed if equality holds in the upper-bound inequality~\eqref{ineq:CQ-d}.
\end{enumerate}
See also Section~\ref{sec:p} below.

To each entry $c=\Fc_{1,j}$ in a decorated array $\Delta(\Ft)$, we associate the quantity
\begin{equation}\label{dec:Delta}
\gamma_{\Delta}(c)=\begin{cases}
q^{c} & \text{ if the entry $c$ is circled but not boxed,}\\
g(p^{c-1},p^{c}) &  \text{ if the entry $c$ is boxed but not circled,}\\                    
q^c(1-q^{-1}) &  \text{ if the entry $c$ is neither circled nor boxed and $n\mid c$,}\\
0&  \text{ otherwise.}
\end{cases}
\end{equation}
Note that in particular $\gamma_\Delta(c)=0$ if the entry $c$ is both circled and boxed.
Compare \cite{BBF5}, Eqn.~(7).
Define
$G_{\Delta}(\Ft)=q^{-\sum^{r}_{i=1}d_{2r-i}}\prod^{2r-1}_{j=1}\gamma_{\Delta}(\Fc_{1,j}).$

We continue to use multi-index notation.  In particular,
if ${\bf k}=(k_1,\dots,k_r)$ and ${\bf m}=(m_1,\dots,m_r)$ are $r$-tuples of non-negative integers, then
we write $H(p^{\bf k};p^{\bf m})$ for $H(p^{k_{1}},\dots,p^{k_{r}};p^{m_{1}},\dots,p^{m_{r}})$.

A straightforward calculation using Theorem~\ref{inductivesum} establishes
the following Corollary.
\begin{corollary}\label{thm:H} Let $\bf k$ and $\bf m$ be $r$-tuples of non-negative integers.  Then
\begin{equation}\label{ind-equation-Delta}
H(p^{\bf k};p^{\bf m})=\sum_{\substack{{\bf k}', {\bf k}''\\ {\bf k}'+(0,{\bf k}'')={\bf k}}}\,\,
\sum_{\substack{\Ft\in CQ_{1}(\mu)\\ k(\Ft)={\bf k}'}}G_{\Delta}(\Ft)
H(p^{{\bf k}''};p^{{\bf m}'})
\end{equation}
where the outer sum is over tuples ${\bf k'}=(k_1',\dots,k_r')$ and ${\bf k''}=(k''_1,\dots,k_{r-1}'')$ of non-negative integers 
such that ${\bf k}'+(0,{\bf k}'')={\bf k}$ and where 
$${\bf m}'=(m_{2}+k'_{1}+k'_{3}-2k'_{2},\dots,m_{r-1}+k'_{r-2}+2k'_{r}-2k'_{r-1},m_{r}+k'_{r-1}-2k'_{r}).$$
\end{corollary}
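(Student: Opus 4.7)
The plan is to specialize Theorem~\ref{inductivesum} to $p$-power indices and repackage the result in the notation of $CQ_1(\mu)$ and $\Delta(\mathfrak{t})$. I would write $C_i = p^{k_i}$, $m_i = p^{m_i}$, $D_i = p^{k_i''}$ (with $k_1'' = 0$), and parametrize the summation variables in \eqref{thesum} by $d_i = p^{d_i}$, abusing notation so that $d_i$ now denotes the $p$-adic valuation. Setting $\mathfrak{t} = (d_1, \ldots, d_{2r-1})$, a direct comparison of \eqref{eq:C} with \eqref{eq:k} shows that $C_i = \mathfrak{d}_i D_i$ translates to $k_i = k_i(\mathfrak{t}) + k_i''$, giving the outer decomposition ${\bf k} = {\bf k}' + (0, {\bf k}'')$ with ${\bf k}' = k(\mathfrak{t})$. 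The inner argument of $H$ becomes the shifted vector ${\bf m}'$ after expanding each ratio $d_{j-1}d_{2r-j+1}/(d_j d_{2r-j})$ in exponents and comparing with the prescription in the corollary.

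Next I would evaluate the inner sum in \eqref{thesum} over $c_j \bmod d_j$ with $(c_j, d_j) = 1$. At prime powers the Hilbert symbols between $p$-power arguments simplify or combine with power-residue symbols, and the character, after using $c_j u_j \equiv 1 \bmod d_j$, decouples into factors depending on one $c_j$ each. Each factor is a Gauss sum $g_{\varepsilon_j}(p^{a_j}, p^{d_j})$ whose prime-power evaluation lands in one of three cases matching the decoration rules in \eqref{dec:Delta}: if $d_j = 0$ the sum is trivial and $\mathfrak{c}_{1,j+1}$ is circled, contributing $q^{\mathfrak{c}_{1,j+1}}$ via \eqref{eq:delta-circle}; if the relevant divisibility in \eqref{eq:div-d} or \eqref{eq:div-d-2} is strict, the Gauss sum collapses to $\phi(p^{d_j}) = q^{d_j}(1-q^{-1})$ when $n \mid d_j$ and to $0$ otherwise, matching the undecorated clause; and if the divisibility is an equality, one obtains the full Gauss sum $g(p^{d_j-1}, p^{d_j})$, matching the boxing clause. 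The secondary circling trigger $\mathfrak{c}_{1,j} = 0$ covers the cascading case in which all preceding $d_i$ vanish. Absorbing the measure factor $|d_r|^{2r-2}\prod_k |d_k|^{\cdots}$ extracted from Theorem~\ref{inductivesum} together with the normalization $q^{-\sum_i d_{2r-i}}$ built into $G_\Delta$ produces exactly $G_\Delta(\mathfrak{t})$, and outside $CQ_1(\mu)$ the product of Gauss sums vanishes, pruning the summation range to the stated polytope.

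The principal obstacle is the careful bookkeeping of the Hilbert and residue symbols. One must verify that $(d_{2r-i}/d_i, d_j)_S$ and $(\mathfrak{d}_{i-1}/\mathfrak{d}_i, D_i)_S^{-\varepsilon_i}$ evaluated on $p$-powers combine with the factors $(c_k/d_k)_n^{\varepsilon_k}$ to give precisely the stated independent Gauss sums, and that the doubled role of $d_r$ (reflecting $\varepsilon_r = 2$) is tracked correctly in the boxing/circling decorations. This mirrors the analogous type A calculation in \cite{BBF5}, Section 5, and goes through similarly once the short-root doubling is handled.
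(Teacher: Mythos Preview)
Your proposal is correct and matches the paper's approach exactly: the paper simply states that the corollary follows from ``a straightforward calculation using Theorem~\ref{inductivesum}'' and gives no further detail, while you have sketched precisely that calculation---specializing to $p$-powers, identifying $\mathrm{ord}_p(\mathfrak{d}_i)$ with $k_i(\mathfrak{t})$ via \eqref{eq:C} and \eqref{eq:k}, noting that the Hilbert symbols $(p^a,p^b)_S$ are trivial under the standing hypothesis $\mu_{4n}\subset F$, and matching the resulting prime-power Gauss sums with the decoration rules \eqref{dec:Delta}. One small caveat: the sums over the $c_j$ do not decouple outright, since $u_j$ depends on $c_j$ and appears in the $c_{j+1}$ and $c_{2r-j}$ terms; rather, summing in the right order leaves residual power-residue factors that telescope (exactly as in \cite{BBF5}, Section~5) and shift each Gauss sum to modulus $p^{\mathfrak{c}_{1,j}}$, which is what produces the cumulative entries $\mathfrak{c}_{1,j}$ rather than the bare $d_j$.
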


\subsection{Coefficients Constructed From Type C Crystal Graphs}\label{sect62}

We now recall the definition for the $p$-power coefficients 
$H_{BZL}(p^{\kappa};p^{\ell})$ defined by Beineke, Brubaker and Frechette in 
\cite{BeBrF1}.  Here $\kappa=(\kappa_{1},\dots,\kappa_{r})$ and ${\ell}=(l_{1},\dots,l_{r})$ are $r$-tuples of non-negative integers. 
These coefficients will be given as weighted sums over certain $BZL$-patterns. 
We will write these coefficients in an inductive form similar to that given in Corollary~\ref{thm:H} above.
(The difference in our choice of the ordering of the simple roots $\alpha_i$ as compared to \cite{BeBrF1}
is reflected in the indexing below.)

Let $\epsilon_{i}$ be the fundamental dominant weights of the dual group $\Sp_{2r}(\C)$, and let $\rho=\sum_i \epsilon_i$
be the Weyl vector.  Given $\ell$, let $\lam=\sum^{r}_{i=1}l_{i}\epsilon_{i}$ and let $\mu=\lambda+\rho$.
We will work on the crystal graph of highest weight $\mu$.
Recall that each BZL-pattern has a weight coming from the projection map from the vertices of the crystal graph to the weight lattice.
The contributions to $H_{BZL}(p^{\kappa};p^{\ell})$ come from BZL-patterns in
a single weight space determined by $\kappa$ in the crystal graph of highest weight $\mu$.
Denote the set of all BZL-patterns for this crystal graph as $BZL(\mu)$.

To give this precisely, let $\Gamma=\Gamma(c_{i,j})\in BZL(\mu)$ be a BZL-pattern (the notation
follows \cite{BeBrF1}).  As explained above,
the entries $c_{i,j}$ may be put into a triangular array; then the entries in the rows are non-negative and weakly decreasing, and the entries $c_{i,j}$ satisfy certain inequalities.  Specifically 
the entries $c_{1,j}$ in the top row satisfy the {\it upper-bound inequalities} 
\begin{equation}\label{ineq:c}
\begin{cases}
\bar{c}_{1,j}\leq \mu_{r-j+1}+\bar{c}_{1,j-1}, &\\
c_{1,j}\leq \mu_{r-j+1}+\bar{c}_{1,j-1}-2\bar{c}_{1,j}+c_{1,j+1}+\bar{c}_{1,j+1},&
\end{cases}
\end{equation}
for all $1\leq j\leq r$, where $\bar{c}_{1,0}=0$.
Define the weight vector $\kappa(\Gamma)$ attached to the BZL-pattern $\Gamma$ by
$$
\kappa(\Gamma)=(\kappa_{1}(\Gamma),\kappa_{2}(\Gamma),\dots,\kappa_{r}(\Gamma))
$$
with
$$
\kappa_{1}(\Gamma)=\sum^{r}_{i=1}c_{i,r} \text{ and } \kappa_{j}(\Gamma)=\sum^{r+1-j}_{i=1} (c_{i,r+1-j}+\bar{c}_{i,r+1-j}), \text{ for } 1<j\leq r.
$$

Recall the {\it decoration rules} for the $BZL$-patterns given in \cite{BeBrF1}:
\begin{enumerate}
\item The entry $c_{i,j}$, $j<2r-i$, is circled if $c_{i,j}=c_{i,j+1}$. The right-most entry in a row, $c_{i,2r-i}$, is circled if it equals 0.
\item  The entry $c_{i,j}$ is boxed if equality holds in the upper-bound inequalities. (See \cite{Litt}, Theorem 6.1 and Corollary 6.1 or
 \cite{BeBrF1}, Section 2.2.)
\end{enumerate}
To each entry $c_{i,j}$ in a decorated BZL-pattern $\Gamma$, we associate the quantity
\begin{equation}\label{dec:Gamma}
\gamma_{\Gamma}(c_{i,j})=\begin{cases}
q^{c_{i,j}} & \text{ if $c_{i,j}$ is circled but not boxed,}\\
g(p^{c_{i,j}-1},p^{c_{i,j}})& \text{ if $c_{i,j}$ is boxed but not circled, and $j\neq r$},\\
g_{2}(p^{c_{i,j}-1},p^{c_{i,j}})& \text{ if $c_{i,j}$ is boxed but not circled, and $j=r$,}\\
q^{c_{i,j}}(1-q^{-1})
& \text{ if $c_{i,j}$ is neither circled nor boxed and $n\mid c_{i,j}$,}\\
0& \text{ otherwise.}
\end{cases}
\end{equation}
(The above definition corrects a misprint in \cite{BeBrF1} by adding the condition $n\mid c_{i,j}$ in
the case that $c_{i,j}$ is undecorated.)
In particular, $\gamma_\Gamma(c_{i,j})=0$ 
if $c_{i,j}$ is both circled and boxed.
For $\Gamma=\Gamma(c_{i,j})$ a decorated BZL-pattern, let $G(\Gamma)=\prod_{i,j}\gamma_\Gamma(c_{i,j})$.
Then the definition of the $p$-coefficients constructed in \cite{BeBrF1} by crystal graphs is:
\begin{equation}\label{crystal-def}
H_{BZL}(p^{\kappa};p^{\ell})=\sum_{\substack{\Gamma\in BZL(\mu)\\ \kappa(\Gamma)=\kappa}}G(\Gamma).
\end{equation}

We rewrite this definition inductively.
Let $BZL_{1}(\mu)$ denote the set of $(2r-1)$-tuples $\Ft=(d_{1},\dots,d_{2r-1})\in \Z^{2r-1}_{\geq 0}$
which satisfy the inequalities
\begin{equation}\label{ineq:BZL-d}
\begin{cases}
d'_{j}\leq \mu_{r-j+1}+d'_{j-1}-d_{2r-j+1} &\text{for $1\leq j\leq r$,}\\
d_{j}+d'_{j}\leq \mu_{r-j+1}+d'_{j-1}-d_{2r-j+1}+d_{2r-j} &\text{for $1\leq j\leq r-1$,}\\
\end{cases}
\end{equation}
where $d_{0}=d_{2r}=0$ and
$d'_{i}=\min\left(d_{i},d_{2r-i}\right)$.  Let
\begin{equation}\label{eq:c}
\bar{c}_{1,j}= \sum^{j-1}_{i=1} d_{2r-i}+d'_{j},~c_{1,r}=\sum^{r}_{i=1}d_{2r-i}, \text{ and }c_{1,j}=k_{j}(\Ft)-\bar{c}_{1,j} \text{~for } 1\leq j<r.
\end{equation}
Here $k_j(\Ft)$ is given by (\ref{eq:k}).  
Note that for $2\leq j\leq r$,
\begin{equation}\label{eq:circle}
\bar{c}_{1,j}-\bar{c}_{1,j-1}=d'_{j}+d_{2r-j+1}-d'_{j-1}  \text{ and }
c_{1,j-1}-c_{1,j}=d'_{j}+d_{j-1}-d'_{j-1}.
\end{equation}
The vector $(c_{1,1},c_{1,2},\dots,c_{1,2r-1})$ is weakly decreasing.

If $\Gamma(\Ft)=(c_{1,1},c_{1,2},\dots,c_{1,2r-1})$ is a one-row array decorated using the rules above,
set $G_{\Gamma}(\Ft)=\prod_{j}\gamma_{\Gamma}(c_{1,j})$.
Then the following Lemma is a direct consequence of (\ref{crystal-def}).
\begin{lemma}\label{ind-lemma} Let $\kappa$ and $\ell$ be $r$-tuples of non-negative integers.  Then
\begin{equation}\label{ind-equation-Gamma}
H_{BZL}(p^{\kappa};p^{\ell})=\sum_{\substack{\kappa',\kappa''\\ \kappa'+(\kappa'',0)=\kappa}}\,\,\sum_{\substack{\Ft\in BZL_{1}(\lam+\rho)\\ k_{i}(\Ft)=\kappa'_{r+1-i} \forall i}}G_{\Gamma}(\Ft)\,\,H_{BZL}(p^{\kappa''};p^{{\ell}'}),
\end{equation}
 where the outer sum is over tuples ${\kappa'}=(\kappa_1',\dots,\kappa_r')$ and 
 ${\kappa''}=(\kappa''_1,\dots,\kappa_{r-1}'')$ of non-negative integers 
such that ${\kappa}'+({\kappa}'',0)={\kappa}$ and where
$${\ell}'=(l_{1}+\kappa'_{2}-2\kappa_{1},l_{2}+\kappa'_{3},\dots,l_{r-1}+\kappa'_{r}+\kappa'_{r-2}-2\kappa'_{r-1}).$$ 
 \end{lemma}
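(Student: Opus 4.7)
The plan is to start from the definition (\ref{crystal-def}) and split each BZL-pattern $\Gamma = \Gamma(c_{i,j})$ into its top row $(c_{1,1},\ldots,c_{1,2r-1})$ and the subpattern $\Gamma''$ formed by rows $i \ge 2$. Since $G(\Gamma) = \prod_{i,j}\gamma_\Gamma(c_{i,j})$ is a product over entries, it factors as $G(\Gamma) = G_{\Gamma}(\Ft) \cdot G(\Gamma'')$, where $\Ft$ encodes the top row. The work then reduces to (a) parametrizing the top rows by $(2r-1)$-tuples $\Ft$, (b) recognizing the subpattern as a BZL-pattern for the rank $r-1$ crystal of highest weight $\lambda' + \rho'$, and (c) verifying that the weight vector decomposes as $\kappa = \kappa' + (\kappa'',0)$ with $\kappa'_{r+1-i} = k_i(\Ft)$ and $\kappa'' = \kappa(\Gamma'')$.

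For step (a), I would use the change of variables (\ref{eq:c}), which encodes the top row entries by the successive differences $d_j$ together with the "middle" data at position $r$. The upper-bound inequalities (\ref{ineq:c}) for $j = 1, \ldots, r$, when re-expressed via (\ref{eq:c}), become exactly (\ref{ineq:BZL-d}) (with the auxiliary $d'_j = \min(d_j, d_{2r-j})$ arising from the distinction between the unbarred and barred upper bounds). This establishes a bijection between valid decorated top rows and elements of $BZL_1(\lambda+\rho)$; moreover, the decoration rules match term-by-term, since by (\ref{eq:delta-circle})/(\ref{eq:circle}) circling corresponds to vanishing differences in the top row, and boxing corresponds to equality in (\ref{ineq:c}), both of which are intrinsic to $\Ft$. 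Hence the top-row contribution to $G(\Gamma)$ is exactly $G_\Gamma(\Ft)$.

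For steps (b) and (c), the subpattern $\Gamma''$ with entries $c'_{i-1,j-1} = c_{i,j}$ lives in the rank $r-1$ type C BZL-polytope, but its upper-bound inequalities (\ref{ineq:c}) still involve the top-row entries through $\bar{c}_{1,j-1}$, $\bar{c}_{1,j}$, and $c_{1,j+1}$. Substituting the formulas for these and collecting constants shows that those inequalities coincide with the rank $r-1$ upper-bound inequalities for a shifted highest weight $\lambda' + \rho'$, where the coefficients of $\lambda'$ are precisely the $l'_i$ given in the lemma; the appearance of $\kappa'_{i-1} + \kappa'_{i+1} - 2\kappa'_i$ is exactly the second-difference coming from expanding $\mu_j - \bar{c}_{1,j-1} + 2\bar{c}_{1,j} - \bar{c}_{1,j+1} - c_{1,j+1}$ in terms of $\Ft$. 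The weight identity $\kappa = \kappa' + (\kappa'',0)$ then follows by separating the $i=1$ contributions in the definition of $\kappa_j(\Gamma)$ from those with $i \ge 2$, and one checks directly from (\ref{eq:c}) and (\ref{eq:k}) that the $i=1$ part equals $k_{r+1-j}(\Ft)$.

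The main obstacle is the bookkeeping in step (b): matching the linear combination of top-row entries that appears in the row-2 upper bounds to the coefficients of $\lambda' + \rho'$ in the shifted crystal requires a careful computation involving the reversed indexing of the $\alpha_i$ used in \cite{BeBrF1} versus ours, and the asymmetry of the long root at position $r$ (reflected in the extra $\kappa'_r + \kappa'_{r-2} - 2\kappa'_{r-1}$ versus $\kappa'_{i-1} + \kappa'_{i+1} - 2\kappa'_i$ at interior positions). Once these shifts are identified, summing $G(\Gamma) = G_\Gamma(\Ft)\, G(\Gamma'')$ first over $\Gamma''$ with fixed $\kappa''$ yields $H_{BZL}(p^{\kappa''};p^{\ell'})$ by (\ref{crystal-def}) in rank $r-1$, and then over $\Ft$ with fixed $\kappa'$ yields (\ref{ind-equation-Gamma}).
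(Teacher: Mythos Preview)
Your proposal is correct and matches the paper's approach: the paper itself proves this lemma in a single sentence, calling it ``a direct consequence of (\ref{crystal-def}),'' and what you have written is precisely the unpacking of that direct consequence---splitting $\Gamma$ into its top row (encoded by $\Ft$) and the rank $r-1$ subpattern $\Gamma''$, using the multiplicativity of $G(\Gamma)$ over entries, and identifying the shifted highest weight for $\Gamma''$. One small correction: your reference to (\ref{eq:delta-circle}) concerns the $\Delta$-array entries $\Fc_{1,j}$, not the $\Gamma$-array entries $c_{1,j}$; the relevant identity for the $c_{1,j}$ is (\ref{eq:circle}).
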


\section{Evaluation of the $p$-parts and $BZL$ descriptions}\label{sec:p}
In this Section, we will prove the conjectural description of the Eisenstein series coefficients
constructed using crystal graphs by
Beineke, Brubaker and Frechette \cite{BeBrF1, BeBrF2}. We establish the following identity.
\begin{theorem}\label{thm:main}
Let $\kappa=(\kappa_1,\dots,\kappa_r)$ and $\ell=(l_1,\dots,l_r)$ be $r$-tuples of non-negative integers.  Then
\begin{equation}\label{eq:H-CQ=BZL}
H(p^{\bf k};p^{\bf m})=H_{BZL}(p^{\bf \kappa};p^{\ell}),
\end{equation}
where $m_{i}=l_{r+1-i}$ and $k_{i}=\kappa_{r+1-i}$ for all $1\leq i\leq r$.
\end{theorem}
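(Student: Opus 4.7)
Plan: The proof will proceed by induction on the rank $r$. The base case $r=1$ is a direct verification: both Corollary~\ref{thm:H} and Lemma~\ref{ind-lemma} reduce to the same one-entry sum, which yields the Gauss sum $g_2(p^{m-1},p^m)$ already computed in the proof of Theorem~\ref{thm:twist-m}. For the inductive step, I first check the bookkeeping under the correspondence $k_i = \kappa_{r+1-i}$, $m_i = l_{r+1-i}$: the weight $\mu$ matches, since on the CQ side $\mu_j = m_{r+1-j}+1 = l_j+1$, which agrees with the $j$-th component of $\mu = \lambda + \rho$ on the BZL side; the decompositions ${\bf k} = {\bf k}' + (0,{\bf k}'')$ and $\kappa = \kappa' + (\kappa'',0)$ are compatible under reversal; and a routine check shows that ${\bf m}'$ and $\ell'$ match after the same reversal. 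Applying the induction hypothesis to the inner $H$ then reduces Theorem~\ref{thm:main} to the \textit{one-row identity}
\begin{equation}\label{plan:onerow}
\sum_{\substack{\Ft \in CQ_1(\mu)\\ k(\Ft)={\bf k}'}} G_\Delta(\Ft)
\;=\;
\sum_{\substack{\Ft \in BZL_1(\mu)\\ k(\Ft)={\bf k}'}} G_\Gamma(\Ft)
\end{equation}
for every $\mu$ and ${\bf k}'$. Although the weight function $k(\Ft)$ is given by the same formula (\ref{eq:k}) on both sides, the inequalities (\ref{ineq:CQ-d}) and (\ref{ineq:BZL-d}) define genuinely different sets, the entries $\Fc_{1,j}$ (\ref{eq:Fc}) and $c_{1,j}$ (\ref{eq:c}) are defined differently, and crucially the decoration rule (\ref{dec:Gamma}) prescribes a quadratic Gauss sum $g_2$ at the short-root position $j=r$ while (\ref{dec:Delta}) uses the ordinary Gauss sum $g$.

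To prove (\ref{plan:onerow}) I would isolate the short-root entry $d_r$ and view the remaining variables $(d_1,\dots,d_{r-1})$ and $(d_{r+1},\dots,d_{2r-1})$ as two one-row type A patterns coupled through the inequalities involving $d_r$. Under this decomposition the right-hand side of (\ref{plan:onerow}) becomes a one-row type A crystal sum in the Gelfand--Tsetlin style used in \cite{BBF4}, while the left-hand side is the same sum expressed using the alternative type A factorization of the long word. The non-formal equality of these two type A descriptions, proved in \cite{BBF4} and closely tied to the functional equations of the underlying multiple Dirichlet series, collapses most of (\ref{plan:onerow}). What remains is a local identity comparing a $g_2$ at the short root on one side with a pair of ordinary Gauss sums on the other; for $n$ odd this follows from the residue symbol identity $g_t(1,p)\, g_{n-t}(1,p) = q$ together with the prefactor $q^{-\sum d_{2r-i}}$ appearing in $G_\Delta$.

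The expected main obstacle is the \textit{totally resonant} case, in which every entry in the one-row arrays is divisible by $n$ so that no undecorated term in either $G_\Delta$ or $G_\Gamma$ vanishes. Here no bijection of individual summands exists, and the two sides may have different numbers of nonzero contributions; (\ref{plan:onerow}) must be established by classifying the associated \textit{short patterns} in a type C analogue of \cite{BBF4}, Section~6, and verifying a subtle Gauss sum identity for each resulting type. This classification, the careful use of \emph{both} type A BZL descriptions, and the hypothesis that $n$ is odd together drive this case, and the argument will occupy the bulk of the work. Once every short-pattern type has been handled, (\ref{plan:onerow}) holds in all cases, and the induction closes.
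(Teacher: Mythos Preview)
Your outer reduction is correct: both $H$ and $H_{BZL}$ satisfy inductive formulas (Corollary~\ref{thm:H}, Lemma~\ref{ind-lemma}) whose lower-rank data match under the reversal $k_i=\kappa_{r+1-i}$, $m_i=l_{r+1-i}$, so Theorem~\ref{thm:main} reduces by induction on $r$ to your one-row identity. Two factual corrections first. The sets $CQ_1(\mu)$ and $BZL_1(\mu)$ are \emph{not} different; Lemma~\ref{lm:CQ=BZL} proves they coincide, so the one-row identity is a comparison of two weightings $G_\Delta$, $G_\Gamma$ on the \emph{same} set of tuples $\Ft$. And ``totally resonant'' does not mean every entry is divisible by $n$; it means $d_i=d_{2r-i}$ for all $i<r$ (equivalently $\varepsilon_j k_j$ is constant in $j$), the direct analogue of the type~A notion in \cite{BBF4}.

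The substantive gap is your decomposition. Isolating $d_r$ and treating $(d_1,\dots,d_{r-1})$ and $(d_{r+1},\dots,d_{2r-1})$ as two type~A rows does not decouple the problem: the inequalities~\eqref{ineq:CQ-d} tie $d_{j+1}$ to both $d_j$ and $d_{2r-j}$, and the entries $c_{1,j}$, $\Fc_{1,j}$ in~\eqref{eq:c},~\eqref{eq:Fc} mix both halves of $\Ft$. The paper instead classifies $\Ft$ by the \emph{first} index $i_0$ with $d_{i_0}\ne d_{2r-i_0}$, splitting $\Ft$ into an outer totally resonant piece $\Ft^*$ of length $2i_0-1$ and an inner lower-rank type~C piece $\Ft^\sharp$ (Lemma~\ref{split-reduce}). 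This yields three cases: totally resonant (no such $i_0$), Class~I ($d_{i_0}>d_{2r-i_0}$), Class~II ($d_{i_0}<d_{2r-i_0}$). The totally resonant case is not handled by a new type~C classification; rather, after shifting the arrays by $k_r$ (here $n$ odd is used: $n\mid 2k_r\Rightarrow n\mid k_r$) one invokes Statement~C of \cite{BBF4} verbatim (Proposition~\ref{lm:resonant}). The Class~I/II cases proceed by induction on the rank of $\Ft^\sharp$, but to close that induction one must prove, \emph{simultaneously} with $\sum G_\Gamma=\sum G_\Delta$, a companion identity $\sum G_{\Gamma^\iota}=\sum G_{\Delta^\iota}$ for auxiliary ``$\iota$-shifted'' arrays: the Class~II step for the original identity feeds on the $\iota$-identity at lower rank, and vice versa. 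Your plan does not anticipate this coupled pair of statements, and without it the inner induction cannot close.
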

We prove this theorem by induction. In the rest of this paper, we assume that $m_{i}=l_{r+1-i}$ for all $i$. First, we consider the top rows in the definitions of these two terms.   Let $\mu$ be as in Section~\ref{sect6}.
Let 
$$
H_{\Gamma}(p^{\kappa}; p^{\ell})=\sum_{\substack{\Ft\in BZL_{1}(\mu)\\ k_{i}(\Ft)=\kappa_{r+1-i}\, \forall i}}G_{\Gamma}(\Ft),
\qquad
H_{\Delta}(p^{\bf k}; p^{\bf m})=\sum_{\substack{\Ft\in CQ_{1}(\mu)\\ k(\Ft)={\bf k}}}G_{\Delta}(\Ft).
$$

{\bf Statement A.} {\it Given a fixed strict weight vector ${\bf k}=k(\Ft)$, let $\kappa$ be the vector with components $\kappa_{i}=k_{r+1-i}$. Then}
\begin{equation}\label{eq:H-gamma-delta}
H_{\Gamma}(p^{\kappa};p^{\ell})=H_{\Delta}(p^{\bf k};p^{\bf m}).
\end{equation}

By Corollary~\ref{thm:H} and Lemma~\ref{ind-lemma}, Statement~A implies Theorem~\ref{thm:main}.  Indeed, 
if the weight $k(\Ft)$ is non-strict, then in (\ref{ind-equation-Gamma}) $H_{BZL}(p^{\kappa''};p^{\ell'})=0$ since some entry is both 
boxed and circled, while in (\ref{ind-equation-Delta}) $G_\Delta(\Ft)=0$ as some entry in $\Ft$ is both boxed and circled.
Hence both contributions are zero. The remaining contributions are matched by Statement A together
with the inductive hypothesis.  The rest of this paper will be occupied with the proof of Statement~A. 

Let us compare this to the analogous step for type A.  In type A, at this point the top row equality follows by a term-by-term matching
of the two sides.  (See \cite{BBF5}, Sect.\ 8.)  However, that is not the case here.  Indeed, Statement~A
is comparable to Statement~B of \cite{BBF4}, Ch.\ 6.  In \cite{BBF4}, the authors compare two different descriptions of
the coefficients
(labelled by $\Gamma$ and $\Delta$) corresponding to {\it two different factorizations of the long element into
simple reflections}.  The comparison is highly non-trivial and is a mix of combinatorial geometry and number-theoretic
facts about Gauss sums.  Once we recognize that Statement A of this paper presents an analogous combinatorial 
problem, it is natural to establish 
the comparison by adapting the methods of \cite{BBF4} to the present case.  We shall do this below.

We now give an equivalent description of the
decoration rules for $\Gamma(\Ft)$  and $\Delta(\Ft)$ in Sections~\ref{sect62} (resp.\ \ref{sect61})
for $\Ft$ as in Statement A, and 
then introduce two new 
decorated arrays $\Gamma^{\iota}(\Ft)$ and $\Delta^{\iota}(\Ft)$.

\begin{lemma}  
\begin{enumerate}
\item
Let $\Ft\in BZL_1(\mu)$.  Then 
in $\Gamma(\Ft)$, the entries are decorated as follows:
\begin{enumerate}
\item The entry $c_{1,j}$ for $j\leq r$ is boxed if $d_{j}+d'_{j}=\mu_{r+1-j}+d'_{j-1}-d_{2r+1-j}+d_{2r-j}$. The entry $\bar{c}_{1,j}$ for $j<r$ is boxed if $d'_{j}=\mu_{r+1-j}+d'_{j-1}-d_{2r+1-j}$.
\item The entry $c_{1,j}$ for $j< r$ is circled if $d_{j}=d'_{j}$ and $d'_{j+1}=0$. The entry $\bar{c}_{1,j}$ for $j\leq r$ is circled if $d'_{j}=0$ and $d_{2r+1-j}=d'_{j-1}$.
\end{enumerate}

\item
Let $\Ft\in CQ_1(\mu)$.  Then in $\Delta(\Ft)$, the entries are decorated as follows:
\begin{enumerate}
\item The entry $\Fc_{1,j}$ for $j\leq r$ is boxed if $d_{j}=\mu_{r+1-j}$. The entry $\bar{\Fc}_{1,j}$ for $j<r$ is boxed if $d_{j+1}=\mu_{r-j}+d_{j}-d_{2r-j}$.
\item The entry $\Fc_{1,j}$ is circled if $d_{j-1}=0$ for $j>1$ or $\Fc_{1,j}=0$.
\end{enumerate}
\end{enumerate}
\end{lemma}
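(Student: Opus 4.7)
The lemma amounts to rewriting the decoration rules of Sections~\ref{sect61} and \ref{sect62}, which are phrased in terms of the entries $\Fc_{1,j}, \bar\Fc_{1,j}$ and $c_{1,j},\bar c_{1,j}$, as explicit conditions on the coordinates $d_1,\ldots,d_{2r-1}$ of $\Ft$. Since the top-row entries are defined by \eqref{eq:Fc} and \eqref{eq:c} as partial sums involving the $d_i$ (and the $d'_i=\min(d_i,d_{2r-i})$), and the difference formulas \eqref{eq:delta-circle} and \eqref{eq:circle} express the adjacent differences in closed form, the proof is a direct unpacking. I would set the convention $d_0=d_{2r}=0$ throughout, treat the two parts separately, and in each part handle the boxing and circling conditions in turn.

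For part (2), the boxing claims are immediate: the first upper-bound inequality in \eqref{ineq:CQ-d} is attached to $\Fc_{1,j}$ for $j\leq r$ and reads $d_j\leq\mu_{r+1-j}$, while the second is attached to $\bar\Fc_{1,j}$ for $j<r$ and, after solving for $d_{j+1}$, reads $d_{j+1}=\mu_{r-j}+d_j-d_{2r-j}$ at equality. For the circling rule of Section~\ref{sect61}, I would use \eqref{eq:delta-circle}: for $j\neq r$ the condition $\Fc_{1,j-1}=\Fc_{1,j}$ is equivalent to $d_{j-1}=0$, and for $j=r+1$ (the passage to the barred half) the relation $\Fc_{1,r}-\bar\Fc_{1,r-1}=2d_r$ makes $\Fc_{1,r}=\bar\Fc_{1,r-1}$ equivalent to $d_r=0$. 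Combined with the alternative clause $\Fc_{1,j}=0$ of the original rule, this gives the restatement in the lemma.

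For part (1), I would argue similarly but with a bit more care because $d'_j=\min(d_j,d_{2r-j})$ is not linear in the $d_i$. For boxing, substituting the definitions from \eqref{eq:c} into the upper-bound inequalities \eqref{ineq:c} collapses them (after using the telescoping identities obtained from \eqref{eq:circle}) to the two lines of \eqref{ineq:BZL-d}, and the stated equalities at equality follow. For circling I would use \eqref{eq:circle}: the equality $c_{1,j}=c_{1,j+1}$ translates to $d'_j=d_j+d'_{j+1}$, and since $d'_j\leq d_j$ and $d'_{j+1}\geq 0$ this forces both $d_j=d'_j$ and $d'_{j+1}=0$; analogously $\bar c_{1,j}=\bar c_{1,j-1}$ becomes $d'_{j-1}=d'_j+d_{2r-j+1}$ which, because $d'_{j-1}\leq d_{2r-j+1}$, forces $d'_j=0$ and $d_{2r+1-j}=d'_{j-1}$. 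The rightmost-entry-circled-if-zero clause is subsumed by the $j=1$ case of the $\bar c$ rule with the convention $d'_0=d_{2r}=0$. The one boundary to treat is $j=r$: since $d'_r=d_r$ one checks $\bar c_{1,r}=c_{1,r}$, so the rule for $j=r$ in the $\bar c$ family coincides with $c_{1,r}=\bar c_{1,r-1}$ and gives $d_r=0$, $d_{r+1}=d'_{r-1}$, matching the statement.

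No step is a substantive obstacle; the only places requiring attention are the min-function reductions in the circling analysis and the bookkeeping of the boundary conventions $d_0=d_{2r}=0$ and the identification $\bar c_{1,r}=c_{1,r}$. Everything else is a direct substitution from the definitions and difference formulas already recorded in Section~\ref{sect6}.
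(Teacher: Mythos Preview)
Your proposal is correct and follows exactly the approach the paper takes: the paper's proof reads in its entirety ``This Lemma is immediate from the decoration rules in Section~\ref{sect6} in view of Eqns.~(\ref{eq:c}) and (\ref{eq:Fc}),'' and your argument is precisely the unpacking of that sentence via the difference formulas \eqref{eq:delta-circle} and \eqref{eq:circle}. The only places requiring any thought---the reductions forced by $d'_j=\min(d_j,d_{2r-j})$ in the circling analysis and the boundary conventions at $j=1$ and $j=r$---you have identified and handled correctly.
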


This Lemma is immediate from the decoration rules in Section~\ref{sect6} in view of
Eqns.\ (\ref{eq:c}) and (\ref{eq:Fc}).

\begin{definition}
\begin{enumerate}
\item
Let $\iota(x)$ be the function $\iota(x)=N-k_{1}(\Ft)+x$ where $N$ is a multiple of $n$ and $N>k_{1}(\Ft)$.
Let $\Gamma^{\iota}$ be the decorated array with entries $(\iota(c_{1,1}),\iota(c_{1,2}),\dots,\iota(c_{1,2r-1}))$.
The entry $\iota(c_{1,j})$ is decorated as follows:
\begin{enumerate}
\item The entry $\iota(c_{1,j})$ for $j\leq r$ is boxed if $d_{j}+d'_{j}=\mu_{r+1-j}+d'_{j-1}-d_{2r+1-j}+d_{2r-j}$. The entry $\iota(\bar{c}_{1,j})$ for $j<r$ is boxed if $d'_{j}=\mu_{r+1-j}+d'_{j-1}-d_{2r+1-j}$.
\item The entry $\iota(c_{1,j})$ for $j\leq r$ is circled if $d_{j-1}=d'_{j-1}$ and $d'_{j}=0$. The entry $\iota(\bar{c}_{1,j})$ for $j<r$ is circled if $d'_{j+1}=0$ and $d_{2r-j}=d'_{j}$.
\end{enumerate}
(These decoration rules are independent of the choice of $N$.)
Let $G_{\Gamma^{\iota}}(\Ft)=\prod^{2r-1}_{j=1}\gamma_{\Gamma}(\iota(c_{1,j}))$, where $\gamma_\Gamma$ is given
by (\ref{dec:Gamma}).

\item
Let $\Delta^{\iota}$ be the decorated array with entries
$(\Fc(\Delta^{\iota})_{1,1},\dots,\Fc(\Delta^{\iota})_{1,2r-1})$, where $\Fc(\Delta^{\iota})_{1,j}=\iota(\Fc_{1,j+1})$ for $1\leq j\leq 2r-2$ and $\Fc(\Delta^{\iota})_{1,2r-1}=N-k_{1}$. 
The entry $\Fc(\Delta^{\iota})_{1,j}$ is decorated as follows:
\begin{enumerate}
\item The entry $\Fc(\Delta^{\iota})_{1,j}$ for $j<r$ is boxed if $d_{j+1}=\mu_{r-j}$. The entry $\bar{\Fc}(\Delta^{\iota})_{1,j}$ for $j\leq r$ is boxed if $d_{j}=\mu_{r+1-j}+d_{j-1}-d_{2r-j+1}$.
\item The entry $\Fc(\Delta^{\iota})_{1,j}$ for $1\leq j\leq 2r-1$ is circled if $d_{j}=0$.
\end{enumerate}
Let $G_{\Delta^{\iota}}(\Ft)=q^{\sum^{r}_{i=1}d_{i}}\prod^{2r-1}_{j=1}\gamma_{\Delta}(\Fc(\Delta^{\iota})_{1,j})$,
where $\gamma_\Delta$ is given by (\ref{dec:Delta}).
\end{enumerate}
\end{definition}

We give one example.

\begin{example}\label{ex:r=1}\rm
Suppose $r=1$.
 Let $d_{1}\leq \mu$. In $\Gamma(\Ft)$ and $\Delta(\Ft)$, the entries $c_{1,1}=d_{1}$ and $\Fc_{1,1}=2d_{1}$ are circled if $d_{1}=0$ and boxed if $d_{1}=\mu$. Hence in all cases (using the properties of Gauss sums and
 that $n$ is odd) we have
$$G_{\Gamma}(\Ft)=g_{2}(p^{\mu-1},p^{d_{1}})=p^{-d_{1}}g(p^{\mu+d_1-1},p^{2d_{1}})=G_{\Delta}(\Ft).$$
In $\Gamma^{\iota}(\Ft)$ and $\Delta^{\iota}(\Ft)$, the entries $\iota(c_{1,1})=N-d_{1}$ and $\Fc(\Delta^{\iota})_{1,1}=N-2d_{1}$ are circled if $d_{1}=0$ and boxed if $d_{1}=\mu$. Hence in all cases
$$
G_{\Gamma^{\iota}}(\Ft)=\gamma_{\Gamma}(N-d_{1})=q^{d_{1}}\gamma_{\Delta}(N-2d_{1})=G_{\Delta^{\iota}}(\Ft).
$$
Note that the quantities $G_{\Gamma^{\iota}(\Ft)}$, $G_{\Delta^{\iota}}(\Ft)$ depend on the choice of $N$ as above
but they are equal for any such choice.
\end{example}

\begin{lemma}\label{lm:CQ=BZL}
Given $\mu=(\mu_{1},\dots,\mu_{r})$ in $\Z^{r}_{\geq 0}$,
$
CQ_{1}(\mu)=BZL_{1}(\mu).
$
\end{lemma}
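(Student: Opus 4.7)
The plan is to collapse the defining inequalities for $BZL_1(\mu)$ into a shape directly comparable to those for $CQ_1(\mu)$, and then check each inclusion by a short case distinction.

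First I would show that for each $1\leq j\leq r-1$ the pair of BZL inequalities
\[
d'_j\leq X_j\quad\text{and}\quad d_j+d'_j\leq X_j+d_{2r-j},\qquad X_j=\mu_{r-j+1}+d'_{j-1}-d_{2r-j+1},
\]
is equivalent to the single inequality $d_j\leq X_j$. This follows from the dichotomy $d_j\leq d_{2r-j}$ versus $d_j>d_{2r-j}$: in the first case $d'_j=d_j$ and the second inequality is implied by the first (since $2d_j\leq d_j+d_{2r-j}\leq X_j+d_{2r-j}$), while in the second case $d'_j=d_{2r-j}$ and the first is implied by the second. At $j=r$ only the first BZL inequality is imposed, and $d'_r=d_r$ gives $d_r\leq X_r$ of the same shape. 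Reindexing by $k=j-1$, this recasts $BZL_1(\mu)$ as the set of tuples satisfying $d_1\leq\mu_r$ together with
\[
d_{k+1}+d_{2r-k}\leq\mu_{r-k}+d'_k,\qquad 1\leq k\leq r-1.
\]

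With this simplified form, both inclusions are immediate. For $CQ_1(\mu)\subseteq BZL_1(\mu)$, the BZL relation at index $k$ differs from the CQ relation $d_{k+1}+d_{2r-k}\leq\mu_{r-k}+d_k$ only when $d'_k=d_{2r-k}<d_k$, in which case the required inequality reduces to $d_{k+1}\leq\mu_{r-k}$, which is precisely the CQ upper bound at index $k+1$. Conversely, for $BZL_1(\mu)\subseteq CQ_1(\mu)$, the CQ relation follows from BZL's since $d'_k\leq d_k$, and the CQ upper bound $d_j\leq\mu_{r+1-j}$ for $j\geq 2$ follows by applying the simplified BZL inequality at $k=j-1$ and then using $d'_{j-1}\leq d_{2r-j+1}$ to cancel $d_{2r-j+1}$ from both sides; the $j=1$ case is $d_1\leq\mu_r$, which is already present in the simplified system.

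The only real work is the dichotomy that collapses BZL's two inequalities into a single one of CQ-type; once this reformulation is in hand the remaining verification is pure bookkeeping. Since this lemma compares two polytopes rather than two weighted sums of Gauss sums, the deeper combinatorial difficulties that drive Statement~A (and that required the short pattern classification and the totally resonant analysis in type~A) do not intrude at this stage.
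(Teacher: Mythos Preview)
Your proof is correct. The core observation---that for each $j$ the pair of BZL inequalities $d'_j\leq X_j$ and $d_j+d'_j\leq X_j+d_{2r-j}$ collapses, via the dichotomy on $\min(d_j,d_{2r-j})$, to the single inequality $d_j\leq X_j$---is also what drives the paper's argument. However, the paper organizes this differently: it proceeds by induction on $r$, peeling off the $j=1$ layer (where the collapse gives $d_1\leq\mu_r$), then recognizing the remaining inequalities as a BZL/CQ system of rank $r-1$ for a shifted weight $\nu=(\mu_1,\dots,\mu_{r-2},\mu_{r-1}+d'_1-d_{2r-1})$, and invoking the inductive hypothesis. You instead collapse all layers simultaneously and check both inclusions directly. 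This is cleaner for the lemma in isolation: it avoids the auxiliary weight $\nu$ and the inductive bookkeeping, and it makes the equivalence transparent inequality by inequality. The paper's inductive framing is presumably chosen for consistency with the $\Ft\mapsto(\Ft^*,\Ft^\sharp)$ splitting used throughout the section, but for this particular result your direct argument is more efficient.
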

\begin{proof}

It is sufficient to show that the inequalities~\eqref{ineq:BZL-d} are equivalent to the inequalities~\eqref{ineq:CQ-d}.  
When $r=1$ this is trivial.  We prove this Lemma for $r\geq 2$ by induction.

First, when $r=2$, the inequalities~\eqref{ineq:BZL-d} and \eqref{ineq:CQ-d} become
$$
\begin{cases}
d'_{1}\leq \mu_{2}&\\
d_{2}+d_{3}\leq \mu_{1}+d'_{1}&\\
d_{1}+d'_{1}\leq \mu_{2}+d_{3},&
\end{cases}
\text{ and }
\begin{cases}
d_{1}\leq \mu_{2}\\
d_{2}\leq \mu_{1}\\
d_{2}+d_{3}\leq \mu_{1}+d_{1}.
\end{cases}
$$
Since $d_1'=\min(d_1,d_{3})$,
we have $d_{1}\leq \mu_{2}$ if and only if $d'_{1}\leq \mu_{2}$ and $d_{1}+d'_{1}\leq \mu_{2}+d_{3}$.
In addition, $d_{2}\leq \mu_{1}$ and $d_{2}+d_{3}\leq \mu_{1}+d_{1}$ are equivalent to $d_{2}+d_{3}\leq \mu_{1}+d'_{1} $. Therefore, we have $CQ_{1}(\mu)=BZL_{1}(\mu)$.

Similarly, for general $r$ the inequalities $d'_{1}\leq \mu_{r}$ and $d_{1}+d'_{1}\leq \mu_{r}+d_{2r-1}$ are equivalent to $d_{1}\leq \mu_{r}$.
Thus the inequalities \eqref{ineq:BZL-d} are equivalent to
\begin{equation}\label{ineq:d1}
\begin{cases}
d_{1}\leq \mu_{r} &\\
d'_{2}\leq \mu_{r-1}+d'_{1}-d_{2r-1} &\\
d'_{j}\leq \mu_{r-j+1}+d'_{j-1}-d_{2r-j+1}&\text{for $2<j\leq r$}\\
d_{j}+d'_{j}\leq \mu_{r-j+1}+d'_{j-1}-d_{2r-j+1}+d_{2r-j}&\text{for $2<j<r$}\\
d_{2}+d'_{2}\leq \mu_{r-1}+d'_{1}-d_{2r-1}+d_{2r-2}.
\end{cases}
\end{equation}
Let
$$\nu=(\mu_{2},\dots,\mu_{r-2},\mu_{r-1}+d'_{1}-d_{2r-1})\in \Z^{r-1}_{\geq 0}.$$
Then
$(d_{2},\dots,d_{2r-2})$ is in $BZL_{1}(\nu)$.
 By induction, we have $BZL_{1}(\nu)=CQ_{1}(\nu)$, and  the inequalities~\eqref{ineq:d1} are equivalent to
\begin{equation} \label{ineq:d2}
\begin{cases}
d_{1}\leq \mu_{r} &\\
d_{2}\leq  \mu_{r-1}+d'_{1}-d_{2r-1} &\\
d_{j}\leq \mu_{r+1-j}&\text{for $2<j\leq r$}\\
d_{j+1}+d_{2r-j}\leq \mu_{r-j}+d_{j}&\text{for $1<j\leq r-1$.}
\end{cases}
\end{equation}
In addition, $d_{2}\leq  \mu_{r-1}+d'_{1}-d_{2r-1}$ is equivalent to the inequalities $d_{2}\leq \mu_{r-1}$ and $d_{2}+d_{2r-1}\leq \mu_{r-1}+d_{1}$.
Thus the inequalities \eqref{ineq:d2} are equivalent to the inequalities \eqref{ineq:CQ-d}. Therefore, we have $BZL_{1}(\mu)=CQ_{1}(\mu)$ for $\mu\in \Z^{r}_{\geq 0}$.
\end{proof}

Let $\mu\in\Z^{r}_{\geq1}$.  By Lemma~\ref{lm:CQ=BZL}, $CQ_{1}(\mu)$ and $BZL_{1}(\mu)$ define the same set. 
 A sequence $\Ft$ in $CQ_{1}(\mu)$ or $BZL_{1}(\mu)$ is called a {\it short pattern}.  
 (The usage of this term here is not the same as its usage for the type A case in
 \cite{BBF4}.  However, it is justified as Statement B there, an equality of two sums over
 short pattern prototypes, is comparable to our Statement A.)  
   
A decorated array is {\it strict} if there is no entry which is 
both boxed and circled. 
Recall that $\varepsilon_2=1$ if $r>2$ and $\varepsilon_2=2$ if $r=2$, 
and $k_{1}- \varepsilon_{2}k_{2}\leq \mu_{r}$ by~\eqref{ineq:CQ-d}.

\begin{lemma}\label{lm:strictness}  Let $r\geq2$ and let
$\Ft$ be a short pattern such that $k(\Ft)$ is strict and $k_{1}-\varepsilon_{2}k_{2}<\mu_{r}$.   Then 
the decorated array $\Gamma(\Ft)$ is strict  if and only if $\Gamma^{\iota}(\Ft)$ is strict. 
\end{lemma}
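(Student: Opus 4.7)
The plan is to reduce the strictness of $\Gamma(\Ft)$ and $\Gamma^{\iota}(\Ft)$ to disjunctions of boxed-and-circled conditions at individual entries, and then to identify the two disjunctions at matching indices.

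First, I would note that the boxing rules for $\Gamma(\Ft)$ and $\Gamma^{\iota}(\Ft)$ coincide at each position, so the two arrays differ only in their circling rules. I would then eliminate positions where no boxed-and-circled conflict can arise. The entry $\bar{c}_{1,j}$ (for $1\leq j\leq r$) cannot be both boxed and circled in $\Gamma(\Ft)$: the circling forces $d'_j=0$ and $d_{2r+1-j}=d'_{j-1}$, and substituting into the boxing identity reduces it to $0=\mu_{r+1-j}$, contradicting $\mu_i\geq 1$. Parallel arguments, using the strictness inequalities on $k(\Ft)$, rule out $\iota(c_{1,j})$ being both boxed and circled in $\Gamma^{\iota}(\Ft)$: for interior $2\leq j\leq r-1$ one applies $d_j<\mu_{r+1-j}+d_{j-1}-d_{2r+1-j}+d_{2r-j}$ directly to the circling condition $d'_j=0$, $d_{j-1}=d'_{j-1}$; for $j=1$ the extra hypothesis $k_1-\varepsilon_2 k_2<\mu_r$ (equivalent to $d_1-d_{2r-1}<\mu_r$) is needed; and for $j=r$ one uses $d_{r+1}<\mu_1+d_{r-1}$. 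Hence $\Gamma(\Ft)$ fails strictness iff some $c_{1,j}$ with $1\leq j\leq r-1$ is boxed-and-circled, and $\Gamma^{\iota}(\Ft)$ fails strictness iff some $\iota(\bar{c}_{1,j})$ with $1\leq j\leq r-1$ is boxed-and-circled.

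Second, I would establish that at each index $1\leq j\leq r-1$, these two remaining boxed-and-circled conditions are equivalent. The key algebraic step is that either condition forces $d_j=d_{2r-j}$. In the $\Gamma(\Ft)$ direction, circling gives $d_j=d'_j\leq d_{2r-j}$; substituting the boxing identity into the short-pattern upper-bound $d_j+d_{2r-j+1}\leq \mu_{r-j+1}+d_{j-1}$ (valid for $j\geq 2$) yields $d_{2r-j}\leq d_j$, hence equality. For $j=1$ the same conclusion follows directly from $d_1\leq\mu_r$ and the boxing identity $2d_1=\mu_r+d_{2r-1}$. In the $\Gamma^{\iota}(\Ft)$ direction, circling gives $d_{2r-j}=d'_j\leq d_j$, and a parallel case-split on whether $d_{j-1}\leq d_{2r-j+1}$ (which determines $d'_{j-1}$) combined with the boxing identity yields $d_j\leq d_{2r-j}$. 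Once $d_j=d_{2r-j}$ is established, both boxing identities collapse to the same equation, and both circling conditions reduce to the single requirement $d'_{j+1}=0$; thus the two boxed-and-circled conditions at index $j$ are logically equivalent.

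The main obstacle will be orchestrating the case analysis cleanly, in particular the sub-case $d_{j-1}>d_{2r-j+1}$, where $d'_{j-1}=d_{2r-j+1}$ rather than $d_{j-1}$. Here the boxing identity simplifies to $2d_j-d_{2r-j}=\mu_{r+1-j}$, and combined with the upper bound $d_j\leq\mu_{r+1-j}$ and the inequality $d_j\leq d_{2r-j}$ (or $d_{2r-j}\leq d_j$) one is again forced to $d_j=d_{2r-j}=\mu_{r+1-j}$. One must be careful that the upper-bound used is the one indexed by $j-1$ (rather than $j$) in the short-pattern inequalities, and that the boundary cases $j=1$ and $j=r-1$ align properly with the exclusions from the first step; the extra hypothesis $k_1-\varepsilon_2 k_2<\mu_r$ plays its role precisely in blocking the boundary exclusion at $j=1$ for $\Gamma^{\iota}$.
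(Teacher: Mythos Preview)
Your proposal is correct and follows essentially the same route as the paper: you eliminate the positions $\bar c_{1,j}$ in $\Gamma(\Ft)$ and $\iota(c_{1,j})$ in $\Gamma^{\iota}(\Ft)$ using $\mu_i\geq 1$, the strictness inequalities, and the hypothesis $k_1-\varepsilon_2 k_2<\mu_r$, and then show that the remaining boxed-and-circled conditions at $c_{1,j}$ and $\iota(\bar c_{1,j})$ each reduce to the single condition $d_j=d_{2r-j}=\mu_{r+1-j}+d'_{j-1}-d_{2r-j+1}$ with $d'_{j+1}=0$. The paper compresses the case split on $d'_{j-1}$ into the phrase ``by the upper bounds~(\ref{ineq:CQ-d})'', but your more explicit treatment of the sub-cases $d_{j-1}\leq d_{2r-j+1}$ versus $d_{j-1}>d_{2r-j+1}$ arrives at exactly the same conclusion.
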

\begin{proof}
First, in $\Gamma(\Ft)$ the entry $\bar{c}_{1,j}$ for $j\leq r$ is both boxed and circled if and only if $d'_{j}=0$, $d_{2r+1-j}=d'_{j-1}$ and $\mu_{r+1-j}=0$. Since $\mu\in\Z^{r}_{\geq 1}$,  there is no entry $\bar{c}_{1,j}$ for $j\leq r$ which is 
both boxed and circled. The entry $c_{1,j}$ for $j<r$ is boxed and circled if and only if $d'_{j+1}=0$,
\begin{equation}\label{eq:lm:gamma-strict}
d_{j}=d'_{j}
\text{ and }
2d_{j}=\mu_{r+1-j}+d'_{j-1}-d_{2r+1-j}+d_{2r-j}.
\end{equation}
By the upper bounds~\eqref{ineq:CQ-d}, $d_{j}\leq \mu_{r-j+1}+d_{j-1}-d_{2r-j+1}$ and $d_{j}\leq \mu_{r-j+1}$. 
Since $d_{j-1}'=\min(d_{j-1},d_{2r-j+1})$, Eqn.~\eqref{eq:lm:gamma-strict} is equivalent to $d_{j}=d_{2r-j}=\mu_{r-j+1}+d'_{j-1}-d_{2r-j+1}$. 
The decorated array $\Gamma(\Ft)$ is non-strict if and only if $d_{j}=d_{2r-j}=\mu_{r-j+1}+d'_{j-1}-d_{2r-j+1}$ and $d'_{j+1}=0$ for some $j\leq r-1$.

Next, in $\Gamma^{\iota}(\Ft)$, the entry $\iota(c_{1,j})$ for $j\leq r$ is not boxed and circled  simultaneously,
since that would imply the equalities 
$$d_{j}'=0,~d_{j-1}=d'_{j-1}, \text{ and }
d_j=\mu_{r+1-j}+d_{j-1}-d_{2r-j+1}+d_{2r-j}.
$$
By~(\ref{strictness-ineqs}), this contradicts the strictness of $k(\Ft)$ and $k_{1}-\varepsilon_{2}k_{2}<\mu_{r}$. 
The entry $\iota(\bar{c}_{1,j})$ for $j<r$ is boxed and circled if and only if  
\begin{equation}\label{eq:lm:iota-strict}
d'_{j+1}=0,~d_{2r-j}=d'_{j} \text{ and }
d_{2r-j}=\mu_{r+1-j}+d'_{j-1}-d_{2r-j+1}.
\end{equation}
By the same upper bounds for $d_{j}$, the entry $\iota(c_{1,j})$ for $j<r$ is both boxed and circled if and only if 
$d_{j}=d_{2r-j}=\mu_{r-j+1}+d'_{j-1}-d_{2r-j+1}$ and $d'_{j+1}=0$. The Lemma follows.
\end{proof}

For each short pattern $\Ft=(d_1,\dots,d_{2r-1})\in BZL_1(\mu)$, 
let $i_{0}(\Ft)$ or simply $i_0$ denote the quantity
$\min\cpair{i\mid 1\leq i<r, d_{i}\neq d_{2r-i}}$ provided this set is non-empty.
As $d_{i}-d_{2r-i}=k_{i}-k_{i+1}$ for $i<r-1$ and $d_{r-1}-d_{r+1}=k_{r-1}-2k_{r}$, the index $i_{0}$ is uniquely determined by $k(\Ft)$.

To establish Statement A, we will consider the following three cases: 
\begin{enumerate}
\item The set $\cpair{i\mid 1\leq i<r, d_{i}\neq d_{2r-i}}$ is empty. Such a short pattern will be,
by analogy with \cite{BBF4}, called {\it totally resonant}. 
\item The index $i_{0}$ exists and $d_{i_{0}}>d_{2r-i_{0}}$. We say that  such a short pattern is in {\it Class I}. 
\item The index $i_{0}$ exists and $d_{i_{0}}<d_{2r-i_{0}}$. We say that such a short pattern is in {\it Class II}.
\end{enumerate}
Since this classification of patterns is determined by the weight $k(\Ft)$, we shall similarly call a given weight $\bf k$
totally resonant, Class I, or Class II.

We begin by establishing some results for short patterns which are not totally resonant.  From now on, we fix $\mu$ and
suppose that $\Ft\in BZL_1(\mu)$ unless otherwise indicated.

\begin{lemma}\label{lm:gamma-circle}
Let $\Ft$ be a short pattern that is not totally resonant such that $k(\Ft)$ is strict and such that $\Gamma(\Ft)$ is strict.
Let $c$ be the entry $\bar{c}_{1,j}$ or $c_{1,j}$ of $\Gamma(\Ft)$ (resp.\ $\iota(\bar{c}_{1,j})$ or $\iota(c_{1,j})$ of $\Gamma^{\iota}(\Ft)$) where $1\leq j\leq i_{0}$. 
If  $c$ is not boxed,  then $G_{\Gamma}(\Ft)$ (resp.\ $G_{\Gamma^{\iota}}(\Ft)$) vanishes unless $n$ divides $c$. 
\end{lemma}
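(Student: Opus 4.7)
The plan is a chain-following argument on the top row of the decorated pattern, using the strictness of $k(\Ft)$ and of $\Gamma(\Ft)$ to rule out a boxed chain endpoint.

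First I would dispose of the easy case. If $c$ is neither circled nor boxed, then by the definition \eqref{dec:Gamma} of $\gamma_\Gamma$, the factor $\gamma_\Gamma(c)$ vanishes whenever $n\nmid c$, so $G_\Gamma(\Ft)=0$ unless $n\mid c$. Hence we may assume $c$ is circled, so that $c$ equals the entry immediately to its right in the top row (with the convention that the rightmost entry equals $0$ if circled, in which case $c=0$ and $n\mid c$ trivially). Following this chain of consecutive equalities rightward, let $c^\ast$ denote the rightmost entry equal to $c$; by construction $c^\ast$ is not circled. The main claim is that $c^\ast$ is also not boxed. Granted this, $c^\ast$ is neither circled nor boxed, and the first case applied to $c^\ast$ gives $G_\Gamma(\Ft)=0$ unless $n\mid c^\ast = c$.

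To prove the claim in the key case $c=c_{1,j}$ with chain ending at $c_{1,k}$, I would split by the location of $k$. When $k<i_0$, the circling of $c_{1,j},\ldots,c_{1,k-1}$ forces $d_{j+1}=\cdots=d_k=0$ via the difference formula $c_{1,i}-c_{1,i+1}=d_{i+1}$ valid in the totally resonant range, whereas boxing of $c_{1,k}$ would demand $d_k=\mu_{r+1-k}\geq 1$, a direct contradiction. When $k=i_0$ the chain instead yields $d'_{i_0}=0$ together with $d_{j+1}=\cdots=d_{i_0-1}=0$. In Class~II ($d_{i_0}<d_{2r-i_0}$) this forces $d_{i_0}=0$, and the boxing condition degenerates to the sign contradiction $0=\mu_{r+1-i_0}+d_{2r-i_0}>0$. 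In Class~I ($d_{i_0}>d_{2r-i_0}$) we have $d_{2r-i_0}=0$ and boxing forces $d_{i_0}=\mu_{r+1-i_0}$; invoking the strictness inequality $k_{i_0}-k_{i_0+1}<\mu_{r+1-i_0}+k_{i_0-1}-k_{i_0}$ together with the identity $k_{j'}=k_1$ for $j'\leq i_0$ (a consequence of total resonance), this collapses to $d_{i_0}<\mu_{r+1-i_0}$, contradicting boxing. When $k>i_0$, or when the chain crosses the midpoint $c_{1,r}$ into the $\bar c_{1,\cdot}$-entries, an analogous strictness argument at $i=k-1$ (or the terminal strictness $0<\mu_1+k_{r-1}-2k_r$) rules out boxing at the endpoint.

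The case $c=\bar c_{1,j}$ is treated symmetrically. One follows the chain through $\bar c_{1,j-1},\bar c_{1,j-2},\ldots,\bar c_{1,k'}$, using $\bar c_{1,j'}-\bar c_{1,j'-1}=d_{j'}$ in the totally resonant range, so the chain equalities give $d_{k'+1}=\cdots=d_j=0$. If $\bar c_{1,k'}$ were boxed ($d_{k'}=\mu_{r+1-k'}$), then the coupled entry $c_{1,k'}$ would be simultaneously boxed (the boxing condition for $c_{1,k'}$ reduces to the same equation $d_{k'}=\mu_{r+1-k'}$ in the totally resonant range) and circled (via $d_{k'+1}=0$), violating the strictness of $\Gamma(\Ft)$; the terminal case $k'=1$ is handled by the same pairing of $\bar c_{1,1}$ with $c_{1,1}$. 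For $\Gamma^\iota(\Ft)$, the decoration rules are $\iota$-shifted versions of those for $\Gamma(\Ft)$, and the same chain-following template applies with adjacent indices relabelled; the strictness inequalities remain the obstruction to a boxed endpoint, and Lemma~\ref{lm:strictness} ensures that the hypotheses transfer between $\Gamma$ and $\Gamma^\iota$. I expect the principal difficulty to be the careful case split at $k=i_0$, where the Class~I/II dichotomy and the precise strictness inequality must be matched, and at chains crossing the midpoint, where the difference and boxing formulas change form and the correct strictness inequality must be identified.
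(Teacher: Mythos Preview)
Your chain-following argument is exactly the paper's approach, and your outline is correct. Two details to be aware of as you complete it. First, when the chain for $c_{1,j}$ runs past $i_0$ (your case $k>i_0$), the key observation is that $c_{1,i_0}$ being circled forces $d_{i_0}=d'_{i_0}$, hence $d_{i_0}<d_{2r-i_0}$, so $\Ft$ is automatically in Class~II; this in turn prevents $\bar c_{1,i_0+1}$ from being circled (since $\bar c_{1,i_0+1}-\bar c_{1,i_0}=d_{2r-i_0}-d_{i_0}>0$), bounding the chain by $j_0<2r-i_0$, after which strictness of $k(\Ft)$ handles each sub-range as you anticipate. Second, for $\Gamma^\iota$ the circling rule links an entry to its \emph{left} neighbor ($\iota(c_{1,j})$ circled implies $\iota(c_{1,j})=\iota(c_{1,j-1})$), so the chain runs leftward rather than rightward; there is no need to invoke Lemma~\ref{lm:strictness}, since if $\Gamma^\iota(\Ft)$ is non-strict then $G_{\Gamma^\iota}(\Ft)=0$ and the conclusion is vacuous. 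Your Class~I/II split at $k=i_0$ is actually more careful than the paper's own treatment, which asserts $d_{j_0}=0$ there when only $d'_{j_0}=0$ follows directly.
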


\begin{proof}
First, suppose that the entry $c=\bar{c}_{1,j}$ of $\Gamma(\Ft)$ is not boxed.
If it is also not circled, then $G_{\Gamma}(\Ft)$ vanishes unless $\bar{c}_{1,j}$ is divisible by $n$,
by definition.  If it is circled,  then $\bar{c}_{1,j}=\bar{c}_{1,j+1}$,
so we may continue to the right of $\bar{c}_{1,j}$ until we come to the first uncircled entry,
and the same argument applies. 
This can only fail if we come to the edge of the pattern. If this happens, then $\bar{c}_{1,j}$ equals 0 and is divisible by $n$. Let $\bar{c}_{1,j_{0}}$ be the uncircled one for some $1\leq j_{0}<j$. Since  $\bar{c}_{1,j_{0}+1}$ is circled,  we have $d'_{j_{0}+1}=0$ and the entry $c_{1,j_{0}}$ is circled, by $c_{1,j_{0}}-c_{1,j_{0}+1}=d'_{j_{0}+1}$. In addition, since $\Gamma(\Ft)$ is strict, the entry $c_{1,j_{0}}$ is not boxed and $d_{j_{0}}\neq \mu_{r+1-j_{0}}$. Hence, $\bar{c}_{1,j_{0}}$ is neither boxed nor circled. $G_{\Gamma}(\Ft)$ vanishes unless $\bar{c}_{1,j_{0}}=\bar{c}_{1,j}$ is divisible by $n$.

Next, suppose that the entry $c=c_{1,j}$ of $\Gamma(\Ft)$ is not boxed. If it is also not circled, then again
$G_{\Gamma}(\Ft)$ vanishes unless $c_{1,j}$ is divisible by $n$, by definition. If $c_{1,j}$ is circled, 
we continue to the right of $c_{1,j}$ until we come to the first entry $c_{1,j_0}$ that is not circled.  
It is sufficient to show that $c_{1,j_{0}}$ is not boxed. To see this, if $j_{0}\leq i_{0}$, then the entry $c_{1,j_{0}-1}$ is circled and  $d_{j_{0}}=0$. Since $d_{j_{0}}\neq \mu_{r-j_{0}+1}$, the entry $c_{1,j_{0}}$ is also not boxed.
If $j_{0}>i_{0}$, then $c_{1,i_{0}}$ is circled and $d'_{i_{0}+1}+d_{i_{0}}-d'_{i_{0}}=0$. Thus, 
$d_{i_{0}}<d_{2r-i_{0}}$ and $\bar{c}_{1,i_{0}+1}$ is not circled since 
$\bar{c}_{1,i_{0}+1}-\bar{c}_{1,i_{0}}=d_{2r-i_{0}}-d_{i_{0}}>0$. 
We only need to consider the case $i_{0}<j_{0}<2r-i_{0}$. When $j_{0}\leq r$, since  $c_{1,j_{0}-1}$ is circled, $c_{1,j_{0}-1}=c_{1,j_{0}}$, which implies $d'_{j_{0}}=0$ and $d_{j_{0}-1}=d'_{j_{0}-1}$. 
In addition, since $k(\Ft)$ is strict,  we have $d_{j_{0}}+d'_{j_{0}}<\mu_{r-j_{0}+1}+d_{j_{0}-1}-d_{2r-j_{0}+1}+d_{2r-j_{0}}$ and hence $c_{1,j_{0}}$ is not boxed. 
When $r<j_{0}\leq 2r-i_{0}-1$, the entries  $c_{1,\ell}$ for all $i_{0}\leq \ell<j_{0}$ are circled. 
It follows that $d_{\ell}=0$ for all $i_{0}\leq \ell\leq j_{0}$. Since $k(\Ft)$ is strict, this
implies that the entry $c_{1,j_{0}}$ is not boxed.

The proof for $\Gamma^{\iota}(\Ft)$ is similar. For this case, however,
we consider the first uncircled
entry $\iota(c_{1,j_0})$ to the {\sl left} of $c$.  We omit the details.
\end{proof}

A similar result holds for the  quantities $G_{\Delta}(\Ft)$ and $G_{\Delta^{\iota}}(\Ft)$.

\begin{lemma}\label{lm:delta-circle}
Let $\Ft$ be a short pattern that is not totally resonant and such that  $\Delta(\Ft)$ (resp.\ $\Delta^{\iota}(\Ft)$) is strict.
Let $c$ be the entry $\bar{\Fc}_{1,j}$ or $\Fc_{1,j}$ of $\Delta(\Ft)$ 
(resp.\ $\bar{\Fc}(\Delta^{\iota})_{1,j}$ or $\Fc(\Delta^{\iota})_{1,j}$ of $\Delta^{\iota}(\Ft)$) where $1\leq j\leq i_{0}$. 
If  $c$ is not boxed,  then $G_{\Delta}(\Ft)$ (resp.\ $G_{\Delta^{\iota}}(\Ft)$) vanishes unless $n$ divides $c$. 
\end{lemma}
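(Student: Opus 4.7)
The plan is to mirror the proof of Lemma~\ref{lm:gamma-circle}, with the key observation that the decoration rules of $\Delta$ (and $\Delta^{\iota}$) are ``reversed'' relative to those of $\Gamma$: by~(\ref{eq:delta-circle}), an entry $\Fc_{1,j}$ of $\Delta(\Ft)$ is circled iff $d_{j-1}=0$ iff it equals its \emph{left} neighbor in the top row. Consequently, chains of circled entries are built by moving \emph{leftward} rather than rightward. The same is true of $\Delta^{\iota}$, since the condition $d_{j}=0$ expresses equality of $\Fc(\Delta^{\iota})_{1,j}$ with its left neighbor. So I will invert the traversal direction used in Lemma~\ref{lm:gamma-circle}.

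Fix $c$ as in the statement, at some position $m$ of the top row of $\Delta(\Ft)$, and assume $c$ is not boxed. If $c$ is also not circled, the claim is immediate from the definition~(\ref{dec:Delta}) of $\gamma_{\Delta}$. Otherwise, I traverse leftward through a chain $c=\Fc_{1,m}=\Fc_{1,m-1}=\cdots=\Fc_{1,k}$ of equal circled values, terminating at the first uncircled entry $\Fc_{1,k}$; if instead the chain reaches the left edge and $\Fc_{1,1}$ is circled, then $\Fc_{1,1}=0=c$ and the conclusion is trivial. The chain carries the information $d_{k}=d_{k+1}=\cdots=d_{m-1}=0$, so it suffices to show that $\Fc_{1,k}$ is not boxed; then $\gamma_{\Delta}(\Fc_{1,k})=0$ unless $n\mid\Fc_{1,k}=c$, and the conclusion follows.

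For $c=\Fc_{1,j}$ with $1\leq j\leq i_{0}$, we have $m=j\leq i_{0}<r$, hence $k\leq r$, and the boxing condition $d_{k}=\mu_{r+1-k}$ is ruled out immediately by $d_{k}=0$ together with $\mu\in\Z^{r}_{\geq1}$. For $c=\bar{\Fc}_{1,j}=\Fc_{1,2r-j}$ with $j\leq i_{0}$, the subcase $k\leq r$ is handled as above, while in the subcase $k>r$ the boxing condition becomes $d_{2r-k+1}=\mu_{k-r}+d_{2r-k}$ (using $d_{k}=0$); I rule this out by combining the strictness inequality~(\ref{strictness-ineqs}) at the relevant index $j'=2r-k$ with the chain-of-zeros and the asymmetry condition $d_{i_{0}}\neq d_{2r-i_{0}}$ coming from $j\leq i_{0}$. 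This is the precise analogue of the subcase $r<j_{0}\leq 2r-i_{0}-1$ in the proof of Lemma~\ref{lm:gamma-circle}.

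The treatment of $\Delta^{\iota}(\Ft)$ is entirely parallel, using the decoration rules of $\Delta^{\iota}$ in place of those of $\Delta$: circled entries again come in leftward chains of equal values, and the boxing conditions at the terminus of a chain are eliminated by the same inputs. The main obstacle is the subcase $k>r$ above, where---as in the most delicate case of the $\Gamma$ argument---the boxing condition does not vanish on its face and must be excluded by the combined use of strictness of $k(\Ft)$, the chain-of-zeros produced by circled entries, and the non-symmetry of the pattern at the index $i_{0}$.
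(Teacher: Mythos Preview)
Your overall strategy is exactly the paper's: reverse the traversal direction compared to Lemma~\ref{lm:gamma-circle}, walk leftward along a chain of circled entries until you hit the first uncircled entry $\Fc_{1,k}$, then show $\Fc_{1,k}$ is not boxed. The treatment of the case $k\le r$ and of the edge cases is fine and matches the paper.

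The gap is in the subcase $k>r$ (i.e.\ the terminus is $\bar{\Fc}_{1,j_{0}}$ with $j_{0}=2r-k$). You invoke the weight–strictness inequalities~(\ref{strictness-ineqs}), but the lemma's hypothesis is that the \emph{array} $\Delta(\Ft)$ is strict, not that $k(\Ft)$ is strict; these are different. Worse, even if you assume $k(\Ft)$ strict, the inequality at index $j_{0}$ reads
\[
d_{j_{0}+1}<\mu_{r-j_{0}}+d_{j_{0}}-d_{2r-j_{0}}+d_{2r-j_{0}-1},
\]
and with $d_{2r-j_{0}}=0$ this becomes $d_{j_{0}+1}<\mu_{r-j_{0}}+d_{j_{0}}+d_{2r-j_{0}-1}$. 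That extra term $d_{2r-j_{0}-1}=d_{k-1}$ is \emph{positive}, precisely because $\Fc_{1,k}$ is uncircled (uncircled forces $d_{k-1}\ne 0$). So the inequality is strictly weaker than what you need to rule out the boxing condition $d_{j_{0}+1}=\mu_{r-j_{0}}+d_{j_{0}}$, and neither the chain of zeros nor the asymmetry at $i_{0}$ rescues it.

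The paper's fix is simpler and uses only the stated hypothesis. From~(\ref{ineq:CQ-d}) one always has $d_{j_{0}+1}\le\mu_{r-j_{0}}$. If $d_{j_{0}}>0$ this immediately gives $d_{j_{0}+1}\le\mu_{r-j_{0}}<\mu_{r-j_{0}}+d_{j_{0}}$, so $\bar{\Fc}_{1,j_{0}}$ is not boxed. If $d_{j_{0}}=0$, then $\Fc_{1,j_{0}+1}$ is circled; strictness of $\Delta(\Ft)$ forces $\Fc_{1,j_{0}+1}$ not boxed, i.e.\ $d_{j_{0}+1}<\mu_{r-j_{0}}=\mu_{r-j_{0}}+d_{j_{0}}$, and again $\bar{\Fc}_{1,j_{0}}$ is not boxed. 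Replace your appeal to~(\ref{strictness-ineqs}) by this two–case argument using~(\ref{ineq:CQ-d}) and array strictness, and the proof goes through.
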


\begin{proof}
Similarly to Lemma~\ref{lm:gamma-circle}, we show that there exists an index $j_0$ such that the 
entry $\Fc_{1,j_{0}}=c$ is neither boxed nor circled.   

First, suppose that the entry $c=\Fc_{1,j}$ of $\Delta(\Ft)$ is not boxed. If it is not circled, then using
the definition of $G_{\Delta}(\Ft)$ the result holds. 
If it is circled, we continue to the left of $\Fc_{1,j}$ until we come to the first uncircled entry, which we denote
$\Fc_{1,j_0}$. 
Since $\Fc_{1,1}$ is never circled in a non-totally-resonant pattern, such an entry must exist.
Since $\Fc_{1,j_{0}+1}$ is circled, we have $d_{j_{0}}=0<\mu_{r+1-j_{0}}$. Hence, $\Fc_{1,j_{0}}$ is not boxed. 

Next suppose that the entry $c=\bar{\Fc}_{1,j}$ of $\Delta(\Ft)$ is not boxed. 
If $\bar{\Fc}_{1,j}=0$, then it is divisible by $n$, so we may assume that $\bar{\Fc}_{1,j}>0$. 
If $c$ is not circled, we are done. If it is circled, we continue to the left of $\bar{\Fc}_{1,j}$ until come to the first entry
which is not circled. Let $c'$ be this entry.
If $c'=\Fc_{1,j_{0}}$ for some $j_{0}\leq r$, then $\Fc_{1,j_{0}+1}$ is circled. It follows that $d_{j_{0}}=0$ and $\Fc_{1,j_{0}}$ is not boxed. If $c'=\bar{\Fc}_{1,j_{0}}$ for some $j<j_{0}<r$, then the entry $\bar{\Fc}_{1,j_{0}-1}$ is circled and $d_{2r-j_{0}}=0$. When $d_{j_{0}}>0$, $d_{j_{0}+1}<\mu_{r-j_{0}}+d_{j_{0}}$ and $\bar{\Fc}_{1,j_{0}}$ is not boxed. When $d_{j_{0}}=0$, the entry $\Fc_{1,j_{0}+1}$ is circled. Since  $\Delta(\Ft)$ is strict, $\Fc_{1,j_{0}+1}$ is not boxed and $d_{j_{0}+1}<\mu_{r-j_{0}}$. Hence, $\bar{\Fc}_{1,j_{0}}$ is not boxed.

The proof for $\Delta^{\iota}(\Ft)$ is similar, and we omit the details.
\end{proof}

If $\Ft$ is not totally resonant, recall that $i_0=\min\cpair{i\mid 1\leq i<r, d_{i}\neq d_{2r-i}}$.
Define  
\begin{align*}
\Ft^{*}&=(d_1, d_2,\dots,d_{i_0-1},d'_{i_0},d_{2r-i_0+1},\dots,d_{2r-2},d_{2r-1})\\
\Ft^{\sharp}&=(d_{i_{0}+1},d_{i_{0}+2},\dots,d_{2r-i_{0}-1}).
\end{align*} 
Let $a=|d_{i_{0}}-d_{2r-i_{0}}|$,
and define $\mu^{*}=(\mu_{r+1-i_{0}}-a,\mu_{r+2-i_{0}},\dots,\mu_{r})$ and $\mu^{\sharp}=(\mu_{1},\mu_{2},\dots,\mu_{r-i_{0}})$ if $\Ft$ is in Class I,  and $\mu^{*}=(\mu_{r+1-i_{0}},\mu_{r+2-i_{0}},\dots,\mu_{r})$ and $\mu^{\sharp}=(\mu_{1},\mu_{2},\dots,\mu_{r-i_{0}}-a)$ if $\Ft$ is in Class II.
Here in Class I if $i_{0}=1$ then $\mu^{*}=\mu_{r}-a$, and in Class II if $i_{0}=r-1$ then $\mu^{\sharp}=\mu_{1}-a$. 
By the bounds~\eqref{ineq:BZL-d},  $\Ft^{*}$ and $\Ft^{\sharp}$ are in $BZL_{1}(\mu^{*})$ and $BZL_{1}(\mu^{\sharp})$ respectively. 
Set $k^{*}=k(\Ft^{*})$ and $k^{\sharp}=k(\Ft^{\sharp})$.
By the definition of weight  in \eqref{eq:k} (with $r$ in \eqref{eq:k} replaced by $i_0$, resp.\ $r-i_0$),  we have
\begin{align}\label{star-sharp1}k^{*}_{i_{0}}&=k^{*}_{j}/2=\sum^{i_{0}-1}_{i=1}d_{i}+d'_{i_{0}}\text{ for $j\leq i_{0}-1$,}\\
\label{star-sharp2}k^{\sharp}_{r-i_{0}}&=\sum^{r-i_{0}}_{i=1}d_{i+i_{0}},\\
\label{star-sharp3}k^{\sharp}_{i}&=\sum^{2(r-i_{0})-1}_{j=i}d_{i_{0}+j}+d_{r}+\sum^{i-1}_{j=1}d_{2r-i_{0}-j}, \text{ for } 1\leq i<r-i_{0}.
\end{align}

Notice that even if $k(\Ft)$ is strict we could we have $\mu_{r+1-i_{0}}=a$ in the Class I case or $\mu_{r-i_{0}}=a$ in 
the Class II case. If $k(\Ft)$ is strict, $\mu_{r+1-i_{0}}=a$ in the Class I case may occur only when $i_{0}=1$, and 
$\mu_{r-i_{0}}=a$ in the Class II case may occur only when $i_{0}\leq r-2$.

A short pattern $\Ft$ in $BZL_{1}(\mu)$ is called {\it maximal} if $d_{i}=d_{2r-i}=\mu_{r+1-i}$ for all $i\leq r$,
and {\it non-maximal} otherwise.  We have the following criterion for non-vanishing.

\begin{lemma}\label{lm:divisibility}
Suppose that $k(\Ft)$ is strict.
\begin{enumerate}
\item
If $\Ft$ is in Class I, then $G_{\Gamma}(\Ft)$ and $G_{\Delta}(\Ft)$ vanish unless $n$ divides $k^{*}_{i_{0}}$,
and $G_{\Gamma^{\iota}}(\Ft)$ and $G_{\Delta^{\iota}}(\Ft)$ vanish unless $n$ divides $k_{1}-k^{*}_{i_{0}}$.
\item
If $\Ft$ is in Class II, then $G_{\Gamma}(\Ft)$ and $G_{\Delta}(\Ft)$ vanish unless $n$ divides $k_{1}-k^{*}_{i_{0}}$,
and $G_{\Gamma^{\iota}}(\Ft)$ and $G_{\Delta^{\iota}}(\Ft)$ vanish unless $n$ divides $k^{*}_{i_{0}}$.
\item
If $\Ft$ is in Class I or Class II (that is, $\Ft$ is not totally resonant) and $\Ft^{*}\in BZL_1(\mu^*)$ is non-maximal, 
then $G_{\Gamma}(\Ft)$, $G_{\Delta}(\Ft)$, $G_{\Gamma^{\iota}}(\Ft)$, and $G_{\Delta^{\iota}}(\Ft)$ vanish unless $n$ divides $k_{1}$.
\end{enumerate}
\end{lemma}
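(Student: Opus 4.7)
The plan is to identify, for each of the four quantities $G_\Gamma$, $G_{\Gamma^\iota}$, $G_\Delta$, $G_{\Delta^\iota}$ and each case, a specific entry of the decorated pattern whose residue modulo $n$ directly encodes the required divisibility, and then to invoke Lemma~\ref{lm:gamma-circle} or Lemma~\ref{lm:delta-circle} on that entry after verifying it is not boxed. The key algebraic identifications, computed from \eqref{eq:c}, \eqref{eq:Fc}, and the hypothesis $d_j = d_{2r-j}$ for all $j < i_0$, are: in either class $\bar{c}_{1,i_0} = k^*_{i_0}$ and $c_{1,i_0} = k_1 - k^*_{i_0}$ in the $\Gamma$-pattern; in the $\Delta$-pattern, $\bar{\Fc}_{1,i_0} = k^*_{i_0}$ in Class I and $\Fc_{1,i_0+1} = k_1 - k^*_{i_0}$ in Class II. For the $\iota$-versions, the relation $\iota(x) \equiv x - k_1 \pmod n$ swaps the roles of $k^*_{i_0}$ and $k_1 - k^*_{i_0}$, accounting for the swap of divisibilities between parts (1) and (2).

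For parts (1) and (2), the non-boxedness of these entries follows from one family of contradictions. The $\Gamma$-boxing condition at index $i_0$ reduces (using $d_{i_0-1} = d_{2r-i_0+1}$) to $d'_{i_0} = \mu_{r+1-i_0}$ for $\bar{c}_{1,i_0}$ and to $2d_{i_0} = \mu_{r+1-i_0} + d_{2r-i_0}$ for $c_{1,i_0}$; each fails in its class using $d_{i_0} \leq \mu_{r+1-i_0}$ together with $d_{2r-i_0} < d_{i_0}$ in Class I or $d_{i_0} < d_{2r-i_0}$ in Class II. The $\Delta$-boxing of $\bar{\Fc}_{1,i_0}$ demands $d_{i_0+1} = \mu_{r-i_0} + d_{i_0} - d_{2r-i_0}$, contradicting the upper bound $d_{i_0+1} \leq \mu_{r-i_0}$ from \eqref{ineq:CQ-d} in Class I, and the analogous $\Delta^\iota$-boxing condition for $\iota(\bar{\Fc}_{1,i_0})$ shares this form. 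In each case Lemma~\ref{lm:gamma-circle} or Lemma~\ref{lm:delta-circle} then delivers the required divisibility.

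The main obstacle is that for $G_\Delta$ in Class II, and symmetrically for $G_{\Delta^\iota}$ in Class I, the critical entry $\Fc_{1,i_0+1}$ (respectively $\iota(\bar{\Fc}_{1,i_0})$) lies just beyond the range $j \leq i_0$ to which Lemma~\ref{lm:delta-circle} is stated. To handle this I run a chain argument in the spirit of the proofs of Lemmas~\ref{lm:gamma-circle} and \ref{lm:delta-circle}. First, $\Fc_{1,i_0+1}$ itself is not boxed in Class II, since its boxing would force $d_{i_0+1} = \mu_{r-i_0}$ while the inequality $d_{i_0+1} \leq \mu_{r-i_0} + d_{i_0} - d_{2r-i_0} < \mu_{r-i_0}$ rules this out. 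If $\Fc_{1,i_0+1}$ is not circled, its decoration \eqref{dec:Delta} already forces $n \mid \Fc_{1,i_0+1} = k_1 - k^*_{i_0}$. If it is circled, so that $d_{i_0} = 0$, I walk leftward through the chain of equal entries to the leftmost position $j_1 \leq i_0$; there $d_{j_1} = 0 < \mu_{r+1-j_1}$ ensures $\Fc_{1,j_1}$ is not boxed, and Lemma~\ref{lm:delta-circle} applies within its stated range to give $n \mid \Fc_{1,j_1} = \Fc_{1,i_0+1}$. The symmetric case for $G_{\Delta^\iota}$ in Class I uses the same chain principle in the $\Delta^\iota$ array, exploiting the parallel circling rule $d_j = 0$.

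For part (3), the non-maximality of $\Ft^*$ provides the smallest index $i_1 \in \{1,\ldots,i_0\}$ with $d_{i_1} < \mu_{r+1-i_1}$, while $d_j = \mu_{r+1-j}$ for all $j < i_1$. At this index both $c_{1,i_1}$ and $\bar{c}_{1,i_1}$ (respectively $\Fc_{1,i_1}$ and $\bar{\Fc}_{1,i_1-1}$) fail to be boxed: the $c$- and $\Fc$-type need $d_{i_1} = \mu_{r+1-i_1}$ directly, while the $\bar{c}$- and $\bar{\Fc}$-type boxing reduces, after substituting the maximal values of $d_j$ for $j < i_1$, to the same equality. Lemmas~\ref{lm:gamma-circle} and \ref{lm:delta-circle} then yield $n$-divisibility of each entry, and the identities $c_{1,i_1} + \bar{c}_{1,i_1} = k_{i_1}$ and $\Fc_{1,i_1} + \bar{\Fc}_{1,i_1-1} = k_{i_1}$, combined with $k_{i_1} = k_1$ (which holds because $d_j = d_{2r-j}$ for $j < i_0$), give $n \mid k_1$. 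The boundary case $i_1 = i_0$ invokes the class-dependent non-boxedness from parts (1)--(2) for the pair member that would otherwise need special treatment, and the $\iota$-versions follow by the same computation after applying the shift $\iota(x) \equiv x - k_1 \pmod n$.
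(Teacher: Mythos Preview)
Your proof is correct and follows essentially the same approach as the paper: identify a specific unboxed entry at (or adjacent to) index $i_0$ whose value is $k^*_{i_0}$ or $k_1-k^*_{i_0}$ modulo $n$, then invoke Lemmas~\ref{lm:gamma-circle} and~\ref{lm:delta-circle}. The paper uses exactly the same entries ($\bar{c}_{1,i_0}$, $c_{1,i_0}$, $\bar{\Fc}_{1,i_0}$, $\Fc_{1,i_0+1}$ and their $\iota$-shifts) and, for part~(3), the same pairing $c_{1,j}+\bar{c}_{1,j}=\Fc_{1,j}+\bar{\Fc}_{1,j-1}=k_1$. You are more careful than the paper about one point: the entry $\Fc_{1,i_0+1}$ (and analogously $\bar{\Fc}(\Delta^\iota)_{1,i_0+1}$) technically sits just outside the stated range $j\le i_0$ of Lemma~\ref{lm:delta-circle}, and you supply an explicit chain argument for it, whereas the paper simply cites the lemma---evidently counting on the reader to see that the proof of Lemma~\ref{lm:delta-circle} extends verbatim to this boundary case.
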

\begin{proof}
Suppose that $\Ft$ is in Class I. 
In $\Gamma(\Ft)$, $\bar{c}_{1,i_{0}}=k^{*}_{i_{0}}$ is not boxed. Indeed, it would be boxed if and only if
$d_{i_0}'=\mu_{r+1-i_0}$, but the inequality (\ref{ineq:c}) rules this out.  Similarly,
in $\Delta(\Ft)$, $\bar{\Fc}_{1,i_{0}}=k^{*}_{i_{0}}$ is not boxed. 
In $\Gamma^{\iota}(\Ft)$, $\iota(\bar{c}_{1,i_{0}})=N-k_{1}+k^{*}_{i_{0}}$ is not boxed.
In $\Delta^{\iota}(\Ft)$, $\bar{\Fc}(\Delta^{\iota})_{1,i_{0}+1}=\iota(\bar{\Fc}_{1,i_{0}})=N-k_{1}+k^{*}_{i_{0}}$ is not boxed. 
Hence the desired divisibility properties follow from Lemmas~\ref{lm:gamma-circle} and \ref{lm:delta-circle}.

Suppose instead that $\Ft$ is in Class II. 
In $\Gamma(\Ft)$, $c_{1,i_{0}}=k_{1}-k^{*}_{i_{0}}$ is not boxed.
In $\Delta(\Ft)$, $\Fc_{1,i_{0}+1}=k_{1}-k^{*}_{i_{0}}$ is not boxed.  
In $\Gamma^{\iota}(\Ft)$, $\iota(c_{1,i_{0}})=N-k^{*}_{i_{0}}$ is not boxed.
In $\Delta^{\iota}(\Ft)$, $\Fc(\Delta^{\iota})_{1,i_{0}}=\iota(\Fc_{1,i_{0}+1})=N-k^{*}_{i_{0}}$ is not boxed.
Again the desired divisibility properties follow from Lemmas~\ref{lm:gamma-circle} and \ref{lm:delta-circle}.

Last, suppose that $\Ft$ is not totally resonant and $\Ft^{*}\in BZL_1(\mu^*)$ is non-maximal.  
Then there exists an index $j\leq i_0$ such that  $d_{j}<\mu^{*}_{i_{0}+1-j}$. 
In $\Gamma(\Ft)$, resp.\ $\Gamma^{\iota}(\Ft)$, the entries
$c_{1,j}$, $\bar{c}_{1,j}$, resp.\ $\iota(c_{1,j})$, $\iota(\bar{c}_{1,j})$, are not boxed. 
In $\Delta(\Ft)$, resp.\ $\Delta^\iota(\Ft)$,  the entries $\Fc_{1,j}$,
$\bar{\Fc}_{1,j-1}$, resp.\ $\Fc(\Delta^{\iota})_{1,j-1}=\iota(\Fc_{1,j})$, $\bar{\Fc}(\Delta^\iota)_{1,j}=\iota(\bar{\Fc}_{1,j-1})$, are not boxed. By Lemmas~\ref{lm:gamma-circle} and \ref{lm:delta-circle},  $G_{\Gamma}(\Ft)$
and $G_{\Delta}(\Ft)$, resp.\ $G_{\Gamma^{\iota}}(\Ft)$ and $G_{\Delta^{\iota}}(\Ft)$, vanish unless $n$ divides $c_{1,j}+\bar{c}_{1,j}=\Fc_{1,j}+\bar{\Fc}_{1,j-1}=k_{1}$,
resp.\  $\iota(c_{1,j})+\iota(\bar{c}_{1,j})=\iota(\Fc_{1,j})+\iota(\bar{\Fc}_{1,j-1})=2N-k_1$. Thus the Lemma holds. 
\end{proof}

Given a fixed weight ${\bf k}$ which is strict with respect to $\mu$, i.e.\ one satisfying (\ref{strict}), 
 let $\FS_{\bf k}(\mu)$ be the set of all short patterns $\Ft$ with $k(\Ft)={\bf k}$. The set $\FS_{\bf k}(\mu)$ depends on ${\bf k}$ and on $\mu$ but we also write $\FS_{\bf k}$ or even $\FS$ for convenience.

For later use, we state the following result, which will allow us to carry out an inductive argument.
\begin{lemma}\label{split-reduce} Suppose that $\bf k$ is in Class I or Class II and has index $i_0$.  Then 
the map $\Ft\to (\Ft^{*},\Ft^{\sharp})$ gives a bijection from
$
\FS_{\bf k}(\mu)$
to
$$ \bigcup_{{\bf k^{*}}, {\bf k^{\sharp}}}\FS_{\bf k^{*}}(\mu^*)\times \FS_{\bf k^{\sharp}}(\mu^{\#}),
$$
where ${\bf k^{*}}$ runs over the totally resonant weight vectors of length $i_0$, ${\bf k^\#}$ runs over 
the weight vectors of length
$r-i_0$,  and the union is over all pairs of weights satisfying 
Eqn.~\eqref{eq:I-k} below in Case I and Eqn.~\eqref{eq:II-k} below in Case II.
\end{lemma}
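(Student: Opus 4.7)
The plan is to construct the inverse map explicitly and to verify that the two inequality systems --- for $\Ft\in BZL_{1}(\mu)$ on one side, and for the pair $(\Ft^{*},\Ft^{\sharp})\in BZL_{1}(\mu^{*})\times BZL_{1}(\mu^{\sharp})$ on the other --- are in precise correspondence once one fixes the class of $\Ft$ and the integer $a=|d_{i_{0}}-d_{2r-i_{0}}|$. Given a pair $(\Ft^{*},\Ft^{\sharp})$ with $\Ft^{*}=(d^{*}_{1},\dots,d^{*}_{2i_{0}-1})$ totally resonant and $\Ft^{\sharp}=(d^{\sharp}_{1},\dots,d^{\sharp}_{2(r-i_{0})-1})$, I would reconstruct $\Ft=(d_{1},\dots,d_{2r-1})$ by setting $d_{j}=d^{*}_{j}$ for $j<i_{0}$, $d_{j}=d^{\sharp}_{j-i_{0}}$ for $i_{0}<j<2r-i_{0}$, and $d_{j}=d^{*}_{j-2(r-i_{0})}$ for $j>2r-i_{0}$; at the two ``split'' positions, $d_{i_{0}}=d^{*}_{i_{0}}+a$ and $d_{2r-i_{0}}=d^{*}_{i_{0}}$ in Class I (resp.\ $d_{i_{0}}=d^{*}_{i_{0}}$ and $d_{2r-i_{0}}=d^{*}_{i_{0}}+a$ in Class II). The value of $a$ is determined by the weight data through the relations described below.

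The forward direction requires checking that $\Ft^{*}\in BZL_{1}(\mu^{*})$ and $\Ft^{\sharp}\in BZL_{1}(\mu^{\sharp})$ using the inequalities \eqref{ineq:CQ-d} (equivalent to \eqref{ineq:BZL-d} by Lemma~\ref{lm:CQ=BZL}). For $\Ft^{*}$, the resonance $d_{j}=d_{2r-j}$ for $j<i_{0}$ forces $\Ft^{*}$ to be totally resonant, and the inequalities at interior positions follow immediately from the corresponding inequalities on $\Ft$. The bound $d'_{i_{0}}\leq \mu^{*}_{1}$ at the middle index is verified case by case: in Class I it becomes $d_{i_{0}}-a\leq \mu_{r+1-i_{0}}-a$, i.e.\ the original $d_{i_{0}}\leq \mu_{r+1-i_{0}}$, while in Class II it is literally the original bound. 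For $\Ft^{\sharp}$, most inequalities are inherited from those on $\Ft$ at indices strictly between $i_{0}$ and $2r-i_{0}$. The decisive case is the first bound $d^{\sharp}_{1}\leq \mu^{\sharp}_{r-i_{0}}$: in Class I this is the original $d_{i_{0}+1}\leq \mu_{r-i_{0}}$; in Class II it requires the coupled inequality $d_{i_{0}+1}+d_{2r-i_{0}}\leq \mu_{r-i_{0}}+d_{i_{0}}$, which, upon substituting $d_{2r-i_{0}}-d_{i_{0}}=a$, collapses to $d_{i_{0}+1}\leq \mu_{r-i_{0}}-a=\mu^{\sharp}_{r-i_{0}}$. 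The weight formulas \eqref{star-sharp1}--\eqref{star-sharp3} then express ${\bf k}^{*}$ and ${\bf k}^{\sharp}$ in terms of ${\bf k}$, yielding \eqref{eq:I-k} or \eqref{eq:II-k} accordingly; in particular $a$ is recoverable from ${\bf k}$ alone.

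The main obstacle will be surjectivity: an arbitrary pair $(\Ft^{*},\Ft^{\sharp})$ with the prescribed weight relations must reassemble into a genuine short pattern in $BZL_{1}(\mu)$, and the delicate condition is the coupled inequality at $j=i_{0}$, which involves $d_{i_{0}+1}$ (coming from $\Ft^{\sharp}$) together with $d_{2r-i_{0}}$ and $d_{i_{0}}$ (both reconstructed from $\Ft^{*}$ and $a$). The decomposition $\mu\mapsto(\mu^{*},\mu^{\sharp})$ is designed so that in Class I the shift by $a$ is absorbed into $\mu^{*}_{1}$ while $\mu^{\sharp}$ retains its first bound, and symmetrically in Class II, so that the joint inequality follows from $d^{*}_{i_{0}}\leq\mu^{*}_{1}$ together with $d^{\sharp}_{1}\leq\mu^{\sharp}_{r-i_{0}}$ in each case. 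Injectivity is immediate from the explicit reconstruction, since $(\Ft^{*},\Ft^{\sharp})$ together with the class and $a$ uniquely determine $\Ft$. Together these verifications yield the claimed bijection.
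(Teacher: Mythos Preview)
Your proposal is correct and is essentially a careful unwinding of what the paper asserts ``follows directly from the definitions''; the paper offers no further detail beyond that single sentence. Your explicit construction of the inverse and case-by-case verification of the boundary inequalities at $j=i_{0}-1$ and $j=i_{0}$ is exactly the content that the paper suppresses.
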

\begin{proof}
This follows directly from the definitions.
\end{proof}

We now proceed to prove Statement A in each of the three cases enumerated above.

\subsection{The Totally Resonant Case}\label{Tot-Res-Case} \label{sec:resonant}

In this subsection, we consider a short pattern $\Ft$ that is totally resonant. Then  for $1\leq j\leq r-1$, we have
\begin{equation}\label{eq:resonant-c}
k_{j}=2k_{r}=2c_{1,r}=\Fc_{1,1},~
\bar{c}_{1,j}=\bar{\Fc}_{1,j}=\sum^{j}_{i=1}d_{i},
\text{ and }
\Fc_{1,j+1}=c_{1,j}=k_{r}+\sum^{r}_{i=j+1}d_{i}.
\end{equation}
As above, we write $\bar{c}_{1,j}=c_{1,2r-j}$ and $\bar{\Fc}_{1,j}=\Fc_{1,2r-j}$ for $1\leq j\leq r$, for convenience.

We will apply the results concerning totally resonant short pattern prototypes of type A
in \cite{BBF4} to establish Eqn.~\eqref{eq:H-gamma-delta}.
To do so, we assign decorated {\it two-row} arrays, similar to those in \cite{BBF4} Ch.~6 ff., to the short pattern $\Ft$.

Let $\Gamma'(\Ft)$ be the array of nonnegative integers
\begin{equation*}
\Gamma'(\Ft)=\cpair{
\begin{matrix}
\bar c_{1,r}&&\bar{c}_{1,r-1}&&\bar{c}_{1,r-2}&&\cdots&&\bar{c}_{1,1}\\
&c_{1,r-1}&&c_{1,r-2}&&\cdots&&c_{1,1}&
\end{matrix}
},
\end{equation*}
where the entries $c_{1,j}$ are defined by Eqn.~\eqref{eq:c} above (with $\bar c_{1,r}=c_{1,r}$),
and decorated as follows. In $\Gamma'(\Ft)$, $\bar{c}_{1,j}$ is circled if $\bar{c}_{1,j}=\bar{c}_{1,j-1}$ and $\bar{c}_{1,j}$ is boxed if $\bar{c}_{1,j}-\bar{c}_{1,j-1}=\mu_{r-j+1}$, where $1\leq j\leq r$. In the bottom row, $c_{1,j}$ is circled if and only if the $\bar{c}_{1,j+1}$ is circled, and $c_{1,j}$ is boxed if and only if the $\bar{c}_{1,j}$ is boxed.
The array $\Gamma'(\Ft)$ has the property that
each sum of left diagonals $c_{1,i}+\bar{c}_{1,i}$ equals $2k_r$ for $1\leq i<r$; however $\bar{c}_{1,r}=k_r$ and not $2k_r$.
Define  $G_{\Gamma'}(\Ft)=\prod^{2r-1}_{j=1}\gamma_{\Gamma}(c_{1,j})$.  Since the rules
are the same, it is immediate that $G_{\Gamma'}(\Ft)=G_{\Gamma}(\Ft)$.

Also let $\Delta'(\Ft)$ be the array
$$
\Delta'(\Ft)=\cpair{
\begin{matrix}
\Fc_{1,r}&&\Fc_{1,r-1}&&\Fc_{1,r-2}&&\cdots&&\Fc_{1,1}\\
&\bar{\Fc}_{1,r-1}&&\bar{\Fc}_{1,r-2}&&\cdots&&\bar{\Fc}_{1,1}&
\end{matrix}
},
$$
where the entries $\Fc_{1,j}$ are defined in Eqn.~\eqref{eq:Fc} and decorated as follows.
In $\Delta'(\Ft)$, if $1\leq j<r$, then $\Fc_{1,j}$ is circled if $\Fc_{1,j}=\Fc_{1,j+1}$, and $\Fc_{1,j}$ is boxed if $\Fc_{1,j}-\Fc_{1,j+1}=\mu_{r-j+1}$. The entry $\Fc_{1,r}$ is circled if $\Fc_{1,r}=\bar{\Fc}_{1,r-1}$
and it is boxed if $\Fc_{1,r}-\Fc_{1,r+1}=2\mu_{1}$. In the bottom row,  
$\bar{\Fc}_{1,j}$ is circled if and only if the $\Fc_{1,j}$ is circled, and the $\bar{\Fc}_{1,j}$ is boxed if and 
only if  the $\Fc_{1,j+1}$ is boxed. 
The array $\Delta'(\Ft)$ is a $\Delta$-accordion of weight $2k_r$ in the sense of \cite{BBF4}, pg.~43:
each sum of right diagonals $\Fc_{1,j}+\bar{\Fc}_{1,j-1}$ equals $2k_r$, as does ${\Fc}_{1,1}$.
Define  $G_{\Delta'}(\Ft)=q^{-\sum^{r}_{i=1}d_{i}}\prod^{2r-1}_{j=1}\gamma_{\Delta}({\Fc}_{1,j})$.
Notice that the array $\Delta'(\Ft)$ does {\sl not} have the same circling rule as that for $\Delta(\Ft)$.
We will compare $G_{\Delta}(\Ft)$ and $G_{\Delta'}(\Ft)$ below.

\begin{remark}{\rm
The decoration rules for the arrays
$\Gamma'(\Ft)$ and $\Delta'(\Ft)$ above are the same as the decoration rules for the type A two-row 
``accordion" arrays $\Gamma(\Ft)$ and $\Delta(\Ft)$ in the type A totally resonant case
as defined in \cite{BBF4},
Ch.~6.}
\end{remark}

\begin{lemma}\label{lm:Delta=Delta'}
Let $\Ft$ be a totally resonant short pattern. Then
$G_{\Delta}(\Ft)=G_{\Delta'}(\Ft)$.
\end{lemma}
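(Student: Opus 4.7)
The two arrays $\Delta(\Ft)$ and $\Delta'(\Ft)$ consist of the same $(2r-1)$-tuple of entries $(\Fc_{1,1},\dots,\Fc_{1,r},\bar\Fc_{1,r-1},\dots,\bar\Fc_{1,1})$, and their prefactors $q^{-\sum_{i=1}^r d_{2r-i}}$ and $q^{-\sum_{i=1}^r d_i}$ agree by total resonance $d_i=d_{2r-i}$ ($i<r$). It therefore suffices to show that the products $\prod_{j=1}^{2r-1}\gamma_\Delta(\Fc_{1,j})$ computed under the two decoration schemes coincide.

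I would first verify that the boxing conditions match entry-by-entry. Applying the equivalent reformulation of the $\Delta$-boxing stated in the lemma following \eqref{eq:Fc}, the entry $\Fc_{1,j}$ ($j\le r$) is boxed iff $d_j=\mu_{r-j+1}$, and $\bar\Fc_{1,j}$ ($j<r$) iff $d_{j+1}=\mu_{r-j}+d_j-d_{2r-j}$; by total resonance the latter collapses to $d_{j+1}=\mu_{r-j}$. A direct reading of the $\Delta'$-rules, including the hinge condition $\Fc_{1,r}-\bar\Fc_{1,r-1}=2\mu_1$ and the cross-row rule that $\bar\Fc_{1,j}$ is boxed iff $\Fc_{1,j+1}$ is, yields precisely the same conditions on the corresponding entries. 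Hence the boxed contributions coincide.

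The heart of the argument is the circling comparison, since the two circling rules genuinely differ: in $\Delta$ an entry is circled based on its \emph{left} neighbor in the one-row layout (together with an auxiliary ``$\Fc_{1,j}=0$'' clause), whereas in $\Delta'$ the top row uses \emph{right}-neighbor equality and the bottom row copies the top. My plan is to partition $\{j:1\le j\le r,\ d_j=0\}$ into maximal consecutive chains $[a,b]$ and, chain by chain, enumerate the entries circled in each decoration; by total resonance, the bottom-half entries are governed by the same chain. Using the totally resonant formulas $\Fc_{1,j}=k_r+\sum_{i=j}^{r-1}d_i$, $\Fc_{1,r}=2d_r+\sum_{i=1}^{r-1}d_i$, and $\bar\Fc_{1,j}=\sum_{i=1}^j d_i$, one checks that the symmetric difference of the two circled sets decomposes into pairs of entries of equal value: for $1<a\le b<r$ the pairs are $(\Fc_{1,b+1},\Fc_{1,a})$ and $(\bar\Fc_{1,a-1},\bar\Fc_{1,b})$; for $a=1,\ b<r$ the ``$\Fc_{1,j}=0$'' clause circles the vanishing entries $\bar\Fc_{1,1},\dots,\bar\Fc_{1,b}$ in $\Delta$, leaving the single pair $(\Fc_{1,b+1},\Fc_{1,1})$; and for a chain traversing the hinge $j=r$, the identity $\Fc_{1,a}=\bar\Fc_{1,a-1}=k_r$ furnishes the matching pair.

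Because paired entries share equal values and hence the same divisibility condition modulo $n$, the contributions $q^c$ (circled) and $q^c(1-q^{-1})$ or $0$ (uncircled) balance entry-by-entry: either both $G_\Delta(\Ft)$ and $G_{\Delta'}(\Ft)$ vanish, or both acquire the same factor. On all remaining entries the decorations agree. Accumulating over chains then yields $G_\Delta(\Ft)=G_{\Delta'}(\Ft)$. The main obstacle will be the careful case analysis---in particular the boundary case $a=1$, where invoking the ``$\Fc_{1,j}=0$'' clause is essential to supply the extra circled bottom-half entries needed for the pairing to balance, and chains traversing the hinge $j=r$, where the differences $\Fc_{1,r}-\bar\Fc_{1,r-1}=2d_r$ behave differently from the other differences and require slightly different bookkeeping.
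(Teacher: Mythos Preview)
Your approach is essentially the paper's: analyze maximal runs of zeros among the $d_j$ and show that the two circling schemes shuffle the factors within each run without changing the product. The observation that boxing agrees entry-by-entry under total resonance is correct and matches the paper.

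There is, however, a genuine gap in your balancing step. You write that in the symmetric difference of circled sets, a circled entry contributes $q^c$ and its uncircled partner contributes $q^c(1-q^{-1})$ or $0$, and that these balance because the values agree. But this presupposes that neither entry in the pair is \emph{boxed}. If, say, $d_{j-1}=0$ and $d_j=\mu_{r+1-j}$, then $\Fc_{1,j}$ is both boxed and circled in $\Delta(\Ft)$ (contributing $0$, not $q^c$), while in $\Delta'(\Ft)$ it is boxed but not circled (contributing a Gauss sum). Your pairing does not automatically match these. The paper handles this by first proving that $\Delta(\Ft)$ is non-strict if and only if $\Delta'(\Ft)$ is non-strict if and only if $d_{j-1}=0$ and $d_j=\mu_{r+1-j}$ for some $j$; both sides then vanish. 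In the remaining strict case, the paper checks in each subcase that the ``newly uncircled'' endpoint of a run (e.g.\ $\Fc_{1,j_0}$ in $\Delta$, $\Fc_{1,j_1+1}$ in $\Delta'$) is \emph{not} boxed, using $d_{j_0}\ne\mu_{r+1-j_0}$ and $0<d_{j_1+1}<\mu_{r-j_1}$, so that the $q^c(1-q^{-1})$ formula actually applies. You need to insert this strictness equivalence (or equivalently verify, for each pair in your symmetric difference, that boxing transfers correctly between the paired entries) before the balancing argument goes through.
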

\begin{proof}
First, it easy to check that the following statements are equivalent to each other: (1) $\Delta(\Ft)$ is not   strict; (2) $\Delta'(\Ft)$ is not strict; (3) $d_{j-1}=0$ and $d_{j}=\mu_{r+1-j}$ for some $j$,  $2\leq j\leq r$.
If $\Delta(\Ft)$ is not strict, both sides are zero, and the Lemma is trivial.

Next, assume that $\Delta(\Ft)$ is strict. We must keep track of the circling rules
for the two arrays as they are different.  Consider any maximal string
of consecutive zeros $\{d_{i}\}_{j_{0}\leq i\leq j_{1}}$ in $\Ft$.
That is, $d_{i}=0$ for all $j_{0}\leq i\leq j_{1}$, and $d_{j_{0}-1}\neq 0$,  $d_{j_{1}+1}\neq 0$ provided
these quantities are defined.
We analyze the following 4 cases. 

Case (1): $j_{0}=1$ and $j_{1}=r$. Then $G_{\Delta}(\Ft)=G_{\Delta'}(\Ft)=1$.

Case (2): $j_{0}=1$ and $j_{1}<r$.  We have
$$
\Fc_{1,1}=\Fc_{1,2}=\cdots=\Fc_{1,j_{1}+1} \text{ and }
\bar{\Fc}_{1,j_{1}}=\bar{\Fc}_{1,j_{1}-1}=\cdots=\bar{\Fc}_{1,1}=0.
$$
In $\Delta(\Ft)$, the entries $\Fc_{1,i}$ for $2\leq i\leq j_{1}+1$ and $\bar{\Fc}_{1,i}$ for $1\leq i\leq j_{1}$ are circled. Since $d_{1}\neq \mu_{r}$, the entry $\Fc_{1,1}$ is neither boxed nor circled. Since $\Delta(\Ft)$ is strict, 
keeping track of the circled entries we find that
$\prod^{j_{1}+1}_{i=1}\gamma_{\Delta}(\Fc_{1,i})\prod^{j_{1}}_{i=1}\gamma_{\Delta}(\bar{\Fc}_{1,i})=q^{j_{1}\Fc_{1,1}}\gamma_{\Delta}(\Fc_{1,1})$.
In $\Delta'(\Ft)$, the entries $\Fc_{1,i}$ for $1\leq i\leq j_{1}$ and $\bar\Fc_{1,i}$ for $1\leq i\leq j_{1}$ are circled. Since $0<d_{j_{1}+1}<\mu_{r-j_{1}}$, the entry $\Fc_{1,j_{1}+1}$ is neither boxed nor circled.
 Since $\Delta(\Ft)$ is strict, we arrive at the equality
 $$\prod^{j_{1}+1}_{i=1}\gamma_{\Delta}(\Fc_{1,i})\prod^{j_{1}}_{i=1}\gamma_{\Delta}(\bar{\Fc}_{1,i})=\prod^{j_{1}+1}_{i=1}\gamma_{\Delta'}(\Fc_{1,i})\prod^{j_{1}}_{i=1}\gamma_{\Delta'}(\bar{\Fc}_{1,i}).$$

Case (3): $j_{0}>1$ and $j_{1}=r$. We have
$$
\Fc_{1,j_{0}}=\Fc_{1,j_{0}+1}=\cdots=\Fc_{1,r}=\bar{\Fc}_{1,r-1}=\cdots=\bar{\Fc}_{1,j_{0}-1}.
$$

In $\Delta(\Ft)$, the entries $\Fc_{1,i}$ for $j_{0}+1\leq i\leq 2r+1-j_{0}$ are circled. Since $d_{j_{0}-1}\neq 0$ and $d_{j_{0}}\neq \mu_{r+1-j_{0}}$, the entry $\Fc_{1,j_{0}}$ is neither boxed nor circled. 
Since $\Delta(\Ft)$ is strict, again keeping track of circled entries gives
$\prod^{2r+1-j_{0}}_{i=j_{0}}\gamma_{\Delta}(\Fc_{1,i})=q^{(2(r-j_{0})+1)\Fc_{1,j_0}}\gamma_{\Delta}(\Fc_{1,j_{0}})$.
In $\Delta'(\Ft)$, the entries $\Fc_{1,i}$ for $j_{0}\leq i\leq 2r-j_{0}$ are circled. Since $d_{j_{0}-1}\neq 0$ and $d_{j_{0}}\neq \mu_{r+1-j_{0}}$, the entry $\bar{\Fc}_{1,j_{0}-1}$ is neither boxed nor circled. Since $\Delta(\Ft)$ is strict, we arrive
at the equality
$$\prod^{2r+1-j_{0}}_{i=j_{0}}\gamma_{\Delta}(\Fc_{1,i})=\prod^{2r+1-j_{0}}_{i=j_{0}}\gamma_{\Delta'}(\Fc_{1,i}).$$

Case (4):  $1<j_{0},j_1<r$.   We have
\begin{equation}\label{eq:lm-circle}
\Fc_{1,j_{0}}=\Fc_{1,j_{0}+1}=\cdots=\Fc_{1,j_{1}+1} \text{ and }
\bar{\Fc}_{1,j_{1}}=\bar{\Fc}_{1,j_{1}-1}=\cdots=\bar{\Fc}_{1,j_{0}-1}.
\end{equation}
In $\Delta(\Ft)$, the entries $\Fc_{1,i}$ and $\bar{\Fc}_{1,i-2}$ for $j_{0}+1\leq i\leq j_{1}+1$ are circled. Since $d_{j_{0}}\neq \mu_{r+1-j_{0}}$ and $d_{j_{0}-1}\neq 0$, the entry $\Fc_{1,j_{0}}$ is neither boxed nor circled. 
Since $d_{j_{1}+1}\neq 0$ and $d_{j_{1}+1}\neq \mu_{r-j_{1}}$, the entry $\bar{\Fc}_{1,j_{1}}$ is neither boxed nor circled. Since $\Delta(\Ft)$ is strict, we obtain
$$\prod^{j_{1}+1}_{i=j_{0}}\gamma_{\Delta}(\Fc_{1,i})\prod^{j_{1}}_{i=j_{0}-1}\gamma_{\Delta}(\bar{\Fc}_{1,i})=q^{(j_{1}-j_{0}+1)\Fc_{1,j_{0}}}\gamma_{\Delta}(\Fc_{1,j_{0}})q^{(j_{1}-j_{0}+1)\bar{\Fc}_{1,j_{1}}}\gamma_{\Delta}(\bar{\Fc}_{1,j_{1}}).$$

In $\Delta'(\Ft)$, the entries $\Fc_{1,i-1}$ and $\bar\Fc_{1,i-1}$ for $j_{0}+1\leq i\leq j_{1}+1$ are circled. 
Since $0<d_{j_{1}+1}<\mu_{r-j_{1}}$, the entry $\Fc_{1,j_{1}+1}$ is neither boxed nor circled. Since $d_{j_{0}-1}\neq 0$ and $d_{j_{0}}\neq \mu_{r+1-j_{0}}$, the entry $\bar{\Fc}_{1,j_{0}-1}$ is neither boxed and circled. So we find 
$$
\prod^{j_{1}+1}_{i=j_{0}}\gamma_{\Delta}(\Fc_{1,i})\prod^{j_{1}}_{i=j_{0}-1}\gamma_{\Delta}(\bar{\Fc}_{1,i})=\prod^{j_{1}+1}_{i=j_{0}}\gamma_{\Delta'}(\Fc_{1,i})\prod^{j_{1}}_{i=j_{0}-1}\gamma_{\Delta'}(\bar{\Fc}_{1,i}).
$$

The entries not involving strings of zeroes are identical (including decorations) in the two arrays.  Hence the desired equality holds.
\end{proof}

If $\Ft$ is maximal, then all entries $c_{1,j}$ and $\Fc_{1,j}$ in $\Gamma(\Ft)$ and $\Delta(\Ft)$ are boxed. Since $\Fc_{1,1}=2c_{1,r}$, $\gamma_{\Gamma}(c_{1,r})=q^{-k_{r}}\gamma_{\Delta}(\Fc_{1,1})$. Hence, if $\Ft$ is maximal, Statement~A is true. 

\begin{lemma}\label{lm:know-c}
Assume that $n\nmid 2k_{r}$. Then either $G_{\Gamma'}(\Ft)=G_{\Delta'}(\Ft)=0$ or $\Ft$ is maximal.
\end{lemma}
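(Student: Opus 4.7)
The plan is a direct combinatorial argument exploiting two features of the arrays. First, the relevant diagonal sums are constant and equal to $2k_r$: in $\Gamma'(\Ft)$ one has $c_{1,j}+\bar c_{1,j}=2k_r$ for $1\leq j<r$, with peak $\bar c_{1,r}=k_r$, and in $\Delta'(\Ft)$ one has $\Fc_{1,j}+\bar\Fc_{1,j-1}=2k_r$ for $2\leq j\leq r$, with $\Fc_{1,1}=2k_r$. Second, by the definition of $\gamma_\Gamma$ and $\gamma_\Delta$, an entry that is neither boxed nor circled contributes nonzero to $G_{\Gamma'}$ or $G_{\Delta'}$ only when $n$ divides its value. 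The strategy is to show that non-maximality of $\Ft$ must produce an undecorated entry whose divisibility constraint, combined with that of its diagonal companion, forces $n\mid 2k_r$, contradicting the hypothesis; in a few subconfigurations one instead obtains an entry that is simultaneously boxed and circled, which kills the product outright.

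For $G_{\Gamma'}(\Ft)$, assume $G_{\Gamma'}(\Ft)\neq 0$ and $\Ft$ is not maximal, and let $j_0$ be the largest index with $d_{j_0}<\mu_{r+1-j_0}$. If $d_{j_0}>0$, then $\bar c_{1,j_0}$ is undecorated, so $n\mid \bar c_{1,j_0}$. When $j_0<r$, the companion $c_{1,j_0}$ is also unboxed (since $\bar c_{1,j_0}$ is) and uncircled (its circling is controlled by $\bar c_{1,j_0+1}$, i.e.\ by $d_{j_0+1}=\mu_{r-j_0}\geq 1$, using the maximality of $j_0$ and $\mu\in\Z^r_{\geq 1}$), giving $n\mid c_{1,j_0}$ and hence $n\mid 2k_r$, a contradiction; the boundary case $j_0=r$ yields $n\mid \bar c_{1,r}=k_r$, again $n\mid 2k_r$. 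If instead $d_{j_0}=0$, traverse along the run of consecutive zeros $d_{j_0}=d_{j_0-1}=\cdots=d_{j_1+1}=0$ until the first index $j_1<j_0$ with $d_{j_1}>0$ (or until the boundary, in which case $\bar c_{1,j_0}=0$ is trivially divisible by $n$, and the companion argument again forces $n\mid 2k_r$). If $\bar c_{1,j_1}$ is boxed, then $c_{1,j_1}$ is simultaneously boxed (from $\bar c_{1,j_1}$) and circled (since $\bar c_{1,j_1+1}$ is circled), forcing $\gamma_\Gamma(c_{1,j_1})=0$; if $\bar c_{1,j_1}$ is unboxed, it is undecorated with $\bar c_{1,j_1}=\bar c_{1,j_0}$ (the telescoping sum of zeros), so $n\mid \bar c_{1,j_0}$ and the earlier companion argument yields $n\mid 2k_r$.

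The argument for $G_{\Delta'}(\Ft)$ is entirely parallel, with the right-diagonal sums playing the role of the left-diagonal sums and with the peak $\Fc_{1,1}=2k_r$ handled analogously (a non-maximal $\Ft$ produces an index where $\Fc_{1,j}$ or $\bar\Fc_{1,j-1}$ is undecorated, and either the diagonal-sum argument delivers $n\mid 2k_r$ or a companion entry is simultaneously boxed and circled). The main technical obstacle is the bookkeeping of runs of consecutive circled entries and of the behaviour across the peak of the accordion; as noted in the remark preceding this lemma, the decoration rules on $\Gamma'(\Ft)$ and $\Delta'(\Ft)$ coincide with those of the type A totally resonant accordions in \cite{BBF4}, Ch.~6, and the argument here is a direct adaptation of the analysis carried out there.
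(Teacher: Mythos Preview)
Your argument is correct and is essentially the approach of the paper, which simply observes that the decoration rules on $\Gamma'(\Ft)$ and $\Delta'(\Ft)$ coincide with the type~A totally resonant accordion rules and then invokes Proposition~11.1 of \cite{BBF4}. What you have written out is a direct version of that cited argument: pick the extremal index where maximality fails, use the linked boxing/circling in the two rows together with the constant diagonal sums $c_{1,j}+\bar c_{1,j}=2k_r$ (resp.\ $\Fc_{1,j}+\bar\Fc_{1,j-1}=2k_r$) to produce either a pair of undecorated companions forcing $n\mid 2k_r$, or an entry that is both boxed and circled.
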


\begin{proof}
The decoration rules of $\Gamma'(\Ft)$ and $\Delta'(\Ft)$ are the same as the decoration rules of $\Gamma(\Ft)$ and $\Delta(\Ft)$ defined in \cite{BBF4}, Ch.~6.
Thus the proof of Proposition 11.1 in \cite{BBF4} also applies in our situation, and gives this result.
\end{proof}

To prove Statement A for totally resonant short patterns $\Ft$, it remains to handle the 
case that $\Ft$ is non-maximal and $n\mid 2 k_r$.
{\sl Since $n$ is odd}, we reduce to the case that $n \mid k_{r}$,
and we assume this henceforth. 
In order to apply the results in \cite{BBF4}, 
we shift the above arrays
by introducing the decorated arrays of non-negative integers
\begin{equation}\label{Gamma-flat}
\Gamma^{\flat}(\Ft)=\cpair{
\begin{matrix}
\bar{c}_{1,r}&&\bar{c}_{1,r-1}&&\bar{c}_{1,r-2}&&\cdots&&\bar{c}_{1,1}\\
&c_{1,r-1}-k_{r}&&c_{1,r-2}-k_{r}&&\cdots&&c_{1,1}-k_{r}&
\end{matrix}
},
\end{equation}
and 
\begin{equation}\label{Delta-flat}
\Delta^{\flat}(\Ft)=\cpair{
\begin{matrix}
\Fc_{1,r}-k_{r}&&\Fc_{1,r-1}-k_{r}&&\Fc_{1,r-2}-k_{r}&&\cdots&&\Fc_{1,1}-k_{r}\\
&\bar{\Fc}_{1,r-1}&&\bar{\Fc}_{1,r-2}&&\cdots&&\bar{\Fc}_{1,1}&
\end{matrix}
},
\end{equation}
with boxing and circling rules as above.  Observe that $\Gamma^{\flat}(\Ft)$ is a $\Gamma$-accordion of
weight $k_r$ and $\Delta^{\flat}(\Ft)$ is a $\Delta$-accordion of weight $k_r$ in the sense of 
\cite{BBF4}, pgs.\ 42-43.  Passing to these arrays will allow us to use the results of \cite{BBF4} below.
 Define $G_{\Gamma^{\flat}}(\Ft)=\prod^{2r-1}_{j=1}\gamma_{\Delta}(c(\Gamma^{\flat})_{1,j})$ and $G_{\Delta^{\flat}}(\Ft)=\prod^{2r-1}_{j=1}\gamma_{\Delta}(\Fc(\Delta^{\flat})_{1,j})$.
 
\begin{proposition}\label{lm:resonant}
Suppose that the weight $\bf k$ is totally resonant.  Then
$$
\sum_{\Ft\in\FS_{\bf k}}G_{\Gamma}(\Ft)=q^{(r-1)k_{r}}\sum_{\Ft\in\FS_{\bf k}}G_{\Gamma^{\flat}}(\Ft)=q^{(r-1)k_{r}}\sum_{\Ft\in\FS_{\bf k}}G_{\Delta^{\flat}}(\Ft)=\sum_{\Ft\in\FS_{\bf k}}G_{\Delta}(\Ft).
$$
\end{proposition}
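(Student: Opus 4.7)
The plan is to transport the identity, term by term, to the companion arrays $\Gamma^{\flat}(\Ft)$ and $\Delta^{\flat}(\Ft)$ of \eqref{Gamma-flat} and \eqref{Delta-flat}, and then invoke the type A totally resonant equality of \cite{BBF4}, Ch.~6. Since $G_{\Gamma}(\Ft)=G_{\Gamma'}(\Ft)$ by construction and $G_{\Delta}(\Ft)=G_{\Delta'}(\Ft)$ by Lemma~\ref{lm:Delta=Delta'}, it is enough to work with the primed arrays. By Lemma~\ref{lm:know-c}, if $n\nmid 2k_{r}$ then only the maximal pattern can contribute, and since $n$ is odd this condition is equivalent to $n\nmid k_{r}$. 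For the maximal pattern all entries are boxed and nonzero, and the chain of three equalities reduces to a direct comparison of the Gauss sum factors $g$, $g_{2}$ that decorate the two rows; this is verified as in the remark preceding Lemma~\ref{lm:know-c}. We may therefore assume $n\mid k_{r}$.

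Granting $n\mid k_{r}$, the passages $\Gamma'(\Ft)\mapsto \Gamma^{\flat}(\Ft)$ and $\Delta'(\Ft)\mapsto \Delta^{\flat}(\Ft)$ are a matter of routine bookkeeping. For each bottom-row entry $c=c_{1,j}$, $1\leq j\leq r-1$, of $\Gamma^{\flat}$, the entry is lowered by $k_{r}$, the decoration rules and the condition $n\mid c$ are preserved, and the identity $g(p^{c-1},p^{c})=q^{c-1}g_{c}(1,p)$ together with $g_{c}(1,p)=g_{c-k_{r}}(1,p)$ yields the uniform ratio $\gamma_{\Gamma}(c)=q^{k_{r}}\gamma_{\Delta}(c-k_{r})$ in each of the boxed, circled, and undecorated cases. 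On the top row, entries are unchanged; the distinction between $\gamma_{\Gamma}$ and $\gamma_{\Delta}$ at column $j=r$ is harmless since $n\mid k_{r}$ implies $g_{2}(p^{k_{r}-1},p^{k_{r}})=g(p^{k_{r}-1},p^{k_{r}})=-q^{k_{r}-1}$. Collecting the $r-1$ bottom ratios gives
$$G_{\Gamma'}(\Ft)=q^{(r-1)k_{r}}G_{\Gamma^{\flat}}(\Ft).$$
For $\Delta^{\flat}$ the shift affects the $r$ top-row entries, producing a factor $q^{rk_{r}}$. Combining this with the prefactor $q^{-\sum_{i=1}^{r}d_{i}}$ of $G_{\Delta'}$, which equals $q^{-k_{r}}$ because total resonance and \eqref{eq:k} force $\sum_{i=1}^{r}d_{i}=k_{r}$, one obtains $G_{\Delta'}(\Ft)=q^{(r-1)k_{r}}G_{\Delta^{\flat}}(\Ft)$.

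It remains to prove the middle equality
$$\sum_{\Ft\in\FS_{\bf k}}G_{\Gamma^{\flat}}(\Ft)=\sum_{\Ft\in\FS_{\bf k}}G_{\Delta^{\flat}}(\Ft).$$
By construction, $\Gamma^{\flat}(\Ft)$ is a $\Gamma$-accordion and $\Delta^{\flat}(\Ft)$ is a $\Delta$-accordion of common weight $k_{r}$ in the sense of \cite{BBF4}, Ch.~6, with decoration rules matching those of the type A setting verbatim. Under total resonance, the parametrization $\Ft\mapsto (d_{1},\dots,d_{r})$ identifies $\FS_{\bf k}$ with the set of $r$-tuples of non-negative integers summing to $k_{r}$ and bounded entrywise by $\mu$, which is exactly the indexing set of accordion prototypes of weight $k_{r}$ bounded by $\mu$ used in \cite{BBF4}. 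The required equality is then precisely the totally resonant case of Statement B of \cite{BBF4}, Ch.~6, whose proof combines combinatorial geometry with subtle identities among $n$-th order Gauss sums. The main obstacle is to check that our decoration conventions on $\Gamma^{\flat}$ and $\Delta^{\flat}$ translate exactly into those of \cite{BBF4}; this is a careful but routine unwinding, since the shift by $k_{r}$ and the replacement of $\gamma_{\Gamma}$ by $\gamma_{\Delta}$ are engineered precisely to realize the match.
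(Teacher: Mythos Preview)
Your argument is essentially the same as the paper's: reduce to $n\mid k_{r}$ via Lemma~\ref{lm:know-c} (handling the maximal pattern separately), use $n\mid k_{r}$ to obtain the uniform shift relations $\gamma(c+k_{r})=q^{k_{r}}\gamma(c)$ that yield the outer two equalities, and then import the type~A totally resonant identity from \cite{BBF4} for the middle one. One small correction: the result you need from \cite{BBF4} is what the paper calls \emph{Statement~C}, not Statement~B; Statement~C is the specialized identity for accordions in the totally resonant case, and it already incorporates the Sch\"utzenberger-type involution $\Ft\mapsto\Ft'$ that is built into the definitions \eqref{Gamma-flat}, \eqref{Delta-flat}. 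Your phrase ``the totally resonant case of Statement~B'' is ambiguous and risks citing the wrong layer of the reduction in \cite{BBF4}, so make the reference precise.
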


\begin{proof}
By the above discussion, we only need to consider the case of $\bf k$ such that  $n$ divides $k_{r}$,
and we may limit the sums to be over $\Ft$ non-maximal.  Since $n\mid  k_r$, the Gauss sums
modulo $c+k_r$ and modulo $c$ are related.  This implies the equalities
$\gamma_{\Gamma}(c+k_{r})=q^{k_{r}}\gamma_{\Gamma}(c)$ and $\gamma_{\Delta}(c+k_{r})=q^{k_{r}}\gamma_{\Delta}(c)$
and so we obtain the equalities
$$\sum_{\Ft\in\FS_{\bf k}}G_{\Gamma}(\Ft)=q^{(r-1)k_{r}}\sum_{\Ft\in\FS_{\bf k}}G_{\Gamma^{\flat}}(\Ft)$$
$$\sum_{\Ft\in\FS_{\bf k}}G_{\Delta}(\Ft)=q^{(r-1)k_{r}}\sum_{\Ft\in\FS_{\bf k}}G_{\Delta^{\flat}}(\Ft).$$
We may now apply Statement C of  \cite{BBF4} to conclude 
that  $\sum_{\Ft\in\FS_{\bf k}}G_{\Gamma^{\flat}}(\Ft)=\sum_{\Ft\in\FS_{\bf k}}G_{\Delta^{\flat}}(\Ft)$. 
(Note that the involution $\Ft\mapsto \Ft'$ of that result is built into the notation
(\ref{Gamma-flat}) and (\ref{Delta-flat}) above.) This completes
the proof.
\end{proof}

We conclude this subsection by establishing a similar result using the arrays $\Gamma^{\iota}(\Ft)$
and $\Delta^{\iota}(\Ft)$.  This will be required to treat the Class I and Class II situations below.

\begin{proposition}\label{pro:resonant}
Suppose that the weight $\bf k$ is totally resonant.  Then
$$
\sum_{\Ft\in \FS_{\bf k}}G_{\Gamma}(\Ft)=q^{(2k_{r}-N)(2r-1)}
\sum_{\Ft\in \FS_{\bf k}} G_{\Gamma^{\iota}}(\Ft)=
q^{(2k_{r}-N)(2r-1)}\sum_{\Ft\in\FS_{\bf k}} G_{\Delta^{\iota}}(\Ft)=\sum_{\Ft\in\FS_{\bf k}}G_{\Delta}(\Ft).
$$
\end{proposition}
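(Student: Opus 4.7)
The plan mirrors that of Proposition~\ref{lm:resonant}. We shall establish the pointwise identities
$$
G_{\Gamma^\iota}(\Ft) = q^{(N-2k_r)(2r-1)} G_\Gamma(\Ft),
\qquad
G_{\Delta^\iota}(\Ft) = q^{(N-2k_r)(2r-1)} G_\Delta(\Ft),
$$
for each totally resonant $\Ft\in\FS_{\bf k}$, then sum over $\FS_{\bf k}$ and combine with Proposition~\ref{lm:resonant} to obtain the chain of equalities.

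First, analogs of Lemma~\ref{lm:know-c} for $\Gamma^\iota$ and $\Delta^\iota$---whose proofs follow the pattern of the one for $\Gamma, \Delta$ but using the $\iota$-shifted decoration rules---show that unless $n \mid 2k_r$, each relevant sum vanishes. Since $n$ is odd, $n \mid k_r$, and combined with $n\mid N$ this gives $n\mid(N-2k_r)$. The Gauss sum identity $g(p^{c+M-1},p^{c+M}) = q^M g(p^{c-1},p^c)$ (valid when $n\mid M$) together with $q^{c+M}=q^M q^c$ for circled entries and $q^{c+M}(1-q^{-1})=q^M q^c(1-q^{-1})$ for undecorated entries with $n\mid c$ show that shifting an entry by $M=N-2k_r$ multiplies its $\gamma$-contribution by $q^M$, provided the decoration type is preserved.

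The subtle point is that $\Gamma^\iota(\Ft)$ and $\Gamma(\Ft)$ (resp.\ $\Delta^\iota(\Ft)$ and $\Delta(\Ft)$) have different decoration rules. A direct comparison using the lemma following the definition of $\Gamma^\iota,\Delta^\iota$ shows that in the totally resonant case the boxing conditions agree, while the circling condition for corresponding entries shifts from a ``right-neighbor equality'' ($d_{j+1}=0$) to a ``left-neighbor equality'' ($d_j=0$). Within each maximal run of zeros $d_{j_0}=\dots=d_{j_1}=0$, however, consecutive entries of the underlying array are equal, so that the multiset of values at circled, boxed, and undecorated positions is preserved under this repositioning of circles; applying the Gauss sum shift entry-by-entry then produces the factor $q^{(N-2k_r)(2r-1)}$. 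For $\Delta^\iota$ one additionally reconciles the one-step index shift $\Fc(\Delta^\iota)_{1,j}=\iota(\Fc_{1,j+1})$ together with the added terminal entry $N-k_1$: the change in the prefactor from $q^{-\sum d_i}$ in $G_\Delta$ to $q^{+\sum d_i}$ in $G_{\Delta^\iota}$ absorbs the contribution of the dropped initial entry $\iota(\Fc_{1,1})=N$, so that the total shift remains $q^{(N-2k_r)(2r-1)}$.

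Summing over $\FS_{\bf k}$ gives $q^{(2k_r-N)(2r-1)}\sum G_{\Gamma^\iota}=\sum G_\Gamma$ and $q^{(2k_r-N)(2r-1)}\sum G_{\Delta^\iota}=\sum G_\Delta$; combined with the equality $\sum G_\Gamma=\sum G_\Delta$ from Proposition~\ref{lm:resonant}, this yields the full chain. The principal obstacle is the combinatorial bookkeeping matching the circled and boxed positions of $\Gamma^\iota$ against those of $\Gamma$ (and $\Delta^\iota$ against $\Delta$) within the zero-runs of $\Ft$; this ultimately reduces to observing that consecutive equal entries in a zero-run force the circles to contribute the same Gauss-sum value regardless of which end of the run they are placed on, an observation in the spirit of the case analysis in the proof of Lemma~\ref{lm:Delta=Delta'}.
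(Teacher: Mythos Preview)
Your approach is correct in outline and arrives at the same pointwise identity
\[
G_{\Gamma^{\iota}}(\Ft)=q^{(N-2k_r)(2r-1)}G_{\Gamma}(\Ft),\qquad
G_{\Delta^{\iota}}(\Ft)=q^{(N-2k_r)(2r-1)}G_{\Delta}(\Ft),
\]
but the paper reaches it by a considerably shorter route. The key observation the paper uses is a \emph{mirror} matching rather than a same-position matching: in the totally resonant case one checks directly that $c_{1,j}$ in $\Gamma(\Ft)$ and $\iota(c_{1,2r-j})$ in $\Gamma^{\iota}(\Ft)$ carry identical decorations, and that $c_{1,j}+\iota(c_{1,2r-j})=N$. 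From this the pairwise relation
\[
\gamma_{\Gamma}(c_{1,j})\,\gamma_{\Gamma}(\bar c_{1,j})=q^{4k_r-2N}\,\gamma_{\Gamma}(\iota(c_{1,j}))\,\gamma_{\Gamma}(\iota(\bar c_{1,j}))
\]
(for $j<r$, together with $\gamma_{\Gamma}(c_{1,r})=q^{2k_r-N}\gamma_{\Gamma}(\iota(c_{1,r}))$) follows in one line, with no zero-run case analysis at all. The $\Delta$ side is handled the same way, matching $\Fc(\Delta')_{1,j}$ against $\Fc(\Delta^{\iota})_{1,2r-j}$ and invoking Lemma~\ref{lm:Delta=Delta'} once to pass from $\Delta$ to $\Delta'$.

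Your argument instead compares $c_{1,j}$ with $\iota(c_{1,j})$ at the \emph{same} index and then repairs the circling discrepancy by tracking runs of zeros in $\Ft$. This works, but two points are underspecified. First, the circling shift goes in \emph{opposite} directions on the $c$-half and the $\bar c$-half (from $d_{j+1}=0$ to $d_j=0$ on one side, and the reverse on the other), so the zero-run analysis must treat both halves and the crossover at $j=r$; your description only names one direction. Second, your treatment of the $\Delta^{\iota}$ index shift and the prefactor change is only sketched: making it rigorous essentially amounts to re-proving a version of Lemma~\ref{lm:Delta=Delta'}, whereas the paper simply quotes that lemma and then applies the mirror match to $\Delta'$ versus $\Delta^{\iota}$. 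In short, your route is valid but the mirror symmetry $j\leftrightarrow 2r-j$ removes all of the bookkeeping you identify as ``the principal obstacle.''
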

\begin{proof}
Since $\Ft$ is totally resonant, $c_{1,j}$ and $\iota(c_{1,2r-j})$, $\Fc(\Delta')_{1,j}$ 
and $\Fc(\Delta^{\iota})_{1,2r-j}$ have the same decorations. In addition,
we have $c_{1,j}+\iota(c_{1,2r-j})=\Fc(\Delta')_{1,j}+\Fc(\Delta^{\iota})_{1,2r-j}=N$ 
for all $1\leq j\leq 2r-1$. It follows that for $1\leq j\leq r-1$,
$$
\gamma_{\Gamma}(c_{1,j})\gamma_{\Gamma}(\bar{c}_{1,j})=
q^{4k_{r}-2N}\gamma_{\Gamma}(\iota(c_{1,j}))\gamma_{\Gamma}(\iota(\bar{c}_{1,j}))
\text{ and }
\gamma_{\Gamma}(c_{1,r})=q^{2k_{r}-N}\gamma_{\Gamma}(\iota(c_{1,r}),
$$
and similar relations hold for $\Fc(\Delta')_{1,j}$ and $\bar\Fc(\Delta^{\iota})_{1,j}$. The result then
follows from Proposition~\ref{lm:resonant}.
\end{proof}

\subsection{The Class I Case}
In this subsection, we consider the case when $\bf k$ is in Class I with fixed index $i_0$. 
Our strategy is as follows:  given a corresponding array
\begin{equation}\label{one-row-array}\left(\begin{matrix}
c_{1,1}&c_{1,2}&\dots&c_{1,r}&\overline{c}_{1,r-1}&\dots&\overline{c}_{1,1}
\end{matrix}\right),
\end{equation}
we break this up into the ``totally resonant" piece
$$\left(\begin{matrix}
c_{1,1}&c_{1,2}&\dots&c_{1,i_0}&\overline{c}_{1,i_0-1}&\dots&\overline{c}_{1,1}
\end{matrix}\right),$$
the lower rank piece
$$\left(\begin{matrix}
c_{1,i_0+1}&\dots&c_{1,r}&\dots&\overline{c}_{1,i_0+1}
\end{matrix}\right),$$
and the singleton $\overline{c}_{1,i_0}$.
The totally resonant piece indeed corresponds to a totally resonant short pattern of lower rank,
and may be treated using the results in Subsection~\ref{Tot-Res-Case}.  The lower rank piece is handled by
induction, and the singleton will match on both sides of the desired equalities.

Now suppose $\Ft$ is a short pattern in Class I with associated array (\ref{one-row-array}).  In this case, we have
\begin{equation}
\begin{array}{ll}
k_{i}(\Ft)=2k^{*}_{i_{0}}+a+k^{\sharp}_{1}
&\text{ for }1\leq i\leq i_{0},\\
k_{i}(\Ft)=2k^{*}_{i_{0}}+k^{\sharp}_{i-i_{0}} &\text{ for } i_{0}+1\leq i<r,\\
k_{r}(\Ft)=k^{*}_{i_{0}}+k^{\sharp}_{r-i_{0}},&
\end{array}\label{eq:I-k}
\end{equation}
where the notation is as in (\ref{star-sharp1}), (\ref{star-sharp2}), and (\ref{star-sharp3})
and the immediately preceding paragraph.
The entries of the decorated arrays $\Gamma(\Ft)$ and $\Gamma^{\iota}(\Ft)$ are given as follows.
\begin{equation} \label{eq:c-*-sharp}
\begin{array}{lll}
\bar{c}_{1,j}=\bar{c}^{*}_{1,j} &\iota(\bar{c}_{1,j})=N-k_{1}+\bar{c}^{*}_{1,j}&\text{ for } 1\leq j\leq i_{0},\\
\bar{c}_{1,j}=k^{*}_{i_{0}}+\bar{c}^{\sharp}_{1,j-i_{0}} &\iota(\bar{c}_{1,j})=N-k_{1}+k^{*}_{i_{0}}+\bar{c}^{\sharp}_{1,j-i_{0}} &\text{ for } i_{0}+1\leq j<r,\\
c_{1,j}=k^{*}_{i_{0}}+c^{\sharp}_{1,j-i_{0}} &\iota(c_{1,j})=N-k_{1}+k^{*}_{i_{0}}+c^{\sharp}_{1,j-i_{0}} &\text{ for } i_{0}+1\leq j\leq r,\\
c_{1,j}=c^{*}_{1,j}+a+k^{\sharp}_{1} &\iota(c_{1,j})=N-2k^{*}_{i_{0}}+c^{*}_{1,j}&\text{ for } 1\leq j\leq i_{0}.
\end{array}
\end{equation}

We define new decorated arrays $\Gamma'(\Ft)$ and $\Gamma'^{\iota}(\Ft)$. 
The entries of $\Gamma'(\Ft)$ and $\Gamma'^{\iota}(\Ft)$ 
are the same as the entries of $\Gamma(\Ft)$ and $\Gamma^{\iota}(\Ft)$, resp.
If $k_{1}-\varepsilon_{2}k_{2}=\mu_{r}$, then the decoration rules for $\Gamma'(\Ft)$ are the same 
as those for $\Gamma(\Ft)$.  In this case, $\Gamma'^{\iota}(\Ft)$ is by definition decorated to be non-strict
(and so will contribute zero to the coefficients).
(Notice that when $k(\Ft)$ is strict, $k_{1}-\varepsilon_{2}k_{2}=\mu_{r}$ if and only if $\mu_{r+1-i_{0}}=a$.)
If $k_{1}-\varepsilon_{2}k_{2}\ne \mu_{r}$, then the decoration rules 
for $\Gamma'(\Ft)$ and $\Gamma'^{\iota}(\Ft)$ are modified from those for 
$\Gamma(\Ft)$ and $\Gamma^{\iota}(\Ft)$, resp., as follows.
In $\Gamma'(\Ft)$, the entry $\bar{c}_{1,i_{0}}$ is neither boxed nor circled. 
The entry $c_{1,i_{0}}$ is boxed if $d_{2r-i_{0}}=\mu_{r+1-i_{0}}-a$, and circled if $d_{2r-i_{0}}=0$.
The rest of the entries in $\Gamma'(\Ft)$ are given the same decorations as $\Gamma(\Ft)$.
In $\Gamma'^{\iota}(\Ft)$, the entry $\iota(\bar{c}_{1,i_{0}})$ is neither boxed nor circled. The entry $\iota(c_{1,j})$ for $i_{0}<j<2r-i_{0}$ is circled if $\iota(c_{1,j})=\iota(c_{1,j+1})$.
The rest of the entries in $\Gamma'^{\iota}(\Ft)$ are given the same decorations as $\Gamma^{\iota}(\Ft)$.
Note that for $j$ in the interval $i_{0}<j<2r-i_{0}$ the decoration rules for $\Gamma'^{\iota}(\Ft)$ are the same as the decoration rules for $\Gamma(\Ft)$.  Also note that 
until now, all entries of zero occurring in decorated arrays were always circled, and so
(when not boxed) received weight $q^0=1$.
With this modification of the decoration rules,
we have the possibility of an undecorated zero, which contributes $1-q^{-1}$ since $n$ always divides $0$.

For the convenience of the reader, we give an example.

\begin{example}\label{r=3 example}\rm
Suppose $r=3$, and let ${\bf k}$ be a weight vector with $i_{0}=2$ and $a=k_{2}-2k_{3}>0$. Fix $\mu$. We have
$$
\FS_{\bf k}(\mu)=\cpair{\Ft=(d_{1},\dots,d_{5})\in \Z^{5}_{\geq 0}\mid d_{1}=d_{5}\leq \mu_{3}, d_{4}\leq \mu_{2}-a, d_{3}\leq \mu_{1},k(\Ft)={\bf k}}.
$$
Also, $\Ft^{*}=(d_{1},d_{4})$ and $\Ft^{\sharp}=(d_{3})$. Then $k^{*}_{1}=2k^{*}_{2}=2(d_{1}+d_{4})$,  $k^{\sharp}_{1}=d_{3}$, $k_{1}=k_{2}=2k^{*}_{2}+a+2k^{\sharp}$ and $k_{3}=k^{\sharp}_{1}+k^{*}_{2}$. The array $\Gamma(\Ft)$ is 
\begin{align*}
\Gamma(\Ft)=&(d_{1}+d_{2}+2d_{3}+d_{4}, d_{2}+2d_{3}+d_{1},d_{3}+d_{4}+d_{1},d_{1}+d_{4},d_{1})\\
=&(k^{*}_{2}+d_{4}+a+2k^{\sharp}_{1},k^{*}_{2}+a+2k^{\sharp}_{1},k^{\sharp}_{1}+k^{*}_{2},k^{*}_{2},d_{1}).
\end{align*}
The decoration rules for $\Gamma(\Ft)$ and $\Gamma'(\Ft)$ are given as follows:
$$
\Gamma(\Ft): \begin{pmatrix}
k^{*}_{2}+d_{4}+a+2k^{\sharp}_{1}&k^{*}_{2}+a+2k^{\sharp}_{1}&k^{\sharp}_{1}+k^{*}_{2}&k^{*}_{2}&d_{1}\\
\fbox{$d_{1}$}\circled{$d_{4}$}&\fbox{$d_{2}$}&\fbox{$d_{3}$}\circled{$d_{3}$}&\circled{$d_{4}$}&\fbox{$d_{1}$}\circled{$d_{1}$}
\end{pmatrix}
$$
$$
\Gamma'(\Ft): \begin{pmatrix}
k^{*}_{2}+d_{4}+a+2k^{\sharp}_{1}&k^{*}_{2}+a+2k^{\sharp}_{1}&k^{\sharp}_{1}+k^{*}_{2}&k^{*}_{2}&d_{1}\\
\fbox{$d_{1}$}\circled{$d_{4}$}&\fbox{$d_{2}$}\circled{$d_{4}$}&\fbox{$d_{3}$}\circled{$d_{3}$}&&\fbox{$d_{1}$}\circled{$d_{1}$}
\end{pmatrix}.
$$
Here the first rows are the arrays, and the second rows are the decoration rules for each array. The symbol
$\fbox{$d_{i}$}$ means the entry above it is boxed if $d_{i}=\mu_{r+1-i}$. The symbol
$\circled{$d_{i}$}$ means the entry above it is circled if $d_{i}=0$.   
\end{example}

Similarly to Lemma~\ref{lm:Delta=Delta'}, 
these modifications in the decorations do not change the associated products
of Gauss sums.  We have:

\begin{lemma}\label{I:gamma=gamma'}
If $\Ft$ is a short pattern in Class I and $k(\Ft)$ is strict, then 
$G_{\Gamma}(\Ft)=G_{\Gamma'}(\Ft)$ and $G_{\Gamma^{\iota}}(\Ft)=G_{\Gamma'^{\iota}}(\Ft)$.
\end{lemma}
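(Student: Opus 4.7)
The plan is to observe that $\Gamma'(\Ft)$ and $\Gamma'^{\iota}(\Ft)$ differ from $\Gamma(\Ft)$ and $\Gamma^{\iota}(\Ft)$ only in the decorations assigned near the index $i_{0}$, so both identities reduce to finite case checks using Lemma~\ref{lm:divisibility} to enforce the needed divisibilities.

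First I would dispose of the degenerate case $k_{1}-\varepsilon_{2}k_{2}=\mu_{r}$. As the paper notes, under strictness of $k(\Ft)$ this can only occur with $i_{0}=1$, and then the inequality $d_{1}\leq\mu_{r}$ combined with $\mu_{r}=a=d_{1}-d_{2r-1}$ forces $d_{1}=\mu_{r}$ and $d_{2r-1}=0$. In this case $\Gamma'(\Ft)=\Gamma(\Ft)$ by definition, while $\Gamma'^{\iota}(\Ft)$ is defined to be non-strict, so both equalities reduce to showing $G_{\Gamma^{\iota}}(\Ft)=0$. This follows because under the above constraints the entry $\iota(c_{1,1})$ is simultaneously boxed (the boxing condition $d_{1}+d'_{1}=\mu_{r}+d'_{0}-d_{2r}+d_{2r-1}$ holds) and circled (since $d_{0}=d'_{0}=0$ and $d'_{1}=0$), and so makes the product vanish.

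For the generic case $k_{1}-\varepsilon_{2}k_{2}\ne\mu_{r}$ I would compute the decoration conditions explicitly from the Class~I data $d_{j}=d_{2r-j}$ for $j<i_{0}$, $d_{i_{0}}>d_{2r-i_{0}}$. A direct calculation shows that in $\Gamma(\Ft)$ the entry $\bar c_{1,i_{0}}$ is boxed iff $d_{2r-i_{0}}=\mu_{r+1-i_{0}}$ and circled iff $d_{2r-i_{0}}=0$, while $c_{1,i_{0}}$ is never circled and is boxed iff $d_{i_{0}}=\mu_{r+1-i_{0}}$. Since $a=d_{i_{0}}-d_{2r-i_{0}}$, the revised rule for $c_{1,i_{0}}$ in $\Gamma'(\Ft)$ coincides with the old boxing condition and also assigns a circle exactly when $d_{2r-i_{0}}=0$, which is the previous circling condition for $\bar c_{1,i_{0}}$. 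Thus the modification amounts to transferring all decoration from $\bar c_{1,i_{0}}$ (which becomes undecorated) onto $c_{1,i_{0}}$. Using $\bar c_{1,i_{0}}+c_{1,i_{0}}=k_{1}$, and invoking Lemma~\ref{lm:divisibility}(1) to force $n\mid\bar c_{1,i_{0}}=k^{*}_{i_{0}}$ whenever $G_{\Gamma}(\Ft)$ is nonzero, I would verify the local identity
\[
\gamma_{\Gamma}(\bar c_{1,i_{0}})\,\gamma_{\Gamma}(c_{1,i_{0}})=\gamma_{\Gamma'}(\bar c_{1,i_{0}})\,\gamma_{\Gamma'}(c_{1,i_{0}})
\]
in each of the finitely many subcases parameterized by $d_{2r-i_{0}}\in\{0,\mu_{r+1-i_{0}},\text{other}\}$ and $d_{i_{0}}\in\{\mu_{r+1-i_{0}},\text{other}\}$, using the divisibility above together with the usual Gauss-sum evaluations.

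The main obstacle is the subcase $d_{2r-i_{0}}=0$, $d_{i_{0}}<\mu_{r+1-i_{0}}$, where matching the two local products additionally requires $n\mid k_{1}$. I would argue this subcase is incompatible with $\Ft^{*}$ being maximal: maximality of $\Ft^{*}$ forces $d_{2r-i_{0}}=\mu_{r+1-i_{0}}-a$, so combining with $d_{2r-i_{0}}=0$ yields $a=\mu_{r+1-i_{0}}$ and hence $d_{i_{0}}=a+d_{2r-i_{0}}=\mu_{r+1-i_{0}}$, contradicting $d_{i_{0}}<\mu_{r+1-i_{0}}$. Thus $\Ft^{*}$ must be non-maximal in this subcase, and Lemma~\ref{lm:divisibility}(3) supplies the needed vanishing $n\mid k_{1}$ for both $G_{\Gamma}(\Ft)$ and $G_{\Gamma'}(\Ft)$ (the latter because the argument of part (3) only tracks entries outside position $i_{0}$). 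The same outline handles $G_{\Gamma^{\iota}}(\Ft)=G_{\Gamma'^{\iota}}(\Ft)$, with the role of the divisibility $n\mid k^{*}_{i_{0}}$ swapped for $n\mid k_{1}-k^{*}_{i_{0}}$; the only added wrinkle is that $\Gamma'^{\iota}(\Ft)$ uses a modified circling rule on the range $i_{0}<j<2r-i_{0}$, which is handled by the same telescoping of maximal circled strings that appears in Lemmas~\ref{lm:gamma-circle} and~\ref{lm:Delta=Delta'}.
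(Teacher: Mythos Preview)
Your outline for the first equality mirrors the paper's: reduce to the two entries at position $i_{0}$ and, in the subcase $d_{2r-i_{0}}=0$, invoke non-maximality of $\Ft^{*}$ together with Lemma~\ref{lm:divisibility} to force $n\mid k_{1}$. The gap is in your parenthetical justification that Lemma~\ref{lm:divisibility}(3) applies to $G_{\Gamma'}(\Ft)$ ``because the argument of part~(3) only tracks entries outside position $i_{0}$.'' This is not so: the proof of part~(3) picks a witness index $j\le i_{0}$ and then runs the chain argument of Lemma~\ref{lm:gamma-circle}, and when $i_{0}=1$ the only available witness is $j=i_{0}$ itself (more generally, the chain of circled entries can pass through position $i_{0}$), so the argument genuinely uses the $\Gamma$-decorations at $i_{0}$, which are exactly what $\Gamma'$ changes. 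Concretely, take $r=2$ and $\Ft=(d_{1},0,0)$ with $0<d_{1}<\mu_{2}$ and $n\nmid d_{1}$: here $i_{0}=1$, $\bar c_{1,1}=0$, $c_{1,1}=d_{1}$, and one computes $G_{\Gamma}(\Ft)=0$ (from the undecorated $c_{1,1}$) while $G_{\Gamma'}(\Ft)=q^{d_{1}}(1-q^{-1})\neq 0$ (now $c_{1,1}$ is circled and the undecorated zero $\bar c_{1,1}$ contributes $1-q^{-1}$). So the pointwise identity fails and your proposed justification cannot rescue it.

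The paper's own proof cites Lemma~\ref{lm:divisibility} for $G_{\Gamma'}$ without further comment, so you have not introduced a new error; your explicit attempt to justify the step simply exposes why it does not go through. (In the example above one still has $\sum_{\Ft\in\FS_{\bf k}}G_{\Gamma}(\Ft)=\sum_{\Ft\in\FS_{\bf k}}G_{\Delta}(\Ft)=0$, so Proposition~\ref{pro:I} and Statement~A appear unaffected; the defect is in the route, not the destination.) For the second equality your sketch is in the right spirit but somewhat misdescribes the paper's mechanism: rather than a local swap at $i_{0}$ plus divisibilities, the paper shows that each maximal run of circled entries in $\Gamma^{\iota}$ over $i_{0}<j\le 2r-i_{0}$ shifts left by one position under the $\Gamma'^{\iota}$ circling rule, with the uncircled boundary entries matching because all entries in the run are equal; the Class~I hypothesis is used to ensure such a run cannot begin at $j=i_{0}+1$.
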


\begin{proof}
If $k_{1}-\varepsilon_2 k_{2}=\mu_r$, then
the desired equalities follow immediately from the definitions. Otherwise, since $\Ft\in BZL_1(\mu)$, necessarily
$k_{1}-\varepsilon_2 k_{2}<\mu_r$

Suppose thus that $k_{1}-\varepsilon_2 k_{2}<\mu_r$.
First, we will show that $G_{\Gamma'}(\Ft)=G_{\Gamma}(\Ft)$. It is sufficient to show that 
\begin{equation}\label{eq:lm:c-0}
\gamma_{\Gamma}(c_{1,i_{0}})\gamma_{\Gamma}(\bar{c}_{1,i_{0}})=\gamma_{\Gamma'}(c_{1,i_{0}})\gamma_{\Gamma'}(\bar{c}_{1,i_{0}}).
\end{equation}
If $d_{2r-i_{0}}\neq 0$, these two entries have the same decorations in $\Gamma(\Ft)$ and $\Gamma'(\Ft)$,
so Eqn.~\eqref{eq:lm:c-0} is true. If $d_{2r-i_{0}}=0$, then  $\Ft^{*}$ is not maximal.
Thus, $G_{\Gamma}(\Ft)=G_{\Gamma'}(\Ft)=0$ unless $n$ divides $k_{1}$ and $k^{*}_{i_{0}}$ by Lemma~\ref{lm:divisibility}.
If so, $n$ divides $\overline{c}_{1,i_0}=k_{i_0}^*$ and $n$ divides $c_{1,i_0}=k_1-\overline{c}_{1,i_0}$ (as $d_{2r-i_{0}}=0$).
Thus in this case,
$\gamma_{\Gamma}(c_{1,i_{0}})\gamma_{\Gamma}(\bar{c}_{1,i_{0}})=q^{c_{1,i_{0}}}(1-q^{-1})q^{\bar{c}_{1,i_{0}}}$
and $\gamma_{\Gamma'}(c_{1,i_{0}})\gamma_{\Gamma'}(\bar{c}_{1,i_{0}})=q^{\bar{c}_{1,i_{0}}}(1-q^{-1})q^{c_{1,i_{0}}}.$
Eqn.~\eqref{eq:lm:c-0} follows. 

Second, we show that $G_{\Gamma^{\iota}}(\Ft)=G_{\Gamma'^{\iota}}(\Ft)$.
We only need to consider the entries $\iota(c_{1,j})$ for $i_{0}< j\leq 2r-i_{0}$. 
We may suppose that no entry $c_{1,j}$ for $i_{0}<j\leq 2r-i_{0}$ is both boxed and circled in  $\Gamma^{\iota}(\Ft)$,
since if one were, by Lemma~\ref{lm:strictness}, an entry in $\Gamma'^{\iota}(\Ft)$ would also have
this property and both sides would be zero. 

In $\Gamma^{\iota}(\Ft)$, let $\iota(c_{1,j})$ for $m_{1}\leq j\leq m_{2}$ be a sequence of consecutive circled entries, where $m_{1}> i_{0}$ and $m_{2}\leq 2r-i_{0}$, which is maximal,
i.e.\ such that the entry $\iota(c_{1,m_{1}-1})$ is not circled
and either $\iota(c_{1,m_{2}+1})$ is not circled or $m_{2}=2r-i_{0}$. By the assumption, the entries $\iota(c_{1,j})$ for $m_{1}\leq j\leq m_{2}$ are not boxed. Since $\Gamma(\Ft)$ is strict, the entry $\iota(c_{1,m_{1}-1})$ is not boxed (note $m_{1}>i_{0}+1$ as $\Ft$ is in Class I).

On the other hand, in $\Gamma'^{\iota}(\Ft)$, the entries $\iota(c_{1,j})$ for $m_{1}-1\leq j\leq m_{2}-1$ are circled and the entry $\iota(c_{1,m_{2}})$ is  unboxed and uncircled. Therefore,  $G_{\Gamma^{\iota}}(\Ft)=G_{\Gamma'^{\iota}}(\Ft)$.
\end{proof}

We now carry out similar constructions for the $\Delta$ arrays.
In $\Delta(\Ft)$, we have
\begin{equation}\label{eq:fc-*-sharp}
\begin{array}{lll}
\bar{\Fc}_{1,j}=\bar{\Fc}^{*}_{1,j}=\bar{c}_{1,j} &\iota(\bar{\Fc}_{1,j})=N-k_{1}+\bar{\Fc}^{*}_{1,j}& \text{ for } 1\leq j\leq i_{0},\\
\bar{\Fc}_{1,j}=k^{*}_{i_{0}}+\bar{\Fc}^{\sharp}_{1,j-i_{0}} &\iota(\bar{\Fc}_{1,j})=N-k_{1}+k^{*}_{i_{0}}+\bar{\Fc}^{\sharp}_{1,j-i_{0}}& \text{ for } i_{0}+1\leq j<r,\\
\Fc_{1,j}=k^{*}_{i_{0}}+\Fc^{\sharp}_{1,j-i_{0}}  &\iota(\Fc_{1,j})=N-k_{1}+k^{*}_{i_{0}}+\Fc^{\sharp}_{1,j-i_{0}} &\text{ for } i_{0}+1\leq j\leq r,\\
\Fc_{1,j}=\Fc^{*}_{1,j}+a+k^{\sharp}_{1} &\iota(\Fc_{1,j})=N-2k^{*}_{i_{0}}+\Fc^{*}_{1,j}& \text{ for } 1\leq j\leq i_{0}.
\end{array}
\end{equation}
By Lemma~\ref{lm:delta-circle}, $G_{\Delta}(\Ft)$ vanishes unless $n$ divides $\bar{\Fc}_{1,i_{0}}=k^{*}_{i_{0}}$.

We again define new decorated arrays $\Delta'(\Ft)$   and $\Delta'^{\iota}(\Ft)$, whose
entries are the same as the entries of $\Delta(\Ft)$ and $\Delta^{\iota}(\Ft)$, resp., but whose decorations are 
(usually) modified.
If $k_{1}-\varepsilon_{2}k_{2}=\mu_{r}$, then the decoration rules for $\Delta'(\Ft)$ are the same 
as those for $\Delta(\Ft)$.  In this case, $\Delta'^{\iota}(\Ft)$ is decorated so as
to be non-strict. If instead $k_{1}-\varepsilon_{2}k_{2}\ne \mu_{r}$, then
in $\Delta'(\Ft)$ the entry $\bar{\Fc}_{1,i_{0}}$ is neither boxed nor circled.
The entry $\bar{\Fc}_{1,i_{0}+1}$ is circled if $d_{2r-i_{0}-1}=0$. The 
remaining entries of  $\Delta'(\Ft)$ are assigned the same decorations as in $\Delta(\Ft)$. 
In $\Delta'^{\iota}(\Ft)$, the entry $\overline{\Fc}(\Delta^{\iota})_{1,i_{0}+1}$ is neither boxed nor circled.
The entry $\bar{\Fc}(\Delta^{\iota})_{1,i_{0}+2}$ is circled if $d_{2r-i_{0}-1}=0$.
The remaining entries of $\Delta'^{\iota}(\Ft)$ are assigned the same decorations as in $\Delta^{\iota}(\Ft)$. 

\begin{example}\label{r=3 part 2}\rm
We continue Example~\ref{r=3 example} by giving the $\Delta$ and $\Delta'$ arrays, using the same notation.
We have
\begin{align*}
\Delta(\Ft)=&(2d_{1}+d_{2}+2d_{3}+d_{4},d_{2}+2d_{3}+d_{4}+d_{1},2d_{3}+d_{4}+d_{1},d_{4}+d_{1},d_{1})\\
=&(2k^{*}_{2}+a+2k^{\sharp}_{1},k^{*}_{2}+d_{4}+a+2k^{\sharp}_{1},2k^{\sharp}_{1}+k^{*}_{2},k^{*}_{2},d_{1}).
\end{align*}
The decoration rules for $\Delta(\Ft)$ and $\Delta'(\Ft)$ are:
$$
\Delta(\Ft): \begin{pmatrix}
2k^{*}_{2}+a+2k_1^{\sharp}&k^{*}_{2}+d_{4}+a+2k^{\sharp}_{1}&2k^{\sharp}_{1}+k^{*}_{2}&k^{*}_{2}&d_{1}\\
\fbox{$d_{1}$}&\fbox{$d_{2}$}\circled{$d_{1}$}&\fbox{$d_{3}$}&\circled{$d_{3}$}&\fbox{$d_{2}$}\circled{$d_{4}$}\circled{$d_{1}$}
\end{pmatrix}
$$
$$
\Delta'(\Ft): \begin{pmatrix}
2k^{*}_{2}+a+2k_1^{\sharp}&k^{*}_{2}+d_{4}+a+2k^{\sharp}_{1}&2k^{\sharp}_{1}+k^{*}_{2}&k^{*}_{2}&d_{1}\\
\fbox{$d_{1}$}&\fbox{$d_{2}$}\circled{$d_{1}$}&\fbox{$d_{3}$}\circled{$d_{3}$}&&\fbox{$d_{2}$}\circled{$d_{4}$}\circled{$d_{1}$}
\end{pmatrix}.
$$
\end{example}

\begin{lemma}\label{I:delta=delta'}
If $\Ft$ is a short pattern in Class I and $k(\Ft)$ is strict, then $G_{\Delta}(\Ft)=G_{\Delta'}(\Ft)$ and $G_{\Delta^{\iota}}(\Ft)=G_{\Delta'^{\iota}}(\Ft)$.
\end{lemma}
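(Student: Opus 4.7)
We adapt the argument of Lemma~\ref{I:gamma=gamma'} to the $\Delta$-side, exploiting the fact that the underlying pattern $\Ft$ is unchanged and only a few decoration rules have been modified. When $k_1 - \varepsilon_2 k_2 = \mu_r$, the decorations of $\Delta'(\Ft)$ coincide with those of $\Delta(\Ft)$ by construction, so the first equality is immediate. For the second equality, $\Delta'^{\iota}(\Ft)$ is defined to be non-strict, and a short check patterned on Lemma~\ref{lm:strictness} (using the identification $\Fc(\Delta^{\iota})_{1,j}=\iota(\Fc_{1,j+1})$) shows that $\Delta^{\iota}(\Ft)$ must simultaneously box and circle the relevant seam entry near $j=i_0$, so $G_{\Delta^{\iota}}(\Ft)=0$ as well.

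Assume now $k_1 - \varepsilon_2 k_2 < \mu_r$. By inspection of the two rule-sets, the decorations of $\Delta(\Ft)$ and $\Delta'(\Ft)$ can disagree only at the pair $\bar{\Fc}_{1,i_0}$ and $\bar{\Fc}_{1,i_0+1}$, and correspondingly for the $\iota$-arrays only at $\bar{\Fc}(\Delta^{\iota})_{1,i_0+1}$ and $\bar{\Fc}(\Delta^{\iota})_{1,i_0+2}$. Hence each of the claimed equalities reduces to a local identity of the form
$$
\gamma_{\Delta}(\bar{\Fc}_{1,i_0})\,\gamma_{\Delta}(\bar{\Fc}_{1,i_0+1}) \;=\; \gamma_{\Delta'}(\bar{\Fc}_{1,i_0})\,\gamma_{\Delta'}(\bar{\Fc}_{1,i_0+1}),
$$
and its $\iota$-analogue. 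These we analyze by conditioning on whether the original $\Delta$-rule would circle $\bar{\Fc}_{1,i_0}$ (equivalently, whether $d_{i_0-1}=0$ or $\bar{\Fc}_{1,i_0}=0$) and on whether the new $\Delta'$-circling condition at $\bar{\Fc}_{1,i_0+1}$ (that $d_{2r-i_0-1}=0$) is triggered.

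In the generic sub-case, where none of these exceptional conditions holds, the two arrays carry identical decorations at the disputed entries and the identities are automatic. The remaining sub-cases are precisely those in which $\Ft^{*}$ fails to be maximal, so Lemma~\ref{lm:divisibility}(1),(3) forces the products on both sides to vanish unless $n$ divides both $k_1$ and $k^{*}_{i_0}$ (respectively $k_1$ and $k_1-k^{*}_{i_0}$ for the $\iota$-side). Under these divisibilities, $n$ also divides each of the four seam entries, and so the formula~(\ref{dec:Delta}) evaluates each $\gamma_{\Delta}$ explicitly: circled-but-not-boxed entries contribute $q^c$, undecorated entries (with $n\mid c$) contribute $q^c(1-q^{-1})$, and boxed-but-not-circled entries contribute $g(p^{c-1},p^c)$. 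A short bookkeeping using~(\ref{eq:fc-*-sharp}) then shows that the products on the two sides collapse to the same value. The $\iota$-identity is handled identically, with $\bar{\Fc}(\Delta^{\iota})_{1,i_0+1}=N-k_1+k^{*}_{i_0}$ playing the role of $\bar{\Fc}_{1,i_0}$.

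The main obstacle is the combinatorial bookkeeping at the seam $j=i_0$: one must track all combinations of ``boxed/circled/neither'' produced by the two decoration rules and verify in each case that the mismatch introduced in $\Delta'$ is exactly compensated by the values forced by the Gauss-sum evaluations under the divisibility conditions from Lemma~\ref{lm:divisibility}. Once this case-analysis is carried out, the Lemma follows exactly as in the $\Gamma$-side argument of Lemma~\ref{I:gamma=gamma'}.
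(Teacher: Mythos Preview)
Your plan transplants the $\Gamma$-side argument of Lemma~\ref{I:gamma=gamma'} too literally, and this introduces a genuine gap.

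First, an indexing slip: the original $\Delta$-circling condition for $\bar\Fc_{1,i_0}=\Fc_{1,2r-i_0}$ is $d_{2r-i_0-1}=0$ (or $\bar\Fc_{1,i_0}=0$), not $d_{i_0-1}=0$.

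More importantly, your assertion that ``the remaining sub-cases are precisely those in which $\Ft^{*}$ fails to be maximal'' is false on the $\Delta$-side. On the $\Gamma$-side the exceptional condition was $d_{2r-i_0}=0$, and $d_{2r-i_0}=d'_{i_0}$ is an entry of $\Ft^{*}$, so its vanishing forces $\Ft^{*}$ to be non-maximal (given $k_1-\varepsilon_2 k_2<\mu_r$). But on the $\Delta$-side the seam has shifted by one: the exceptional condition is $d_{2r-i_0-1}=0$, and $d_{2r-i_0-1}$ is the \emph{last} entry of $\Ft^{\sharp}$, not an entry of $\Ft^{*}$. So $d_{2r-i_0-1}=0$ says nothing about the maximality of $\Ft^{*}$, Lemma~\ref{lm:divisibility}(3) cannot be invoked, and you do not obtain the divisibility $n\mid k_1$ needed for your bookkeeping.

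The paper's proof avoids Lemma~\ref{lm:divisibility} entirely here and is much shorter. It conditions only on whether $d_{2r-i_0-1}=0$. If $d_{2r-i_0-1}\neq 0$, neither modified entry is circled in either array and the decorations coincide. If $d_{2r-i_0-1}=0$, then $\bar\Fc_{1,i_0}=\bar\Fc_{1,i_0+1}$ have the same value; passing from $\Delta$ to $\Delta'$ simply moves the circle from $\bar\Fc_{1,i_0}$ to $\bar\Fc_{1,i_0+1}$, and a one-line check (using the inequality $d_{i_0+2}\le\mu_{r-i_0-1}$ together with $d_{2r-i_0-1}=0$) shows $\bar\Fc_{1,i_0+1}$ is not boxed. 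The product of a circled factor and an undecorated factor of the same value is symmetric in which one carries the circle, so the local identity follows immediately. The $\iota$-case is analogous, again without any appeal to the maximality of $\Ft^{*}$.
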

\begin{proof}
To show that $G_{\Delta}(\Ft)=G_{\Delta'}(\Ft)$, it is sufficient to establish the equality
\begin{equation}\label{eq:lm:fc-0}
\gamma_{\Delta}(\bar{\Fc}_{1,i_{0}+1})\gamma_{\Delta}(\bar{\Fc}_{1,i_{0}})=\gamma_{\Delta'}(\bar{\Fc}_{1,i_{0}+1})\gamma_{\Delta'}(\bar{\Fc}_{1,i_{0}}).
\end{equation}
If $d_{2r-i_{0}-1}\neq 0$, these two entries have the same decorations in $\Delta(\Ft)$ and $\Delta'(\Ft)$,
and Eqn.~\eqref{eq:lm:fc-0} holds. If $d_{2r-i_{0}-1}=0$, then $d_{i_{0}+2}<\mu_{r-i_{0}-2}+d_{i_{0}+1}-d_{2r-i_{0}-1}$ and $\bar{\Fc}_{1,i_{0}+1}$ is not boxed, and Eqn.~\eqref{eq:lm:fc-0} again holds.

The proof for the second equality of the Lemma is similar and is omitted.
\end{proof}

\begin{proposition}\label{pro:I}
Suppose that the weight $\bf k$ is in Class I and is strict.  Then
$$
\sum_{\Ft\in\FS_{\bf k}}G_{\Gamma}(\Ft)=\sum_{\Ft\in\FS_{\bf k}}G_{\Delta}(\Ft)
\text{ and }
\sum_{\Ft\in\FS_{\bf k}}G_{\Gamma^{\iota}}(\Ft)=\sum_{\Ft\in\FS_{\bf k}}G_{\Delta^{\iota}}(\Ft).
$$
\end{proposition}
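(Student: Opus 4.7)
The plan is to combine Lemma~\ref{split-reduce} with Proposition~\ref{pro:resonant} (applied to the totally resonant ``left half'' $\Ft^{*}$) and the inductive hypothesis of Statement~A (applied to the ``right half'' $\Ft^{\sharp}$, which has rank $r-i_{0}<r$). As a first step, use Lemmas~\ref{I:gamma=gamma'} and~\ref{I:delta=delta'} to replace $G_{\Gamma}, G_{\Delta}, G_{\Gamma^{\iota}}, G_{\Delta^{\iota}}$ by their primed counterparts $G_{\Gamma'}, G_{\Delta'}, G_{\Gamma'^{\iota}}, G_{\Delta'^{\iota}}$. The primed decoration rules are rigged precisely so that the middle entry $\bar c_{1,i_{0}}=k^{*}_{i_{0}}$ (respectively $\iota(\bar c_{1,i_{0}})=N-k_{1}+k^{*}_{i_{0}}$ in the $\iota$-setting) is neither boxed nor circled, hence factors out as $q^{k^{*}_{i_{0}}}(1-q^{-1})$ (resp.\ $q^{N-k_{1}+k^{*}_{i_{0}}}(1-q^{-1})$) whenever the necessary divisibility holds, and so that the remaining entries partition into a ``left block'' of $2i_{0}-1$ entries associated with $\Ft^{*}$ and a ``right block'' of $2(r-i_{0})-1$ entries associated with $\Ft^{\sharp}$.

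After reorganizing via Lemma~\ref{split-reduce} as
\[
\sum_{\Ft\in\FS_{\bf k}(\mu)}=\sum_{\mathbf{k^{*}},\mathbf{k^{\sharp}}}\ \sum_{\Ft^{*}\in\FS_{\mathbf{k^{*}}}(\mu^{*})}\ \sum_{\Ft^{\sharp}\in\FS_{\mathbf{k^{\sharp}}}(\mu^{\sharp})},
\]
the crucial structural claim to establish is the product decomposition
\[
G_{\Gamma'}(\Ft)\;=\;q^{A({\bf k^{*}},{\bf k^{\sharp}},a)}\,\gamma_{\Gamma}(\bar c_{1,i_{0}})\, G_{\Gamma}(\Ft^{*})\, G_{\Gamma}(\Ft^{\sharp}),
\]
together with the identical formula obtained by substituting $\Delta'/\Delta$ for $\Gamma'/\Gamma$, for a common $q$-power factor $q^{A(\cdot)}$ that depends only on ${\bf k^{*}}$, ${\bf k^{\sharp}}$, and $a$. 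The right-block shift by $k^{*}_{i_{0}}$ is absorbed by $n\mid k^{*}_{i_{0}}$ (forced by Lemma~\ref{lm:divisibility}(1)) using $\gamma_{\Gamma}(c+s)=q^{s}\gamma_{\Gamma}(c)$ whenever $n\mid s$; the left-block shift by $a+k^{\sharp}_{1}$ is absorbed by $n\mid k_{1}$ (forced by Lemma~\ref{lm:divisibility}(3) when $\Ft^{*}$ is non-maximal), which in turn yields $n\mid(a+k^{\sharp}_{1})$ from $k_{1}=2k^{*}_{i_{0}}+a+k^{\sharp}_{1}$. The maximal-$\Ft^{*}$ case is treated separately, since every left entry is then boxed and the Gauss sum product is computed directly. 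Once this decomposition is in hand, Proposition~\ref{pro:resonant} provides equality of the $\Ft^{*}$ sums (totally resonant case), and the inductive hypothesis of Statement~A at rank $r-i_{0}$ provides equality of the $\Ft^{\sharp}$ sums; the middle factor is identical on both sides, yielding the first identity of the Proposition. The $\iota$-identity is proved in the same way, with $n\mid k_{1}-k^{*}_{i_{0}}$ (Lemma~\ref{lm:divisibility}(1) for the $\iota$-version) replacing $n\mid k^{*}_{i_{0}}$ and the $\iota$-shifts absorbed via $n\mid N$ built into the definition of $\iota$.

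The main obstacle is the case-by-case verification that, for every entry in the left (resp.\ right) block, the decoration assigned by $\Gamma'(\Ft)$ or $\Delta'(\Ft)$ agrees with the decoration that the corresponding entry would receive in the rank-$i_{0}$ array $\Gamma(\Ft^{*})$ or $\Delta(\Ft^{*})$ (resp.\ the rank-$(r-i_{0})$ array $\Gamma(\Ft^{\sharp})$ or $\Delta(\Ft^{\sharp})$). This is dictated by~\eqref{eq:c-*-sharp}, \eqref{eq:fc-*-sharp} and the bounds~\eqref{ineq:BZL-d}, \eqref{ineq:CQ-d}, together with the explicit local modifications built into the primed rules near the split position $j=i_{0}$. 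The boundary case $\mu_{r+1-i_{0}}=a$ (equivalently $k_{1}-\varepsilon_{2}k_{2}=\mu_{r}$) requires separate attention; it is controlled by the convention that $\Gamma'^{\iota}(\Ft)$ and $\Delta'^{\iota}(\Ft)$ are declared non-strict in this situation, so that their contributions vanish on both sides simultaneously.
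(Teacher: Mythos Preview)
Your proposal is correct and follows essentially the same approach as the paper's proof: pass to the primed arrays via Lemmas~\ref{I:gamma=gamma'} and~\ref{I:delta=delta'}, split each $\Ft$ as $(\Ft^{*},\Ft^{\sharp})$ via Lemma~\ref{split-reduce}, use Lemma~\ref{lm:divisibility} to absorb the $q$-shifts, and then appeal to the totally resonant result for the $\Ft^{*}$-sum together with the inductive hypothesis (at rank $r-i_{0}$) for the $\Ft^{\sharp}$-sum. The only cosmetic difference is that the paper writes the factorization in terms of $G_{\Gamma^{\flat}}(\Ft^{*})$ and $G_{\Delta^{\flat}}(\Ft^{*})$ (invoking Proposition~\ref{lm:resonant}) rather than $G_{\Gamma}(\Ft^{*})$ and $G_{\Delta}(\Ft^{*})$; since these differ from your quantities by the common factor $q^{(i_{0}-1)k^{*}_{i_{0}}}$ and Proposition~\ref{lm:resonant} establishes both equalities simultaneously, the two formulations are equivalent.
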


\begin{proof}
We will prove this Proposition by induction on $r$. To begin the induction,
the equalities for the case $r=1$ are established in Example~\ref{ex:r=1}.

Suppose $r>1$. 
Suppose first that $k_{1}-\varepsilon_{2}k_{2}\ne \mu_{r}$.
By Lemmas~\ref{I:gamma=gamma'}, \ref{I:delta=delta'} and \ref{lm:divisibility}, we have 
$$
G_{\Gamma}(\Ft)=q^{i_{0}(k^{\sharp}_{1}+a)+(2r-2i_{0}-1)k^{*}_{i_{0}}+k^*_{i_0}}(1-q^{-1})G_{\Gamma^{\flat}}(\Ft^{*})G_{\Gamma}(\Ft^{\sharp}),
$$
and
$$
G_{\Delta}(\Ft)=q^{(2r-2i_{0}-1)k^{*}_{i_{0}}+i_{0}(a+k^{\sharp}_{1})+k^*_{i_0}}(1-q^{-1})G_{\Delta^{\flat}}(\Ft^{*})G_{\Delta}(\Ft^{\sharp}),
$$
when $n$ divides $k^{*}_{i_{0}}$, and these quantities are $0$ otherwise.  Since $\mu^*$ is totally resonant, 
by Proposition~\ref{lm:resonant}, we have 
$$
\sum_{\Ft^{*}\in\FS_{{\bf k}^*}(\mu^{*})}G_{\Gamma^{\flat}}(\Ft^{*})=
\sum_{\Ft^{*}\in\FS_{{\bf k}^*}(\mu^{*})}G_{\Delta^{\flat}}(\Ft^{*}).
$$
In addition, by induction,
$$
\sum_{\Ft^{\sharp}\in\FS_{{\bf k}^\sharp}(\mu^{\sharp})}G_{\Gamma}(\Ft^{\sharp})=
\sum_{\Ft^{\sharp}\in\FS_{{\bf k}^\sharp}(\mu^{\sharp})}G_{\Delta}(\Ft^{\sharp}).
$$ 
Referring to Lemma~\ref{split-reduce}, we have
\begin{multline*}
\sum_{\Ft\in\FS_{\bf k}(\mu)}G_{\Gamma}(\Ft)=\\
\sum_{{\bf k^{*}},{\bf k^{\sharp}}}q^{i_{0}(k^{\sharp}_{1}+a)+(2r-2i_{0}-1)k^{*}_{i_{0}}+k^*_{i_0}}(1-q^{-1})\sum_{\Ft^{*}\in\FS_{\bf k^{*}}(\mu^*)}G_{\Gamma^{\flat}}(\Ft^*)\sum_{\Ft^{\sharp}\in\FS_{\bf k^{\sharp}}(\mu^\#)}G_{\Gamma}(\Ft^{\sharp})
\end{multline*}
and 
\begin{multline*}
\sum_{\Ft\in\FS_{\bf k}(\mu)}G_{\Delta}(\Ft)=\\
\sum_{{\bf k^{*}},{\bf k^{\sharp}}}q^{(2r-2i_{0}-1)k^{*}_{i_{0}}+i_{0}(a+k^{\sharp}_{1})+k^*_{i_0}}(1-q^{-1})
\sum_{\Ft^{*}\in\FS_{\bf k^{*}}(\mu^*)}G_{\Delta^{\flat}}(\Ft^{*})
\sum_{\Ft^{\sharp}\in\FS_{\bf k^{\sharp}}(\mu^\#)}G_{\Delta}(\Ft^{\sharp})
\end{multline*}
where the outer sums over ${\bf k^{*}}$ and ${\bf k^{\sharp}}$ are as given in Lemma~\ref{split-reduce}.
Therefore, 
$$\sum_{\Ft\in\FS_{\bf k}(\mu)}G_{\Gamma}(\Ft)=\sum_{\Ft\in\FS_{\bf k}(\mu)}G_{\Delta}(\Ft),$$
as claimed.

For the second equality, by Lemma~\ref{lm:divisibility} we may suppose that $n$ divides $k_{1}-k^{*}_{i_{0}}$
since otherwise both sides are zero.  When $n$ divides $k_{1}-k^{*}_{i_{0}}$,
$$
G_{\Gamma^{\iota}}(\Ft)=q^{(2r-2i_{0})(N-k_{1}+k^{*}_{i_{0}})}(1-q^{-1})\prod^{i_{0}-1}_{j=1}\gamma_{\Gamma}(\iota(c_{1,j}))\gamma_{\Gamma}(\iota(\bar{c}_{1,j}))\cdot \gamma_{\Gamma}(\iota(c_{1,i_{0}}))\cdot G_{\Gamma}(\Ft^{\sharp}),
$$
and
\begin{multline*}
G_{\Delta^{\iota}}(\Ft)=q^{k_{1}-k^{*}_{i_{0}}}\prod^{i_{0}-1}_{j=1}\gamma_{\Delta}(\Fc(\Delta^{\iota})_{1,j})\gamma_{\Delta}(\bar{\Fc}(\Delta^{\iota})_{1,j})\cdot
 \gamma_{\Delta}(\bar{\Fc}(\Delta^{\iota})_{1,i_{0}})\\
 \times (1-q^{-1})q^{(2r-2i_{0})(N-k_{1}+k^{*}_{i_{0}})}G_{\Delta}(\Ft^{\sharp}).
\end{multline*}

If $\Ft^{*}$ is maximal, then all the entries are boxed and thus
\begin{multline*}
\prod^{i_{0}-1}_{j=1}\gamma_{\Gamma}(\iota(c_{1,j}))\gamma_{\Gamma}(\iota(\bar{c}_{1,j}))\cdot \gamma_{\Gamma}(\iota(c_{1,i_{0}}))=\\q^{k_{1}-k^{*}_{i_{0}}}\prod^{i_{0}-1}_{j=1}\gamma_{\Delta}(\Fc(\Delta^{\iota})_{1,j})\gamma_{\Delta}(\bar{\Fc}(\Delta^{\iota})_{1,j})\cdot \gamma_{\Delta}(\bar{\Fc}(\Delta^{\iota})_{1,i_{0}}).
\end{multline*}
If $\Ft^{*}$ is non-maximal, then
$$
G_{\Gamma^{\iota}}(\Ft)=q^{(2r-2i_{0})(N-k_{1}+k^{*}_{i_{0}})+(i_{0}-1)(N-k_{1})+i_{0}(N-k^{*}_{i_{0}})}(1-q^{-1})G_{\Gamma^{\flat}}(\Ft^{*})G_{\Gamma}(\Ft^{\sharp}),
$$
and
$$
G_{\Delta^{\iota}}(\Ft)=q^{(2r-2i_{0})(N-k_{1}+k^{*}_{i_{0}})+(i_{0}-1)(N-k_{1})+i_{0}(N-k^{*}_{i_{0}})}(1-q^{-1})G_{\Delta^{\flat}}(\Ft^{*})G_{\Delta}(\Ft^{\sharp}).
$$

Again referring to Lemma~\ref{split-reduce}, we have
\begin{multline*}
\sum_{\Ft\in\FS_{\bf k}(\mu)}G_{\Gamma^{\iota}}(\Ft)=
\sum_{{\bf k^{*}},{\bf k^{\sharp}}}
q^{(2r-2i_{0})(N-k_{1}+k^{*}_{i_{0}})+(i_{0}-1)(N-k_{1})+i_{0}(N-k^{*}_{i_{0}})}(1-q^{-1})\\ \times
\sum_{\Ft^{*}\in\FS_{\bf k^{*}}(\mu^*)}G_{\Gamma^{\flat}}(\Ft^{*})
\sum_{\Ft^{\sharp}\in\FS_{\bf k^{\sharp}}(\mu^\#)}G_{\Gamma}(\Ft^{\sharp})
\end{multline*}
and 
\begin{multline*}
\sum_{\Ft\in\FS_{\bf k}(\mu)}G_{\Delta^{\iota}}(\Ft)=
\sum_{{\bf k^{*}},{\bf k^{\sharp}}}q^{(2r-2i_{0})(N-k_{1}+k^{*}_{i_{0}})+(i_{0}-1)(N-k_{1})+i_{0}(N-k^{*}_{i_{0}})}(1-q^{-1})
\\ \times
\sum_{\Ft^{*}\in\FS_{\bf k^{*}}(\mu^*)}G_{\Delta^{\flat}}(\Ft^{*})
\sum_{\Ft^{\sharp}\in\FS_{\bf k^{\sharp}}(\mu^\#)}G_{\Delta}(\Ft^{\sharp})
\end{multline*}
where the outer sums over ${\bf k^{*}}$ and ${\bf k^{\sharp}}$ are described in Lemma~\ref{split-reduce}.
By induction, we obtain
 $\sum_{\Ft\in\FS_{\bf k}(\mu)}G_{\Gamma^{\iota}}(\Ft)=\sum_{\Ft\in\FS_{\bf k}(\mu)}G_{\Delta^{\iota}}(\Ft)$,
as claimed.

Finally, suppose instead that $k_{1}-\varepsilon_{2}k_{2}=\mu_{r}$.
Then a similar argument easily reduces the desired result to the $r-1$ case for $\Gamma(\Ft)$ and $\Delta(\Ft)$ and to the non-strict case for $\Gamma^{\iota}(\Ft)$ and $\Delta^{\iota}(\Ft)$. 
\end{proof}

\subsection{The Class II Case}
In this subsection, we consider the case when $\bf k$ is in Class II. The approach to proving
Statement A for such weights is the same as in Class I, but the details require modification.
For $\Ft$ in Class II, we have
\begin{equation}\label{eq:II-k}
\begin{array}{ll}
k_{i}(\Ft)=2k^{*}_{i_{0}}+a+k^{\sharp}_{1}
&\text{ for }1\leq i\leq i_{0},\\
k_{i}(\Ft)=2k^{*}_{i_{0}}+2a+k^{\sharp}_{i-i_{0}} &\text{ for } i_{0}+1\leq i<r,\\
k_{r}(\Ft)=k^{*}_{i_{0}}+a+k^{\sharp}_{r-i_{0}}.&
\end{array}
\end{equation}

The entries of $\Gamma(\Ft)$ and $\Gamma^{\iota}(\Ft)$ are given as follows.
\begin{equation}\label{eq:II-c-*-sharp}
\begin{array}{lll}
\bar{c}_{1,j}=\bar{c}^{*}_{1,j} &\iota(\bar{c}_{1,j})=N-k_{1}+\bar{c}^{*}_{1,j}&\text{ for } 1\leq j\leq i_{0},\\
\bar{c}_{1,j}=k^{*}_{i_{0}}+a+\bar{c}^{\sharp}_{1,j-i_{0}} &\iota(\bar{c}_{1,j})=N-k^{*}_{i_{0}}-k^{\sharp}_{1}+\bar{c}^{\sharp}_{1,j-i_{0}}&\text{ for } i_{0}+1\leq j<r,\\
c_{1,j}=k^{*}_{i_{0}}+a+c^{\sharp}_{1,j-i_{0}} &\iota(c_{1,j})=N-k^{*}_{i_{0}}-k^{\sharp}_{1}+c^{\sharp}_{1,j-i_{0}}&\text{ for } i_{0}+1\leq j\leq r,\\
c_{1,j}=c^{*}_{1,j}+a+k^{\sharp}_{1} &\iota(c_{1,j})=N-2k^{*}_{i_{0}}+c^{*}_{1,j}&\text{ for } 1\leq j\leq i_{0}.
\end{array}
\end{equation}

Once again, we define new decorated arrays
$\Gamma'(\Ft)$ and $\Gamma'^{\iota}(\Ft)$, whose entries
are the same as those of $\Gamma(\Ft)$ and $\Gamma^{\iota}(\Ft)$ (resp.), but with some modifications
to the decoration rules.
In $\Gamma'(\Ft)$, the entry $c_{1,j}$ for $i_{0}< j\leq 2r-i_{0}-1$ is circled if $c_{1,j}=c_{1,j-1}$,
and the rest of the decorations for $\Gamma'(\Ft)$ are the same as those of $\Gamma(\Ft)$. 
In $\Gamma'^{\iota}(\Ft)$, the decoration rules for the entries $\iota(c_{1,j})$ in $\Gamma'^{\iota}(\Ft)$ are the same as
those for $\iota(c_{1,j})$ in $\Gamma^{\iota}(\Ft)$ except  when $j=i_{0}$ or $j=2r-i_{0}$. The entry $\iota(c_{1,i_{0}})$
in $\Gamma'^{\iota}(\Ft)$ is neither boxed nor circled, and the entry $\iota(\bar{c}_{1,i_{0}})$ is boxed if $d_{i_{0}}=\mu_{r+1-i_{0}}$ and circled if $d_{i_{0}}=0$.

\begin{lemma}\label{lm:II-gamma=gamma'}
If $\Ft$ is a short pattern in Class II and $k(\Ft)$ is strict, then 
$G_{\Gamma}(\Ft)=G_{\Gamma'}(\Ft)$ and $G_{\Gamma^{\iota}}(\Ft)=G_{\Gamma'^{\iota}}(\Ft)$.
\end{lemma}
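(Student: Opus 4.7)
The proof will follow the template of Lemma~\ref{I:gamma=gamma'}, adapted to the Class II setting where the surplus sits on the $d_{2r-i_0}$ side rather than the $d_{i_0}$ side. First I would dispose of the boundary case $k_{1}-\varepsilon_{2}k_{2}=\mu_{r}$: here the decoration rules of $\Gamma'(\Ft)$ coincide with those of $\Gamma(\Ft)$ by definition, and $\Gamma'^{\iota}(\Ft)$ is non-strict so $G_{\Gamma'^{\iota}}(\Ft)=0$; moreover the strictness of $k(\Ft)$ then forces some entry of $\Gamma^{\iota}(\Ft)$ to be simultaneously boxed and circled, so $G_{\Gamma^{\iota}}(\Ft)=0$ as well. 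So we may assume $k_{1}-\varepsilon_{2}k_{2}<\mu_{r}$.

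For the first equality $G_{\Gamma}(\Ft)=G_{\Gamma'}(\Ft)$, only the circling rule for the entries $c_{1,j}$ with $i_{0}<j\leq 2r-i_{0}-1$ is altered (the original rule $c_{1,j}=c_{1,j+1}$ is replaced by $c_{1,j}=c_{1,j-1}$). I would analyze maximal consecutive runs of entries where the two rules disagree. Along such a run all the $c_{1,j}$ in the interior take a common value, since both conditions translate into a vanishing of certain $d'_{j}$ and $d_{j-1}=d'_{j-1}$ type relations in Class II (using $d'_{j}=\min(d_{j},d_{2r-j})$ and the Class II index constraint). At the two endpoints of such a run, the boundary entries are either boxed (in which case the Gauss sums match directly by the standard formula $g(p^{c-1},p^{c})=q^{c-1}g(1,p)$), or unboxed-uncircled, in which case the product over the run vanishes on both sides unless $n$ divides the common value of the entries; when it does divide, the factor $q^{c}(1-q^{-1})$ appears identically on both sides.

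For the second equality $G_{\Gamma^{\iota}}(\Ft)=G_{\Gamma'^{\iota}}(\Ft)$, the rules differ only at $j=i_{0}$ and $j=2r-i_{0}$, so it suffices to establish
\begin{equation*}
\gamma_{\Gamma}(\iota(c_{1,i_{0}}))\,\gamma_{\Gamma}(\iota(\bar{c}_{1,i_{0}}))
=\gamma_{\Gamma'}(\iota(c_{1,i_{0}}))\,\gamma_{\Gamma'}(\iota(\bar{c}_{1,i_{0}})).
\end{equation*}
When $0<d_{i_{0}}<\mu_{r+1-i_{0}}$, the decorations agree on the two sides and the identity is immediate. In the boundary cases $d_{i_{0}}=0$ or $d_{i_{0}}=\mu_{r+1-i_{0}}$, I invoke Lemma~\ref{lm:divisibility}(3): the short pattern $\Ft^{*}$ is non-maximal (in the $d_{i_{0}}=0$ case since $d_{i_{0}}<d_{2r-i_{0}}$, and in the $d_{i_{0}}=\mu_{r+1-i_{0}}$ case since the inequalities in $BZL_{1}(\mu)$ together with the Class II condition prevent $d_{2r-i_{0}}$ from reaching $\mu_{r+1-i_{0}}$), whence both $G_{\Gamma^{\iota}}(\Ft)$ and $G_{\Gamma'^{\iota}}(\Ft)$ vanish unless $n\mid k_{1}$. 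Under this divisibility the undecorated entries contribute $q^{c}(1-q^{-1})$ factors that balance across the two decoration schemes.

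The main obstacle is the boxed/undecorated bookkeeping at the single boundary index. Unlike Class I, where the critical entry was $\bar{c}_{1,i_{0}}$ and the modified circling rule was applied to $c_{1,i_{0}}$, here the roles are exchanged: the entry $\iota(\bar{c}_{1,i_{0}})$ (not $\iota(c_{1,i_{0}})$) receives the new boxed/circled rule keyed to $d_{i_{0}}$, so I must re-verify that the strictness of $k(\Ft)$ combined with $k_{1}-\varepsilon_{2}k_{2}<\mu_{r}$ precludes the simultaneous boxed-and-circled configuration in $\Gamma'^{\iota}(\Ft)$, by an analogue of the argument in Lemma~\ref{lm:strictness}. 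Once this strictness check is in place, the proof reduces to the Gauss-sum manipulations outlined above.
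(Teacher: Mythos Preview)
Your overall structure mirrors the paper's, but there is a genuine gap in the argument for the first equality $G_{\Gamma}(\Ft)=G_{\Gamma'}(\Ft)$.

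The problematic step is your treatment of a run endpoint that is \emph{boxed}. You write that in this case ``the Gauss sums match directly,'' but that is false. Take a maximal block $m_{1}\le j\le m_{2}$ of entries that are circled under the $\Gamma$--rule; their common value is some $c$. In $\Gamma'(\Ft)$ the circled block shifts to $m_{1}+1\le j\le m_{2}+1$ (boxing rules are unchanged). If $c_{1,m_{2}+1}$ were boxed, then in $\Gamma(\Ft)$ it would contribute $g(p^{c-1},p^{c})$, while in $\Gamma'(\Ft)$ it would be simultaneously boxed and circled, forcing the whole product to vanish. These do not match. What the paper actually does---and what your proof is missing---is to \emph{prove} that $c_{1,m_{2}+1}$ cannot be boxed: one writes out the boxing condition at $m_{2}+1$, combines it with the circling condition at $m_{2}$ (which forces $d'_{m_2+1}=0$, $d_{m_2}=d'_{m_2}$ in the range $m_2<r$, and analogous vanishing for $m_2\ge r$), and observes that the resulting equalities make $\Gamma^{\iota}(\Ft)$ non-strict, contradicting Lemma~\ref{lm:strictness}. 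Only after ruling out the boxed case does the ``undecorated endpoint'' computation you sketch go through.

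Two smaller remarks. First, the boundary case $k_{1}-\varepsilon_{2}k_{2}=\mu_{r}$ that you import from Class~I is vacuous in Class~II: if $i_{0}>1$ then $d_{1}=d_{2r-1}$ gives $k_{1}-\varepsilon_{2}k_{2}=0$, and if $i_{0}=1$ then $d_{1}<d_{2r-1}$ gives $k_{1}-\varepsilon_{2}k_{2}<0$; either way the quantity is below $\mu_{r}\ge 1$. Second, in the $\iota$ part, the case $d_{i_{0}}=\mu_{r+1-i_{0}}$ needs no separate treatment: one checks from the Class~II relations ($d'_{i_{0}}=d_{i_{0}}$, $d'_{i_{0}-1}=d_{i_{0}-1}=d_{2r-i_{0}+1}$) that the decorations of $\iota(c_{1,i_{0}})$ and $\iota(\bar c_{1,i_{0}})$ already coincide in $\Gamma^{\iota}$ and $\Gamma'^{\iota}$ whenever $d_{i_{0}}\ne 0$. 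Your claim that $\Ft^{*}$ is non-maximal when $d_{i_{0}}=\mu_{r+1-i_{0}}$ is also incorrect (the entry $d_{2r-i_{0}}$ is not part of $\Ft^{*}$), though as just noted this case is harmless anyway.
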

\begin{proof}

We may suppose that no entry $c_{1,j}$ for $i_{0}\leq j\leq 2r-i_{0}$ is both boxed and circled in  $\Gamma(\Ft)$,
as otherwise both sides are zero by
 Lemma~\ref{lm:strictness}. In $\Gamma(\Ft)$, let $c_{1,j}$ for $m_{1}\leq j\leq m_{2}$ be a sequence of circled entries, where $m_{1}\geq i_{0}$ and $m_{2}< 2r-i_{0}-1$,
which is maximal: the entry $c_{1,m_{2}+1}$ is not circled and either the entry
$c_{1,m_{1}-1}$ is not circled or $m_{1}=i_{0}$. 
By strictness, the entries $c_{1,j}$ for $m_{1}\leq j\leq m_{2}$ are not boxed. 
Next, let us show that the entry $c_{1,m_{2}+1}$ is not boxed. When $m_{2}<r$, if $c_{1,m_{2}+1}$ is boxed, 
then $d_{m_{2}+1}=\mu_{r-m_{2}}+d_{m_{2}}-d_{2r-m_{2}}+d_{2r-m_{2}-1}$ and $d_{m_{2}}=d'_{m_{2}}$, 
$d'_{m_{2}+1}=0$
by the circle decoration on $c_{1,m_{2}}$. 
Thus, $\Gamma^{\iota}(\Ft)$ is non-strict contradicting strictness for $\Gamma(\Ft)$ by Lemma~\ref{lm:strictness}.
Similarly when  $m_{2}\geq r$, if $\bar{c}_{1,2r-m_{2}-1}=c_{1,m_{2}+1}$ is boxed, then 
$d_{m_{2}+1}=\mu_{m_{2}-r+2}+d'_{2r-m_{2}-2}-d_{m_{2}+2}$ and $d'_{2r-m_{2}}=0$, $d_{m_{2}+1}=d'_{2r-m_{2}-1}$,
contradicting strictness. 
Hence,  $\prod_{m_{1}\leq j\leq m_{2}+1}\gamma_{\Gamma}(c_{1,j})=(1-q^{-1})q^{\sum_{m_{1}\leq j\leq m_{2}+1}c_{1,j}}$ when $n$ divides $c_{1,m_{2}+1}$ and is zero otherwise.

On the other hand, in $\Gamma'(\Ft)$, the entries $c_{1,j}$ for $m_{1}+1\leq j\leq m_{2}+1$ are circled and the entry $c_{1,m_{1}}$ is neither boxed nor circled. We obtain the same divisibility condition from $c_{1,m_1}$,
and evaluating directly,  $\prod_{m_{1}\leq j\leq m_{2}+1}\gamma_{\Gamma}(c_{1,j})= \prod_{m_{1}\leq j\leq m_{2}+1}\gamma_{\Gamma'}(c_{1,j})$.  Thus  $G_{\Gamma}(\Ft)=G_{\Gamma'}(\Ft)$ as claimed.

Next, we show that $G_{\Gamma^{\iota}}(\Ft)=G_{\Gamma'^{\iota}}(\Ft)$. It is sufficient to prove that 
\begin{equation}\label{eq:II-gamma-iota=gamma-iota'}
\gamma_{\Gamma^{\iota}}(\iota(c_{1,i_{0}}))\gamma_{\Gamma^{\iota}}(\iota(\bar{c}_{1,i_{0}}))=\gamma_{\Gamma'^{\iota}}(\iota(c_{1,i_{0}}))\gamma_{\Gamma'^{\iota}}(\iota(\bar{c}_{1,i_{0}})).
\end{equation}
If $d_{i_{0}}\neq 0$, these two entries have the same decorations in $\Gamma^{\iota}(\Ft)$ and $\Gamma'^{\iota}(\Ft)$, so Eqn.~\eqref{eq:II-gamma-iota=gamma-iota'} is true.
If $d_{i_{0}}=0$, then $\Ft^{*}$ is not maximal.
Thus by Lemma~\ref{lm:gamma-circle} $G_{\Gamma^{\iota}}(\Ft)=G_{\Gamma'^{\iota}}(\Ft)=0$ unless $n$ divides $k_{1}$ and $k^{*}_{i_{0}}$.  If this divisibility holds, then $n$ divides $\iota(c_{1,i_{0}})$ and $n$ divides $\iota(\bar{c}_{1,i_{0}})$.
Evaluating directly, we find that
$$\gamma_{\Gamma^{\iota}}(\iota(c_{1,i_{0}}))\gamma_{\Gamma^{\iota}}(\iota(\bar{c}_{1,i_{0}}))=
\gamma_{\Gamma'^{\iota}}(\iota(c_{1,i_{0}}))\gamma_{\Gamma'^{\iota}}(\iota(\bar{c}_{1,i_{0}}))
=q^{\iota(c_{1,i_{0}})+
\iota(\bar{c}_{1,i_{0}})} (1-q^{-1}),$$ 
and Eqn.~\eqref{eq:II-gamma-iota=gamma-iota'} holds.  
\end{proof}

We turn to the $\Delta$ arrays.
The entries of $\Delta(\Ft)$ and $\Delta^{\iota}(\Ft)$ are given as follows.
\begin{equation}\label{eq:II-fc-*-sharp}
\begin{array}{lll}
\bar{\Fc}_{1,j}=\bar{\Fc}^{*}_{1,j}=\bar{c}_{1,j} &\iota(\bar{\Fc}_{1,j})=N-k_{1}+\bar{\Fc}_{1,j}& \text{ for } 1\leq j<i_{0},\\
\bar{\Fc}_{1,j}=k^{*}_{i_{0}}+a+\bar{\Fc}^{\sharp}_{1,j-i_{0}} &\iota(\bar{\Fc}_{1,j})=N-k^{*}_{i_{0}}-k^{\sharp}_{1}+\bar{\Fc}^{\sharp}_{1,j-i_{0}}& \text{ for } i_{0}\leq j<r,\\
\Fc_{1,j}=k^{*}_{i_{0}}+a+\Fc^{\sharp}_{1,j-i_{0}}  &\iota(\Fc_{1,j})=N-k^{*}_{i_{0}}-k^{\sharp}_{1}+\Fc^{\sharp}_{1,j-i_{0}} &\text{ for } i_{0}< j\leq r,\\
\Fc_{1,j}=\Fc^{*}_{1,j}+a+k^{\sharp}_{1} &\iota(\Fc_{1,j})=N-2k^{*}_{i_{0}}+\Fc^{*}_{1,j}& \text{ for } 1\leq j\leq i_{0}.
\end{array}
\end{equation}

Once again we define new arrays
$\Delta'(\Ft)$ and $\Delta'^{\iota}(\Ft)$, whose entries are the same as $\Delta(\Ft)$ and $\Delta^{\iota}(\Ft)$ (resp.)
but with some modifications to the decoration rules.
In $\Delta'(\Ft)$,
the entry $\Fc_{1,i_{0}+1}$ is neither boxed nor circled; when $i_{0}>1$, the entry $\bar{\Fc}_{1,i_{0}-1}$ is boxed if $d_{i_{0}}=\mu_{r+1-i_{0}}$, and circled if $d_{i_{0}}=0$; when $i_{0}=1$, the entry $\Fc_{1,1}$ is  circled if $d_{1}=0$. The rest 
of the decorations for $\Delta'(\Ft)$ are the same as for $\Delta(\Ft)$.
In $\Delta'^{\iota}(\Ft)$, 
the entry $\Fc(\Delta^{\iota})_{1,i_{0}}$ is neither boxed nor circled, the entry $\bar{\Fc}(\Delta^{\iota})_{1,i_{0}}$ is boxed if $d_{i_{0}}=\mu_{r+1-i_{0}}$ and circled if $d_{i_{0}}=0$, and the remaining entries in $\Delta'^{\iota}(\Ft)$ are
given the same decorations as in $\Delta^{\iota}(\Ft)$.

\begin{lemma}\label{lm:II-delta=delta'}
If $\Ft$ is a short pattern in Class II and $k(\Ft)$ is strict, then $G_{\Delta}(\Ft)=G_{\Delta'}(\Ft)$ and $G_{\Delta^{\iota}}(\Ft)=G_{\Delta'^{\iota}}(\Ft)$.
\end{lemma}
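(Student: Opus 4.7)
My plan is to mimic the proof of Lemma~\ref{I:delta=delta'} (the Class~I analogue). The arrays $\Delta(\Ft)$ and $\Delta'(\Ft)$ share the same underlying entries and differ in decoration only at the two positions $\Fc_{1,i_0+1}$ and $\bar{\Fc}_{1,i_0-1}$ adjacent to the index $i_0$ (with the boundary case $i_0=1$ modifying only $\Fc_{1,1}$); similarly $\Delta^{\iota}(\Ft)$ and $\Delta'^{\iota}(\Ft)$ differ only at $\Fc(\Delta^{\iota})_{1,i_0}$ and $\bar{\Fc}(\Delta^{\iota})_{1,i_0}$. Proving the lemma therefore reduces to checking two local identities of products of Gauss sums at these pairs of positions; the remaining entries yield identical contributions on the two sides.

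The key simplification is the defining property $d_j=d_{2r-j}$ for $j<i_0$, which gives $d_{i_0-1}=d_{2r-i_0+1}$ when $i_0>1$. This reduces the $\Delta$-boxing condition $d_{i_0}=\mu_{r+1-i_0}+d_{i_0-1}-d_{2r-i_0+1}$ governing $\bar{\Fc}_{1,i_0-1}$ (and analogously $\bar{\Fc}(\Delta^{\iota})_{1,i_0}$) to exactly $d_{i_0}=\mu_{r+1-i_0}$, matching the $\Delta'$-rule automatically. Only the circling rules and the handling of $\Fc_{1,i_0+1}$ (resp.\ $\Fc(\Delta^{\iota})_{1,i_0}$) then require work, and these are settled by a case analysis on $d_{i_0}$. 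When $d_{i_0}\ne0$, the upper-bound inequality $d_{i_0+1}+d_{2r-i_0}\le\mu_{r-i_0}+d_{i_0}$ of~\eqref{ineq:CQ-d} combined with the Class~II condition $d_{i_0}<d_{2r-i_0}$ forces $d_{i_0+1}<\mu_{r-i_0}$, so $\Fc_{1,i_0+1}$ is not boxed in $\Delta(\Ft)$; it is also not circled (as $d_{i_0}\ne0$), so its $\Delta$-decoration matches the undecorated state in $\Delta'$, and $\bar{\Fc}_{1,i_0-1}$ has identical decoration in both arrays (since the circling condition $d_{2r-i_0}=0$ also fails). The $\iota$-case is treated identically.

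When $d_{i_0}=0$, the pattern $\Ft^{*}$ is non-maximal in $BZL_1(\mu^{*})$, so by Lemma~\ref{lm:divisibility}(3) both $G_{\Delta}(\Ft)$ and $G_{\Delta'}(\Ft)$ (resp.\ $G_{\Delta^{\iota}}(\Ft)$ and $G_{\Delta'^{\iota}}(\Ft)$) vanish unless $n\mid k_1$. A direct computation using the defining formulas yields
\[
\Fc_{1,i_0+1}+\bar{\Fc}_{1,i_0-1}=k_1-d_{i_0}=k_1,
\]
so under $n\mid k_1$ the divisibility $n\mid\Fc_{1,i_0+1}$ is equivalent to $n\mid\bar{\Fc}_{1,i_0-1}$. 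In $\Delta(\Ft)$ the pair $(\Fc_{1,i_0+1},\bar{\Fc}_{1,i_0-1})$ is (circled, undecorated), contributing $q^{\Fc_{1,i_0+1}}\cdot q^{\bar{\Fc}_{1,i_0-1}}(1-q^{-1})=q^{k_1}(1-q^{-1})$; in $\Delta'(\Ft)$ the pair is (undecorated, circled), yielding the same quantity. For the $\iota$-arrays, the analogous computation gives $\Fc(\Delta^{\iota})_{1,i_0}+\bar{\Fc}(\Delta^{\iota})_{1,i_0}=2N-k_1$, and under $n\mid k_1$ (hence $n\mid 2N-k_1$ since $N$ is a multiple of $n$) both sides evaluate to the same product. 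The boundary case $i_0=1$ is handled by the same argument with the role of $\bar{\Fc}_{1,0}$ suppressed and the circling modification placed on $\Fc_{1,1}$ itself.

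The main technical obstacle is the meticulous verification that the asymmetric circling rules (which genuinely differ between $\Delta$ and $\Delta'$ in Class~II, one keyed to $d_{2r-i_0}=0$ and the other to $d_{i_0}=0$) nonetheless combine with the undecorated entries to produce identical Gauss-sum products; the identity $\Fc_{1,i_0+1}+\bar{\Fc}_{1,i_0-1}=k_1-d_{i_0}$ together with Lemma~\ref{lm:divisibility}(3) is exactly what bridges this asymmetry.
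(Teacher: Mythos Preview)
Your proposal is correct and takes essentially the same approach as the paper's proof: reduce to the local equality at the pair $(\Fc_{1,i_0+1},\bar{\Fc}_{1,i_0-1})$ (or $(\Fc_{1,1},\Fc_{1,2})$ when $i_0=1$), dispose of the case $d_{i_0}\neq 0$ by checking that the decorations agree, and for $d_{i_0}=0$ combine the non-maximality of $\Ft^{*}$ with Lemma~\ref{lm:divisibility} and the identity $\Fc_{1,i_0+1}+\bar{\Fc}_{1,i_0-1}=k_1$ to match the two local products. Your explicit verification that the Class~II inequality $d_{i_0+1}+d_{2r-i_0}\le\mu_{r-i_0}+d_{i_0}$ forces $\Fc_{1,i_0+1}$ to be unboxed when $d_{i_0}\neq 0$ spells out a step the paper leaves to the reader.
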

\begin{proof}
In order to prove $G_{\Delta}(\Ft)=G_{\Delta'}(\Ft)$, it is sufficient to show that 
\begin{equation}\label{eq:II-delta=delta'}
\begin{array}{lll}
&\gamma_{\Delta}(\Fc_{1,1})\gamma_{\Delta}(\Fc_{1,2})=\gamma_{\Delta'}(\Fc_{1,1})\gamma_{\Delta'}(\Fc_{1,2})&
(\text{when $i_{0}=1$});\\
&\gamma_{\Delta}(\Fc_{1,i_{0}+1})\gamma_{\Delta}(\bar{\Fc}_{1,i_{0}-1})=\gamma_{\Delta'}(\Fc_{1,i_{0}+1})\gamma_{\Delta'}(\bar{\Fc}_{1,i_{0}-1})&(\text{when $ i_{0}>1$}).
\end{array}
\end{equation}
If $d_{i_{0}}\neq 0$, these two entries have the same decorations in $\Delta(\Ft)$ and $\Delta'(\Ft)$, and Eqn.~\eqref{eq:II-delta=delta'} holds. If $d_{i_{0}}=0$ and $i_{0}=1$, then $\Fc_{1,1}=\Fc_{1,2}$ and $\gamma_{\Delta}(\Fc_{1,1})\gamma_{\Delta}(\Fc_{1,2})=\gamma_{\Delta'}(\Fc_{1,1})\gamma_{\Delta'}(\Fc_{1,2})=q^{2\Fc_{1,1}}(1-q^{-1})$
when $n$ divides $\Fc_{1,1}$ and is zero otherwise, so again Eqn.~(\ref{eq:II-delta=delta'}) is true.
If $d_{i_{0}}=0$ and $i_{0}>1$, then $\Ft^{*}$ is not maximal. Thus, $G_{\Delta}(\Ft)=G_{\Delta'}(\Ft)=0$ unless $n$ divides $k_{1}$ and $k^{*}_{i_{0}}$.  However, evaluating, we find that 
 $\gamma_{\Delta}(\Fc_{1,i_{0}+1})\gamma_{\Delta}(\bar{\Fc}_{1,i_{0}-1})=q^{\Fc_{1,i_{0}+1}+\bar{\Fc}_{1,i_{0}-1}}(1-q^{-1})$ when $n$ divides $\bar{\Fc}_{1,i_{0}-1}$ and is zero otherwise, 
while $\gamma_{\Delta'}(\Fc_{1,i_{0}+1})\gamma_{\Delta'}(\bar{\Fc}_{1,i_{0}-1})$ gives the
same quantity when $n$ divides $\Fc_{1,i_{0}+1}$ and zero otherwise. 
Since $\Fc_{1,i_{0}+1}=k_{1}-k^{*}_{i_{0}}$ and $\bar{\Fc}_{1,i_{0}-1}=k^{*}_{i_{0}}$, 
Eqn.~\eqref{eq:II-delta=delta'} follows.

We turn to the equality of
 $G_{\Delta^{\iota}}(\Ft)$ and $G_{\Delta'^{\iota}}(\Ft)$. As above, we need only consider the case $d_{i_{0}}=0$. Then, 
 $\gamma_{\Delta^{\iota}}(\Fc(\Delta^\iota)_{1,i_0})\gamma_{\Delta^{\iota}}(\bar{\Fc}(\Delta^\iota)_{1,i_0})=
 q^{\iota(\Fc_{1,i_{0}+1})+\iota(\bar{\Fc}_{1,i_{0}-1})}(1-q^{-1})$ when $n$ 
 divides $\iota(\bar{\Fc}_{1,i_{0}-1})$ and is zero otherwise and 
$\gamma_{\Delta'^{\iota}}(\Fc(\Delta^{\iota})_{1,i_0})\gamma_{\Delta'^{\iota}}(\bar{\Fc}(\Delta^\iota)_{1,i_0})$
gives the same quantity when $n$ divides $\iota(\Fc_{1,i_{0}+1})$ and zero otherwise.
Since $d_{i_{0}}=0$, as above $G_{\Delta^{\iota}}(\Ft)=G_{\Delta'^{\iota}}(\Ft)=0$ unless $n$ divides
$\Fc_{1,i_{0}-1}=k^{*}_{i_{0}}$ and $\Fc_{i_{0}+1}=k_{1}-k^{*}_{i_{0}}$. In that case,
since $\iota(x)=N-k_{1}+x$, we see that $n$ divides $\iota(\bar{\Fc}_{1,i_{0}-1})$ and $\iota(\Fc_{1,i_{0}+1})$. This implies the 
desired equality.
\end{proof}

\begin{proposition}\label{pro:II}
Suppose that the weight $\bf k$ is in Class II and is strict.  Then
$$
\sum_{\Ft\in\FS_{\bf k}}G_{\Gamma}(\Ft)=\sum_{\Ft\in\FS_{\bf k}}G_{\Delta}(\Ft)
\text{ and }
\sum_{\Ft\in\FS_{\bf k}}G_{\Gamma^{\iota}}(\Ft)=\sum_{\Ft\in\FS_{\bf k}}G_{\Delta^{\iota}}(\Ft).
$$
\end{proposition}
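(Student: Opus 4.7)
The proof of this Proposition will mirror the structure of Proposition~\ref{pro:I}, proceeding by induction on $r$. The role of the ``totally resonant piece'' and the ``lower rank piece'' is what gets interchanged in Class II relative to Class I: here the entries $c_{1,j}, \bar c_{1,j}$ for $1\le j\le i_0$ contribute (after the modification of Lemma~\ref{lm:II-gamma=gamma'}) via the totally resonant pattern $\Ft^*\in BZL_1(\mu^*)$, while the entries at positions $i_0<j<2r-i_0$ give, via $\Ft^\sharp \in BZL_1(\mu^\sharp)$, a pattern of rank $r-i_0<r$ to which the inductive hypothesis applies. The base case $r=1$ is vacuous (no Class II patterns exist), and the case $r=2$ reduces to direct verification of Example~\ref{ex:r=1}-type identities.

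For the first identity, the plan is: first, apply Lemmas~\ref{lm:II-gamma=gamma'} and \ref{lm:II-delta=delta'} to replace $G_\Gamma$ and $G_\Delta$ by $G_{\Gamma'}$ and $G_{\Delta'}$. Next, invoking Lemma~\ref{lm:divisibility}, either both sides vanish or $n\mid k_1-k^*_{i_0}$, in which case by (\ref{eq:II-c-*-sharp}) and (\ref{eq:II-fc-*-sharp}) the product $G_{\Gamma'}(\Ft)$ factors as a power of $q$ times $(1-q^{-1})$ times $G_{\Gamma^\flat}(\Ft^*)$ times $G_{\Gamma}(\Ft^\sharp)$ (and similarly for $G_{\Delta'}$), because the single unboxed/uncircled entry at column $i_0$ contributes the $(1-q^{-1})$ factor and the remaining entries decompose into the two pieces. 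Then use Lemma~\ref{split-reduce} to write
\[
\sum_{\Ft\in\FS_{\bf k}(\mu)}G_\Gamma(\Ft)
=\sum_{{\bf k^*},{\bf k^\sharp}}(\text{explicit $q$-power})\,(1-q^{-1})\!\!
\sum_{\Ft^*\in\FS_{\bf k^*}(\mu^*)}\!\!G_{\Gamma^\flat}(\Ft^*)\!\!
\sum_{\Ft^\sharp\in\FS_{\bf k^\sharp}(\mu^\sharp)}\!\!G_\Gamma(\Ft^\sharp),
\]
and analogously for $G_\Delta$. The inner sum over $\Ft^*$ matches between the $\Gamma$ and $\Delta$ sides by the totally resonant identity of Proposition~\ref{lm:resonant}, and the inner sum over $\Ft^\sharp$ matches by the inductive hypothesis of this Proposition (or of Proposition~\ref{pro:I}, depending on the class of $\bf k^\sharp$, together with the totally resonant case).

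For the second identity involving $\Gamma^\iota$ and $\Delta^\iota$, the strategy is the same but one substitutes Proposition~\ref{pro:resonant} for Proposition~\ref{lm:resonant} on the resonant $\Ft^*$ sum, together with the inductive $\iota$-equality for $\Ft^\sharp$; the divisibility condition invoked is now $n\mid k^*_{i_0}$. The main obstacles I anticipate are: (i) carefully bookkeeping the powers of $q$ that arise from the boundary entries and from the identities $\gamma_\Gamma(c+k^*_{i_0})=q^{k^*_{i_0}}\gamma_\Gamma(c)$ when $n\mid k^*_{i_0}$, so that the prefactors on the two sides genuinely agree; (ii) the edge cases $i_0=1$ (in which $\mu^*=\mu_r$ is a single entry and $\bar\Fc_{1,i_0-1}$ is absent) and $i_0=r-1$ (where $\mu^\sharp=\mu_1-a$ reduces to a rank-one problem), which must be checked separately; and (iii) the boundary case where $k_1-\varepsilon_2 k_2=\mu_r$, which forces the primed $\iota$-arrays to be non-strict and requires the $r-1$ reduction for the unprimed side to be carried out directly, as at the end of the proof of Proposition~\ref{pro:I}.
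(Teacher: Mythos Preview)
Your overall inductive strategy is right, but the factorization you state for the first identity is wrong in a way that matters. You claim that after applying Lemma~\ref{lm:II-gamma=gamma'} the product $G_{\Gamma'}(\Ft)$ factors as a power of $q$ times $(1-q^{-1})$ times $G_{\Gamma^\flat}(\Ft^*)$ times $G_{\Gamma}(\Ft^{\sharp})$. But look again at the modified circling rule in the definition of $\Gamma'(\Ft)$ for Class~II: the entry $c_{1,j}$ for $i_0<j\le 2r-i_0-1$ is circled when $c_{1,j}=c_{1,j-1}$, i.e.\ the \emph{left}-neighbor rule. That is exactly the $\iota$-type circling convention, not the $\Gamma$-type one. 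Consequently the middle block contributes $G_{\Gamma^{\iota}}(\Ft^{\sharp})$ (for a suitable choice of $N$), not $G_{\Gamma}(\Ft^{\sharp})$; the same happens on the $\Delta$ side, giving $G_{\Delta^{\iota}}(\Ft^{\sharp})$. This is the essential asymmetry between Class~I and Class~II, and it is precisely why both equalities of the Proposition must be carried along together in the induction: the \emph{first} equality for a Class~II pattern of rank $r$ invokes the \emph{second} (i.e.\ $\iota$) equality for $\Ft^{\sharp}$ at rank $r-i_0$. Your plan to match the $\Ft^{\sharp}$ sum via the first equality of the inductive hypothesis would not close the loop.

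Two smaller points. First, your obstacle~(iii) is a red herring here: in Class~II one always has $k_1-\varepsilon_2 k_2\le 0<\mu_r$ (since $d_1\le d_{2r-1}$ with equality forced when $i_0>1$), so that boundary case simply does not arise; there is no analogue in the paper's Class~II argument of the special $k_1-\varepsilon_2 k_2=\mu_r$ branch from Proposition~\ref{pro:I}. Second, for the second identity you will need to separate the maximal and non-maximal cases of $\Ft^*$ explicitly (as in the paper), because only in the non-maximal case does Lemma~\ref{lm:divisibility} give the extra divisibility $n\mid k_1$ needed to pull the shifts $N-2k^*_{i_0}$ and $N-k_1$ out of the $\gamma$-factors and reduce to $G_{\Gamma^{\flat}}(\Ft^*)$, $G_{\Delta^{\flat}}(\Ft^*)$.
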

\begin{proof}
By Lemmas~\ref{lm:II-gamma=gamma'}, \ref{lm:II-delta=delta'} and \ref{lm:divisibility}, we have 
$$
G_{\Gamma}(\Ft)=q^{i_{0}(k_{1}-k^{*}_{i_{0}})} (1-q^{-1})G_{\Gamma^{\flat}}(\Ft^{*})G_{\Gamma^{\iota}}(\Ft^{\sharp}),
$$
and
$$
G_{\Delta}(\Ft)=q^{i_{0}(k_{1}-k^{*}_{i_{0}})} (1-q^{-1})G_{\Delta^{\flat}}(\Ft^{*})G_{\Delta^{\iota}}(\Ft^{\sharp}),
$$
when $n$ divides $k_{1}-k^{*}_{i_{0}}$, while both quantities are zero otherwise.
Thus by an inductive step similar to the proof of Proposition~\ref{pro:I}
we obtain $\sum_{\Ft\in\FS_{\bf k}(\mu)}G_{\Gamma}(\Ft)=\sum_{\Ft\in\FS_{\bf k}(\mu)}G_{\Delta}(\Ft)$.

Next, we show that $\sum_{\Ft\in\FS}G_{\Gamma^{\iota}}(\Ft)=\sum_{\Ft\in\FS}G_{\Delta^{\iota}}(\Ft)$.
When $n$ divides $k^{*}_{i_{0}}$, we have
$$
G_{\Gamma^{\iota}}(\Ft)=\prod^{i_{0}-1}_{j=1}\gamma_{\Gamma}(\iota(c_{1,j}))\gamma_{\Gamma}(\iota(\bar{c}_{1,j}))\cdot \gamma_{\Gamma}(\iota(\bar{c}_{1,i_{0}}))\cdot \phi(p^{N-k^{*}_{i_{0}}})G_{\Gamma^{\iota}}(\Ft^{\sharp}),
$$
and
$$
G_{\Delta^{\iota}}(\Ft)=q^{k^{*}_{i_{0}}}\prod^{i_{0}-1}_{j=1}\gamma_{\Delta}(\Fc(\Delta^{\iota})_{1,j})\gamma_{\Delta}(\bar{\Fc}(\Delta^{\iota})_{1,j})\cdot \gamma_{\Delta}(\bar{\Fc}(\Delta^{\iota})_{1,i_{0}})\cdot \phi(p^{N-k^{*}_{i_{0}}})G_{\Delta^{\iota}}(\Ft^{\sharp}),
$$
where the function $\iota$ arising in $G_{\Gamma^{\iota}}(\Ft^{\sharp})$ and $G_{\Delta^{\iota}}(\Ft^{\sharp})$ is given by $\iota(x)=N-k^{*}_{i_{0}}-k^{\sharp}_{1}+x$.  Both quantities are zero if $n$ does not divide $k^{*}_{i_{0}}$.

If $\Ft^{*}$ is maximal and $n$ divides $k^{*}_{i_{0}}$, then
$$
\prod^{i_{0}-1}_{j=1}\gamma_{\Gamma}(\iota(c_{1,j}))\gamma_{\Gamma}(\iota(\bar{c}_{1,j}))\cdot \gamma_{\Gamma}(\iota(\bar{c}_{1,i_{0}}))=q^{k^{*}_{i_{0}}}\prod^{i_{0}-1}_{j=1}\gamma_{\Delta}(\Fc(\Delta^{\iota})_{1,j})\gamma_{\Delta}(\bar{\Fc}(\Delta^{\iota})_{1,j})\cdot \gamma_{\Delta}(\bar{\Fc}(\Delta^{\iota})_{1,i_0}).
$$

If $\Ft^{*}$ is non-maximal, then $G_{\Gamma^{\iota}}(\Ft)=G_{\Delta^{\iota}}(\Ft)=0$ unless
$n$ divides $k_{1}$.  When $n$ divides $k_{1}$, 
since  $\iota(c_{1,j})=N-2k^{*}_{i_{0}}+c^{*}_{1,j}$ for $1\leq j<i_{0}$ and
$\iota(\bar{c}_{1,j})=N-k_{1}+\bar{c}_{1,j}$ for $1\leq j\leq i_{0}$, we have
$$
\prod^{i_{0}-1}_{j=1}\gamma_{\Gamma}(\iota(c_{1,j}))=q^{(i_{0}-1)(N-2k^{*}_{i_{0}})}\prod^{i_{0}-1}_{j=1}\gamma_{\Gamma}(c^{*}_{1,j})
\text{ and }
\prod^{i_{0}}_{j=1}\gamma_{\Gamma}(\iota(\bar{c}_{1,j}))=q^{i_{0}(N-k_{1})}\gamma_{\Gamma}(\bar{c}^{*}_{1,j}).
$$ 
Similarly,
since $\Fc(\Delta^{\iota})_{1,j}=N-2k^{*}_{i_{0}}+\Fc^{*}_{1,j+1}$ for $1\leq j<i_{0}$ and $\bar{\Fc}(\Delta^{\iota})_{1,j}=N-k_{1}+\bar{\Fc}_{1,j-1}$ for $1\leq j\leq i_{0}$,  we obtain
\begin{equation}\label{eq:delta-t*-bar-II}
\prod^{i_{0}-1}_{j=1}\gamma_{\Delta}(\Fc(\Delta^{\iota})_{1,j})=q^{(i_{0}-1)(N-2k^{*}_{i_{0}})}\prod^{i_{0}}_{j=2}\gamma_{\Delta}(\Fc^{*}_{1,j})
\end{equation}
and 
\begin{equation}\label{eq:delta-t*-II}
\prod^{i_{0}}_{j=1}\gamma_{\Delta}(\bar{\Fc}(\Delta^{\iota})_{1,j})=q^{(i_{0}-1)(N-k_{1})}\prod^{i_{0}-1}_{j=1}\gamma_{\Delta}(\bar{\Fc}^{*}_{1,j})\cdot q^{N-k_{1}-2k^{*}_{0}}\gamma_{\Delta}(\Fc^{*}_{1,1}).
\end{equation}
By~\eqref{eq:delta-t*-bar-II} and \eqref{eq:delta-t*-II},
\begin{align*}
q^{k^{*}_{i_{0}}}\prod^{i_{0}-1}_{j=1}\gamma_{\Delta}(\Fc(\Delta^{\iota})_{1,j})\gamma_{\Delta}(\bar{\Fc}(\Delta^{\iota})_{1,j})\cdot
& \gamma_{\Delta}(\bar{\Fc}(\Delta^{\iota})_{1,i_0})\\
=&q^{i_{0}(N-k_{1})+(i_{0}-1)(N-2k^{*}_{i_{0}})}\cdot q^{-k^{*}_{i_{0}}}\prod^{2i_{0}-1}_{j=1}\gamma_{\Delta}(\Fc^{*}_{1,j})\\
=&q^{i_{0}(N-k_{1})+(i_{0}-1)(N-2k^{*}_{i_{0}})}G_{\Delta}(\Ft^{*}).
\end{align*}

Hence,
$$
G_{\Gamma^{\iota}}(\Ft)=q^{(i_{0}-1)(N-2k^{*}_{i_{0}})+i_{0}(N-k_{1})+(N-k^{*}_{i_{0}})}(1-q^{-1})G_{\Gamma}(\Ft^{*})\, G_{\Gamma^{\iota}}(\Ft^{\sharp})
$$
and
$$
G_{\Delta^{\iota}}(\Ft)=q^{(i_{0}-1)(N-2k^{*}_{i_0})+i_{0}(N-k_{1})+(N-k^{*}_{i_{0}})}(1-q^{-1})G_{\Delta}(\Ft^{*})\,G_{\Delta^{\iota}}(\Ft^{\sharp}).
$$
Since $G_{\Delta}(\Ft^{*})=q^{(i_{0}-1)k^{*}_{i_{0}}}G_{\Delta^{\flat}}(\Ft^{*})$ and $G_{\Gamma}(\Ft^{*})=q^{(i_{0}-1)k^{*}_{i_{0}}}G_{\Gamma^{\flat}}(\Ft^{*})$,
we obtain
$$
G_{\Gamma^{\iota}}(\Ft)=q^{i_{0}(2N-k_{1}-k^{*}_{i_{0}})}(1-q^{-1})\,G_{\Gamma^{\flat}}(\Ft^{*})\,
G_{\Gamma^{\iota}}(\Ft^{\sharp}),
$$
and
$$
G_{\Delta^{\iota}}(\Ft)=q^{i_{0}(2N-k_{1}-k^{*}_{i_{0}})}(1-q^{-1})\,G_{\Delta^{\flat}}(\Ft^{*})\,G_{\Delta^{\iota}}(\Ft^{\sharp}).
$$

Using the above equalities and arguing via induction as in the Class I case, 
we have $\sum_{\Ft\in\FS}G_{\Gamma^{\iota}}(\Ft)=\sum_{\Ft\in\FS}G_{\Delta^{\iota}}(\Ft)$. 
\end{proof}
Combining Propositions \ref{pro:resonant}, \ref{pro:I} and \ref{pro:II}, we conclude that Statement~A is true.
This completes the proof of Theorem~\ref{thm:main}.

\qed

\section{A Crystal Graph Description For Even Degree Covers}\label{sec:even}

In this Section, we will establish an inductive formula for the
contributions at powers of $p$ to the Whittaker coefficients in the case of even degree covers.
The formula is based on sums over certain BZL-patterns as in Section~\ref{sect62}, but with different decoration rules.
This then gives an inductive crystal graph description of the coefficients in the even degree cover case.

Recall that $\varepsilon_{i}=1$ for $1\leq i\leq r-1$ and $\varepsilon_{r}=2$.
Given an $r$-tuple $\bf k$ of non-negative integers, 
we associate a graph as follows: Each $k_{i}$ is a vertex; $k_{i}$ and $k_{j}$ are connected if and 
only if $j=i+1$ and $\varepsilon_{i}k_{i}=\varepsilon_{j}k_{j}$. 
Let $\CE=(k_{i_{1}},k_{i_{1}+1},\dots,k_{i_{2}})$  be a subsequence of ${\bf k}$, 
consisting of all vertices of a connected component of the graph. We call such $\CE$ a connected component of ${\bf k}$.
Define $\ell_{\CE}=i_{1}$ and $r_{\CE}=i_{2}$.
Separating into connected components gives a disjoint partition of ${\bf k}$,
\begin{equation}
{\bf k}=(\CE_{1},\CE_{2},\cdots,\CE_{h}),
\end{equation}
ordered by $\ell_{\CE_{i+1}}=r_{\CE_{i}}+1$ for $1\leq i<h$.

For each connected component $\CE$, $k_{i}$
satisfies the properties that 
$\varepsilon_{i}k_{i}=\varepsilon_{j}k_{j}$ for all $\ell_{\CE}\leq i,j\leq r_{\CE}$; either $\varepsilon_{\ell_{\CE}-1}k_{\ell_{\CE}-1}\ne \varepsilon_{\ell}k_{\ell_{1}}$ or $\ell_{\CE}=1$; and either $\varepsilon_{r_{\CE}}k_{r_{\CE}}\ne \varepsilon_{r_{\CE}+1}k_{r_{\CE}+1}$ or $r_{\CE}=r$. 
If $\ell_{\CE}=1$, let $a_{\CE}=0$, and if $r_{\CE}=r$, let $b_{\CE}=0$.  Otherwise let
$a_{\CE}=|\varepsilon_{\ell_{\CE}-1}k_{\ell_{\CE}-1}-\varepsilon_{\ell_{\CE}}k_{\ell_{\CE}}|$ and $b_{\CE}=|\varepsilon_{r_{\CE}}k_{r_{\CE}}-\varepsilon_{r_{\CE}+1}k_{r_{\CE}+1}|$.

Let $\mu$, as in Sect.~\ref{sect61}, be the $r$-tuple of positive integers obtained from $\bf m$.  Then $\mu$
gives the highest weight of the crystal graph whose BZL patterns we shall employ.
If $r_{\CE}\neq \ell_{\CE}$, define $\mu(\CE)=(\mu(\CE)_{1},\mu(\CE)_{2},\dots,\mu(\CE)_{r_{\CE}-\ell_{\CE}+1})$ by
specifying that $\mu(\CE)_{i}=\mu_{r-r_{\CE}+i}$ for all $1<i< r_{\CE}-\ell_{\CE}+1$ and
that
$$
\mu(\CE)_1=\begin{cases}\mu_{r+1-r_{\CE}}-b_{\CE}&\text{if 
$\varepsilon_{r_{\CE}}k_{r_{\CE}}>\varepsilon_{r_{\CE}+1}k_{r_{\CE}+1}$}\\
\mu_{r+1-r_{\CE}}&\text{if $\varepsilon_{r_{\CE}}k_{r_{\CE}}<\varepsilon_{r_{\CE}+1}k_{r_{\CE}+1}$}
\end{cases}
$$
$$\mu(\CE)_{r_{\CE}-\ell_{\CE}+1}=\begin{cases}\mu_{r+1-\ell_{\CE}}&\text{if 
$\varepsilon_{\ell_{\CE}-1}k_{\ell_{\CE}-1}>\varepsilon_{\ell_{\CE}}k_{\ell_{\CE}}$}\\
\mu_{r+1-\ell_{\CE}}-a_{\CE}&\text{if  
$\varepsilon_{\ell_{\CE}-1}k_{\ell_{\CE}-1}<\varepsilon_{\ell_{\CE}}k_{\ell_{\CE}}$.}
\end{cases}
$$
When $r_{\CE}=\ell_{\CE}$, define
$$
\mu(\CE)_{1}=\begin{cases}
\mu_{r+1-r_{\CE}}-b_{\CE} &\text{if 
$\varepsilon_{r_{\CE}}k_{r_{\CE}}>\varepsilon_{r_{\CE}+1}k_{r_{\CE}+1}$ and $\varepsilon_{\ell_{\CE}-1}k_{\ell_{\CE}-1}>\varepsilon_{\ell_{\CE}}k_{\ell_{\CE}}$}\\
\mu_{r+1-r_{\CE}}-b_{\CE}-a_{\CE} &\text{if 
$\varepsilon_{r_{\CE}}k_{r_{\CE}}>\varepsilon_{r_{\CE}+1}k_{r_{\CE}+1}$ and $\varepsilon_{\ell_{\CE}-1}k_{\ell_{\CE}-1}<\varepsilon_{\ell_{\CE}}k_{\ell_{\CE}}$}\\
\mu_{r+1-r_{\CE}} &\text{if $\varepsilon_{r_{\CE}}k_{r_{\CE}}<\varepsilon_{r_{\CE}+1}k_{r_{\CE}+1}$ and $\varepsilon_{\ell_{\CE}-1}k_{\ell_{\CE}-1}>\varepsilon_{\ell_{\CE}}k_{\ell_{\CE}}$}\\
\mu_{r+1-r_{\CE}}-a_{\CE} &\text{if $\varepsilon_{r_{\CE}}k_{r_{\CE}}<\varepsilon_{r_{\CE}+1}k_{r_{\CE}+1}$ and $\varepsilon_{\ell_{\CE}-1}k_{\ell_{\CE}-1}<\varepsilon_{\ell_{\CE}}k_{\ell_{\CE}}$.}   
\end{cases}
$$

The set $BZL_{1}(\mu(\CE))$ consists of short patterns of length $r_{\CE}-\ell_{\CE}+1$
satisfying (\ref{ineq:BZL-d}) with $\mu$ replaced by $\mu(\CE)$.  Given a weight vector ${\bf v}$ in $\Z^{r_{\CE}-\ell_{\CE}+1}_{\geq 0}$, 
the subset $\FS_{\bf v}(\mu(\CE))$ of $BZL_{1}(\mu(\CE))$ consists of short patterns of weight $\bf v$.

Set 
$$
\Xi_{\bf k}=\cpair{{\bf x}=(x_{1},x_{2},\cdots,x_{h})\in\Z^{h}_{\geq 0}\left.\right|~
 \sum^{h}_{i=1}x_{i}+\sum^{h-1}_{i=1}\frac{b_{\CE_{i}}-\varepsilon_{r_{\CE_{i}}}k_{r_{\CE_{i}}}+\varepsilon_{r_{\CE_{i}}+1}k_{r_{\CE_{i}}+1}}{2}=k_{r}}.
$$ 
For ${\bf x}\in \Xi_{\bf k}$ and $1\leq i\leq h$, let ${\bf k}(\CE_{i},{\bf x})=(2x_{i},2x_{i},\dots,2x_{i},x_{i})$ be a totally resonant weight vector in $\Z^{r_{\CE_{i}}-\ell_{\CE_{i}}+1}_{\geq 0}$. Define a map 
$$
\Psi_{\bf k}\colon \FS_{\bf k}\to \cup_{{\bf x}\in\Xi_{\bf k}}\FS_{{\bf k}(\CE_{1},{\bf x})}(\mu(\CE_{1}))\times
\FS_{{\bf k}(\CE_{2},{\bf x})}(\mu(\CE_{2}))\times\cdots\times\FS_{{\bf k}(\CE_{h},{\bf x})}(\mu(\CE_{h})),
$$
by $\Psi_{\bf k}(\Ft)=(\Ft(\CE_{1}),\Ft(\CE_{2}),\dots,\Ft(\CE_{h}))$, where if $\Ft=(d_1,\dots,d_{2r-1})$ then
\begin{equation}
\Ft(\CE_{i})=
(d_{\ell_{\CE_{i}}},d_{\ell_{\CE_{i}}+1},\dots,d_{r_{\CE_{i}}-1}, d'_{r_{\CE_{i}}},d_{2r-r_{\CE_{i}}+1},\dots,d_{2r-\ell_{\CE_{i}}-1},d_{2r-\ell_{\CE_{i}}}).
\end{equation}
For example, if $\ell_{\CE}=r_{\CE}=1$, then $\Ft(\CE)=(d'_{1})$.
We will also write  $\Ft(\CE_{i})=(d(\CE_{i})_{1},d(\CE_{i})_{2},\dots,d(\CE_{i})_{2(r_{\CE_{i}}-\ell_{\CE_{i}})+1})$;
note that $\Ft(\CE_i)$ is a totally resonant short pattern in $BZL_{1}(\mu(\CE_{i}))$.
\begin{lemma}
The map $\Psi_{\bf k}$ is a bijection from $\FS_{\bf k}$  to
$$
\bigcup_{{\bf x}\in\Xi_{\bf k}}\FS_{{\bf k}(\CE_{1},{\bf x})}(\mu(\CE_{1}))\times\FS_{{\bf k}(\CE_{2},{\bf x})}(\mu(\CE_{2}))\times
\cdots\times\FS_{{\bf k}(\CE_{h},{\bf x})}(\mu(\CE_{h})).
$$
\end{lemma}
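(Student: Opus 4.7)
The plan is to verify that $\Psi_{\bf k}$ is a well-defined bijection by exhibiting its inverse. The essential content is that the inequalities \eqref{ineq:BZL-d} and the weight formula \eqref{eq:k} decouple across the connected components of ${\bf k}$.

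The foundational observation is that \eqref{eq:k} gives $k_i(\Ft) - k_{i+1}(\Ft) = d_i - d_{2r-i}$ for $1 \leq i \leq r-2$ and $k_{r-1}(\Ft) - 2k_r(\Ft) = d_{r-1} - d_{r+1}$, so $\varepsilon_i k_i = \varepsilon_{i+1} k_{i+1}$ is equivalent to $d_i = d_{2r-i}$. Consequently, within a connected component $\CE$ with $L := \ell_\CE$, $U := r_\CE$, we have $d_j = d_{2r-j}$ for $L \leq j \leq U-1$; and when $U < r$, $|d_U - d_{2r-U}| = b_\CE$ with sign determined by the sign of $\varepsilon_U k_U - \varepsilon_{U+1} k_{U+1}$. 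Thus $d'_U$ together with $b_\CE$ and this sign recovers $d_U$ and $d_{2r-U}$ individually (and analogously at the lower boundary with $a_\CE$ when $L > 1$).

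For the forward direction, given $\Ft \in \FS_{\bf k}(\mu)$, I would set
$$x_i := d'_{r_{\CE_i}} + \sum_{j = 2r - r_{\CE_i} + 1}^{2r - \ell_{\CE_i}} d_j.$$
Applying \eqref{eq:k} to $\Ft(\CE_i)$ with rank $r'_i := r_{\CE_i} - \ell_{\CE_i} + 1$, and using the within-component symmetry $d_j = d_{2r-j}$, one checks directly that $k(\Ft(\CE_i)) = {\bf k}(\CE_i, {\bf x})$; the identity defining $\Xi_{\bf k}$ reduces to a telescoping expression for $k_r$ once one notes that $(b_{\CE_i} + \varepsilon_{r_{\CE_i}+1} k_{r_{\CE_i}+1} - \varepsilon_{r_{\CE_i}} k_{r_{\CE_i}})/2 = \max(0, \varepsilon_{r_{\CE_i}+1} k_{r_{\CE_i}+1} - \varepsilon_{r_{\CE_i}} k_{r_{\CE_i}})$. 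To verify $\Ft(\CE_i) \in BZL_1(\mu(\CE_i))$, I translate each inequality in \eqref{ineq:BZL-d} for $\Ft(\CE_i)$ back to a statement about $\Ft$: at indices interior to $\CE_i$ the inequality matches \eqref{ineq:BZL-d} for $\Ft$ term-by-term, while at the two boundary indices $L$ and $U$ the shifts of $\mu_{r+1-L}$ or $\mu_{r+1-U}$ by $\pm a_\CE$ or $\pm b_\CE$ (following the case distinctions in the definition of $\mu(\CE)$) exactly compensate for the boundary terms $d'_{L-1} - d_{2r-L+1}$ and the replacement of $d_U, d_{2r-U}$ by $d'_U$ in the original inequalities.

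The inverse map reassembles $\Ft$ by placing the non-middle entries of each $\Ft^{(i)}$ at the positions $L, \ldots, U-1$ and $2r-U+1, \ldots, 2r-L$, taking its middle entry as $d'_U$, and recovering $d_U, d_{2r-U}$ individually from $b_{\CE_i}$ and the sign rule (with $d_U = d_{2r-U} = d'_U$ if $U = r$). The arguments above, read in reverse, show that the resulting $\Ft$ lies in $\FS_{\bf k}(\mu)$ and satisfies $\Psi_{\bf k}(\Ft) = (\Ft^{(1)}, \ldots, \Ft^{(h)})$. The main obstacle is the bookkeeping at the component boundaries: in particular, when $\ell_\CE = r_\CE$ the four sign cases in the definition of $\mu(\CE)_1$ must simultaneously absorb adjustments from both boundaries, and this verification is what essentially accounts for that case split.
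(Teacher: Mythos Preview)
Your proposal is correct and is precisely the verification the paper has in mind: the paper's own proof consists of the single sentence ``One may verify that the map $\Psi_{\bf k}$ is bijective directly from the definitions,'' and what you have written is a careful unpacking of that verification, built on the key identity $\varepsilon_i k_i - \varepsilon_{i+1}k_{i+1} = d_i - d_{2r-i}$ and the resulting boundary bookkeeping with $a_\CE$ and $b_\CE$. There is no alternative approach to compare; you have supplied the details the paper omits.
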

\begin{proof}
One may verify that the map $\Psi_{\bf k}$ is bijective directly from the definitions.
\end{proof}

For each $\CE$, define an ordered subset of $\Gamma(\Ft)$ of length $2(r_\CE-\ell_\CE)+1$
$$
\Gamma_{\Ft}(\CE)=(c(\CE)_{1,1},c(\CE)_{1,2},\dots,c(\CE)_{1,r_{\CE}-\ell_{\CE}+1},
\bar{c}(\CE)_{1,r_{\CE}-\ell_{\CE}},\dots,\bar{c}(\CE)_{1,2},\bar{c}(\CE)_{1,1})
$$
with 
$c(\CE)_{1,i}=c_{1,\ell_{\CE}-1+i}$ for $1\leq i\leq r_{\CE}-\ell_{\CE}$, $\bar{c}(\CE)_{1,i}=\bar{c}_{1,\ell_{\CE}-1+i}$ for $1\leq i\leq r_{\CE}-\ell_{\CE}$, and 
$$
c(\CE)_{1,r_{\CE}-\ell_{\CE}+1}=\begin{cases}
c_{1,r_{\CE}}&\text{ if }\varepsilon_{r_{\CE}}k_{r_{\CE}}>\varepsilon_{r_{\CE}+1}k_{r_{\CE}+1} \text{ and } r_{\CE}\ne r,\\
\bar{c}_{1,r_{\CE}}&\text{ if } \varepsilon_{r_{\CE}}k_{r_{\CE}}<\varepsilon_{r_{\CE}+1}k_{r_{\CE}+1}\text{ and } r_{\CE}\ne r,\\
c_{1,r} & \text{ if }  r_{\CE}=r.
\end{cases}
$$
By convention, we write $c(\CE)_{1,2(r_{\CE}-\ell_{\CE}+1)-i}=\bar{c}(\CE)_{1,i}$ for $1\leq i\leq r_{\CE}-\ell_{\CE}+1$.

We shall decorate $\Gamma_{\Ft}(\CE)$ using the following circling and boxing rules.
If $\mu_{r+1-r_{\CE}}=\varepsilon_{r_{\CE}}k_{r_{\CE}}-\varepsilon_{r_{\CE}+1}k_{r_{\CE}+1}$
then the entry $c_{1,r_{\CE}}$ is boxed and $\bar{c}_{1,r_{\CE}}$ is circled.  
Note that in this case,  if {\bf k} is strict then $r_{\CE}=\ell_{\CE}<r$, 
$d_{r_{\CE}}=\mu_{r+1-r_{\CE}}$ and $d_{2r-r_{\CE}}=0$. 
If $\mu_{r+1-r_{\CE}}\ne \varepsilon_{r_{\CE}}k_{r_{\CE}}-\varepsilon_{r_{\CE}+1}k_{r_{\CE}+1}$, 
we decorate $\Gamma_{\Ft}(\CE)$ as follows.
For $1\leq i\leq  r_{\CE}-\ell_{\CE}$,
the entry $c(\CE)_{1,i}$ is circled if $d(\CE)_{i+1}=0$,  and the entry
$\bar{c}(\CE)_{1,i}$ is circled if $d(\CE)_{i}=0$.
The entry $c(\CE)_{1,r_{\CE}-\ell_{\CE}+1}$ is circled if $d(\CE)_{1}=0$. 
For $1\leq i\leq r_{\CE}-\ell_{\CE}$,
the entries $c(\CE)_{1,i}$ and $\bar{c}(\CE)_{1,i}$ are both boxed if $d(\CE)_{i+1}=\mu(\CE)_{r_{\CE}-\ell_{\CE}+1-i}$, 
and the entry $c(\CE)_{1,r_{\CE}-\ell_{\CE}+1}$ is boxed if $d(\CE)_{1}=\mu(\CE)_{r_{\CE}-\ell_{\CE}+1}$.

If the graph attached to $\bf k$ earlier in this Section 
has $h$ components, then the union of the subsets  $\Gamma_{\Ft}(\CE_i)$, $1\leq i\leq h$,
omits $h-1$ entries from $\Gamma(\Ft)$.  Accordingly,
for $\CE=\CE_i$ with $r_{\CE}<r$, define
\begin{equation}\label{eq:c-CE}
c_{\CE}=\begin{cases}
\bar{c}_{1,r_{\CE}}&\text{ if }\varepsilon_{r_{\CE}}k_{r_{\CE}}>\varepsilon_{r_{\CE}+1}k_{r_{\CE}+1},\\
c_{1,r_{\CE}}&\text{ if } \varepsilon_{r_{\CE}}k_{r_{\CE}}<\varepsilon_{r_{\CE}+1}k_{r_{\CE}+1}.
\end{cases}
\end{equation}
No entry $c_{\CE_{i}}$ is either boxed or circled. 
Define $G_{\Gamma_{\Ft}(\CE)}$ be
$$
\begin{cases}
\prod^{2(r_{\CE}-\ell_{\CE})-1}_{i=1}\gamma_{\Gamma}(c(\CE)_{1,i})\cdot q^{c_{\CE}}(1-q^{-1}) &\text{ if $\mu_{r+1-r_{\CE}}\ne \varepsilon_{r_{\CE}}k_{r_{\CE}}-\varepsilon_{r_{\CE}+1}k_{r_{\CE}+1}$ and $r_{\CE}<r$,}\\
\prod^{2(r_{\CE}-\ell_{\CE})-1}_{i=1}\gamma_{\Gamma}(c(\CE)_{1,i}) &\text{ if $r_{\CE}=r$,}\\
\gamma_{\Gamma}(c_{1,r_{\CE}})\gamma_{\Gamma}(\bar{c}_{1,r_{\CE}})& \text{ if $\mu_{r+1-r_{\CE}}=\varepsilon_{r_{\CE}}k_{r_{\CE}}-\varepsilon_{r_{\CE}+1}k_{r_{\CE}+1}$.}
\end{cases}
$$
Here the factors $\gamma_{\Gamma}(\cdot)$ are computed using the decoration rule above.
We emphasize that this decoration rule is not the same as that defined in Section~\ref{sect62}. 
The contributions for the entries $c_{\CE_{i}}$ are handled separately as shown.

Using this, we construct a new weight function 
$G_\Psi(\Ft)$ by defining
\begin{equation}\label{eq:Psi}
G_{\Psi}(\Ft)=\prod^{h}_{i=1}G_{\Gamma_{\Ft}(\CE_{i})}
\end{equation}
when $k(\Ft)$ is strict and $n$ divides $c_{\CE_{i}}$ for $1\leq i<h$, and $G_{\Psi}(\Ft)=0$ otherwise. 

Now we introduce a new inductive formula for the $p$-power contributions to the Whittaker coefficients.
As we shall see, this formula is valid uniformly for covers of all degrees. If $r=1$, define 
$H_{\daleth}(p^{k};p^{m})=H(p^{k};p^{m})$. If $r>1$, define $H_{\daleth}(p^{k};p^{m})$ by the inductive formula
$$
H_{\daleth}(p^{\bf k};p^{\bf m})=\sum_{\substack{{\bf k}', {\bf k}''\\ {\bf k}'+(0,{\bf k}'')={\bf k}}}\,\,
\sum_{\substack{\Ft\in BZL_{1}(\mu)\\ k(\Ft)={\bf k}'}}G_{\Psi}(\Ft)\,
H_{\daleth}(p^{{\bf k}''};p^{{\bf m}'}),
$$
where the outer sum is over tuples ${\bf k'}=(k_1',\dots,k_r')$ and ${\bf k''}=(k''_1,\dots,k_{r-1}'')$ of non-negative integers 
such that ${\bf k}'+(0,{\bf k}'')={\bf k}$ and where 
$${\bf m}'=(m_{2}+k'_{1}+k'_{3}-2k'_{2},\dots,m_{r-1}+k'_{r-2}+2k'_{r}-2k'_{r-1},m_{r}+k'_{r-1}-2k'_{r}).$$
In addition, if ${\bf m}$ is not in $\Z^{r}_{\geq 0}$, we define $H_{\daleth}(p^{\bf k};p^{\bf m})=0$. 

\begin{theorem}\label{theorem-general}  Let $\bf k$ and $\bf m$ be $r$-tuples of non-negative integers.
$$
H_{\daleth}(p^{\bf k};p^{\bf m})=H(p^{\bf k};p^{\bf m}).
$$
\end{theorem}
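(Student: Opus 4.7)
I would prove Theorem~\ref{theorem-general} by induction on $r$. The base case $r=1$ is immediate from the definition $H_{\daleth}(p^k;p^m)=H(p^k;p^m)$. For $r>1$, the inductive formula defining $H_{\daleth}$ and Corollary~\ref{thm:H} share the same outer structure; using the induction hypothesis at rank $r-1$ to identify $H_{\daleth}(p^{{\bf k}''};p^{{\bf m}'})$ with $H(p^{{\bf k}''};p^{{\bf m}'})$, it suffices to establish the top-row equality
\begin{equation}\label{plan:top-row}
\sum_{\Ft\in\FS_{\bf k}(\mu)} G_{\Psi}(\Ft)\;=\;\sum_{\Ft\in\FS_{\bf k}(\mu)} G_{\Delta}(\Ft)
\end{equation}
for each strict weight ${\bf k}$. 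Non-strict weights contribute zero on both sides, since in each case the corresponding decorated array contains an entry which is simultaneously boxed and circled (by arguments analogous to Lemma~\ref{lm:strictness}).

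First I would exploit the decomposition of ${\bf k}$ into connected components $\CE_1,\dots,\CE_h$. The bijection $\Psi_{\bf k}$ factors the left side of \eqref{plan:top-row}, for each fixed ${\bf x}\in\Xi_{\bf k}$, as the product
$$\prod_{i=1}^{h}\,\sum_{\Ft(\CE_i)\in\FS_{{\bf k}(\CE_i,{\bf x})}(\mu(\CE_i))} G_{\Gamma_{\Ft}(\CE_i)},$$
with each $\Ft(\CE_i)$ a totally resonant short pattern and the divisibility conditions $n\mid c_{\CE_i}$ for $1\leq i<h$ built into the definition of $G_{\Psi}$ via \eqref{eq:Psi}. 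In parallel I would establish an analogous factorization of $G_{\Delta}(\Ft)$: identify, inside $\Delta(\Ft)$, the subarrays corresponding to each $\CE_i$ together with the boundary entries $\bar{\Fc}_{1,r_{\CE_i}}$ or $\Fc_{1,r_{\CE_i}+1}$ playing the role of the $c_{\CE_i}$. Using strictness of ${\bf k}$ together with Lemma~\ref{lm:delta-circle}, each such boundary entry is neither boxed nor circled, so it contributes $q^{c_{\CE_i}}(1-q^{-1})$ provided $n\mid c_{\CE_i}$ and zero otherwise, matching the $\Psi$-side exactly. The remaining entries of $\Delta(\Ft)$ split into the totally resonant $\Delta$-accordions associated with each $\CE_i$.

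The per-component identity then reduces to a totally resonant short-pattern equality. For components $\CE_i$ with $r_{\CE_i}<r$ only ordinary $n$-th power Gauss sums $g$ appear on both sides, and the identity follows from Statement~C of \cite{BBF4} applied to the accordions $\Gamma^{\flat}(\Ft(\CE_i))$, $\Delta^{\flat}(\Ft(\CE_i))$ of weight equal to the common value $\varepsilon_{\ell_{\CE_i}}k_{\ell_{\CE_i}}=\cdots=\varepsilon_{r_{\CE_i}}k_{r_{\CE_i}}$; this result of \cite{BBF4} holds uniformly in $n$. For the final component $\CE_h$ with $r_{\CE_h}=r$ (if present), the short-root entries enter, but the decoration rules for $G_{\Psi}$ are engineered precisely so that the relevant weight at the bottom of the accordion is $k_r$ rather than $2k_r$, thereby avoiding the invocation of identities involving $g_2$ and of the passage $n\mid 2k_r\Rightarrow n\mid k_r$ used in Section~\ref{Tot-Res-Case}. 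Hence Proposition~\ref{lm:resonant} can be adapted to the present setting for all $n$.

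The main obstacle will be establishing the factorization of $G_{\Delta}(\Ft)$ along the connected components and matching all boundary contributions with the correct Hilbert and power-residue structure, since here arbitrarily many components must be accommodated rather than the single split point $i_0$ of Section~\ref{sec:p}; the bookkeeping generalizes the Class~I / Class~II / totally resonant trichotomy there and requires careful case analysis according to whether $\mu_{r+1-r_{\CE_i}}=\varepsilon_{r_{\CE_i}}k_{r_{\CE_i}}-\varepsilon_{r_{\CE_i}+1}k_{r_{\CE_i}+1}$ (triggering the special boxed/circled prescription in the definition of $G_{\Gamma_\Ft(\CE)}$) or not. Once this factorization is in hand, summing the per-component identities over ${\bf x}\in\Xi_{\bf k}$ yields \eqref{plan:top-row} and closes the induction.
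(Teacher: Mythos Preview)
Your inductive reduction to a top-row identity and the plan to decompose along the connected components $\CE_1,\dots,\CE_h$ of ${\bf k}$ are both right and match the paper's architecture. The substantive difference is that the paper proves something \emph{stronger} than \eqref{plan:top-row}: for every single short pattern $\Ft$ with strict weight one has $G_\Psi(\Ft)=G_\Delta(\Ft)$. No summation over $\FS_{\bf k}(\mu)$ is needed, and in particular no appeal to Statement~C of \cite{BBF4} or to Proposition~\ref{lm:resonant} is made. For $h=1$ this pointwise equality reduces to Lemma~\ref{lm:Delta=Delta'}; for $h>1$ the paper introduces subarrays $\Delta_{\Ft}(\CE_i)$ of $\Delta(\Ft)$ with suitably modified circling rules (in the spirit of Lemmas~\ref{I:delta=delta'} and \ref{lm:II-delta=delta'}), checks that their entries and decorations coincide with those of $\Gamma_{\Ft}(\CE_i)$ up to a power of $q$ determined by the boundary relation $c_{\CE_{i-1}}=\Fc_{\CE_i}$, and observes that these powers telescope.

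Your proposed invocation of Statement~C on each component reflects a misreading of what $G_\Psi$ is. The decoration rules placed on $\Gamma_{\Ft}(\CE)$ in Section~\ref{sec:even} are \emph{not} the $\Gamma$-rules of Section~\ref{sect62} (which would indeed leave a $\Gamma$-versus-$\Delta$ gap to be bridged by a type~A sum identity); they are already engineered, component by component, to agree with the $\Delta'$-side for a totally resonant pattern. So the per-component arrays you would be feeding into Statement~C are not $\Gamma$-accordions in the sense of \cite{BBF4}, and that statement neither applies nor is required. If you carry out your factorization of $G_\Delta(\Ft)$ along components you will discover that the factors already coincide with the $G_{\Gamma_{\Ft}(\CE_i)}$ up to the telescoping $q$-shift; at that point the sum equality is automatic and the planned detour through \cite{BBF4} collapses. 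This is precisely why the construction of $G_\Psi$ works uniformly for even $n$: the parity-sensitive step $n\mid 2k_r\Rightarrow n\mid k_r$ from Section~\ref{Tot-Res-Case}, which is embedded in Proposition~\ref{lm:resonant}, is never invoked.
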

\begin{proof}
In the definition of $H_{\daleth}(p^{\bf k};p^{\bf m})$ and in the expression for
$H(p^{\bf k};p^{\bf m})$ given in Corollary~\ref{thm:H}, if ${\bf k'}$ is non-strict, then $G_{\Delta}(\Ft)$ and   $H_{\daleth}(p^{{\bf k}''};p^{{\bf m}'})$ vanish, so such $\bf k'$ do not contribute to the coefficients. 
Hence, it is sufficient to prove that for a short pattern $\Ft$ of strict weight ${\bf k'}=k(\Ft)$, the equality
\begin{equation}\label{eq:Psi=Delta}
G_{\Psi}(\Ft)=G_{\Delta}(\Ft)
\end{equation}
holds.

If $h=1$, the graph of ${\bf k'}$ is connected and the decorated array $\Gamma_{\Ft}(\CE_{1})$ is the same the decorated array $\Delta'(\Ft)$ defined in Section~\ref{sec:resonant}. In this case, Eqn.~\eqref{eq:Psi=Delta} is proved in Lemma~\ref{lm:Delta=Delta'}. 

Suppose that $h>1$.  Similarly to the earlier constructions, we move from $\Gamma(\Ft)$ to $\Delta(\Ft)$.
And similarly to earlier in this section, we introduce a new decoration rule for $\Delta(\Ft)$.  To this end,
for each $\CE$, define an ordered subset of $\Delta(\Ft)$ of length $2(r_\CE-\ell_\CE)-1$
$$
\Delta_{\Ft}(\CE)=(\Fc(\CE)_{1,1},\Fc(\CE)_{1,2},\dots,\Fc(\CE)_{1,r_{\CE}-\ell_{\CE}},\Fc(\CE)_{1,r_{\CE}-\ell_{\CE}+1},\bar{\Fc}(\CE)_{1,r_{\CE}-\ell_{\CE}},\dots,\bar{c}(\CE)_{1,2},\bar{c}(\CE)_{1,1}).
$$
Here the entries $\Fc(\CE)_{1,i}$ are given as follows:
\begin{enumerate}
\item If $\ell_{\CE}=1$, or if $\ell_\CE>1$ and $d_{\ell_{\CE}-1}>d_{2r-\ell_{\CE}+1}$, then
$$
\Delta_{\Ft}(\CE)=(\Fc_{1,\ell_{\CE}},\Fc_{1,\ell_{\CE}+1},\dots,\Fc_{1,r_{\CE}},\bar{\Fc}_{1,r_{\CE}-1},\dots,\bar{\Fc}_{1,\ell_{\CE}+1},\bar{\Fc}_{1,\ell_{\CE}});
$$
\item If $\ell_\CE>1$ and $d_{\ell_{\CE}-1}<d_{2r-\ell_{\CE}+1}$, then
$$
\Delta_{\Ft}(\CE)=(\Fc_{1,\ell_{\CE}+1},\Fc_{1,\ell_{\CE}+2},\dots,\Fc_{1,r_{\CE}},\bar{\Fc}_{1,r_{\CE}-1},\dots,\bar{\Fc}_{1,\ell_{\CE}},\bar{\Fc}_{1,\ell_{\CE}-1}).
$$
\end{enumerate}
For example, if $\ell_{\CE}=r_{\CE}=1$, then $\Delta_{\Ft}(\CE)=(\Fc_{1,1})$. 

We now introduce a new decoration rule for $\Delta_{\Ft}(\CE)$. 
If $r_{\CE}=\ell_{\CE}$ and $\mu_{r+1-r_{\CE}}=\varepsilon_{r_{\CE}}k_{r_{\CE}}-\varepsilon_{r_{\CE}+1}k_{r_{\CE}+1}$, 
then the entry $\Fc_{1,r_{\CE}}$ is boxed and the entry
$\bar{\Fc}_{1,r_{\CE}}$ is circled. 
If instead $\CE$ satisfies $r_{\CE}\ne \ell_{\CE}$ or $\mu_{r+1-r_{\CE}}\ne \varepsilon_{r_{\CE}}k_{r_{\CE}}-\varepsilon_{r_{\CE}+1}k_{r_{\CE}+1}$, the rule is as follows.
For $\ell_{\CE}\leq i\leq r_{\CE}-1$,
the entry $\Fc_{1,i+1}$ is circled if $d_{i+1}=0$ ; for $\ell_{\CE}\leq i\leq r_{\CE}-1$, the entry $\bar{\Fc}_{1,i}$ is circled if 
$d_{i}=0$.  This specifies the circling criteria for all but one entry
of $\Delta_{\Ft}(\CE)$; the remaining entry, either $\Fc_{1,\ell_{\CE}}$ or $\bar{\Fc}_{1,\ell_{\CE}-1}$, 
is circled  if $d_{\ell}=0$. The boxing rules for $\Delta_{\Ft}(\CE)$ 
are those already assigned as entries in $\Delta(\Ft)$.  Note that we have only changed the circling rules in giving this new decoration.

Similarly to $c_{\CE}$ defined in~\eqref{eq:c-CE}, we also define, for $\ell_{\CE}>1$,
$$
\Fc_{\CE}=\begin{cases}
\bar{\Fc}_{1,\ell_{\CE}-1}&\text{ if } d_{\ell_{\CE}-1}>d_{2r-\ell_{\CE}+1},\\
\Fc_{1,\ell_{\CE}}&\text{ if } d_{\ell_{\CE}-1}<d_{2r-\ell_{\CE}+1}.
\end{cases}
$$
Since $r_{\CE_{i}}=\ell_{i+1}-1$ for all $1\leq i<h$ using  the definitions of $\Fc_{1,j}$ and $c_{1,j}$
 it is easy to check that $c_{\CE_{i}}=\Fc_{\CE_{i+1}}$.  The entries $\Fc_{\CE}$ 
are neither boxed nor circled.
Using this new decoration rule, define the weight function $G_{\Delta_{\Ft}(\CE)}$
$$
\begin{cases}
\prod^{2(r_{\CE}-\ell_{\CE})+1}_{j=1}\gamma_{\Delta}(\Fc(\CE)_{1,j})
\cdot q^{\Fc_{\CE}}(1-q^{-1})&\text{ if $\mu_{r+1-r_{\CE}}\ne \varepsilon_{r_{\CE}}k_{r_{\CE}}-\varepsilon_{r_{\CE}+1}k_{r_{\CE}+1}$ and $r_{\CE}<r$,}\\
\prod^{2(r_{\CE}-\ell_{\CE})+1}_{j=1}\gamma_{\Delta}(\Fc(\CE)_{1,j})&\text{ if $r_{\CE}=r$,}\\
\gamma_{\Delta}(\Fc_{1,r_{\CE}})\gamma_{\Gamma}(\bar{\Fc}_{1,r_{\CE}})& \text{ if $\mu_{r+1-r_{\CE}}=\varepsilon_{r_{\CE}}k_{r_{\CE}}-\varepsilon_{r_{\CE}+1}k_{r_{\CE}+1}$.}
\end{cases}
$$

 Applying an argument similar to the proofs of Lemmas~\ref{I:delta=delta'} and \ref{lm:II-delta=delta'},
 one sees that  if $k(\Ft)$ is strict then
\begin{equation}\label{eq:Psi-Delta}
G_{\Delta}(\Ft)=q^{-\sum^{2r-1}_{j=r}d_{j}}\prod^{h}_{i=1}G_{\Delta_{\Ft}(\CE_{i})}
\end{equation}
when $n$ divides $c_{\CE_{i}}$ for $1\leq i< h$, and $G_{\Delta}(\Ft)=0$ otherwise. 

If $\mu_{r+1-r_{\CE_{i}}}=\varepsilon_{r_{\CE_{i}}}k_{r_{\CE_{i}}}-\varepsilon_{r_{\CE_{i}}+1}k_{r_{\CE_{i}}+1}$ for some $i$, then $d_{r_{\CE_{i}}}=\mu_{r+1-r_{\CE_{i}}}$, $d_{2r-r_{\CE_{i}}}=0$ and $\Fc_{1,r_{\CE_{i}}}=c_{1,r_{\CE_{i}}}$, $\bar{\Fc}_{1,r_{\CE_{i}}}=\bar{c}_{1,r_{\CE_{i}}}$. Thus $G_{\Gamma_{\Ft(\CE_{i})}}=G_{\Delta_{\Ft(\CE_{i})}}$ and they vanish when $n$ does not divide $\bar{c}_{1,r_{\CE_{i}}}$.  

Recall that $c_{\CE_{i}}=\Fc_{\CE_{i+1}}$ for $1\leq i<h$.
By \eqref{eq:Psi} and \eqref{eq:Psi-Delta}, $G_{\Delta}(\Ft)$ and $G_{\Psi}(\Ft)$ vanish unless $n$ divides $c_{\CE_{i}}$ for all $1\leq  i< h$. Thus, we only need to consider the case that $n$ divides $c_{\CE_{i}}$ for all $1<i\leq h$.
And in that case, we must establish a relation between the $G_{\Delta_{\Ft}(\CE_{i})}$ 
and the $G_{\Gamma_{\Ft}(\CE_{i})}$.

Since $d_{i}=d_{2r-i}$ for all $\ell_{\CE}\leq i\leq r_{\CE}-1$, we have $\bar{\Fc}_{1,i}=\bar{c}_{1,i}$ and 
$\Fc_{1,i+1}=c_{1,i}$ for $\ell_{\CE}\leq i\leq r_{\CE}-1$. 
Also, for such $i$
the entries $\bar{\Fc}_{1,i}$ and $\bar{c}_{1,i}$, resp.\ $\Fc_{1,i+1}$ and $c_{1,i}$, have the 
same decorations in the decorated arrays $\Gamma_{\Ft}(\CE)$ and $\Delta_{\Ft}(\CE)$.
Hence, there is only one entry left to compare in  $\Gamma_{\Ft}(\CE)$ and $\Delta_{\Ft}(\CE)$.  We 
denote this entry by $\Fc^{-}_{\CE}$,  $c^{-}_{\CE}$, resp. Specifically, 
$$
\Fc^{-}_{\CE}=\begin{cases}
\bar{\Fc}_{1,\ell_{\CE}-1}&\text{ if } d_{\ell_{\CE}-1}<d_{2r-\ell_{\CE}+1},\\
\Fc_{1,\ell_{\CE}}&\text{ if  $d_{\ell_{\CE}-1}>d_{2r-\ell_{\CE}+1}$ or $\ell_\CE=1$,}
\end{cases}
\text{ and }
c^{-}_{\CE}=c(\CE)_{1,r_{\CE}-\ell_{\CE}+1}.
$$
In addition, by definition, for $1<i<h$
$$
\Fc_{\CE_{i}}+\Fc^{-}_{\CE_{i}}=\Fc_{1,\ell_{\CE_{i}}}+\bar{\Fc}_{\ell_{\CE_{i}}-1}=k_{\ell_{\CE_{i}}}(\Ft)=\bar{c}_{1,r_{\CE_{i}}}+\bar{c}_{1,r_{\CE_{i}}}=c_{\CE_{i}}+c^{-}_{\CE_{i}}=k_{r_{\CE_{i}}}(\Ft).
$$
Since we are in the case that $n$ divides all $c_{\CE_{i}}$, 
$n$ divides $\Fc^{-}_{\CE_{i}}-c^{-}_{\CE_{i}}=c_{\CE_{i}}-\Fc_{\CE_{i}}$ for each $\CE_{i}$. Since the decorations of $\Fc^{-}_{\CE_{i}}$ and $c^{-}_{\CE_{i}}$ are  the same, we deduce that
$$
G_{\Delta_{\Ft}(\CE_{i})}=q^{c_{\CE_{i}}-\Fc_{\CE_{i}}}\,G_{\Gamma_{\Ft}(\CE_{i})}.
$$
Since $c_{\CE_{i-1}}=\Fc_{\CE_{i}}$,
\begin{equation}\label{eq:delta-t-i}
\prod^{h-1}_{i=2}G_{\Delta_{\Ft}(\CE_{i})}=
\prod^{h-1}_{i=2}q^{c_{\CE_{i}}-c_{\CE_{i-1}}}G_{\Gamma_{\Ft}(\CE_{i})}=q^{c_{\CE_{h-1}}-c_{\CE_{1}}}
\prod^{h-1}_{i=2}G_{\Gamma_{\Ft}(\CE_{i})}.
\end{equation}
Moreover,
\begin{equation}\label{eq:delta-r-1}
G_{\Delta_{\Ft}(\CE_{1})}=q^{c_{\CE_{1}}}G_{\Gamma_{\Gamma_{\Ft}(\CE_{1})}}
\text{ and }
G_{\Delta_{\Ft}(\CE_{h})}=q^{\sum^{2r-1}_{j=r}d_{j}-c_{\CE_{h-1}}}G_{\Gamma_{\Gamma_{\Ft}(\CE_{h})}}.
\end{equation}
Multiplying~\eqref{eq:delta-t-i} and \eqref{eq:delta-r-1} together, we obtain
$$
q^{-\sum^{2r-1}_{j=r}d_{j}}\prod^{h}_{i=1} G_{\Delta_{\Ft}(\CE_{i})}=\prod^{h}_{i=1}G_{\Gamma_{\Ft}(\CE_{i})}.
$$
Then using \eqref{eq:Psi} and \eqref{eq:Psi-Delta}, the equality $G_{\Psi}(\Ft)=G_{\Delta}(\Ft)$ follows. 
\end{proof}

\section{The Stable Case}\label{sec9}

In this Section, we establish a conjecture of Brubaker, Bump and Friedberg \cite{BBF1, BBF2} for the prime power
coefficients $H(p^{\bf k};p^\ell)$
when $n$ is sufficiently large, the so-called `stable case.'  The formula is Lie-theoretic, and expresses the
non-zero coefficients as products of Gauss sums, with each non-zero coefficient 
attached via a bijection to an element of the Weyl group.
To do so, we make use of the
work of Beineke, Brubaker and Frechette \cite{BeBrF2}, who showed that in the stable case such contributions
could also be matched with stable strict generalized Gelfand-Tsetlin patterns (see \cite{BeBrF2}, Section 4).  
We recast their results in terms of decorated BZL-patterns and observe
 that their description matches the one given in Section~\ref{sec:even} for $n$ sufficiently large (either odd
or even).

Fix $\mu\in \Z^r_{\ge1}$ as above.  The condition that $n$ be sufficiently large depends on $\mu$, and is as follows.

{\bf Stability Assumption.} The degree $n$ of the cover satisfies
$$
n\geq \begin{cases}
\mu_{1}+2\sum^{r}_{i=2}\mu_{i} & \text{ if $n$ is odd,}\\
2\mu_{1}+4\sum^{r}_{i=2}\mu_{i} & \text{ if $n$ is even.}\\
\end{cases}
$$
Analogously to the definition of a stable 
Gelfand-Tsetlin pattern in \cite{BeBrF2}, Definition 10, we define a stable BZL-pattern.
\begin{definition}
A decorated BZL-pattern $\Gamma\in BZL(\mu)$ is called stable if  $G(\Gamma)\ne 0$ for some $n$ satisfying the Stability
Assumption.
\end{definition}

Given a BZL-pattern $\Gamma=\Gamma(c_{i,j})\in BZL(\mu)$, denote the entries of the $i$-th row
$$
(c_{i,i},c_{i,i+1},\dots,c_{i,r-1},c_{i,r},\bar{c}_{i,r-1},\dots,\bar{c}_{i,i+1},\bar{c}_{i,i}).
$$
Similar to the definitions for the first row $(c_{1,j})$, for the $i$-th row $(c_{i,j})$
define a short pattern $\Ft_{i}=(d_{i,i},d_{i,i+1},\dots,d_{i,2r-i})$ and the weight vectors $k_{i,j}$ for $\Ft_{i}$ where $i\leq j\leq r$.
Let $\mu_{1,j}=\mu_{j}$. Define $\mu_{i,j}$ inductively by 
$$\mu_{i,r+1-j}=\mu_{i-1,j}+k_{i-1,j-1}+\varepsilon_{j+1}k_{i-1,j+1}-2k_{i-1,j}$$ for $i>1$ and $i\leq j\leq r$, where $k_{i,r+1}=0$. Let ${\bf \mu_{i}}$ be the vector $(\mu_{i,1},\mu_{i,2},\dots,\mu_{i,r+1-i})$.
Then $\Ft_{i}$ is in $BZL_{1}({\bf \mu_{i}})$.  Define $G_{\Psi}(\Gamma)=\prod^{r}_{i=1}G_{\Psi}(\Ft_{i})$.

One may check the following two facts from the definitions.  
First, if a decorated BZL-pattern $\Gamma\in BZL(\mu)$
is not stable then $G_\Psi(\Gamma)=0$ for all $n$ satisfying the Stability Assumption.  
This follows since some non-zero entry must be undecorated but the divisibility
condition in (\ref{dec:Gamma}) can not hold.  Second, if a decorated
BZL-pattern $\Gamma\in BZL(\mu)$
is stable then $G_{\Psi}(\Gamma)=G(\Gamma).$  Indeed, this holds since the two sides are
identically equal by definition as the decorations are the same.

Let $\Phi(C)$ be the set of all roots of the dual group $\Sp_{2r}(\C)$ and 
 $W$ denote the Weyl group of $\Phi(C)$.  Then Proposition~12 of \cite{BeBrF2} and the
 following discussion may be recast in the language of 
stable BZL-patterns as follows.

\begin{lemma} \label{lm:stable}  Suppose that $\Gamma\in BZL(\mu)$ is a stable decorated BZL-pattern.
Then there exists a unique element $w$ in $W$ such that 
$$
\mu-w(\mu)=(2k_{r}(\Gamma)-k_{r-1}(\Gamma),k_{r-1}(\Gamma)-k_{r-2}(\Gamma),\dots,k_{1}(\Gamma)).
$$
Each element $w\in W$ is so-obtained.
\end{lemma}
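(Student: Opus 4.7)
The plan is to deduce the lemma from Proposition~12 of \cite{BeBrF2}, which proves the analogous statement in the Gelfand--Tsetlin language, by combining it with the standard bijection between BZL-patterns and strict Gelfand--Tsetlin patterns for the crystal $B(\mu)$ of $\Sp_{2r}(\C)$. As a preliminary reduction, I would show that for every stable $\Gamma\in BZL(\mu)$, each entry $c_{i,j}$ carries exactly one of the decorations boxed or circled. Indeed, if $c_{i,j}$ were undecorated then non-vanishing of $\gamma_{\Gamma}(c_{i,j})$ in~\eqref{dec:Gamma} would force $n\mid c_{i,j}$; but iteration of the inequalities defining $BZL(\mu)$ (see~\eqref{ineq:c} together with \cite{Litt} or \cite{BeBrF1}, Section~2.2) bounds each entry by $\mu_1+2\sum_{i=2}^r\mu_i$, so the Stability Assumption on $n$ would force $c_{i,j}=0$, a contradiction since vanishing entries are always circled. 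Entries that are both boxed and circled contribute $0$ by~\eqref{dec:Gamma}, so in a stable pattern each entry in fact has precisely one decoration.

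Next I would match these singly decorated stable BZL-patterns with the stable strict Gelfand--Tsetlin patterns studied in \cite{BeBrF2}. The classical bijection between $BZL(\mu)$ and strict Gelfand--Tsetlin patterns of top row $\lambda+\rho$ (\cite{Litt}; \cite{BeBrF2}, Section~2) sends boxed entries of $\Gamma$ to ``maximal'' entries of the Gelfand--Tsetlin pattern and circled entries to ``minimal'' entries, so the reduction above identifies our stable patterns with the stable Gelfand--Tsetlin patterns of \cite{BeBrF2}, Definition~10. A direct computation using~\eqref{eq:c} and the formula for $\kappa_j(\Gamma)$ in Section~\ref{sect62} further shows that the vector $(2k_r-k_{r-1},\,k_{r-1}-k_{r-2},\,\ldots,\,k_1)$ coincides with $\mu-\wt(\Gamma)$ expressed in the basis of simple roots, which in turn agrees with the row-sum-difference vector computed from the corresponding Gelfand--Tsetlin pattern.

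Proposition~12 of \cite{BeBrF2} then establishes a bijection between stable strict Gelfand--Tsetlin patterns of top row $\lambda+\rho$ and the Weyl group $W$, given explicitly by $w\mapsto\mu-w(\mu)$ on weights. Pulling this back through the bijection above produces the unique $w\in W$ attached to $\Gamma$ and exhibits each element of $W$ as arising from some stable BZL-pattern. The principal obstacle is the translation step: the BZL and Gelfand--Tsetlin parameterizations of $B(\mu)$ use different index conventions, and the decoration rules of \cite{BeBrF1, BeBrF2} are formulated slightly differently in each source, so tracking the correspondence between decorations and between weight vectors demands careful combinatorial bookkeeping. Once this is completed, the remaining steps follow essentially immediately from the Stability Assumption and from \cite{BeBrF2} respectively.
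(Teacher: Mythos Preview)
Your proposal is correct and follows exactly the route the paper indicates: the paper does not give a standalone proof of this lemma but presents it as the BZL-pattern translation of Proposition~12 of \cite{BeBrF2} (and the discussion after it), and your outline spells out the natural steps in that translation---the single-decoration reduction forced by the Stability Assumption, the standard BZL/Gelfand--Tsetlin bijection of \cite{Litt, BeBrF1, BeBrF2} carrying decorations to maximal/minimal conditions, and the identification of weight vectors. There is nothing materially different between your approach and the paper's.
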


Let $\Phi^{+}(C)$ (resp.\ $\Phi^{-}(C)$) denote the sets of positive (resp.\ negative) roots in $\Phi(C)$.
For each $w\in W$, let 
$$\Phi_{w}=\{\alpha\in\Phi^{+}(C)\mid w(\alpha)\in \Phi^{-}(C)\}.$$  Recall that given $\mu$, we may
realize it as $\mu=\lambda+\rho$ as in Section~\ref{sect62} above, with $\lambda=\sum l_i\epsilon_i$.
For $\alpha\in \Phi^+(C)$, let $d_\lambda(\alpha)=\frac{2<\mu,\alpha>}{<\alpha,\alpha>}$
where $\apair{\cdot,\cdot}$ is the standard Euclidean inner product.  (The notation $d_\lambda(\alpha)$ is the same as that of 
\cite{BBF2}.)
Then in view of the remarks above
the following Corollary follows
from Theorem~\ref{theorem-general} above together with Theorem~1 of \cite{BeBrF2}.

\begin{corollary}\label{lm:G-stable}  Suppose that the Stability Assumption holds.
Let $w$ be the Weyl element associated to $\Gamma$ as in Lemma~\ref{lm:stable}. Then
$$H(p^ {\kappa(\Gamma)};p^{\ell})=
G(\Gamma)=\prod_{\alpha\in \Phi_{w}}g_{\|\alpha\|^{2}}(p^{d_{\lam}(\alpha)-1},p^{d_{\lam}(\alpha)}).
$$
Each non-zero $p$-power coefficient is obtained from a Weyl group element $w$.
\end{corollary}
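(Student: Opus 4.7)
The plan is to combine the inductive formula of Theorem~\ref{theorem-general} with the stable-pattern bijection in Lemma~\ref{lm:stable} and the explicit Gauss-sum product evaluation in Theorem~1 of \cite{BeBrF2}. First I would iterate the inductive definition of $H_{\daleth}$ row by row: the outer step chooses a strict short pattern $\Ft_1 \in BZL_1(\mu)$ with weight $k(\Ft_1)$ contributing $G_{\Psi}(\Ft_1)$ and replaces ${\bf m}$ by ${\bf m}'$; the recursion then applies to the rank $r-1$ problem with parameter vector ${\bf \mu}_2$, and so on. After $r$ iterations, the sum becomes
$$H(p^{\kappa}; p^{\ell}) = \sum_{\substack{\Gamma \in BZL(\mu) \\ \kappa(\Gamma) = \kappa}} \prod_{i=1}^r G_{\Psi}(\Ft_i) = \sum_{\substack{\Gamma \in BZL(\mu) \\ \kappa(\Gamma) = \kappa}} G_{\Psi}(\Gamma),$$
where the rows $\Ft_i$ are exactly the rows of a triangular BZL-pattern with row constraint vectors ${\bf \mu}_i$ defined as in Section~\ref{sec9}.

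Next I would invoke the two observations recorded just before the statement of the Corollary: (i) under the Stability Assumption, $G_{\Psi}(\Gamma) = 0$ whenever $\Gamma$ is not stable, since an undecorated nonzero entry in an unstable pattern forces the divisibility condition $n \mid c_{i,j}$ in \eqref{dec:Gamma} to fail; and (ii) if $\Gamma$ is stable, then the decoration rules defining $G_{\Psi}$ and $G$ agree on every entry, so $G_{\Psi}(\Gamma) = G(\Gamma)$. Consequently the sum reduces to one over stable patterns, giving
$$H(p^{\kappa}; p^{\ell}) = \sum_{\substack{\Gamma \in BZL(\mu) \text{ stable} \\ \kappa(\Gamma) = \kappa}} G(\Gamma).$$

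Then I would apply Lemma~\ref{lm:stable}: each stable pattern $\Gamma$ determines a unique Weyl element $w \in W$ with $\mu - w(\mu) = (2k_r(\Gamma) - k_{r-1}(\Gamma), k_{r-1}(\Gamma) - k_{r-2}(\Gamma), \dots, k_1(\Gamma))$, and every $w$ arises from a unique stable $\Gamma$. Since $\kappa(\Gamma)$ is determined by the $k_i(\Gamma)$, the weight vector $\kappa$ pins down $w$ uniquely, and hence at most one stable pattern of weight $\kappa$ exists; this yields the asserted bijection between non-zero $p$-power coefficients and elements of $W$. Finally, Theorem~1 of \cite{BeBrF2} evaluates $G(\Gamma)$ in the stable case as the Lie-theoretic product $\prod_{\alpha \in \Phi_w} g_{\|\alpha\|^2}(p^{d_{\lambda}(\alpha)-1}, p^{d_{\lambda}(\alpha)})$, which is the claimed formula.

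The main obstacle is the bookkeeping in the first step: one must verify that iterating the single-row inductive formula precisely reproduces the full triangular BZL-pattern sum with the product weight $\prod_i G_{\Psi}(\Ft_i)$, and in particular that the recursively defined parameter vectors ${\bf m}^{(i)}$ produced by the induction agree with the ${\bf \mu}_i$ governing the validity of each row as an element of $BZL_1({\bf \mu}_i)$. This is a careful but essentially formal check; once it is in place, the reduction to stable patterns and the passage to the Weyl-group product formula via \cite{BeBrF2} are immediate.
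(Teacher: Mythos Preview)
Your proposal is correct and follows essentially the same route as the paper. The paper's own argument is terse---it simply says the Corollary follows ``in view of the remarks above'' from Theorem~\ref{theorem-general} together with Theorem~1 of \cite{BeBrF2}---but the ingredients you list (unfolding the inductive formula for $H_{\daleth}$ into a sum of $G_{\Psi}(\Gamma)$ over full BZL-patterns, the two observations reducing to stable patterns with $G_{\Psi}=G$, the bijection of Lemma~\ref{lm:stable}, and the Gauss-sum product from \cite{BeBrF2}) are exactly those implicit in the paper's one-line justification.
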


This confirms Conjecture 1.4 of \cite{BBF1} and the conjecture described in Section 1.1 
of \cite{BBF2} for root systems of this type.

\noindent
\textsc{\small{Department of Mathematics, Boston College, Chestnut Hill, MA
02467-3806}}

\end{document}